\documentclass[11pt]{amsart}
\usepackage{amsmath,amsthm,amssymb,hyperref,fancyhdr,mathtools}
\usepackage{derivative}
\usepackage{color}
\usepackage{comment}
\usepackage{enumitem}
\usepackage{fullpage}
\usepackage{derivative}
\usepackage{multirow}
\usepackage{booktabs} 
\usepackage{float}

\theoremstyle{plain}
\newtheorem{theorem}{Theorem}
\newtheorem*{theorem*}{Theorem}
\newtheorem{lemma}[theorem]{Lemma}
\newtheorem{proposition}[theorem]{Proposition}
\newtheorem{corollary}[theorem]{Corollary}
\newtheorem{definition}[theorem]{Definition}
\newtheorem{example}[theorem]{Example}
\newtheorem{remark}[theorem]{Remark}
\numberwithin{theorem}{section}
\numberwithin{equation}{section}

\newcommand{\ip}[2]{\ensuremath{\left\langle#1,#2\right\rangle}}

\newcommand{\B}{\mathbb{B}}
\newcommand{\C}{\mathbb{C}}
\newcommand{\D}{\mathbb{D}}

\newcommand{\R}{\mathbb{R}}
\newcommand{\Z}{\mathbb{Z}}
\newcommand{\N}{\mathbb{N}}
\newcommand{\T}{\mathbb{T}}

\newcommand{\cA}{\mathcal{A}}
\newcommand{\cB}{\mathcal{B}}

\newcommand{\cF}{\mathcal{F}}
\newcommand{\cH}{\mathcal{H}}

\newcommand{\cP}{\mathcal{P}}
\newcommand{\cS}{\mathcal{S}}
\newcommand{\cX}{\mathcal{X}}

\DeclareMathOperator{\spann}{span}
\DeclareMathOperator{\diag}{diag}

\newcommand{\GL}{\mathrm{GL}}
\newcommand{\SO}{\mathrm{SO}}
\newcommand{\SL}{\mathrm{SL}}
\newcommand{\SU}{\mathrm{SU}}
\newcommand{\U}{\mathrm{U}}

\newcommand{\Sp}{\mathrm{Sp}}

\newcommand{\Tr}{\operatorname{Tr}}
\newcommand{\tr}{\operatorname{tr}}

\newcommand{\ga}{\mathfrak{a}}

\renewcommand{\gg}{\mathfrak{g}}
\newcommand{\gh}{\mathfrak{h}}

\newcommand{\gk}{\mathfrak{k}}

\newcommand{\gn}{\mathfrak{n}}

\newcommand{\gs}{\mathfrak{s}}
\newcommand{\gt}{\mathfrak{t}}

\newcommand{\bm}{\mathbf{m}}

\newcommand{\id}{\mathrm{Id}}

\newcommand{\End}{\mathrm{End}}

\newcommand{\bsd}{\mathcal{D}}
\newcommand{\Uvar}{Z}
\newcommand{\Zvar}{Z}
\newcommand{\domdpi}[2]{\mathrm{Dom}(\mathrm{d}#1(#2))}
\newcommand{\diffops}{\rm{Diff}}
\newcommand{\avar}{\lambda}

\newcommand{\Berg}{\cA^2_\nu(\bsd)} 

\newcommand{\cbuGL}{C_{b,u}^{(L)}(G)} 
\newcommand{\cbuGR}{C_{b,u}^{(R)}(G)} 

\newcommand{\cbufL}{C_{b,u}^{(L)}(\bsd)} 
\newcommand{\cbufR}{C_{b,u}^{(R)}(\bsd)} 

\newcommand{\bddf}{L^\infty(\bsd)} 

\newcommand{\bddfG}{L^\infty(G)} 
\newcommand{\Lone}{L^1(\bsd,d\mu_0)} 
\newcommand{\Lp}{L^p(\bsd,d\mu_0)} 
\newcommand{\Lq}{L^{p'}(\bsd,d\mu_0)} 

\newcommand{\LoneG}{L^1(\G)} 

\newcommand{\LpG}{L^p(G)}
\newcommand{\domop}{\mathcal{D}_{\mathrm{op}}^r}

\newcommand{\cbuopL}{C_{b,u}^{(L)}(\cA^2_\nu)}

\newcommand{\bdd}{\cB(\cA^2_\nu)}

\newcommand{\traceop}{\cS^1(\cA^2_\nu)}
\newcommand{\schop}{\cS^p(\cA^2_\nu)}

\newcommand{\schopq}[1]{\cS^{#1}(\cA^2_\nu)}

\newcommand{\toepalg}{\mathfrak{T}_\nu(L^\infty)}

\newcommand{\haar}[1]{\ d\mu_{G}(#1)}
\newcommand{\haarK}[1]{\ d\mu_K(#1)}
\newcommand{\inv}[1]{\ d\mu_0(#1)}

\newcommand{\dv}{d\mu_\nu}
\newcommand{\actopL}[2]{\lambda^\nu_{#1}({#2})}
\newcommand{\actfL}[2]{\ell_{#1}#2}

\newcommand{\actfR}[2]{r_{#1}#2}

\newcommand{\ActopL}[1]{\lambda^\nu_{#1}}

\newcommand{\act}[2]{\tau_{#1}(#2)}

\newcommand{\Last}{\ast}
\newcommand{\last}{\ast_\ell}

\newcommand{\G}{G}

\newcommand{\pinu}{\pi_\nu}

\newcommand{\ipnu}[2]{\ensuremath{\left\langle#1,#2\right\rangle}_\nu}
\newcommand{\normnu}[1]{\|#1\|_\nu}

\newcommand{\ddt}{\frac{\mathrm{d}}{\mathrm{d}t}}
\newcommand{\del}[1]{\frac{\partial}{\partial #1}}
\newcommand{\supp}{\operatorname{supp}}
\newcommand{\hc}{\textbf{c}}
\newcommand{\gen}{g}
\newcommand{\Ad}{\mathrm{Ad}}

\newcommand{\Rad}[1]{{#1}^\#}
\newcommand{\rad}[1]{{#1}^\#}

\newcommand{\Pna}{\Phi^\nu_\alpha}
\newcommand{\GO}{\Gamma_\Omega}
\newcommand{\su}{\mathfrak{su}}
\renewcommand{\sl}{\mathfrak{sl}}

\newcommand{\rU}{\mathrm{U}}
\newcommand{\rS}{\mathrm{S}}

\keywords{Bergman space, bounded symmetric domains, quantum harmonic analysis, radial Toeplitz algebra}

\author{Matthew Dawson}
\address{CONAHCYT---CIMAT Unidad Mérida,
          Parque Científico y Tecnológico de Yucatán,
          KM 5.5 Carretera Sierra Papacal --- Chuburná Puerto,
          Sierra Papacal; Mérida, YUC 97302, México}
\email{matthew.dawson@cimat.mx}

\author{Vishwa Dewage}
\address{Faculty of Mathematics, University of Vienna, Oskar-Morgenstern-Platz 1, 1090 Wien, Austria}
\email{vishwa.dewage@univie.ac.at}

\author{Gestur \'Olafsson}
\address{Department of Mathematics, Louisiana State University, Baton Rouge, LA 70803, USA.} 
\email{olafsson@math.lsu.edu}

\subjclass[2000]{22D25, 30H20, 47B35, 47L80}

\thanks{Vishwa Dewage was supported by Austrian Science Fund (FWF) [10.55776/ESP2699225].}

\title[QHA on Bergman spaces of bounded symmetric domains]{Quantum harmonic analysis and the Differential operators on weighted Bergman spaces of bounded symmetric domains}

\begin{document}

\begin{abstract}
We develop quantum harmonic analysis (QHA) for operators on the parametrized Bergman space $\mathcal{A}^2_\nu(\Omega)$ of a bounded symmetric domain $\Omega$, extending the theory previously known for the unit ball. Building on this framework, we introduce a 
class of differential operators acting on operators on $\mathcal{A}^2_\nu(\Omega)$. Combining these tools, we advance the theory of Toeplitz operators on $\mathcal{A}^2_\nu(\Omega)$: we establish Toeplitz approximation results via the $\alpha$-Berezin transform, 
construct a differential operator $D_\alpha$ that shifts the $\alpha$-Berezin transform, and obtain several characterizations of the radial Toeplitz algebra -- including one via the uniformly continuous radial operators, and one via the domain of $D_\alpha$. \end{abstract}

\maketitle

\setcounter{tocdepth}{1}
\tableofcontents

\section{Introduction}\label{sec:intro}
The importance of weighted Bergman spaces $\Berg$ (where $\nu>\gen-1$, and $\gen$ is the genus of $\bsd$) in analysis on a bounded symmetric domain $\Omega=G/K$ of genus $\gen$ has been clear since the discovery, by Harish-Chandra, of the holomorphic discrete series representations of the Hermitian Lie group $G$ on $\Berg$ (see~\cite{H54}). The most well-studied example of a bounded symmetric domain is the unit ball $\B^n$ in the complex inner-product space $\C^n$. Backed by the rich geometry and representation theory of $\Omega$, the theory of Toeplitz operators on $\Berg$ has been developed extensively (see, for example, \cite{E99, FK90, U83} and subsequent works). 

One of our main goals is to revisit Toeplitz operators on $\Berg$ from the perspective of Werner's quantum harmonic analysis (QHA) \cite{W84}, and to further develop the theory. In addition, we introduce a notion of differential operators acting on operators on $\Berg$, another concept that naturally fits within the scope of QHA.

In QHA, natural concepts such as the convolution and the Fourier transform are defined and studied for operators in place of 
functions, providing an operator-theoretic analogue of harmonic analysis.  In terms of representation theory, in QHA one studies representations of groups on spaces of operators using terminology and constructions which are inspired by the theory of group representations that act on spaces of functions.  Following several pivotal works that discovered connections between time-frequency analysis and the theory of Toeplitz operators \cite{F19, LS18, LS20}, there has been a rapid growth in the application of QHA within these areas \cite{LS25, DLMS24,  LM24}. These developments highlight QHA as a unifying framework that naturally accommodates objects such as Toeplitz operators, Wigner transforms, and pseudo-differential operators. As observed in \cite{F19} for the Fock space, Toeplitz operators and their counterpart, the Berezin transform, can be expressed as QHA convolutions, making QHA a powerful tool in the theory \cite{BF20, BFT25, DM24a, F19, FH23, FR23}. The QHA framework was extended to the unweighted Bergman space $\cA^2(\B^n)$ of the unit ball in \cite{DDMO24}, and in \cite{DFO25}, the notion of Fourier transform was developed for radial operators on weighted Bergman spaces. Some QHA concepts were extended to the weighted Bergman spaces $\Berg$ over a bounded symmetric domain in \cite{D25} to study the Toeplitz algebra. In the first part of this article, we systematically develop the QHA framework for $\Berg$, extending the results of \cite{DDMO24} to bounded symmetric domains.

Su\'arez introduced the $\alpha$-Berezin transform of operators on the Bergman space $\cA^2(\D)$ of the unit disc, and investigated when the approximation $T_{B_\alpha(S)}\to S$ holds for an operator $S$ on $\cA^2(\D)$ \cite{S04, S05, S08}. There have been many generalizations; see, for example, \cite{BHV14, BHV14b} for the unit ball, and \cite{D17} for bounded symmetric domains. Following the QHA approach in \cite{DDMO24}, we systematically investigate the $\alpha$-Berezin transform of operators and and obtain several new results. We use it to characterize the Toeplitz operators as follows (see Theorem \ref{theo:Toeplitz criterion}) :

\begin{theorem*}
    Let $1<p\leq \infty$ and let $S\in \schop$. Then the following are equivalent:
    \begin{enumerate}
        \item $S=T^\nu_a$ for some $a\in \Lp$.
        \item There exists $C>0$ such that $\|B^\nu_\alpha(S)\|_p < C$ for all $\alpha\in\N_0$.
    \end{enumerate}
\end{theorem*}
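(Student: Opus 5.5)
We will prove (1)$\Rightarrow$(2) by QHA convolution estimates, and (2)$\Rightarrow$(1) by weak-$\ast$ compactness plus the Toeplitz approximation $T^\nu_{B^\nu_\alpha(S)}\to S$.

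\emph{Setup.} Write the $\alpha$-Berezin transform as a QHA convolution of $S$ with a positive trace class operator $R_\alpha$: a rank-one (or weighted) projection onto the normalized power $k_0^{\alpha}$ of the reproducing kernel, so that $\|R_\alpha\|_{\traceop}=1$. Then
\[
B^\nu_\alpha(S)(z)=\tr\bigl(S\,\actopL{g_z}{R_\alpha}\bigr).
\]

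\emph{(1)$\Rightarrow$(2).} If $S=T^\nu_a$, then $B^\nu_\alpha(T^\nu_a)=a\ast (R_\alpha\ast R_0)$, where $R_0$ is the Bergman projection state. Hence $B^\nu_\alpha(T^\nu_a)$ is the convolution of $a$ with a positive function of integral $1$ with respect to $\mu_0$. Young's inequality for QHA convolutions then gives $\|B^\nu_\alpha(S)\|_p\le\|a\|_p$ uniformly in $\alpha$.

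\emph{(2)$\Rightarrow$(1).} Set $a_\alpha=B^\nu_\alpha(S)$. Since $1<p\le\infty$, $\Lp$ is a dual space: $L^{p'}$ is separable for $p'<\infty$, and for $p=\infty$ it is the dual of $\Lone$. By Banach--Alaoglu we extract a subsequence $a_{\alpha_j}\to a$ weak-$\ast$ in $\Lp$. The map $b\mapsto T^\nu_b$ from $\Lp$ to $\schop$ (into $\bdd$ when $p=\infty$) is weak-$\ast$ continuous, because $\tr(T^\nu_b A)=\int b\,B^\nu(A)\,d\mu_0$ for $A\in\schopq{p'}$ and $B^\nu(A)\in L^{p'}$. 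Hence $T^\nu_{a_{\alpha_j}}\to T^\nu_a$ weak-$\ast$.

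It remains to show $T^\nu_{B^\nu_\alpha(S)}\to S$, at least weakly on finite-rank test operators; uniqueness of weak-$\ast$ limits then gives $S=T^\nu_a$. Here $T^\nu_{B^\nu_\alpha(S)}=S\ast(R_\alpha\ast R_0)$ in operator-convolution form, i.e.
\[
\int_G \actopL{g}{S}\,\varphi_\alpha(g)\,d\mu_G(g)
\]
for a probability density $\varphi_\alpha$ built from $\tr(R_\alpha\,\actopL{g}{R_0})$. The key claim is that $\varphi_\alpha$ concentrates at $K$ as $\alpha\to\infty$. One checks this from the explicit form $(1-|z|^2)$-powers, i.e.\ $h(z,z)^{\nu+\alpha}$-type kernels, via the generic norm $h$. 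Averaging over $K$ is harmless only if $\actopL{k}{S}$ reproduces $S$, which is false in general. So instead we write the integrand in the form $\actopL{g}{\cdot}$ composed with multiplication on both sides. The approximation $T_{B_\alpha(S)}\to S$ in strong/weak operator topology holds for all bounded $S$ in Suárez's setting. For our purposes it suffices to test against $k_z\otimes k_w$, where it reduces to the pointwise convergence
\[
\langle T_{B_\alpha(S)}k_w,k_z\rangle\to\langle Sk_w,k_z\rangle .
\]
This follows from an explicit kernel formula, $\langle T_{B_\alpha S}k_w,k_z\rangle=\int \langle S k^{\alpha}_{u,w},k^{\alpha}_{u,z}\rangle\dots$, together with dominated convergence.

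The main obstacle is this last convergence $T^\nu_{B^\nu_\alpha(S)}\to S$ (weakly on kernel functions) for general $S\in\schop$ on a bounded symmetric domain. Here the unit-ball computations must be replaced by the Faraut--Korányi expansion of $h(z,w)^{-\nu}$. I would first try to deduce it from an earlier result of the paper on Toeplitz approximation via the $\alpha$-Berezin transform, if one is stated. Otherwise I would establish it via the identity $B_\beta B_\alpha=B_\alpha B_\beta$ and the fact that $B_\alpha\to\mathrm{Id}$ on $\Lp$, then use injectivity of the Berezin transform to pass from functions to operators.
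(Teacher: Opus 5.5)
Your outer skeleton is right, and it is the one the paper relies on (the paper defers to \cite[Theorem 6.6]{DDMO24}). It consists of three pieces: (1)$\Rightarrow$(2) from $B^\nu_\alpha(T^\nu_a)=a\ast\varphi_{\nu+\alpha}$ and $\|\varphi_{\nu+\alpha}\|_1=1$; a weak-$\ast$ cluster point $a$ of $B^\nu_{\alpha}(S)$ via the predual of $\Lp$; and weak-$\ast$ continuity of $b\mapsto T^\nu_b$. The gap is the identification $S=T^\nu_a$. You leave it as ``the main obstacle'', and the route you sketch for it would not work, for three reasons.

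(i) Your $R_\alpha$ is misdescribed. $\Phi^\nu_\alpha$ is the radial operator of Theorem~\ref{theo:Phi alpha}, which has signed eigenvalues; it is not a positive rank-one state (no holomorphic $f$ satisfies $|f(z)|^2\propto h(z,z)^\alpha$). On the unweighted disc ($\nu=2$) one finds $\Phi^\nu_\alpha=\sum_{m=0}^{\alpha}(-1)^m\binom{\alpha+1}{m+1}P_m$, which has trace $1$ but trace norm $2^{\alpha+1}-1$. So you have no uniform bound with which to run dominated convergence on the off-diagonal kernel expressions, or to bound $\|T^\nu_{B^\nu_\alpha(S)}\|$ uniformly and upgrade to WOT convergence.

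(ii) The representation $T^\nu_{B^\nu_\alpha(S)}=\int_G\varphi_{\nu+\alpha}(g)\,\actopL{g}{S}\,d\mu_G(g)$ is Lemma~\ref{lem:Berezin_convol}(2), and it requires $S$ to be radial. For general $S$ the right-hand side is always $K$-invariant by Lemma~\ref{lem:convolution_rad}, so it tends to $\rad{S}$, not to $S$. The correct general formula is $T^\nu_{B^\nu_\alpha(S)}=c_\nu\,B^\nu(S)\ast\Phi^\nu_\alpha$.

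(iii) SOT/WOT convergence $T^\nu_{B^\nu_\alpha(S)}\to S$ for every bounded $S$ is not available. Theorem~\ref{theo:radial covolution convergence} gives it only for radial $S$ or $S\in\Lone\ast\bdd$.

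No operator-topology convergence is needed. Test only against $k^\nu_z\otimes\overline{k^\nu_z}$, i.e.\ compare Berezin transforms. Lemma~\ref{lem:properties Berezin}(3) with $\beta=0$ gives
\[
B^\nu\bigl(T^\nu_{B^\nu_\alpha(S)}\bigr)=B_\nu\bigl(B^\nu_\alpha(S)\bigr)=B_{\nu+\alpha}\bigl(B^\nu(S)\bigr)=B^\nu(S)\ast\varphi_{\nu+\alpha}.
\]
This lemma rests on the radiality of $\Phi^\nu_\alpha$ through Theorem~\ref{theo:radial_convol}(3), not on commutativity of $B_\alpha$ and $B_\beta$ on functions.

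The right-hand side tends to $B^\nu(S)$ uniformly. This is because $B^\nu(S)\in\cbufR$ (Lemma~\ref{lem:convolution_in_cbuR}) and $\{\varphi_{\nu+\alpha}\}$ is a Dirac sequence (Proposition~\ref{prop:cbuRf}).

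On the other side, for $z=x\cdot 0$ one has $B_\nu(b)(z)=\int_\bsd b\,\actfL{x}{\varphi_\nu}\,d\mu_0$, with $\actfL{x}{\varphi_\nu}\in\Lone\cap\bddf\subseteq\Lq$. So weak-$\ast$ convergence gives $B_\nu(B^\nu_{\alpha_j}(S))(z)\to B_\nu(a)(z)=B^\nu(T^\nu_a)(z)$.

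Hence $B^\nu(T^\nu_a)=B^\nu(S)$, and injectivity of the Berezin transform yields $S=T^\nu_a$. Your closing sentence points in this direction, but as written it names the wrong identity and leaves the decisive step unproved.
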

\noindent Here, $\schop$ denotes the space of operators on $\Berg$ of Schatten class $p$ when $1<p<\infty$ and denotes the space $\cB(\Berg)$ of all bounded operators on $\Berg$ when $p=\infty$, and $\mu_0$ denotes the (unique up to constant multiple) $G$-invariant measure on $\Omega=G/K$.

Later, in Theorem \ref{theo:radial covolution convergence}, we specify necessary conditions for the convergence $T_{B_\alpha(S)}\stackrel{\alpha\rightarrow\infty}{\longrightarrow} S$ to hold in different topologies. Utilizing this, we show that the Toeplitz algebra $\toepalg$ is generated by Toeplitz operators with right-uniformly continuous symbols $\nu$ is sufficiently large (see Corollary \ref{coro:Toeplitz algebra}). 
Then, restricting to radial operators, we present several characterizations of the radial Toeplitz algebra, one of these shows that it coincides with the algebra of uniformly continuous radial operators (see Theorem \ref{theo: main theorem} for the full statement):

\begin{theorem*}
     We can characterize the radial Toeplitz algebra as follows:
    \begin{align*}
        \toepalg^{K}= \cbuopL^{K} 
        = \overline{\{T^\nu_a\mid a\in \cbufR^{K}\}}^{\|\cdot\|}.
    \end{align*}
\end{theorem*}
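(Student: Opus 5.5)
We prove the chain of equalities by establishing $\{T^\nu_a\mid a\in \cbufR^{K}\}\subseteq\toepalg^{K}\subseteq\cbuopL^{K}$ and then showing that every radial uniformly continuous operator is a norm limit of Toeplitz operators with radial right-uniformly continuous symbols. Once the three spaces are shown to be closed, this gives the result.

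The first inclusion is trivial. The radial Toeplitz algebra $\toepalg^K$ is the fixed-point set of the conjugation action $\actopL{k}{\cdot}$, $k\in K$, on $\toepalg$. It is norm closed, since the action is isometric.

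For the second inclusion I use the QHA framework. Conjugation by $\pi_\nu$ gives $\actopL{g}{T^\nu_a}=T^\nu_{\actfL{g}{a}}$. For $a\in\bddf$ one gets uniform continuity of $g\mapsto T^\nu_a$ by writing $T^\nu_a$ as a QHA convolution $a\ast P_0$ of $a$ with the rank-one projection onto constants, a trace class operator. The translation $g\mapsto \actopL{g}{P_0}$ is continuous in trace norm, and $\|a\ast S\|\le\|a\|_\infty\|S\|_1$. Hence $T^\nu_a\in\cbuopL$. Since $\cbuopL$ is a closed algebra, all of $\toepalg$ lies in it. The subalgebra of $K$-fixed points then gives $\toepalg^K\subseteq \cbuopL^K$.

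For the reverse inclusion, let $S\in\cbuopL^K$ and consider $T^\nu_{B^\nu_\alpha(S)}$. By the QHA identities, $B^\nu_\alpha(S)=S\ast P_\alpha$ and $T^\nu_{B^\nu_\alpha(S)}=(S\ast P_\alpha)\ast P_0$, where $P_\alpha$ is the normalized $\alpha$-Berezin state. This is an operator-valued average $\int_G \actopL{g}{S}\,\phi_\alpha(g)\haar{g}$ against a probability density $\phi_\alpha$ on $G$ that concentrates at $K$ as $\alpha\to\infty$. I invoke Theorem~\ref{theo:radial covolution convergence}: for $S$ left uniformly continuous, $T^\nu_{B^\nu_\alpha(S)}\to S$ in operator norm.

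Here the radiality of $S$ is crucial, and I expect it to be the main obstacle. In general the average $\int_G\actopL{g}{S}\phi_\alpha(g)\haar{g}$ converges to $\int_K \actopL{k}{S}\haarK{k}$ rather than to $S$, because the densities concentrate on the compact subgroup $K$, not at the identity. For radial $S$ this $K$-average is exactly $S$. Making this precise requires showing that the mass of $\phi_\alpha$ outside any neighbourhood $UK$ tends to zero. I would do this using the explicit formula $\langle \pi_\nu(g)1,1\rangle = h(g\cdot0,g\cdot0)^{\nu/2}$, whose powers concentrate at the origin of $\Omega$.

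It remains to check that the symbols have the right properties. The symbol $B^\nu_\alpha(S)$ is $K$-invariant because $S$ and $P_\alpha$ are. It lies in $\cbufR$ because $g\mapsto \actopL{g}{S}$ is norm continuous and $P_\alpha$ is trace class, and the convolution transfers uniform continuity to the function side. This gives $S\in\overline{\{T^\nu_a\mid a\in\cbufR^K\}}^{\|\cdot\|}$, which closes the chain.
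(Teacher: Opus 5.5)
Your first and third steps are sound and match the paper's sketch (which defers details to \cite[Thm.~6.14]{DDMO24}). A Toeplitz operator with symbol in $\cbufR^K$ is a radial element of $\toepalg$. For $S\in\cbuopL^K$ one has $T^\nu_{B^\nu_\alpha(S)}=\varphi_{\nu+\alpha}\ast S\to S$ in norm by Lemma~\ref{lem:Berezin_convol}(2) and Theorem~\ref{theo:radial covolution convergence}, with $B^\nu_\alpha(S)\in\cbufR^K$. One small correction: norm continuity of $g\mapsto\actopL{g}{S}$ gives \emph{left} uniform continuity of $S\ast\Phi^\nu_\alpha$, not right. Right uniform continuity comes from $\actfR{x}{(S\ast\Phi^\nu_\alpha)}=S\ast\actopL{x}{\Phi^\nu_\alpha}$ as in Lemma~\ref{lem:convolution_in_cbuR}, or from $\tilde f(x)=\tilde f(x^{-1})$ for radial $f$.

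The genuine gap is the middle inclusion $\toepalg^K\subseteq\cbuopL^K$. Your estimate silently replaces $\actopL{g}{a\ast P_0}$ by $a\ast\actopL{g}{P_0}$. That is the Fock-space argument, and it uses commutativity of the group. Here Lemma~\ref{lem:translation_convol}(1) gives $\actopL{g}{a\ast P_0}=(\actfL{g}{a})\ast P_0$, i.e.\ $\actopL{g}{T^\nu_a}=T^\nu_{\actfL{g}{a}}$. By contrast, unimodularity gives $a\ast\actopL{g}{P_0}=(\actfR{g^{-1}}{a})\ast P_0$, which is a \emph{right} translate. So trace-norm continuity of $g\mapsto\actopL{g}{P_0}$ says nothing about $\|T^\nu_{\actfL{g}{a}-a}\|$; this is why, for trace-class $S$, the paper requires $\psi\in\cbufL$ to conclude $\psi\ast S\in\cbuopL$.

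In fact $\toepalg\not\subseteq\cbuopL$. On the disc, let $a=\operatorname{sgn}\sin(n_j\arg z)$ on disjoint annuli $A_j$ accumulating at $\partial\D$, and $a=0$ elsewhere. Take $n_j\to\infty$ and place $A_j$ so close to the boundary that each sign sector of $A_j$ contains a hyperbolic disc of radius $R_j\to\infty$. For the rotation $k_j$ by $\pi/n_j$ one gets $\actfL{k_j}{a}-a=-2a$ on $A_j$. Hence
\[
\|\actopL{k_j}{T^\nu_a}-T^\nu_a\|\ge\|B_\nu(\actfL{k_j}{a}-a)\|_\infty\ge 2-o(1),
\]
even though $k_j\to e$.

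So the step ``$\cbuopL$ is a closed algebra containing all the Toeplitz generators'' is false. The middle inclusion must use radiality essentially. For example, you could show that $\varphi_\alpha\ast S\to S$ in norm for every $S\in\toepalg^K$ and then invoke Proposition~\ref{prop:cbu_characterizations}.

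Reducing to radial generators is itself not automatic. The map $k\mapsto\actopL{k}{S}$ is not norm continuous on $\toepalg$, so the radialization of a product $T^\nu_aT^\nu_b$ is only a weak integral. Even for radial $a$ you would still need a genuine estimate of $\|T^\nu_{\actfL{g}{a}-a}\|$. Such an estimate could exploit that left translation by $g$ near $e$ shifts the radial coordinate of every point only by a uniformly small amount.
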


Differential operators have played a role in both QHA and the theory of Toeplitz operators. For example, the Laplacian of an operator on $\cA^2(\D)$, introduced by Su\'arez in \cite{S04} via the Berezin transform, was used to describe the commutative radial Toeplitz algebra \cite{S08}, which was later used in \cite{GMV13} to analyze its Gelfand theory. In the second part of the article, we set out to develop differential operators acting on operators on the Bergman space $\Berg$. We take two complementary approaches: the approach of Su\'arez, and a more representation theoretic viewpoint, taken for example in the abelian setting in \cite{L24, FL25}. Among the two, Su\'arez's approach provides a wider framework. 

Throughout the paper, we make extensive and explicit use of a certain differential operator $D_\alpha$ that turns out to be extremely useful for studying Toeplitz operators and QHA on bounded symmetric domains.  This operator can be found implicitly in the paper \cite{U94}, in the sense that its construction is an immediate consequence of the results in that paper, although the authors never mention it. 

In our first such application of this differential operator, we show that $D_\alpha$ shifts the $\alpha$-Berezin transform of a function (see Theorem \ref{theo:Berezin shift} for the complete statement).

\begin{theorem*}
   Let $\alpha>\gen-1$ and $1\leq p\leq \infty$.  For all $f\in \Lp$, 
   \[
      B_{\alpha+1}f = D_\alpha (B_\alpha f)
   \]
\end{theorem*}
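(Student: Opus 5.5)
The plan is to realize $B_\alpha$ as convolution on $G$ with a $K$-biinvariant kernel and to choose $D_\alpha$ as the $G$-invariant differential operator that converts the $\alpha$-kernel into the $(\alpha+1)$-kernel. Concretely, $B_\alpha f(z)=\int_\bsd f(w)\,h(z,z)^{\alpha}h(w,w)^{\alpha}|h(z,w)|^{-2\alpha}\,\dv_\alpha(w)$, up to normalization. Writing $k_\alpha(g)=c_\alpha\, h(g\cdot 0,g\cdot 0)^{\alpha}$, which is a $K$-biinvariant function on $G$, we get $B_\alpha f = f * k_\alpha$ with respect to $\mu_0$. Here $h$ is the Jordan triple determinant and $h(z,0)=1$.

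We take $D_\alpha$ to be the invariant operator, polynomial in the Laplace--Beltrami-type generators of $\mathbb{D}(G/K)$, whose existence follows from \cite{U94}. The key identity from that paper is the following. For $\Delta_\alpha$ the invariant operator, expressed as a constant-coefficient polynomial in the Casimir and its higher analogues, $\Delta_\alpha\bigl(h(z,z)^{\alpha}\bigr)=c\,h(z,z)^{\alpha+1}$, so that $D_\alpha k_\alpha = k_{\alpha+1}$. Equivalently, on the spherical side, the spherical transform $\widehat{k_\alpha}(\lambda)$ is a Gindikin-type product of Gamma factors. The ratio $\widehat{k_{\alpha+1}}(\lambda)/\widehat{k_\alpha}(\lambda)$ is then a polynomial in $\lambda$ invariant under the Weyl group, hence by Harish-Chandra's isomorphism the symbol of a unique $D_\alpha\in\mathbb{D}(G/K)$. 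I would prove $D_\alpha k_\alpha=k_{\alpha+1}$ this way: compute $\widehat{k_\alpha}$ via the known Berezin-transform eigenvalue formula (Unterberger--Upmeier), check that the ratio is a $W$-invariant polynomial, and invoke the isomorphism. Checking that this ratio is genuinely polynomial is where I expect the main obstacle. It needs the explicit Gamma-product formula with the multiplicities $a,b$ and the shift $\alpha\mapsto\alpha+1$ in every factor $\Gamma(\alpha-\tfrac{a}{2}(j-1)\pm i\lambda_j\ldots)$. The cancellation should leave $\prod_j$ of quadratic factors in $\lambda_j$, and their product over $j$ is symmetric.

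Given this, the identity for smooth compactly supported $f$ follows because invariant differential operators commute with left translations, so $D_\alpha(f*k_\alpha)=f*D_\alpha k_\alpha=f*k_{\alpha+1}$. Differentiating under the integral is justified by smoothness of $k_\alpha$ and the integrability of its derivatives, since $k_\alpha$ and all $D k_\alpha$ lie in $L^1(G)$ for $\alpha>\gen-1$.

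For general $f\in\Lp$, we interpret $D_\alpha(B_\alpha f)$ pointwise, which is legitimate since $B_\alpha f$ is real-analytic. Pointwise, $D_\alpha(B_\alpha f)(z)=\int f(w)\,(D_\alpha)_z[k_\alpha(\ldots)]\,d\mu_0(w)$. The differentiated kernel is $k_{\alpha+1}(\ldots)$, and $k_{\alpha+1}$ lies in $L^{p'}$ for every $p'\ge1$ under the stated hypothesis, since $h^{\alpha+1}$ decays even faster than $h^\alpha$. So dominated convergence, with locally uniform bounds on derivatives of $k_\alpha$ in $z$, lets us pass $D_\alpha$ under the integral for all $1\le p\le\infty$. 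As a consistency check I would also confirm that the normalizing constants $c_\alpha$ match the constant produced by $\Delta_\alpha$, which is automatic if $D_\alpha$ is defined through the spherical symbol ratio.
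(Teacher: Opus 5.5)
Your proposal is correct and follows essentially the paper's route: write $B_\alpha f=f\ast\varphi_\alpha$, and define $D_\alpha$ via the Harish-Chandra isomorphism from the ratio $\widehat{\varphi_{\alpha+1}}/\widehat{\varphi_\alpha}=\frac{1}{d(\alpha)}\prod_{j=1}^r\bigl((\alpha-\frac{\gen-1}{2})^2+\lambda_j^2\bigr)$. This ratio comes from the Unterberger--Upmeier Gamma-product formula, where shifting $\alpha\mapsto\alpha+1$ in each factor $\Gamma(\pm i\lambda_j+\alpha-\frac{\gen-1}{2})$ leaves an even symmetric, hence $W$-invariant, polynomial; $D_\alpha$ then passes through the convolution by left invariance. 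Your dominated-convergence argument for differentiating under the integral fills in a step the paper leaves implicit; it works because, for $z$ in a compact set, all $z$-derivatives of the kernel are bounded by $C\,h(w,w)^\alpha\in L^{p'}(\bsd,d\mu_0)$.
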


Analogously, the differential operator $D_{\nu+\alpha}$ shifts the Berezin transform of operators on $\Berg$ (see Theorem \ref{theo:Berezin shift op} for the complete statement):
\begin{theorem*}
   Let $\nu,\alpha>\gen-1$ and $1\leq p\leq \infty$.  For all $S\in \schop$, 
   \[
      B^\nu_{\alpha+1}(S) = D_{\nu+\alpha} (B^\nu_{\alpha} (S)).
   \]
\end{theorem*}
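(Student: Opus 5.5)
The plan is to reduce the operator statement to the function statement (the shift theorem $B_{\alpha+1}f = D_\alpha(B_\alpha f)$ stated just before) by writing the operator $\alpha$-Berezin transform as a QHA convolution.

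\textbf{Step 1: $B^\nu_\alpha(S)$ as a kernel integral.} From the QHA framework of the first part of the paper, $B^\nu_\alpha(S)$ is a convolution of $S$ with the operator $\alpha$-Berezin transform. Concretely, I expect a formula of the form
\[
B^\nu_\alpha(S)(z)=c_{\nu,\alpha}\int_\Omega \langle S k^\nu_w,k^\nu_w\rangle\, \Phi(z,w)\,d\mu_0(w),
\]
where $k^\nu_w$ is the normalized reproducing kernel of $\Berg$. The kernel $\Phi(z,w)$ is a power $h(\varphi_z(w),\varphi_z(w))^{\nu+\alpha}$ of the generic norm of the Möbius-transformed point. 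In other words, I would try to show
\[
B^\nu_\alpha(S)=B_{\nu+\alpha}\bigl(\widetilde S\bigr),
\]
where $\widetilde S(w)=\langle Sk^\nu_w,k^\nu_w\rangle$ is the ordinary Berezin transform of $S$. The normalizing constants must match, with $c_{\nu,\alpha}$ equal to the normalization of $B_{\nu+\alpha}$.

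To verify this, I would expand $B^\nu_\alpha(S)$ as a trace against $\lambda^\nu_z$-translates of the weighted projection. Then I would use the reproducing property and the identity $h(z,w)^{-\nu}h(z,w)^{-\alpha}=h(z,w)^{-(\nu+\alpha)}$ to combine the kernels. Invariance under $\lambda^\nu_g$ reduces everything to checking the identity at $z=0$, where it is a direct computation with the $K$-invariant expansion of $h^{\nu+\alpha}$.

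\textbf{Step 2: apply the function shift.} Once $B^\nu_\alpha(S)=B_{\nu+\alpha}(\widetilde S)$ holds for all $\alpha$, including $\alpha+1$, the function theorem with parameter $\nu+\alpha$ (valid since $\nu+\alpha>\gen-1$) gives
\[
B^\nu_{\alpha+1}(S)=B_{\nu+\alpha+1}(\widetilde S)=D_{\nu+\alpha}\bigl(B_{\nu+\alpha}\widetilde S\bigr)=D_{\nu+\alpha}\bigl(B^\nu_\alpha(S)\bigr).
\]
This requires $\widetilde S\in \Lp$ whenever $S\in\schop$. That is a standard QHA Young-type bound: the Berezin transform maps $\schop$ to $\Lp$ with norm at most $1$, which I would cite from the QHA section. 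For $p=\infty$ it is just boundedness.

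\textbf{Main obstacle.} The delicate point is Step 1. I must confirm that the paper's definition of the operator $\alpha$-Berezin transform genuinely factors through the scalar Berezin transform with exponent $\nu+\alpha$. The alternative is an inner product of $S$ against a different rank-one family, which would not factor this way.

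If the factorization fails as stated, the fallback is to work directly. I would differentiate under the integral, using that $D_{\nu+\alpha}$ is $G$-invariant, so it commutes with left translations, and that it acts on the kernel $z\mapsto h(\varphi_z(w),\varphi_z(w))^{\nu+\alpha}$ by producing the exponent $\nu+\alpha+1$ with the right constant. This eigen-type identity is exactly what underlies the function version. I would justify the interchange of $D_{\nu+\alpha}$ with the integral by dominated convergence, using the $L^1$-integrability of the derivatives of the kernel for $\nu+\alpha>\gen-1$, together with $S$ being in $\schop$.
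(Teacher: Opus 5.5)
Your Step 1 is false, and the fallback does not rescue it. The fallback differentiates the same scalar kernel $h(\cdot,\cdot)^{\nu+\alpha}$, so it only proves the (true) function identity $D_{\nu+\alpha}B_{\nu+\alpha}(\widetilde S)=B_{\nu+\alpha+1}(\widetilde S)$.

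To see that $B^\nu_\alpha(S)\neq B_{\nu+\alpha}(\widetilde S)$ in general, take $S=\Phi^\nu$ and evaluate at $0$. On one side, $B^\nu_\alpha(\Phi^\nu)(0)=\tr(\Phi^\nu\Phi^\nu_\alpha)=\langle \Phi^\nu_\alpha 1,1\rangle_\nu=c_{\nu+\alpha}/c_\nu$. On the other side, $\widetilde{\Phi^\nu}(w)=h(w,w)^\nu$, so
\[
B_{\nu+\alpha}\bigl(\widetilde{\Phi^\nu}\bigr)(0)=\int_\bsd h(w,w)^{\nu}\,d\mu_{\nu+\alpha}(w)=\frac{c_{\nu+\alpha}}{c_{2\nu+\alpha}}.
\]
These differ because $c_\nu<c_{2\nu+\alpha}$: indeed $1/c_\lambda=\int_\bsd h(z,z)^{\lambda-\gen}\,dz$ is strictly decreasing in $\lambda$. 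Changing the normalizing constant cannot help, since $S=\id$ forces it: both sides must then equal $\tr\Phi^\nu_\alpha=1$. The Toeplitz case shows the same thing conceptually. For $S=T^\nu_a$ one has $B^\nu_\alpha(T^\nu_a)=B_{\nu+\alpha}(a)$ but $\widetilde S=B_\nu(a)$. Your formula would then give $B_{\nu+\alpha}(a)=B_{\nu+\alpha}(B_\nu a)$, and by injectivity $B_\nu a=a$ for every bounded $a$, which is false. Your formula smooths $\widetilde S$ a second time, whereas passing from $\widetilde S$ to $B^\nu_\alpha(S)$ undoes smoothing.

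What is true is the same identity with an extra $B_\nu$ on the left. Using $\varphi_\nu=c_\nu\,\Phi^\nu\ast\Phi^\nu$, Theorem \ref{theo:radial_convol}(3) and $\Phi^\nu_\alpha\ast\Phi^\nu=c_\nu^{-1}\varphi_{\nu+\alpha}$, one gets
\[
B_\nu\bigl(B^\nu_\alpha(S)\bigr)=c_\nu\bigl((S\ast\Phi^\nu)\ast\Phi^\nu_\alpha\bigr)\ast\Phi^\nu=\widetilde S\ast\varphi_{\nu+\alpha}=B_{\nu+\alpha}(\widetilde S).
\]
Your reduction could start from this, but it then needs the step you are missing: commute $D_{\nu+\alpha}$ past $B_\nu$, and cancel $B_\nu$ by injectivity.

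That cancellation is exactly the mechanism of the paper's proof, carried out at the operator level. Since $B^\nu(\Phi^\nu_\alpha)=c_\nu^{-1}\varphi_{\nu+\alpha}$ and $D_{\nu+\alpha}\varphi_{\nu+\alpha}=\varphi_{\nu+\alpha+1}$, the definition of the operator domain gives $\Phi^\nu_\alpha\in\mathrm{Dom}^1_{\mathrm{op}}(D_{\nu+\alpha})$ with $D_{\nu+\alpha}\Phi^\nu_\alpha=\Phi^\nu_{\alpha+1}$. Proposition \ref{prop:diffops_convol_dom}(3) then states $D(S\ast A)=S\ast DA$ for radial $A$ in the operator domain. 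Its proof convolves with $\varphi_\nu$, factors through $(S\ast\Phi^\nu)\ast(A\ast\Phi^\nu)$, and cancels $B_\nu$. Applied here, it yields $D_{\nu+\alpha}(S\ast\Phi^\nu_\alpha)=S\ast\Phi^\nu_{\alpha+1}=B^\nu_{\alpha+1}(S)$.
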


Once we restrict to $\cA^2_\nu=\Berg$ considered over a classical bounded symmetric domain, the above leads to a discrete quantum analog of the heat equation: for $S\in \bdd$, where ,
\begin{align}\label{eq:QHA heat}
    \varphi_{\alpha+1}\ast S-\varphi_{\alpha}\ast S = (D_\alpha-\id)(\varphi_\alpha\ast S).
\end{align}

In the case where $\bsd$ is a domain of rank one (that is, when $\bsd$ is the unit ball $\B^n$ for some $n\in\N$), the operator $D_\alpha$ can be expressed in terms of the identity and the invariant Laplacian of the unit ball $\B^n$. These operators are precisely the generators of the algebra of all invariant differential operators acting on functions on $\B^n$. Naturally, in the higher rank case, $D_\alpha$ becomes a polynomial in the $r$ differential operators that generate the algebra of invariant differential operators on $\bsd$, where $r$ is the rank of the domain $\bsd$. Then, making use of \eqref{eq:QHA heat}, we obtain another characterization of the radial Toeplitz algebra via the common domain $\domop$ of the $r$ differential operators mentioned above (see Theorem \ref{theo:main theorem two}):

\begin{theorem*}
    Consider $\cA^2_\nu=\Berg$ where $\bsd$ is a classical bounded symmetric domain, and $\nu>\gen-1$. Then
    $$(\overline{\domop}^{\|\cdot\|})^K=\cbuopL^K=\toepalg^K.$$
\end{theorem*}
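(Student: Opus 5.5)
We will prove the two equalities through a chain of inclusions,
\[
\toepalg^K \subseteq (\overline{\domop}^{\|\cdot\|})^K \subseteq \cbuopL^K = \toepalg^K .
\]
The last equality is taken from the earlier characterization of the radial Toeplitz algebra (Theorem \ref{theo: main theorem}), so only the first two inclusions need proof.

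For the inclusion $(\overline{\domop}^{\|\cdot\|})^K\subseteq \cbuopL^K$, the plan is as follows.

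1. Recall that $\domop$ consists of the bounded operators $S$ on which the $r$ generating differential operators act along the $G$-action $\actopL{g}{S}$, with bounded images.
2. Such operators are differentiable along the action. By the mean value inequality along one-parameter subgroups, $g\mapsto \actopL{g}{S}$ is Lipschitz near the identity in operator norm. Hence $\domop\subseteq \cbuopL$.
3. The space $\cbuopL$ is norm closed, so the closure of $\domop$ also lies in $\cbuopL$.
4. Taking $K$-invariants preserves these inclusions. Here we use that the $K$-average $\int_K \actopL{k}{S}\haarK{k}$ is a norm contraction that commutes with the differential operators, since these are $K$-invariant.

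For the inclusion $\toepalg^K\subseteq (\overline{\domop}^{\|\cdot\|})^K$, the plan is as follows.

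1. By Theorem \ref{theo: main theorem}, it suffices to show that every $S\in\cbuopL^K$ is a norm limit of $K$-invariant elements of $\domop$.
2. Approximate $S$ by the smoothings $\varphi_\alpha\ast S$, equivalently $T^\nu_{B^\nu_\alpha(S)}$-type objects. For $S\in\cbuopL$, these converge to $S$ in norm as $\alpha\to\infty$ (Theorem \ref{theo:radial covolution convergence}). Radiality is preserved, so it remains to show $\varphi_\alpha\ast S\in\domop$.
3. This membership is where the discrete heat equation \eqref{eq:QHA heat} and the shift theorems (Theorem \ref{theo:Berezin shift op}) enter. They give
\[
(D_\alpha-\id)(\varphi_\alpha\ast S)=\varphi_{\alpha+1}\ast S-\varphi_\alpha\ast S ,
\]
and the right-hand side is bounded. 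Iterating, every polynomial in $D_\alpha, D_{\alpha+1},\dots$ applied to $\varphi_\alpha\ast S$ is bounded, namely a finite combination of the $\varphi_{\alpha+j}\ast S$.
4. An alternative to step 3 is to differentiate the smooth kernel $\varphi_\alpha$ directly, writing $\bsd$-derivatives of $\varphi_\alpha\ast S$ as $(X\varphi_\alpha)\ast S$. If $X\varphi_\alpha$ is integrable, this gives a bound by $\|X\varphi_\alpha\|_1\|S\|$.

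The main obstacle is step 3 of the second inclusion: passing from boundedness of polynomials in the $D_{\alpha+j}$ to boundedness of each of the $r$ generators individually. In rank one, $D_\alpha$ is an affine function of the Laplacian, so this is immediate. In higher rank, each $D_{\alpha+j}$ is a polynomial in the generators $L_1,\dots,L_r$. Using shifts $j=0,\dots,N$, we want these polynomials to span a space containing each $L_i$; the coefficients depend polynomially on $\alpha+j$, which should give a Vandermonde-type invertibility. For classical domains, the explicit form of $D_\alpha$ from \cite{U94} should make this check concrete. If this spanning argument fails, I would fall back on step 4, estimating $\|X\varphi_\alpha\|_1<\infty$ for large $\alpha$ via the explicit form $\varphi_\alpha(z)=h(z,z)^{\alpha}$ (up to normalization), whose derivatives are dominated by $h^{\alpha-c}$ and hence integrable against $\mu_0$ once $\alpha$ is large. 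This is harmless because only $\alpha\to\infty$ is needed.
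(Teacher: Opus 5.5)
The gap is in the inclusion $(\overline{\domop}^{\|\cdot\|})^K\subseteq\cbuopL^K$, which is the real content of the theorem.

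Your step 1 misreads the definition. $S\in\mathrm{Dom}_{\mathrm{op}}(\Delta_k)$ means only that the function $\Delta_k(S\ast\Phi^\nu)$ equals $T_k\ast\Phi^\nu$ for some $T_k\in\bdd$. This Su\'arez-type condition on the Berezin transform is deliberately weaker than membership in the domain of $\mathrm{d}\lambda^\nu(Z_k)$. It gives no direct information about the orbit map $x\mapsto\actopL{x}{S}$, which for a general bounded operator is not even norm continuous.

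Even under a representation-theoretic reading, your step 2 does not follow. The $\Delta_k$ come from central elements of $U(\gg^\C)$ of order $2k$. Bounded images under these $r$ operators do not by themselves yield the first-order derivatives $\mathrm{d}\lambda^\nu(X)S$, $X\in\gg$, that your mean value inequality needs. Upgrading such information to regularity of the orbit is an elliptic-regularity statement (compare Proposition~\ref{prop:Nelsons_radial}). That needs strong continuity and all powers, neither of which is available here. So step 2 is unsupported. The blanket claim $\domop\subseteq\cbuopL$, including non-radial $S$, is also not something the paper proves.

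The missing idea is to apply the discrete heat equation to $S$ itself rather than to its smoothings. For radial $S\in\domop$, Proposition~\ref{prop:diffops_convol_dom}(1),(2) give $\varphi_{\alpha+1}\ast S-\varphi_\alpha\ast S=\varphi_\alpha\ast(D_\alpha-\id)S$. Since $D_\alpha-\id$ is a combination of $\id,\Delta_1,\dots,\Delta_r$ with coefficients $O(\alpha^{-2})$, Lemma~\ref{lem:asymptotics} gives $\|\varphi_{\alpha+1}\ast S-\varphi_\alpha\ast S\|\le A\alpha^{-2}$. Telescoping makes $\{\varphi_\alpha\ast S\}$ norm-Cauchy with a limit $T$. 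Then $\varphi_\alpha\ast(S\ast\Phi^\nu)\to T\ast\Phi^\nu$. On the other hand, $S\ast\Phi^\nu\in\cbufR$ and radiality of $S$ give $\varphi_\alpha\ast(S\ast\Phi^\nu)=(S\ast\Phi^\nu)\ast\varphi_\alpha\to S\ast\Phi^\nu$. So $T=S$ by injectivity of the Berezin transform, and $S\in\cbuopL$ as a norm limit of the elements $\varphi_\alpha\ast S\in\cbuopL$. Radiality is used essentially in identifying the limit.

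Your other inclusion is fine via your fallback step 4: by Proposition~\ref{prop:diffops_convol_dom}(2), $\Delta_k(\varphi_\alpha\ast S)=(\Delta_k\varphi_\alpha)\ast S$. This is essentially the paper's argument that radial smooth vectors are dense in $\cbuopL^K$ and lie in $\domop$ by Proposition~\ref{prop:closed_extension}. Your primary step 3 does not work, though. The heat equation controls only the ordered products $D_{\alpha+j}\cdots D_\alpha$ applied to $\varphi_\alpha\ast S$, not each $D_{\alpha+j}(\varphi_\alpha\ast S)$ separately. These products have strictly increasing degree in $\Delta_1,\dots,\Delta_r$, so for $r\ge 2$ no finite linear combination of them isolates an individual $\Delta_k$.
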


The above result agrees with what is known for the unit disc \cite{S08}. In particular, it was used in \cite{GMV13} to address Gelfand theory of the commutative Toeplitz algebra for the Bergman space over the unit disc.

The structure of the paper is as follows.  In Section~\ref{sec:prelim}, we review basic results on the structure of and analysis on bounded symmetric domains, and in Section~\ref{sec:QHA}, we review basic results on QHA on bounded symmetric domains. Next, in Section~\ref{sec:alpha_berezin_section}, we introduce the $\alpha$-Berezin transform for functions on a bounded symmetric domain $\bsd$ and develop its basic properties.  In Section~\ref{sec:invariant_differential_operators}, as an aid to the reader, as well as an opportunity to fix the relevant notation, we briefly review the theory of invariant differential operators on Riemannian symmetric spaces. Then, in Section~\ref{sec:Berezin_shifts}, we construct the differential operator $D_\alpha$ and show how to use it to shift the Berezin transform of an operator or function.  Next, in Section~\ref{sec:Diffops on ops}, we show how to apply invariant differential operators (such as $D_\alpha$) to certain bounded operators on the Bergman space $\Berg$, using tools from representation theory. Finally, in Section~\ref{sec:heateq}, we use these tools to show that the Berezin transform of a radial Toeplitz operator satisfies a discrete-time generalized heat equation.  This result will allow us to give a characterization of the $C^*$-algebra generated by radial Toeplitz operators that is analogous to the characterization given by Suárez in \cite{S04,S05,S08} for the unit ball using the operator Laplacian. We conclude with an appendix which reviews the definition and root data for the classical bounded symmetric domains.     


\section{Preliminaries}\label{sec:prelim}

\subsection{Bounded symmetric domains}
A domain $\bsd\subseteq\C^n$ is said to be symmetric if for each $z\in \bsd$, there is a biholomorphic map $\sigma_z:\bsd\to\bsd$ of period two for which $z$ is a fixed point. From now on, we assume that $\bsd\subset\C^n$ is a bounded symmetric domain and that $G$ is the connected component of the automorphism group that contains the identity.
Suppose that $G$ is of rank $r$ and has restricted root multiplicities $a$ and $b$, where we are following the notation used in \cite{FK90} (more precisely, this means that the roots for the restricted root system of $\gg$ all have multiplicity $1$, $a$, or $2b$). See, for instance, \cite{H78,H63,K69,L69,L77,S80}), for more information on the structure and classification of bounded symmetric domains.  For a brief overview of the classical (that is, non-exceptional) bounded symmetric domains, we refer the reader to the appendix.

%
%
Without loss of generality, we can suppose that $0\in \bsd$, and that $\bsd$ is a convex, circled, open, and bounded subset of $\C^n$.  We let $K$ denote the stabilizer
\[
    K=\{x\in G\mid x\cdot 0 =0\}.
\]
One shows that $K$ is a maximal compact subgroup of $G$.  Because $G$ acts transitively on $\bsd$, one has that $G/K$ and $\bsd$ are isomorphic as $G$-spaces, where the identification is made via the map $xK\mapsto x\cdot 0$. Using the Harish-Chandra embedding, we can also assume, without loss of generality, that $K$ acts on $\bsd$ by the restriction to $\bsd$ of unitary operators on $\C^n$.

We denote by $\theta$ the Cartan involution of $G$ fixing $K$.  Then $\theta$ factorizes to a holomorphic involution 
\begin{align*}
    \sigma: \bsd \rightarrow \bsd ,\
    x\cdot 0 \mapsto \theta(x) \cdot 0.
\end{align*}
for all $x\in G$ with derivative at $0$ given by $(d\bsd)_0 = -\id \in \End(T_0)$, where $T_0$ is the tangent space to $\bsd$ at $0$.  (This involution is well-defined because $\theta(k)=k\in K$ for all $k\in K$.)  

Furthermore, for each $z\in \bsd$, we denote by $\sigma_z : \bsd \rightarrow \bsd$ the holomorphic involution fixing $z$.  Then $\sigma_z$ can be written in terms of $\theta$ as follows:
\begin{align*}
    \sigma_z : \bsd  \rightarrow \bsd, \
                x\cdot 0  \mapsto x_0 \theta(x_0^{-1}x) \cdot 0,
\end{align*}
where $x_0 \in G$ is such that $z = x_0\cdot 0$.

Given $z\in \bsd$, we define the biholomorphism $\tau_z$ by setting $\tau_z=\sigma_{z_0}$, where $z_0$ is the midpoint of the geodesic connecting $0$ and $z$. Then $\tau_z$ is an involution that interchanges $z$ and $0$.

\begin{lemma}\label{lem:Aut}
   Let $x\in G$. Then $x=\tau_{x\cdot 0}k_x$ for some $k_x\in K$.
\end{lemma}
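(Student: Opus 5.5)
The idea is to show that $k_x:=\tau_{x\cdot 0}^{-1}x$ lies in $G$ and fixes $0$; then $k_x\in K$ by the definition of $K$.

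First I check that $\tau_z\in G$ for every $z\in\bsd$. By definition $\tau_z=\sigma_{z_0}$, where $z_0=x_0\cdot 0$ is the midpoint of the geodesic from $0$ to $z$. The formula above gives
\[
\sigma_{z_0}(y\cdot 0)=x_0\theta(x_0^{-1}y)\cdot 0=x_0\theta(x_0)^{-1}\theta(y)\cdot 0 .
\]
This says $\sigma_{z_0}=L_{x_0\theta(x_0)^{-1}}\circ\sigma$, where $L_g$ denotes the action of $g\in G$.

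So it suffices to see that $\sigma$ itself is given by an element of $G$. Since $\bsd$ is circled, $\sigma=-\id$ on $\bsd$. Indeed, $\sigma$ is a holomorphic automorphism fixing $0$ with derivative $-\id$ at $0$. By Cartan's uniqueness theorem for bounded domains, it coincides with $z\mapsto -z$.

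The map $-\id$ lies in $K$. One way to see this is that $K$ contains the circle action $z\mapsto e^{it}z$: it comes from the center of $K$ in the Hermitian setting, and equivalently the rotations are automorphisms lying in the identity component. Taking $t=\pi$ gives $-\id\in K\subset G$. Alternatively, one can write $\sigma$ as $\exp(\pi Z_0)$ with $Z_0$ the central element of $\gk$ defining the complex structure.

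Hence $\tau_z\in G$, and therefore $\tau_z^{-1}=\tau_z\in G$ as well.

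Now let $x\in G$ and put $z=x\cdot 0$. Since $\tau_z$ is an involution interchanging $z$ and $0$, we get
\[
(\tau_z x)\cdot 0=\tau_z(z)=0 .
\]
Set $k_x=\tau_z x=\tau_z^{-1}x\in G$. It stabilizes $0$, so $k_x\in K$, and $x=\tau_z k_x$.

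The fact that $\tau_z$ swaps $0$ and $z$ was asserted in the text. If it needs justification, it follows because $\sigma_{z_0}$ reverses geodesics through $z_0$, and $z_0$ is the midpoint of the geodesic from $0$ to $z$.

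The main obstacle is the membership $\tau_z\in G$, which uses that $G$ is only the identity component of the automorphism group. This reduces to $-\id\in G$, which I would settle via the circle action in $K$ as above.
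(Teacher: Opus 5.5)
Your proof is correct and follows the same route as the paper: $\tau_{x\cdot 0}x$ fixes $0$, so it lies in $K$, and then $\tau_{x\cdot 0}^{-1}=\tau_{x\cdot 0}$ gives $x=\tau_{x\cdot 0}k_x$. You also check that $\tau_z\in G$, via $\sigma_{z_0}=L_{x_0\theta(x_0)^{-1}}\circ\sigma$ and $\sigma=-\mathrm{id}\in K$ coming from the rotation group $z\mapsto e^{it}z$ in the identity component; this fills in a membership that the paper's two-line proof uses without comment.
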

\begin{proof}
   Note that $(\tau_{x\cdot 0}x)(0)=\act{x\cdot 0}{x\cdot 0}=0$. Hence  $\tau_{x\cdot 0}x=k_x$ for some $k_x\in K$. As $\tau_x^{-1}=\tau_x$ it follows that $x=\tau_{x\cdot 0}k_x$.
\end{proof}

Now let $\gen:=(r-1)a+b+2$ denote the genus of $\bsd$.  The complex dimension of $\bsd$ is given by $n=r(\frac{r-1}{2}a+b+1)$, and the complex dimension of the largest tube-type subdomain is given by $n_1=r(\frac{r-1}{2}a+1)$.  

We let $\overline{\bsd}$ denote the K\"ahler manifold with the same underlying real-analytic manifold as $\bsd$ but the opposite complex structure.  That is, as sets, one has that $\bsd = \overline{\bsd}$, the only difference being that the holomorphic functions on $\overline{\bsd}$ are precisely the antiholomorphic functions on $\bsd$.  

The Jordan triple determinant $h:\bsd\times\overline{\bsd}\rightarrow \C$ associated with $\bsd$ is a polynomial that is holomorphic in the first entry and an antiholomorphic in the second. This encodes many of the important properties of the associated Bergman space. Recall that $\bsd$ is equipped with $r$ mutually orthogonal vectors $\{e_1, \dots,e_r\}$, called a Jordan frame, such that for every $w\in \bsd$, there exist $k_w\in K$ and $0\leq t_1(w),\ldots,t_r(w)<1$ such that
\begin{equation}\label{eq:jframe}
    w=k_w\Big(\sum_{j=1}^r t_j(w) e_j\Big).
\end{equation}
One can show that
\begin{equation}
   \label{eq:formula_for_Jordan_triple_system}
   h(w,w) = \prod_{j=1}^r (1-t_j(w)^2)
\end{equation}
for all $w\in\bsd$.

We denote by $j:G\times \bsd \rightarrow \bsd$ the complex Jacobian of the action of $G$ on $\bsd$.  Then $j$ satisfies the cocycle relation:
\[
    j(xy, z) = j(x, y\cdot z) j(y,\cdot z)
\]
for all $gx,y\in G$ and $z\in\bsd$.  Furthermore, one can show that $j(x,\cdot)$ admits a holomorphic $g$th root $j(x,\cdot)^{1/g}$ for all $x\in G$.  

Below, we collect some of the important properties of the Jordan triple determinant:

\begin{lemma}
   \label{lem:triple_determinant_properties}
   The polynomial $h$ satisfies the following: for all $z,w, \xi\in \bsd$, $x\in G$, and $k\in K$, one has that
   \begin{enumerate}
      \item $h(z,w)\neq 0$
      \item $h(z,0)=1$
      \item $h(z,w)=\overline{h(w,z)}$
      \item $h(x\cdot z, x\cdot w) = j(x,z)^{-1/\gen} h(z,w) \overline{j(x,w)^{-1/\gen}}$
      \item $h(k\cdot z, k\cdot w) = h(z,w)$
      \item $h(x\cdot 0, x\cdot w) = j(x,0)^{-1/\gen} \overline{j(x,w)^{-1/\gen}}$
      \item $h(x\cdot 0, x\cdot 0) = |j(x,0)|^{-2/\gen}$
      \item $h(\act{\xi}{z},\act{\xi}{w})=\frac{h(\xi,\xi)h(z,w)}{h(z,\xi)h(\xi,w)}$.
   \end{enumerate}
\end{lemma}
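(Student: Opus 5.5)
The plan is to derive all eight properties from the relation between the Jordan triple determinant and the Bergman kernel, $K(z,w)=c\,h(z,w)^{-\gen}$, together with the transformation law of the Bergman kernel under biholomorphisms, $K(x\cdot z,x\cdot w)\,j(x,z)\overline{j(x,w)}=K(z,w)$. Items (1)--(3) are structural and come first. For (2), $h$ is a polynomial holomorphic in $z$ and antiholomorphic in $w$ whose normalization gives $h(z,0)=1$; equivalently, $K(z,0)$ is constant because $\bsd$ is circled and $K$-invariant. For (3), Hermitian symmetry $K(z,w)=\overline{K(w,z)}$, together with uniqueness of the branch normalized by $h(\cdot,0)=1$, gives $h(z,w)=\overline{h(w,z)}$. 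For (1), $K(z,w)$ is finite for $z,w\in\bsd$, since the Bergman kernel of a bounded domain is finite on $\bsd\times\bsd$. Alternatively, I would use \eqref{eq:formula_for_Jordan_triple_system}: on the diagonal $h(w,w)=\prod(1-t_j^2)>0$ since $t_j<1$, and off the diagonal I would use the known spectral norm bound for $\bsd$ to show that $h(z,w)$ is the determinant of an invertible operator (the Bergman operator $B(z,w)$).

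The main step is (4). Taking $\gen$-th roots of the transformation law for $K$ gives
\[
h(x\cdot z,x\cdot w)=\epsilon(z,w)\,j(x,z)^{-1/\gen}h(z,w)\overline{j(x,w)^{-1/\gen}},
\]
where $\epsilon$ is a $\gen$-th root of unity. Both sides are continuous and nonvanishing on the connected set $\bsd\times\bsd$, the latter by (1), so $\epsilon$ is constant. To evaluate it I would set $z=w$ in a real-analytic neighborhood and argue by positivity. By \eqref{eq:formula_for_Jordan_triple_system}, $h(x\cdot z,x\cdot z)>0$ and $h(z,z)>0$, while the right side is $\epsilon\,|j(x,z)^{-1/\gen}|^2 h(z,z)$. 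Hence $\epsilon>0$, so $\epsilon=1$. This is the delicate step, and the choice of branch of $j^{1/\gen}$ does not matter: only $|j^{1/\gen}|^2$ appears on the diagonal, and the fixed branch is consistent in both slots.

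The remaining items are specializations. (5) follows from (4) with $x=k$, since $j(k,\cdot)$ is a constant unitary determinant; this gives $h(kz,kw)=|\det k|^{-2/\gen}h(z,w)$ up to branch, and positivity on the diagonal makes it exactly $h(z,w)$. (6) is (4) with $z=0$ combined with (2), and (7) is (6) with $w=0$. For (8), I would apply (4) with $x=\tau_\xi$, which gives
\[
h(\tau_\xi z,\tau_\xi w)=j(\tau_\xi,z)^{-1/\gen}h(z,w)\overline{j(\tau_\xi,w)^{-1/\gen}}.
\]
The factors $j(\tau_\xi,z)^{-1/\gen}$ are then identified using (6) with $x=\tau_\xi$ and $\tau_\xi\cdot\xi=0$. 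Applying (6) to the pair $(0,\xi)$ gives $h(\tau_\xi 0,\tau_\xi\xi)=h(\xi,0)=1$, and applying (4) to the pair $(z,\xi)$ gives
\[
1=h(\tau_\xi z,0)=j(\tau_\xi,z)^{-1/\gen}h(z,\xi)\overline{j(\tau_\xi,\xi)^{-1/\gen}}.
\]
Likewise, the pair $(\xi,\xi)$ gives
\[
h(0,0)=1=|j(\tau_\xi,\xi)^{-1/\gen}|^2h(\xi,\xi).
\]
Solving these for $j(\tau_\xi,z)^{-1/\gen}$ and $\overline{j(\tau_\xi,w)^{-1/\gen}}$ and substituting gives
\[
h(\tau_\xi z,\tau_\xi w)=\frac{h(z,w)}{h(z,\xi)h(\xi,w)\,|j(\tau_\xi,\xi)^{-1/\gen}|^2}=\frac{h(\xi,\xi)h(z,w)}{h(z,\xi)h(\xi,w)}.
\]
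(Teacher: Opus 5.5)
\textbf{The gap: the exponent in (4).} The step that fails is the sign of the exponent in (4). From the law you state, $K(x\cdot z,x\cdot w)\,j(x,z)\overline{j(x,w)}=K(z,w)$, together with $K=c\,h^{-\gen}$, you get
\[
h(x\cdot z,x\cdot w)^{\gen}=j(x,z)\,h(z,w)^{\gen}\,\overline{j(x,w)} .
\]
Taking $\gen$-th roots therefore gives $j(x,z)^{1/\gen}h(z,w)\overline{j(x,w)^{1/\gen}}$, not the version with $-1/\gen$. For the quotient $\epsilon$ you form with $-1/\gen$, one actually has $\epsilon^{\gen}=(j(x,z)\overline{j(x,w)})^{2}$, which is not a root of unity.

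Your own diagonal positivity check exposes the problem. Take the disc ($\gen=2$, $h(z,w)=1-z\bar w$) and $x\cdot z=(z+s)/(1+sz)$ with $0<s<1$. Then $j(x,0)=1-s^2$ and $h(x\cdot 0,x\cdot 0)=1-s^2=|j(x,0)|^{2/\gen}$, whereas (7) as written predicts $(1-s^2)^{-1}$. The paper's appendix confirms this in general. With $h(Z,W)=\det(I-W^*Z)$ and $x^*I_{p,q}x=I_{p,q}$, a direct computation gives $h(x\cdot Z,x\cdot W)=\det(CZ+D)^{-1}h(Z,W)\overline{\det(CW+D)^{-1}}$, and the appendix's branch is $j^{1/\gen}=\det(CZ+D)^{-1}$.

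So with $j$ the complex Jacobian, as the paper defines it, items (4), (6), (7) are false as stated, and no argument can reach them; your derivation only lands on them through the slip. What your method actually proves is (4), (6), (7) with $j^{-1/\gen}$ replaced by $j^{1/\gen}$. Equivalently, the stated forms hold if $j$ denotes the reciprocal of the Jacobian, which is what the paper's later formulas for $\sigma_\nu$ and $\widetilde{\pi}_\nu$ tacitly use. You should state the convention explicitly and prove the corrected identities.

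\textbf{The rest, compared with the paper.} Everything else is sound, and it goes beyond what the paper does. The paper only cites \cite{FK90} for (1)--(4), obtains (5)--(7) as specializations exactly as you do, and never addresses (8). Your Bergman-kernel route to (1)--(4) is the standard one. Your proof of (8), applying (4) to $\tau_\xi$ and eliminating the Jacobian factors via $\tau_\xi\xi=0$, (2), and $h(0,0)=1$, is correct. It is also unaffected by the sign issue, since it only uses the factorized form $h(\tau_\xi z,\tau_\xi w)=\phi(z)h(z,w)\overline{\phi(w)}$. One small omission there: solving the $z$-equation and conjugating produces $\overline{h(w,\xi)}$, and you need (3) to rewrite it as $h(\xi,w)$.
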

\begin{proof}
   Statements (1) through (4) are standard in the literature (see, for instance, \cite{FK90}).  Statement (5) is a particular case of (4), using the fact that $j(k,w)=j(k,z)\in S^1$.  Statement  (6) is an immediate consequence of (4) and (2), and (7) is a particular case of (6).
\end{proof}

From this point on, we will identify the Jordan triple determinant with a function on $G\times G$, which we also denote by $h$, and write $h(x,y):=h(x\cdot 0, y\cdot 0)$ for $x,y\in G$. Then by statement (8) in Lemma \ref{lem:triple_determinant_properties} we have the following:
\begin{lemma}\label{lem:JTdet}
    For $x,y_1,y_2\in G$, the following holds:
    \begin{align*}
   h(xy_1,xy_2)=\frac{h(x,x)h(y_1,y_2)}{h(y_1,x)h(x,y_2)}.
\end{align*}
\end{lemma}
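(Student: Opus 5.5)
The plan is to reduce the statement to item (8) of Lemma~\ref{lem:triple_determinant_properties} by writing the group element $x$ as an involution $\tau_\xi$ followed by an element of $K$. Both steps are direct uses of earlier results.

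First, apply Lemma~\ref{lem:Aut} to $x$ and write $x=\tau_{\xi}k_x$, where $\xi:=x\cdot 0$ and $k_x\in K$. Put $z_i:=y_i\cdot 0$ for $i=1,2$. By definition of $h$ on $G\times G$,
\[
h(xy_1,xy_2)=h(\tau_\xi(k_x\cdot z_1),\tau_\xi(k_x\cdot z_2)).
\]
Item (8) of Lemma~\ref{lem:triple_determinant_properties}, applied to the points $k_x\cdot z_1$ and $k_x\cdot z_2$, turns this into
\[
\frac{h(\xi,\xi)\,h(k_x\cdot z_1,k_x\cdot z_2)}{h(k_x\cdot z_1,\xi)\,h(\xi,k_x\cdot z_2)}.
\]

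Second, remove $k_x$ from every factor. In the numerator, item (5) gives $h(k_x\cdot z_1,k_x\cdot z_2)=h(z_1,z_2)=h(y_1,y_2)$. In the denominator we only have $k_x$ acting on one argument. So write $\xi=x\cdot 0=\tau_\xi\cdot 0$ and check that $k_x\cdot 0=0$. Then, to obtain $h(k_x\cdot z_1,\xi)=h(z_1,\xi')$ for some suitable $\xi'$, use item (5) in the form
\[
h(k_x\cdot z_1,\xi)=h(z_1,k_x^{-1}\cdot\xi).
\]

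This is where the main obstacle lies. We need $h(z_1,k_x^{-1}\xi)=h(y_1,x)$, that is, $k_x^{-1}\cdot\xi=\xi$. That holds only if $k_x$ fixes $\xi$, which is not true in general. So the decomposition must be chosen differently: write $x=k\tau_{\eta}$ with $\eta=x^{-1}\cdot 0$ by applying Lemma~\ref{lem:Aut} to $x^{-1}$, or, more cleanly, prove the identity directly from item (4).

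The direct route goes as follows. Item (4) gives
\[
h(xy_1,xy_2)=j(x,z_1)^{-1/\gen}\,h(z_1,z_2)\,\overline{j(x,z_2)^{-1/\gen}},
\]
and item (6) gives $h(x,x)=|j(x,0)|^{-2/\gen}$. Setting $y_2=e$ in item (4) and using item (2) gives
\[
h(y_1,x)=h(x\cdot x^{-1}y_1\cdot 0,\,x\cdot 0)=j(x,x^{-1}z_1)^{-1/\gen}\,\overline{j(x,0)^{-1/\gen}}.
\]
This last formula is not in terms of $z_1$, so instead apply (4) with $x^{-1}$:
\[
h(x^{-1}\cdot z_1,0)=1=j(x^{-1},z_1)^{-1/\gen}\,h(z_1,x\cdot 0)\,\overline{j(x^{-1},x\cdot 0)^{-1/\gen}}.
\]
This gives $h(y_1,x)=j(x^{-1},z_1)^{1/\gen}\,\overline{j(x^{-1},x\cdot0)^{1/\gen}}$, and symmetrically for $h(x,y_2)$.

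The cocycle relation, in the form $j(x^{-1},xz)\,j(x,z)=1$, lets us compare $j(x^{-1},z_1)$ with $j(x,\cdot)$. Rather than push this through, the cleanest plan is the hint the paper gives: Lemma~\ref{lem:JTdet} follows by setting $\xi=x\cdot 0$ in item (8) when $x=\tau_\xi$ exactly, and reducing the general $x=\tau_\xi k_x$ by noting that both sides are invariant under $y_i\mapsto k y_i$ in the right way. I would verify this by comparing the two sides after the substitution $y_i\mapsto k_x y_i$ and using (5). The careful bookkeeping of the $K$-factor, including the branch choice of the $\gen$th roots in the cocycle approach, is the step I expect to take the most care.
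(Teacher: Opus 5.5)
Your proposal never closes, and the step you finally rely on is false. Substituting $y_i\mapsto k_x y_i$ turns $h(y_1,x)=h(y_1\cdot 0,x\cdot 0)$ into $h(k_xy_1\cdot 0,x\cdot 0)=h(y_1\cdot 0,k_x^{-1}\cdot\xi)$. That is exactly the obstruction you isolated in your first attempt, so there is no invariance ``in the right way''. No bookkeeping removes this obstruction, because the identity as stated is false for general $x\in G$. Put $y_1=y_2=x$. The right-hand side is then $1$. By \eqref{eq:formula_for_Jordan_triple_system}, the left-hand side $h(x^2\cdot 0,x^2\cdot 0)$ equals $1$ only if $x^2\cdot 0=0$, and this fails for $x=\exp H$ with $0\neq H\in\gs$. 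Concretely, on the disc take $h(z,w)=1-z\bar w$ and $x(z)=\frac{z+1/2}{1+z/2}$. Then $x\cdot 0=\tfrac12$ and $x^2\cdot 0=\tfrac45$, so the left side is $\tfrac{9}{25}\neq 1$. The paper's one-line appeal to item (8) of Lemma~\ref{lem:triple_determinant_properties} covers only $x=\tau_\xi$, or more generally $x$ with $x\cdot 0=x^{-1}\cdot 0$.

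Your first computation in fact proves the correct general statement. Since $x^{-1}=k_x^{-1}\tau_\xi$, you have $k_x^{-1}\cdot\xi=x^{-1}\cdot 0$. Item (5) gives $h(\xi,\xi)=h(x^{-1}\cdot 0,x^{-1}\cdot 0)$, so
\[
h(xy_1,xy_2)=\frac{h(x^{-1},x^{-1})\,h(y_1,y_2)}{h(y_1,x^{-1})\,h(x^{-1},y_2)}.
\]
Your alternative decomposition $x=k\tau_\eta$ with $\eta=x^{-1}\cdot 0$ gives the same formula in one line. The cocycle route lands there too: item (4) with $w=x^{-1}\cdot 0$ shows that $j(x,\cdot)^{-1/\gen}$ is a constant multiple of $h(\cdot,x^{-1}\cdot 0)^{-1}$, not of $h(\cdot,x\cdot 0)^{-1}$. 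This corrected version is also what the paper actually uses in Lemma~\ref{lem:function Berezin transform convol}, applied there with $x=y^{-1}$. You should have trusted the obstruction and replaced $x$ by $x^{-1}$ on the right-hand side, rather than looking for an invariance that does not exist.
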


\begin{lemma}
   \label{lem:complex_jacobian_properties}
   For all $k\in K$, one has that $|j(k,0)|=1$.  Furthermore, the map
   \begin{align*}
      \chi: K  \rightarrow S^1, \ 
      k  \mapsto j(k,0)
   \end{align*}
   is a central character of $K$.  Finally, if $k\in K$, then $j(k,w) = j(k,0)$ for all $w\in\bsd$.  
\end{lemma}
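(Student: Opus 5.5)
The plan is to get everything from the fact that every $k\in K$ acts on $\bsd$ as the restriction of a unitary linear map of $\C^n$, via the Harish-Chandra embedding fixed above. I would prove the last assertion first and derive the other two from it.

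\textbf{Step 1: $j(k,\cdot)$ is constant.} For $k\in K$, the map $w\mapsto k\cdot w$ is the restriction of a linear unitary operator $U_k$ on $\C^n$. Its complex derivative at every point is therefore $U_k$ itself. Hence $j(k,w)=\det U_k$ for all $w\in\bsd$, and in particular $j(k,w)=j(k,0)$.

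\textbf{Step 2: modulus one.} Since $U_k$ is unitary, $|\det U_k|=1$, so $|j(k,0)|=1$. As a cross-check, this also follows from Lemma~\ref{lem:triple_determinant_properties}(7) with $x=k$: since $k\cdot 0=0$, we get $1=h(0,0)=|j(k,0)|^{-2/\gen}$.

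\textbf{Step 3: $\chi$ is a homomorphism.} I would apply the cocycle relation with $x=k_1$, $y=k_2$ and $z=0$:
\[
j(k_1k_2,0)=j(k_1,k_2\cdot 0)\,j(k_2,0)=j(k_1,0)\,j(k_2,0),
\]
using $k_2\cdot 0=0$. Alternatively, this is immediate from $\det(U_{k_1}U_{k_2})=\det U_{k_1}\det U_{k_2}$.

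\textbf{Step 4: continuity and centrality.} $\chi$ is continuous because $j$ is continuous, being the Jacobian of a real-analytic action. So $\chi$ is a continuous character $K\to S^1$. Since $S^1$ is abelian, $\chi$ is a class function: $\chi(kk'k^{-1})=\chi(k')$. This is what I take ``central character'' to mean here.

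\textbf{Main obstacle.} The only real point is the interpretation of ``central''. If it is meant to say that $\chi$ factors through $K/[K,K]$, that is automatic for any one-dimensional character. If it is meant as a statement about the center $Z(K)$, which is one-dimensional here because $G$ is Hermitian, I would instead note that $\chi=\det\circ\,\Ad|_{\mathfrak p^+}$ is the determinant of the isotropy representation. I would then invoke the standard fact that this representation is nontrivial on $Z(K)$. Either reading rests on the linearity of the $K$-action, so Step 1 is the key step.
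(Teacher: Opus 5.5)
Your proof is correct. Your Step 3 is exactly the paper's argument for the homomorphism property: the cocycle relation at $z=0$, using $k_2\cdot 0=0$.

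For the other two assertions you take a different route. You use the fact, built into the setup via the Harish-Chandra embedding, that $k$ acts by a unitary linear map $U_k$. Then $j(k,w)=\det U_k$ is visibly constant in $w$ and unimodular in one stroke.

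The paper never returns to the linear realization. It works entirely inside Lemma~\ref{lem:triple_determinant_properties}:
\begin{itemize}
\item It gets $|j(k,0)|=1$ from $1=h(0,0)=h(k\cdot 0,k\cdot 0)=|j(k,0)|^{-2/\gen}$, which is your cross-check.
\item It gets constancy from the transformation law of $h$ together with $h(0,\cdot)\equiv 1$, namely $1=h(k\cdot 0,k\cdot w)=j(k,0)^{-1/\gen}\overline{j(k,w)^{-1/\gen}}$. Raising to the $\gen$-th power and using $|j(k,0)|=1$ then gives $j(k,w)=j(k,0)$.
\end{itemize}

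Your route is shorter and more informative: it identifies $\chi$ as the determinant of the isotropy representation of $K$ on $T_0\bsd$. The paper's route needs no geometric input beyond the properties of $h$ already recorded.

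Finally, the paper establishes nothing about ``central'' beyond $\chi$ being a homomorphism into $S^1$. Your remark that $\chi$ is a class function therefore already covers everything proved there, and the digression on $Z(K)$ is unnecessary.
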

\begin{proof}
   Let $k\in K$.  By statement (6) of Lemma~\ref{lem:triple_determinant_properties}, we have that
   \[
      1 = h(0,0) = h(k\cdot 0, k\cdot 0) = |j(k,0)|^{-2/\gen}.
   \]
   Now let $k_1,k_2\in K$.  From the cocycle property, we have that
   \[
      j(k_1k_2, 0) = j(k_1, k_2\cdot 0) j(k_2, 0) = j(k_1, 0) j(k_2,0).
   \]

   Finally, let $k\in K$ and $w\in \bsd$.  From statement (5) of Lemma~\ref{lem:triple_determinant_properties}, we have that 
   \[
      1 = h(k\cdot 0, w) = j(k,0)^{-1/\gen} \overline{j(k,w)^{-1/\gen}}.
   \]
   Since $|j(k,0)|=1$, it follows that $j(k,w) = j(k,0)$.
\end{proof}

\subsection{Functions on $\bsd$ as functions on the group}
Recall that we make the identification $G/K=\bsd$ via the map $xK\mapsto x\cdot 0$. We define a $G$-invariant measure $\mu_0$ on $\bsd$ by
\[
   \inv{z}=h(z,z)^{-\gen}dz,
\]
where $g$ is the genus of $\bsd$.
A function on the domain $f:\bsd\to \C$ can be identified with the function $\Tilde{f}:\G\to \C$ on $G$ by setting
\[
   \Tilde{f}(x)=f(x\cdot 0),\quad x\in G.
\]
Since the $G$-invariant measures on a homogeneous space are unique up to constant multiple, we can choose a suitable normalization of the Haar measure on $G$ such that, for $f\in \Lone$,
\begin{equation}\label{eq:integral}
    \int_{G} \Tilde{f}(x) \haar{x}=\int_{\bsd} f(z) \inv{z}.
\end{equation}

\subsection{The Bergman space}
Fix $\nu>\gen-1$ and define the measure
\[
   d\mu_\nu(z)=c_\nu h(z,z)^{\nu-\gen} dz = c_\nu h(z,z)^\nu \inv{z}
\]
where $dz$ denotes the normalized Lebesgue measure on $\bsd$, and $c_\nu>0$ is chosen such that $c_\nu\int_\bsd \dv=1$. Note that we have
\begin{equation}\label{eq:integral_nu}
    c_\nu\int_{G} \Tilde{f}(x) h(x,x)^{\nu} \haar{x}=\int_{\bsd} f(z) d\mu_\nu(z).
\end{equation}

It is well known that
\begin{equation}\label{eq:cnu}
     c_\nu = \frac{\Gamma_\Omega(\nu)\Gamma_\Omega(\gen-n/r)}{\Gamma_\Omega(\nu-n/r)\Gamma_\Omega(\gen)},
\end{equation}
where, as before, $n$ is the complex dimension of $\bsd$ and $r$ is its rank.  Here, $\Gamma_\bsd$ is the celebrated Gindinkin Gamma function \cite{G64}), and it is given by
\[
   \Gamma_\Omega(\alpha) = (2\pi)^{n_1-r}\prod_{j=1}^r \Gamma(\alpha_j-(j-1)\frac{a}{2}).
\]
for all $(\alpha_1,\ldots,\alpha_j)\in\C^r$ for which all the Gamma functions in the product are defined\footnote{Note that when $\alpha \in \C$, we define
\[
    \Gamma_\Omega(\alpha) := \Gamma_\Omega((\alpha,\ldots,\alpha))
\]}.  
It follows that $c_\nu$ is a rational function of $\nu$.

The Bergman space $\Berg$ consists of all holomorphic functions that are square integrable with respect to $\dv$. The inner product and norm in $\Berg$ are denoted by $\ipnu{\cdot}{\cdot}$ and $\normnu{\cdot}$ respectively. The Bergman space $\Berg$ is a reproducing kernel Hilbert space with reproducing kernel $K^\nu$ given by
\[
   K^\nu(w,z)=K_z^\nu(w)=h(z,w)^{-\nu},\ \ z,w\in \bsd.
\]
We have the reproducing formula
\[
   f(z)=\ipnu{f}{K^\nu_z}
\]
for all $f\in \Berg$ and $z\in \bsd$.
The normalized reproducing kernels $k_z^\nu$ are given by
\begin{equation}
   \label{eq:definition_of_normalized_reproducing_kernel}
   k_z^\nu(w)=\frac{K_z^\nu(w)}{\normnu{K_z^\nu}}=\frac{h(z,z)^{\nu/2}}{h(z,w)^{\nu}}
\end{equation}
for all $z, w\in\bsd$.
The Bergman space $\Berg$ is a closed subspace of the space of all square-integrable functions $L^2(\bsd,\dv)$. The Bergman projection $P^\nu:L^2(\bsd,\dv)\to \Berg$ is given by
\[
    (P^\nu f)(z)=\ipnu{f}{K_z^\nu},\ \ z\in \bsd,\ \ f\in L^2(\bsd,\dv).
\]

The algebra of bounded operators on $\Berg$ is denoted by $\bdd$. For $1\leq p< \infty$, let $\schop$ denote the space of Schatten $p$-class operators. For $p=\infty$, we make the identification $\cS^\infty(\cA^2_\nu)=\bdd$.
Given a function $a:\bsd\to \C$ the Toeplitz operator $T^\nu_a$ with symbol $a$ (which we will write as $T^\nu_a$ when there is no confusion) is defined by 
\begin{equation*}
    T^\nu_af:=P^\nu(af),\ \ f\in \Berg.
\end{equation*}
If $1\leq p\leq \infty$ and $a\in \Lp$, then $T^\nu_a\in \schop$ with $\|T^\nu_a\|_p\leq \|a\|_p$.
The \textit{Toeplitz algebra} $\toepalg$ 
is the $C^*$-subalgebra of $\bdd$  generated by the Toeplitz operators with symbols in $\bddf$.

The Berezin transform of $S\in \bdd$ is the function $B^\nu(S):\bsd\to \C$ defined by
\begin{equation}\label{eq:operator Berezin transform}
    B^\nu(S)=\ipnu{Sk_z^\nu}{k^\nu_z},\quad z\in\bsd.
\end{equation}

Moreover, the parametrized Berezin transform $B_\nu(a):\bsd\to \C$ of a function  $a:\bsd\to \C$, is given by
\begin{equation}\label{eq:function Berezin transform}
    B_\nu(a)=\ipnu{ak_z^\nu}{k^\nu_z},\quad z\in\bsd.
\end{equation}
The Toeplitz quantization map $a\mapsto T_a$ from $\Lp$ to $\bdd$, as well as its counterpart, the Berezin transform $B^\nu:\bdd\to \bddf$, are both injective maps.

The theory of Toeplitz operators on bounded symmetric domains is well established; see, for example, \cite{E99, DOQ15, DOQ18, D25, D17, H17, H19, U83, X15} for some of the literature.


\subsection{Polynomials on $\bsd$}

For each $1\leq j \leq r$, we define the conical polynomial $\Delta_j : \bsd \rightarrow \C$ by:
\[
   \Delta_j(w) = \prod_{l=j}^l t_l
\]
where $w=k\cdot (\sum_{l=1}^r t_l e_l)$ with $k\in K$ and $0\leq t_1,\ldots, t_r \leq 1$.  That this function is a polynomial follows from a theorem of Chevalley \cite[Cor.\ II.5.12]{H84}.  We similarly define, for each $\textbf{s} = (s_1,\ldots,s_r)\in \C^r$, the function $\Delta^{\textbf{s}}: \bsd\rightarrow \C$ by
\[
   \Delta^{\textbf{s}}:= \Delta_1^{s_1-s_2} \Delta_2^{s_2-s_3} \cdots \Delta_{r-1}^{s_{r_1}-s_r} \Delta_r^{s_r}.
\]
It is clear that $\Delta^{\textbf{m}}$ is a polynomial whenever $\textbf{m}\geq 0$, that is, when $\textbf{m} \in \Z^r$ with $m_1 \geq \cdots \geq m_r \geq 0$.  

Given a function $f:G\to \C$, the left translation of $f$ by $x\in G$, denoted $\actfL{x}{f}$ is given by
\begin{equation}\label{eq:translation}
    \actfL{x}{f}(y)=f(x^{-1}y), \quad y\in G.
\end{equation}

Let $\cP$ denote the space of holomorphic polynomials on $\bsd$.  Then $\cP$ is $K$-invariant under the action \eqref{eq:translation}.
This is due to the fact that the action of $K$ on $\bsd\subseteq \C^n$ is a restriction of the linear action of $K$ on $\C^n$. 

For each $\textbf{m}\geq 0$, we define $\cP_{\textbf{m}}$ to be the $K$-invariant space of polynomials generated by $\Delta^{\textbf{m}}$.  That is,
\[
   \cP_{\textbf{m}} := \spann\{ \actfL{k}{\Delta^{\textbf{m}}} \mid k\in K\}. 
\]

\begin{theorem}{\cite[Theorem 2.1]{FK90}}\label{theo:FK radial decomposition}
   Let $\nu > \gen-1$.  The space of $K$-finite vectors for $\cA^2_\nu$ coincides with the space $\cP$ of polynomials on $\bsd$.  Furthermore,
   \[
      \cP = \bigoplus_{\textbf{m}\geq 0} \cP_{\textbf{m}},
   \]
   where each space $\cP_{\textbf{m}}$ carries an irreducible representation of $K$.  Moreover, the representations on 
   $\cP_{\textbf{m}}$ and $\cP_{\textbf{n}}$ are inequivalent whenever 
   $\textbf{m}\neq \textbf{n}$.
\end{theorem}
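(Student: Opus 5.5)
To prove the Faraut--Korányi decomposition (Theorem~\ref{theo:FK radial decomposition}), I would argue in three stages: identify the $K$-finite vectors with $\cP$, decompose $\cP$ into the $\cP_{\textbf{m}}$ via highest weight theory, and check irreducibility and inequivalence.

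\emph{$K$-finite vectors.} Since $\bsd$ is circled and contains $0$, the circle $\{e^{i\theta}\id\}$ acts on $\bsd$. This action is realized by central elements of $K$, because $K$ contains the center $\U(1)$ generated by the complex structure. On $\Berg$ I would use the action $f\mapsto f\circ k^{-1}$, which is unitary because $\dv$ is $K$-invariant (Lemma~\ref{lem:triple_determinant_properties}(5)).
\begin{itemize}[label={}]
\item Each holomorphic $f\in\Berg$ has a Taylor expansion $f=\sum_d f_d$ into homogeneous pieces of degree $d$. On the circular domain this expansion converges in $\Berg$, and the pieces are mutually orthogonal because they lie in different eigenspaces of the central circle.
\item Each space $\cP^d$ of homogeneous polynomials of degree $d$ is finite dimensional and $K$-invariant, since $K$ acts linearly. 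Hence $\cP$ consists of $K$-finite vectors.
\item Conversely, a $K$-finite vector spans a finite-dimensional $K$-invariant subspace. On that subspace only finitely many characters of the central circle occur, so only finitely many $f_d$ are nonzero, and the vector is a polynomial.
\item $\cP$ is dense in $\Berg$. This follows because the Taylor partial sums converge in norm.
\end{itemize}

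\emph{Decomposing $\cP^d$.} This is the main step: showing that $\cP^d=\bigoplus_{|\textbf{m}|=d}\cP_{\textbf{m}}$, with multiplicity one. I would follow Schmid's approach.
\begin{itemize}[label={}]
\item Complexify $K$ to $K_\C$ and work with the Borel subgroup adapted to the Jordan frame $e_1,\dots,e_r$. The $\Delta_j$ are the principal minors of the Jordan algebra, and they are highest weight vectors.
\item Use Hua--Schmid: the polynomials on $\bsd$ fixed by the unipotent radical $N$ are exactly the polynomials in $\Delta_1,\dots,\Delta_r$. I would get this from the Peirce decomposition, restricting an $N$-invariant polynomial to $\spann\{e_j\}$ and using the density of the $N$-orbit of the frame span together with the $K$-polar decomposition \eqref{eq:jframe}.
\item The monomials $\Delta^{\textbf{m}}$ have distinct weights $\sum_j m_j\gamma_j$, where the $\gamma_j$ are the strongly orthogonal roots. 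So each highest weight occurs at most once in $\cP$.
\item Since $\cP^d$ is a finite-dimensional $K$-module, it is completely reducible. Each irreducible constituent is generated by its highest weight vector, and that vector must be some $\Delta^{\textbf{m}}$.
\end{itemize}
This yields the decomposition and shows that $\cP_{\textbf{m}}=\spann\{\actfL{k}{\Delta^{\textbf{m}}}\}$ is irreducible with highest weight $\textbf{m}$.

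\emph{Inequivalence and conclusion.} For $\textbf{m}\neq\textbf{n}$ the highest weights differ, so $\cP_{\textbf{m}}$ and $\cP_{\textbf{n}}$ are inequivalent. By Schur's lemma, inequivalent isotypic components are orthogonal in $\Berg$. Combining this with the density of $\cP$ gives
\[
\Berg=\widehat{\bigoplus_{\textbf{m}\ge0}}\,\cP_{\textbf{m}}.
\]
The hard part is the Hua--Schmid characterization of $N$-invariant polynomials. Everything else is formal representation theory.
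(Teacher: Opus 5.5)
\emph{Overall assessment.} The paper gives no proof of this statement. It quotes \cite[Theorem 2.1]{FK90}, i.e.\ the Hua--Kostant--Schmid theorem, and your three stages are the standard route behind that citation. The first stage (Fourier decomposition under the central circle $z\mapsto e^{i\theta}z$ of $K$) is fine. So is the last (inequivalence from distinct highest weights, orthogonality via Schur's lemma).

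\emph{The gap.} It lies in the step you yourself flag as the hard part. Write $V=\C^n$ and $\mathfrak a_\C=\spann\{e_1,\dots,e_r\}$, with coordinates $t_1,\dots,t_r$. Restricting an $N$-invariant polynomial to $\mathfrak a_\C$ and using density of $N\cdot\mathfrak a_\C$ shows only that the restriction map $\C[V]^N\to\C[t_1,\dots,t_r]$ is injective. The inclusion $\C[V]^N\subseteq\C[\Delta_1,\dots,\Delta_r]$ is the statement that its image is $\C[t_1,\,t_1t_2,\,\dots,\,t_1\cdots t_r]$, and your sketch says nothing about the image. So you have not excluded an irreducible constituent of $\cP^d$ whose highest weight vector restricts to $t^{\mathbf k}$ with $k_i<k_j$ for some $i<j$. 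Such a constituent is not any $\cP_{\mathbf m}$, so $\cP^d=\bigoplus_{|\mathbf m|=d}\cP_{\mathbf m}$ does not yet follow. Also, the $K$-polar decomposition describes $K$-orbits, not $N$-orbits, so it does not help with the density. The density must come from the Peirce decomposition, in the form that $B=TN$ has an open orbit through $e_1+\cdots+e_r$.

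\emph{How to close it.} The missing idea is a weight argument.
\begin{enumerate}
\item Since $T$ normalizes $N$, $\C[V]^N$ is spanned by $T$-weight vectors.
\item The torus $T$ preserves each line $\C e_j$ and acts on it through the character $\gamma_j$, since the $e_j$ are root vectors for the strongly orthogonal roots. These characters are linearly independent. Hence a weight vector $p\in\C[V]^N$ restricts to a single monomial $c\,t^{\mathbf k}$ with $\mathbf k\in\N_0^r$.
\item An $N$-fixed weight vector in a finite-dimensional $K_\C$-module has dominant weight. Pair its weight $\pm\sum_j k_j\gamma_j$ with the positive compact roots whose restriction to $\spann\{h_{\gamma_1},\dots,h_{\gamma_r}\}$ is $\pm\frac12(\gamma_i-\gamma_j)$; these exist for $r\geq 2$ because $a\geq 1$. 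This forces the $k_j$ to be monotone, and with the frame labelled compatibly with $B$ it reads $k_1\geq\cdots\geq k_r\geq 0$.
\item Then $t^{\mathbf k}$ is the restriction of $\Delta^{\mathbf k}$. The polynomial $p-c\Delta^{\mathbf k}$ is $N$-invariant and vanishes on the dense set $N\cdot\mathfrak a_\C$, so $p=c\Delta^{\mathbf k}$.
\end{enumerate}
You also still owe the easy inclusion: each $\Delta_j$ is an $N$-fixed $T$-weight vector. This follows from the compatibility of $N$ with the Peirce flag of $e_1,\ e_1+e_2,\ \dots,\ e_1+\cdots+e_r$. With these two points in place, the rest of your highest-weight bookkeeping goes through.
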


We  also need the following, see \cite[Thm.\ 2.6 and Thm.\ 2.7]{U83} as well as \cite[(3.13)]{U21}

\begin{theorem}[Upmeier 1983]\label{thm:dimPm}
Let $\bm=(m_1,\ldots , m_r)\in \N_0^r$, $m_1\ge m_2\ge \cdots \ge m_r\ge 0 $. Let $u_r=1+ a(r-1)/2$ and
$v_r = 1+ a(r-1)/2 +b = u_r+b$. Then with
\[C(r,a,b) =\frac{\GO \left(u_r\right)}{\GO\left(v_r\right)} 
\frac{1}{\prod_{1\le i<j\le r} \frac{a(j-1)}{2}(1+\frac{a(j-i-1)}{2})_{a-1}}\] 
we have 
\[\dim \cP_\bm = C(r,a,b)   \frac{\GO (v_r +\bm)}{\GO (u_r + \bm)}  \prod_{1\le i< j\le r} \left(m_i-m_j +\frac{a(j-i)}{2}\right)
\frac{\Gamma \left(m_i-m_j +\frac{a(j-i+1)}{2}\right)}{\Gamma \left(m_j -m_j +1+ \frac{a(j-i -1)}{2}\right) }.\]
\end{theorem}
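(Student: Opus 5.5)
The plan is to compute $\dim\cP_\bm$ by comparing two expressions for the reproducing kernel of $\Berg$ and reading off the $\nu$-dependence. First, by Theorem~\ref{theo:FK radial decomposition} and Schur orthogonality, the kernel decomposes as $h(z,w)^{-\nu}=\sum_{\bm\ge0}(\nu)_\bm K_\bm(z,w)$ (Faraut--Korányi expansion). Here $K_\bm$ is the reproducing kernel of $\cP_\bm$ for the Fischer--Fock inner product, and $(\nu)_\bm=\Gamma_\Omega(\nu+\bm)/\Gamma_\Omega(\nu)$ is the generalized Pochhammer symbol. Evaluating at $z=w$ and integrating over the Shilov boundary, or equivalently taking traces, gives $K_\bm(z,z)$ on the diagonal. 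In particular, $\dim\cP_\bm=K_\bm(e,e)\cdot(\text{normalization})$, where $e=\sum_j e_j$ is the maximal tripotent. Alternatively, one can write $\dim\cP_\bm=\|\Delta^\bm\|_{F}^{-2}\int_K|\Delta^\bm(k e)|^2\,dk$ times a constant, using the fact that $\Delta^\bm$ is a highest weight vector.

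Second, I would realize $\cP_\bm$ as an irreducible $K$-module with highest weight determined by $\bm$ and the restricted roots of multiplicities $1,a,2b$. Then I would apply the Weyl dimension formula, organized through the root structure of $\gk^\C$ relative to the strongly orthogonal roots $\gamma_1,\dots,\gamma_r$. The positive roots of $\gk$ split into three families: roots $\frac12(\gamma_i-\gamma_j)$ with multiplicity $a$, roots $\frac12\gamma_j$ with multiplicity $2b$, and roots orthogonal to $\ga$, which cancel between numerator and denominator. The products $\langle\lambda+\rho,\alpha\rangle/\langle\rho,\alpha\rangle$ over each $a$-string telescope into ratios of Gamma functions. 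This yields the factor
\[
\Bigl(m_i-m_j+\tfrac{a(j-i)}2\Bigr)\frac{\Gamma(m_i-m_j+\frac{a(j-i+1)}2)}{\Gamma(m_i-m_j+1+\frac{a(j-i-1)}2)}
\]
for each pair $i<j$. I would read the denominator in the displayed statement as having the typo $m_j-m_j$ for $m_i-m_j$. The $b$-part gives, for each $j$, $\Gamma(m_j+v_r-(j-1)\frac a2)/\Gamma(m_j+u_r-(j-1)\frac a2)$, and these assemble into $\Gamma_\Omega(v_r+\bm)/\Gamma_\Omega(u_r+\bm)$. The $(2\pi)$ powers cancel in this ratio.

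Third, the constant $C(r,a,b)$ comes from the $\bm$-independent denominators $\prod\langle\rho,\alpha\rangle$. It is fixed by requiring $\dim\cP_0=1$, which produces the stated $\Gamma_\Omega(u_r)/\Gamma_\Omega(v_r)$ together with the product of Pochhammer-type constants $\frac{a(j-1)}2(1+\frac{a(j-i-1)}2)_{a-1}$ over $i<j$.

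The main obstacle is step two. It requires identifying precisely the $\gk$-root system and the value of $\rho_\gk$ restricted along the frame in terms of $a,b$ (half-integral shifts), and it is sensitive to parity conventions when $a$ is odd. Since the paper cites Upmeier~\cite{U83,U21} for exactly this computation, in practice I would invoke \cite[Thm.~2.6, 2.7]{U83} and \cite[(3.13)]{U21} and only verify the normalization $\dim\cP_0=1$.
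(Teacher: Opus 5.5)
The paper does not prove this statement; it quotes it from \cite[Thm.~2.6, 2.7]{U83} and \cite[(3.13)]{U21}. So your fallback of invoking those results is exactly what the paper does. You are also right that $m_j-m_j$ in the last Gamma factor should read $m_i-m_j$. The Weyl-dimension sketch you offer as a derivation would not go through as written, for the following reasons.

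\textbf{The $b$-family count is wrong.} For each $j$, the positive compact roots restricting to $\pm\frac12\gamma_j$ on the span of the coroots $H_{\gamma_k}$ number $b$, not $2b$. The number $2b$ is the multiplicity of the restricted root $\frac12\gamma_j$ of $\gg$ relative to $\ga$. With $2b$ roots you would get $2b$ linear factors in $m_j$, not the $b$ factors $(m_j+1+\frac{(r-j)a}{2})_b$ that you then write down. Already for the ball ($r=1$, $b=n-1$) one has $\dim\cP_m=\binom{m+n-1}{n-1}$, which has degree $b$ in $m$.

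\textbf{The $a$-roots do not telescope.} The $a$ roots over $\frac12(\gamma_i-\gamma_j)$ do not form a single telescoping string. Normalize so that $\langle\lambda,\alpha\rangle=m_i-m_j$ and set $d=\frac{a(j-i)}{2}$. Then their $\rho_{\gk}$-values are $d$ together with $d-\frac a2+1,\dots,d+\frac a2-1$. For $\mathrm{SO}^*(8)$ these values are $1,2,2,3$; for $\mathrm{SO}_0(2,5)$ they are $1,\frac32,2$. This is the source of the separate factor $m_i-m_j+\frac{a(j-i)}2$. Establishing this multiset, together with the values $1+\frac{(r-j)a}{2}+k$ ($0\le k\le b-1$) for the $b$-family, is the substance of the computation. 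It is exactly the step you leave open.

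\textbf{Your first paragraph does not fill the gap.} The Shilov-boundary trace gives $\dim\cP_\bm=(\frac nr)_\bm K_\bm(e,e)$. This is circular unless $K_\bm(e,e)$ is computed independently. The quantity $\|\Delta^\bm\|_F^{-2}\int_K|\Delta^\bm(ke)|^2\,dk$ is just the Hardy-to-Fischer norm ratio $1/(\frac nr)_\bm$, and it carries no information about $\dim\cP_\bm$.

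\textbf{The normalization check exposes a second misprint.} The one check you say you would carry out, $\dim\cP_0=1$, does not reproduce the printed constant. At $\bm=0$ the ratio $\GO(v_r+\bm)/\GO(u_r+\bm)$ cancels against $\GO(u_r)/\GO(v_r)$. Since $\Gamma(\frac{a(j-i+1)}2)/\Gamma(1+\frac{a(j-i-1)}2)=(1+\frac{a(j-i-1)}2)_{a-1}$, what remains is $\prod_{i<j}\frac{a(j-i)}2\,(1+\frac{a(j-i-1)}2)_{a-1}$. Hence the factor $\frac{a(j-1)}2$ in $C(r,a,b)$ should be $\frac{a(j-i)}2$. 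The two agree only for $r\le 2$. For $r=3$, the formula as printed gives $\dim\cP_0=\frac12$ for every domain (e.g.\ $\SU(3,3)$). Your claim that the normalization ``produces the stated'' constant therefore needs correcting.
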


The Fischer inner product $\ip{\cdot}{\cdot}_F$ on $\cP$ is defined by:
\[
   \langle p, q \rangle_F := \left. p\left(\frac{\partial}{\partial z}\right)\overline{q}(z)\right|_{z=0}, \quad p,q\in \cP.
\]
    In fact, one can show that the Fischer inner product conincides with the Fock-space inner product for $\C^n$:
\[
   \langle p, q \rangle_F = \frac{1}{\pi^n} \int_{\C^n} p(z)\overline{q(z)} e^{-||z||^2} dz.
\]
(see \cite[p.\ 73]{FK90}.)  It is not difficult to see that this inner product is $K$-invariant.

Now let $\textbf{m}\geq 0$.  Since the representation of $K$ on each space $\cP_{\textbf{m}}$ is irreducible, it follows (by Schur's Lemma) that every $K$-invariant inner product on $\cP_{\textbf{m}}$ is a positive scalar multiple of the Fischer inner product. 

We note that, since $K^\nu(k\cdot z, k\cdot w) = K^\nu(z,w)$ for all $k\in K$ and $z,w\in \bsd$, the inner product on $\cA^2_\nu$ is $K$-invariant with respect to the natural left-translation action of $K$ on functions on $\bsd$.  Thus, for each value of $\textbf{m}\geq 0$ and $\nu>\gen-1$, the inner product of $\cA^2_\nu$ and the Fischer inner product must be the same, up to a positive scalar multiple, when restricted to $\cP_\textbf{m}$. In fact, it was shown in \cite{FK90} that the Fischer inner product and the inner product $\ipnu{\cdot}{\cdot}$ on $\cA^2_\nu$ satisfy: \begin{equation}
   \label{eq:relationship_between_Fischer_and_Bergman_inner_products}
   \langle f,g\rangle_\nu = \frac{\Gamma_\Omega(\nu)}{\Gamma_\Omega(\nu+\textbf{m})}  \langle f,g \rangle_F
\end{equation}
for all $f,g\in \cP_\textbf{m}$.

This property is very useful whenever it is necessary to make comparisons between vectors and operators defined for Bergman spaces $\cA^2_\nu$ with different values of $\nu$, as we will wish to do several times in the sequel.

\subsection{A projective unitary representation of $G$ on the Bergman space}
As before, we fix $\nu>\gen-1$.  If $\nu\in\N$, then we define the \textbf{(scalar-type) holomorphic discrete series representation} $\sigma_\nu$ by:
\begin{equation}
   \label{eq:holomorphic_discrete_series_definition}
   \sigma_\nu(x) f(z) = j(x^{-1},z)^{-\nu/\gen} f(x^{-1} \cdot z).
\end{equation}
for all $x\in G$, $f\in \Berg$ and $z\in\bsd$.
This representation is unitary and irreducible.
If $\nu\notin\N$, then $j(x,\cdot)^{-\nu/\gen}:\bsd \rightarrow \C$ does not have a holomorphic branch for all $x\in G$.  However, one can lift $j^{-\nu/\gen}$ to a smooth function on $\widetilde{G}\times\bsd$ that is holomorphic in the second variable (here, $\widetilde{G}$ is the universal covering group of $G$), since $\widetilde{G}\times\bsd$ is simply connected.  

This can be done as follows: let $q:\widetilde{G}\rightarrow G$ be the covering homomorphism that exhibits $\widetilde{G}$ as the universal covering of $G$.  One defines a transitive, holomorphic action of $\widetilde{G}$ on $\bsd$ by $x\cdot z := q(x)\cdot z$ for all $x\in \widetilde{G}$ and all $\in \bsd$).  Using the basepoint $0\in\bsd$ as before, this gives the identification $\bsd\cong\widetilde{G}/\widetilde{K}$, where $\widetilde{K}=q^{-1}(K)$.  Then the complex Jacobian of the action of $\widetilde{G}$ is given by the function
\begin{align*}
   \widetilde{j}:\widetilde{G}\times\bsd  \rightarrow\C, \
                     (x,z)  \mapsto j(q(x),z). 
\end{align*}
Lemma~\ref{lem:triple_determinant_properties} then holds without change when one uses $\widetilde{G}$ and $\widetilde{j}$ in place of $G$ and $j$, respectively.

Since $\widetilde{G}\times\bsd$ is simply connected, one defines $\widetilde{j}^{\nu/\gen}$ to be the unique holomorphic branch that is positive at $(e,0)$. One then defines, for all $\nu > \gen-1$, the unitary representation:
\[
   \widetilde{\sigma}_\nu(x) f(z) = \widetilde{j}(x^{-1},z)^{-\nu/\gen} f(x^{-1} \cdot z).
\]
for all $g\in\widetilde{G}$, $f\in\Berg$, and $z\in\bsd$.
It is well known that this defines an irreducible unitary representation $\widetilde{\sigma}_\nu$ of $\widetilde{G}$ on $\Berg$, also referred to as a (scalar-type) holomorphic discrete series representation of $\widetilde{G}$.

Nevertheless, it is often more convenient to work directly with the group $G$ rather than its universal covering $\widetilde{G}$.  In the case of a bounded symmetric domain, $\widetilde{G}$ is a semsimple Lie group with infinite center, which has several undesirable properties (for instance, the subgroup $\widetilde{K}$ of $\widetilde{G}$ that plays the role of the maximal compact subgroup in the Cartan and Iwasawa decompositions of $\widetilde{G}$ is not compact).  For values of $\nu$ that are not necessarily integers, one can consider the corresponding holomorphic discrete series representation as a projective unitary representation of $G$ rather than as a unitary representation of $\widetilde{G}$.  This is essentially the perspective that we will adopt in this paper.

First, for each $x\in \widetilde{G}$, we define an operator $\widetilde{\pi}_\nu(x)$ on $\Berg$ by
\begin{equation}
   \widetilde{\pi}_\nu(x)f(z):= k_{x\cdot 0}(z)f(x^{-1}\cdot z) 
\end{equation}
for all $f\in\Berg$ and all $z\in \bsd$.  We then have the following:

\begin{lemma}
   \label{lem:pi_tilde_is_a_projective_rep}
   For all $x\in\widetilde{G}$, 
   \[
   \widetilde{\pi}_\nu(x)=J_\nu(x,0) \widetilde{\sigma}_\nu(x),
   \]
   where $J_\nu:\widetilde{G}\times\bsd\rightarrow S^1$ is the cocycle defined by:
   \[
      J_\nu(x,z) := \left(\frac{\widetilde{j}(x,z)}{|\widetilde{j}(x,z)|}\right)^{-\nu/\gen}
   \]
   for all $x\in\widetilde{G}$ and $z\in\bsd$. Since $J_\nu(x,0)\in S^1$ for all $x\in\widetilde{G}$, we see that $\widetilde{\pi}_\nu(x)$ is a unitary operator for all $x\in\widetilde{G}$ and that, in fact, $\widetilde{\pi}_\nu$ is an irreducible projective unitary representation of $\widetilde{G}$ on $\Berg$ that is equivalent to $\widetilde{\sigma}_\nu$.
\end{lemma}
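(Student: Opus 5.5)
We compare the two operators pointwise. By definition and \eqref{eq:definition_of_normalized_reproducing_kernel}, for $x\in\widetilde{G}$, $f\in\Berg$ and $z\in\bsd$,
\[
   \widetilde{\pi}_\nu(x)f(z) = \frac{h(x\cdot 0,x\cdot 0)^{\nu/2}}{h(x\cdot 0,z)^{\nu}}\, f(x^{-1}\cdot z),
   \qquad
   \widetilde{\sigma}_\nu(x)f(z)=\widetilde{j}(x^{-1},z)^{-\nu/\gen} f(x^{-1}\cdot z).
\]
Both contain the factor $f(x^{-1}\cdot z)$, so it suffices to show that the multipliers agree up to the constant $J_\nu(x,0)$. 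That is, we want
\[
   k^\nu_{x\cdot 0}(z)=J_\nu(x,0)\,\widetilde{j}(x^{-1},z)^{-\nu/\gen}
   \quad\text{for all } z\in\bsd .
\]

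First we find the modulus and the $z$-dependence. Apply statement (6) of Lemma~\ref{lem:triple_determinant_properties}, valid for $\widetilde{G}$, with $x^{-1}$ in place of $x$ and $w=z$. Writing $y=x^{-1}$ and using (3), this gives
\[
   h(x\cdot 0, z)=\overline{h(z,x\cdot 0)}=\overline{h(y\cdot y^{-1}z,\,y\cdot 0)} .
\]
It is cleaner to apply (4) directly to the pair $(x\cdot 0, z)=x\cdot(0,x^{-1}z)$. Combined with (2), this yields
\[
   h(x\cdot 0,z)=j(x,0)^{-1/\gen}\,\overline{j(x,x^{-1}z)^{-1/\gen}}\,h(0,x^{-1}z)=j(x,0)^{-1/\gen}\,\overline{j(x,x^{-1}z)^{-1/\gen}}.
\]
The cocycle relation with $xx^{-1}=e$ gives $j(x,x^{-1}z)\,j(x^{-1},z)=1$. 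Hence
\[
   h(x\cdot 0,z)^{-\nu}=j(x,0)^{\nu/\gen}\,\overline{j(x^{-1},z)^{-\nu/\gen}}
\]
up to consistent choices of branches. Statement (7) gives $h(x\cdot0,x\cdot0)^{\nu/2}=|j(x,0)|^{-\nu/\gen}$.

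Putting these together, $k^\nu_{x\cdot 0}(z)$ equals a unimodular constant $c(x)$ times $\widetilde{j}(x^{-1},z)^{-\nu/\gen}$. The antiholomorphic/holomorphic bookkeeping must be handled carefully here, since $h$ is antiholomorphic in its second slot. One should arrange the computation so that the surviving $z$-dependent factor is holomorphic in $z$, which $k^\nu_{x\cdot 0}$ certainly is. To identify the constant, evaluate at $z=x\cdot 0$ and use the cocycle identity $\widetilde{j}(x^{-1},x\cdot 0)=\widetilde{j}(x,0)^{-1}$. This gives
\[
   c(x)=|\widetilde{j}(x,0)|^{\nu/\gen}\,\widetilde{j}(x,0)^{-\nu/\gen}=J_\nu(x,0).
\]

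\textbf{Main obstacle: branches.} Because $\nu/\gen$ need not be an integer, every identity above must hold for the chosen branches of $\widetilde{j}^{\nu/\gen}$ and of $h^{-\nu}$, not merely up to a root of unity. We handle this by continuity. Both sides of the desired identity are continuous in $(x,z)$ on the connected space $\widetilde{G}\times\bsd$. The identity holds up to a factor $e^{2\pi i k\nu/\gen}$ taking values in a discrete set, and at $(e,0)$ both sides equal $1$. Hence the factor is identically $1$. The branch of $h^{-\nu}$ is the principal one, which is well defined because $h\ne0$ on $\bsd\times\bsd$, the domain is simply connected, and $h(z,0)=1$.

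\textbf{Consequences.} Since $|J_\nu(x,0)|=1$ and $\widetilde{\sigma}_\nu(x)$ is unitary, $\widetilde{\pi}_\nu(x)$ is unitary. The map $\widetilde{\pi}_\nu$ differs from the irreducible unitary representation $\widetilde{\sigma}_\nu$ by the scalar function $x\mapsto J_\nu(x,0)$. Therefore $\widetilde{\pi}_\nu(xy)=J_\nu(xy,0)J_\nu(x,0)^{-1}J_\nu(y,0)^{-1}\,\widetilde{\pi}_\nu(x)\widetilde{\pi}_\nu(y)$, so $\widetilde{\pi}_\nu$ is a projective unitary representation. It has the same invariant subspaces as $\widetilde{\sigma}_\nu$, so it is irreducible and projectively equivalent to $\widetilde{\sigma}_\nu$.
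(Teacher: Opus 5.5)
Your ingredients are the paper's: statement (7) of Lemma~\ref{lem:triple_determinant_properties} for the modulus, and statement (4) with (2) and the cocycle relation for the $z$-dependence. However, the step ``Putting these together, $k^\nu_{x\cdot 0}(z)$ equals a unimodular constant $c(x)$ times $\widetilde{j}(x^{-1},z)^{-\nu/\gen}$'' does not follow from what you computed.

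Your own display gives $h(x\cdot 0,z)^{-\nu}=\widetilde{j}(x,0)^{\nu/\gen}\,\overline{\widetilde{j}(x^{-1},z)^{-\nu/\gen}}$. So with the kernel written as $h(x\cdot 0,x\cdot 0)^{\nu/2}h(x\cdot 0,z)^{-\nu}$ you would obtain $\overline{J_\nu(x,0)}\;\overline{\widetilde{j}(x^{-1},z)^{-\nu/\gen}}$. This is antiholomorphic in $z$, and for $x\notin\widetilde{K}$ it is not a constant multiple of $\widetilde{j}(x^{-1},z)^{-\nu/\gen}$. You noticed the mismatch but only said one ``should arrange the computation'' to fix it, and that is exactly the step that needs an argument. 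Also, the formula you started from is not holomorphic in $z$, contrary to your remark that $k^\nu_{x\cdot 0}$ ``certainly is''.

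The source is a slot-order slip. Since $h$ is holomorphic in its first variable, the holomorphic normalized kernel is $k^\nu_{w}(z)=h(w,w)^{\nu/2}h(z,w)^{-\nu}$. So \eqref{eq:definition_of_normalized_reproducing_kernel} must be read with the arguments of $h$ in the denominator swapped, and the paper's proof does start from $h(z,x\cdot 0)^{-\nu}$.

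Once this is corrected, the repair is one line. Either conjugate your identity using statement (3), or, as the paper does, apply statement (4) to $(z,x\cdot 0)=x\cdot(x^{-1}\cdot z,0)$ and use (2) and the cocycle relation:
\[
h(z,x\cdot 0)^{-\nu}=\overline{\widetilde{j}(x,0)^{\nu/\gen}}\,\widetilde{j}(x,x^{-1}\cdot z)^{\nu/\gen}=\overline{\widetilde{j}(x,0)^{\nu/\gen}}\,\widetilde{j}(x^{-1},z)^{-\nu/\gen}.
\]
Together with (7), the constant $|\widetilde{j}(x,0)|^{-\nu/\gen}\,\overline{\widetilde{j}(x,0)^{\nu/\gen}}=J_\nu(x,0)$ then appears directly, and evaluating at $z=x\cdot 0$ is unnecessary.

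Alternatively, your computation does give the correct modulus $|k^\nu_{x\cdot 0}(z)|=|\widetilde{j}(x^{-1},z)|^{-\nu/\gen}$. With the holomorphic kernel, the quotient $k^\nu_{x\cdot 0}/\widetilde{j}(x^{-1},\cdot)^{-\nu/\gen}$ is holomorphic of constant modulus one on the connected domain $\bsd$, hence constant. Your evaluation at $z=x\cdot 0$ then correctly identifies it as $J_\nu(x,0)$.

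Your continuity argument for the branches is a genuine addition, since the paper's computation ignores branch issues. But $\{e^{2\pi i k\nu/\gen}:k\in\Z\}$ is dense in $S^1$ when $\nu/\gen$ is irrational, so it is not discrete. The argument still works because this set is countable, and a continuous map from a connected space into a countable subset of $S^1$ is constant. The consequences paragraph is fine.
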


\begin{proof}
   Recalling the definition in (\ref{eq:definition_of_normalized_reproducing_kernel}) of the normalized reproducing kernel, we simply note that:
   \begin{align*}
      \widetilde{\pi}_\nu(x)f(z) & = h(x\cdot 0, x\cdot 0)^{\nu/2}h(z,x\cdot 0)^{-\nu} f(x^{-1}\cdot z) \\
                   & = |\widetilde{j}(x,0)|^{-\nu/\gen} h(z,x\cdot 0)^{-\nu} f(x^{-1}\cdot z) \\
                   & = |\widetilde{j}(x,0)|^{-\nu/\gen} \overline{\widetilde{j}(x,0)^{\nu/\gen}} \widetilde{j}(x, x^{-1} \cdot z)^{\nu/p} h(x^{-1}\cdot z,0)^{-\nu} f(x^{-1}\cdot x) \\
                   & = \left(\frac{\widetilde{j}(x,0)}{|\widetilde{j}(x,0)|}\right)^{-\nu/\gen}\widetilde{j}(x^{-1}, z)^{-\nu/\gen} f(x^{-1}\cdot z)
   \end{align*}
   for all $f\in\Berg$, $z\in\bsd$, and $x\in \widetilde{G}$, where we have used Lemma~\ref{lem:triple_determinant_properties} several times. 
\end{proof}

The projective representation $\pinu$ of $G$ that we define below is the one relevant for the rest of the article. We use it to formulate QHA on $\Berg$, as in \cite{D25}.

\begin{corollary}
   For each $x\in G$, we define the operator $\pinu(x)$ on $\Berg$ by:
   \begin{align}\label{eq:rep}
       \pinu(x)f(z) = k_{x\cdot 0}^\nu(z) f(x^{-1}\cdot z)
   \end{align}
   for all $f\in\Berg$ and $z\in \bsd$.  Then $\pinu:G \rightarrow \U(\Berg)$ is an irreducible projective unitary representation of $G$ satisfying $\widetilde{\pi}_\nu(x) = \pi_\nu(q(x))$ for all $x\in\widetilde{G}$. 
\end{corollary}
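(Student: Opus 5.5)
Since $\pinu(x)$ is defined by the same formula as $\widetilde{\pi}_\nu$, the plan is to transfer everything from $\widetilde{G}$ to $G$ through the covering map $q:\widetilde{G}\to G$. The first step is to check that $\pinu$ is well defined and that $\widetilde{\pi}_\nu = \pinu\circ q$. For $x\in\widetilde{G}$, the action on $\bsd$ is $x\cdot z = q(x)\cdot z$. Hence $k^\nu_{x\cdot 0} = k^\nu_{q(x)\cdot 0}$ and $f(x^{-1}\cdot z) = f(q(x)^{-1}\cdot z)$, which gives $\widetilde{\pi}_\nu(x) = \pinu(q(x))$ directly from the formulas. Note also that the right-hand side of \eqref{eq:rep} involves only $k^\nu_{x\cdot 0}$, which is given explicitly by \eqref{eq:definition_of_normalized_reproducing_kernel} as $h(x\cdot 0,x\cdot 0)^{\nu/2}h(x\cdot 0,z)^{-\nu}$. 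No branch of $j^{\nu/\gen}$ is needed here: $h(x\cdot 0,x\cdot 0)>0$, and $h(\cdot,z)^{-\nu}$ is the fixed branch already used for the reproducing kernel. So $\pinu(x)$ is an honest operator for every $x\in G$.

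The second step transfers the properties. Since $q$ is surjective, every $x\in G$ has the form $q(\tilde x)$, so $\pinu(x)=\widetilde{\pi}_\nu(\tilde x)$. By Lemma~\ref{lem:pi_tilde_is_a_projective_rep}, this operator equals $J_\nu(\tilde x,0)\widetilde{\sigma}_\nu(\tilde x)$, a unimodular scalar times a unitary operator, and is therefore unitary. For the projective property, take $x,y\in G$ with lifts $\tilde x,\tilde y$. Since $q(\tilde x\tilde y)=xy$ and $\widetilde{\sigma}_\nu$ is a genuine representation,
\[
\pinu(x)\pinu(y) = J_\nu(\tilde x,0)J_\nu(\tilde y,0)\,\widetilde{\sigma}_\nu(\tilde x\tilde y) = \frac{J_\nu(\tilde x,0)J_\nu(\tilde y,0)}{J_\nu(\tilde x\tilde y,0)}\,\pinu(xy),
\]
and the prefactor lies in $S^1$. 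Irreducibility follows because the set $\{\pinu(x)\}$ coincides with the set $\{\widetilde{\pi}_\nu(\tilde x)\}$. Each of these operators is a scalar multiple of $\widetilde{\sigma}_\nu(\tilde x)$, so they have the same invariant closed subspaces as $\widetilde{\sigma}_\nu$, and $\widetilde{\sigma}_\nu$ is irreducible.

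I expect the only genuinely delicate point to be the well-definedness in the first step, namely independence of the lift $\tilde x$. This is immediate from the explicit formula above, since it depends only on $q(\tilde x)$. One could also see it via Lemma~\ref{lem:pi_tilde_is_a_projective_rep}: two lifts of the same element differ by an element of $\ker q\subset \widetilde{K}$, central, on which $\widetilde{\sigma}_\nu$ and $J_\nu$ compensate each other. The direct formula avoids that computation. If one also wants continuity of $x\mapsto \pinu(x)$ in the strong operator topology, it follows from the continuity of $\widetilde{\pi}_\nu$ and the fact that $q$ is a local diffeomorphism.
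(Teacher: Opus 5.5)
Your proposal is correct and follows the same route as the paper. You read off $\widetilde{\pi}_\nu=\pinu\circ q$ directly from the defining formulas, then transfer unitarity, the projective property and irreducibility from Lemma~\ref{lem:pi_tilde_is_a_projective_rep} via surjectivity of $q$, spelling out the details the paper calls immediate (the explicit $S^1$-valued multiplier $J_\nu(\tilde x,0)J_\nu(\tilde y,0)/J_\nu(\tilde x\tilde y,0)$ and the invariant-subspace argument for irreducibility).
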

\begin{proof}
   The fact that $\widetilde{\pi}_\nu = \pinu \circ q$ is obvious from their respective definitions.  The rest of the Lemma is then an immediate consequence of the fact that, by the Lemma~\ref{lem:pi_tilde_is_a_projective_rep}, $\widetilde{\pi}_\nu$ is an irreducible projective unitary representation.
\end{proof} 

We now calculate the multiplier associated to this projective unitary representation:
\begin{proposition}
   \label{prop:multiplier}
   For all $x,y\in G$, one has that
   \[
     \pinu(x)\pinu(y)=m_\nu(x,y)\pinu(xy),
   \]
   where $m_\nu(x,y)=\frac{|h(x,y)|^\nu}{h(x,y)^\nu}=\frac{K^\nu(x,y)}{|K^\nu(x,y)|}\in \T$. 
\end{proposition}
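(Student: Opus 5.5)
The plan is a direct computation: evaluate both sides of $\pinu(x)\pinu(y)f(z)$ using \eqref{eq:rep}, and compare the resulting scalar factors using the transformation rules for $h$ in Lemma~\ref{lem:triple_determinant_properties}.

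First I compute the product explicitly. For $f\in\Berg$ and $z\in\bsd$, definition \eqref{eq:rep} gives
\[
\pinu(x)\pinu(y)f(z)=k^\nu_{x\cdot 0}(z)\,k^\nu_{y\cdot 0}(x^{-1}\cdot z)\,f(y^{-1}x^{-1}\cdot z),
\]
whereas
\[
\pinu(xy)f(z)=k^\nu_{xy\cdot 0}(z)f((xy)^{-1}\cdot z).
\]
It therefore suffices to show that the ratio
\[
R(z):=\frac{k^\nu_{x\cdot 0}(z)\,k^\nu_{y\cdot 0}(x^{-1}\cdot z)}{k^\nu_{xy\cdot 0}(z)}
\]
is constant in $z$ and equals $|h(x,y)|^\nu/h(x,y)^\nu$.

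For the identification, I would not expand $R(z)$ with the Jacobian cocycle, because branches of $\nu$-th powers are awkward when $\nu\notin\N$. Instead I argue as follows.

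1. $R$ is constant. The operators $\pinu(x)\pinu(y)$ and $\pinu(xy)$ both lift to $\widetilde{\pi}_\nu(\tilde x)\widetilde{\pi}_\nu(\tilde y)$ and $\widetilde{\pi}_\nu(\tilde x\tilde y)$. By Lemma~\ref{lem:pi_tilde_is_a_projective_rep} these equal $J_\nu(\tilde x,0)J_\nu(\tilde y,0)\widetilde{\sigma}_\nu(\tilde x\tilde y)$ and $J_\nu(\tilde x\tilde y,0)\widetilde{\sigma}_\nu(\tilde x\tilde y)$, since $\widetilde{\sigma}_\nu$ is a genuine representation. So $R$ is a unimodular constant $c$.

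2. Evaluate $c$. Put $z=x\cdot 0$ (equivalently, apply both operators to a convenient vector). Then $x^{-1}\cdot z=0$, and $k^\nu_{w}(0)=h(w,w)^{\nu/2}$ by Lemma~\ref{lem:triple_determinant_properties}(2). Hence
\[
c=\frac{k^\nu_{x\cdot0}(x\cdot 0)\,k^\nu_{y\cdot0}(0)}{k^\nu_{xy\cdot0}(x\cdot 0)}
=\frac{h(x,x)^{-\nu/2}\,h(y,y)^{\nu/2}\,h(x,xy)^{\nu}}{h(xy,xy)^{\nu/2}}.
\]
Here the $\nu$-th powers use the principal branches built into $k^\nu$.

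3. Simplify with Lemma~\ref{lem:JTdet} (taking $y_1=e$, $y_2=y$, and then $y_1=y_2=y$):
\[
h(x,xy)=\frac{h(x,x)}{h(x,x)\,h(x,y)}\cdot h(e,y)=\frac{1}{h(x,y)},
\qquad
h(xy,xy)=\frac{h(x,x)\,h(y,y)}{|h(x,y)|^2}.
\]
Here I use $h(e,y)=h(0,y\cdot0)=1$, $h(e,e)=1$, and $h(y,x)=\overline{h(x,y)}$. Substituting, the factors $h(x,x)$ and $h(y,y)$, which are positive by \eqref{eq:formula_for_Jordan_triple_system}, cancel, and $c=|h(x,y)|^\nu h(x,y)^{-\nu}$. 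Since $K^\nu(x,y)=h(y\cdot0,x\cdot0)^{-\nu}=\overline{h(x,y)}^{-\nu}$, this also equals the second expression $K^\nu(x,y)/|K^\nu(x,y)|$.

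The main obstacle is branch bookkeeping. Powers like $h(\cdot,\cdot)^{\nu}$ must be interpreted as the branch continuous on the simply connected $\bsd\times\bsd$ that equals $1$ when an argument is $0$. Identities such as $(ab)^\nu=a^\nu b^\nu$ then hold only after checking them by continuity in $(x,y)$, starting from $x=e$. I would justify each step this way, using that $\bsd$ is contractible and that $h$ never vanishes (Lemma~\ref{lem:triple_determinant_properties}(1)).
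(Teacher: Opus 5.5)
Your steps 1 and 2 are sound. Evaluating the constant ratio at $z=x\cdot 0$ is a tidier route than the paper's, which manipulates the kernel product at every $w$. Step 3, however, fails. The identity $h(x,xy)=1/h(x,y)$ is false: at $y=e$ it asserts $h(x\cdot 0,x\cdot 0)=1$. When applying Lemma~\ref{lem:JTdet} with $y_1=e$, you replaced $h(e,x)=h(0,x\cdot 0)=1$ by $h(x,x)$. With the values you actually wrote down, $h(x,x)$ does not cancel. You get $c=h(x,x)^{-\nu}|h(x,y)|^{\nu}h(x,y)^{-\nu}$, whose modulus is $h(x,x)^{-\nu}\neq 1$, contradicting your own step 1.

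Fixing this slip, i.e.\ taking $h(x,xy)=h(x,x)/h(x,y)$ as Lemma~\ref{lem:JTdet} literally gives, makes the algebra close. But that lemma is only valid when $x^{-1}\cdot 0=x\cdot 0$, e.g.\ for $x=\tau_\xi$.
\begin{itemize}
\item For general $x=\tau_{x\cdot 0}k_x$ (Lemma~\ref{lem:Aut}), properties (8) and (5) of Lemma~\ref{lem:triple_determinant_properties} put the point $k_x^{-1}\cdot(x\cdot 0)=x^{-1}\cdot 0$ in the denominators: $h(x\cdot z,x\cdot w)=h(x,x)h(z,w)/\bigl(h(z,x^{-1}\cdot 0)\,h(x^{-1}\cdot 0,w)\bigr)$.
\item Consequently $h(x\cdot 0,xy\cdot 0)=h(x,x)/h(x^{-1}\cdot 0,y\cdot 0)$ and $h(xy,xy)=h(x,x)h(y,y)/|h(x^{-1}\cdot 0,y\cdot 0)|^2$.
\item Your step 2 then yields $c=|h(x^{-1}\cdot 0,y\cdot 0)|^\nu/h(x^{-1}\cdot 0,y\cdot 0)^\nu$, not the displayed $m_\nu(x,y)$.
\end{itemize}
The two formulas genuinely differ. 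For $y=x^{-1}$, your step 2 gives $c=h(x,x)^{-\nu/2}h(x^{-1}\cdot 0,x^{-1}\cdot 0)^{\nu/2}=1$, using $k^\nu_0\equiv 1$ and $x^{-1}\cdot 0=k_x^{-1}\cdot(x\cdot 0)$. On the disc, however, take $x=\begin{pmatrix}\sqrt2e^{i\pi/4}&1\\ 1&\sqrt2e^{-i\pi/4}\end{pmatrix}$. Then $h(x\cdot 0,x^{-1}\cdot 0)=1+i/2$, so for $\nu=2$ the stated formula gives $(3-4i)/5\neq 1$. The other ordering convention for $h$ gives $(3+4i)/5$, also $\neq 1$.

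So the proposition cannot be proved as written. What your method does prove is the corrected multiplier, with $h(x^{-1}\cdot 0,y\cdot 0)$ in place of $h(x,y)$. The paper's own proof has the same defect: it invokes the same unadjusted form of Lemma~\ref{lem:JTdet} both for $h(xy,xy)$ and for $h(x,x)/h(xy,x)$.
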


\begin{proof}
   To verify that $\pinu$ is a projective representation, let $x, y \in G$ and $f\in \Berg$. Then
   \begin{align*}
      (\pinu (xy)f)(w)=k^\nu_{xy\cdot 0} (w)f((xy)^{-1}\cdot w)=\frac{h(xy,xy)^{\nu/2}}{h(xy\cdot 0, w)^\nu}f((xy)^{-1}\cdot w).
   \end{align*}
   and
   \begin{align*}
      (\pinu(x)\pinu(y)f)(w)&=k^\nu_{x\cdot 0}(w)k^\nu_{y\cdot 0}(x^{-1}w) f((xy)^{-1}\cdot w)\\
      &=\frac{h(x,x)^{\nu/2}}{h(x\cdot 0, w)^\nu}\frac{h(y,y)^{\nu/2}}{h(y\cdot 0, x^{-1}w)^\nu} f((xy)^{-1}\cdot w)\\
      &=\frac{h(xy,xy)^{\nu/2}|h(x,y)|^\nu}{h(x\cdot 0, w)^\nu h(y\cdot 0, x^{-1}w)^\nu} f((xy)^{-1}\cdot w)\quad \text{(by   lemma \ref{lem:JTdet})}.
   \end{align*}
   Note that $x=\tau_{x\cdot 0}\circ k_x$ for some $k_x\in K$ by Lemma \ref{lem:Aut}. Then $x^{-1}=k_x^{-1}\tau_{x\cdot 0}$ and we have
   \begin{align*}
      h(y\cdot 0, x^{-1}w)&=h(x^{-1}xy\cdot 0, x^{-1}w)\\
      &=h(\act{x\cdot 0}{xy\cdot 0}, \act{x\cdot 0}{w})\quad \text{(by the $K$-invariance of $h$)}\\
      &= \frac{h(x,x)h(xy\cdot 0,w)}{h(xy,x)h(x\cdot 0, w)}\\
      &=\frac{h(x,y)h(xy\cdot 0,w)}{h(x\cdot 0, w)}.
   \end{align*}
   Hence
   \begin{align*}
      (\pinu(x)\pinu(y)f)(w)= \frac{h(xy,xy)^{\nu/2}|h(x,y)|^\nu}{h(xy\cdot 0, w)^\nu h(x,y)^\nu} f((xy)^{-1}\cdot w)=m_\nu(x,y)(\pinu (xy)f)(w). 
   \end{align*}
\end{proof}


\section{Quantum harmonic analysis on the Bergman space} \label{sec:QHA}

In this section, we discuss quantum harmonic analysis (QHA) on the Bergman space $\Berg$, fixing the parameter $\nu>\gen-1$.
The main concepts are the convolution between a function and an operator, as well as the convolution between two operators. Halvdansson extended some of the main ideas in Werner's QHA \cite{W84}, originally introduced in the abelian setting of $\R^{2n}$,  to general locally compact groups. For unimodular groups, the arguments simplify considerably. Here we present an overview of the theory, omitting the proofs, and refer to \cite{H23} and \cite[Appendix A]{DDMO24} for details. We then proceed to discuss the additional features of QHA that are applicable in the case of $G$ acting on the Bergman space $\Berg$, generalizing the results in \cite[Section 3]{DDMO24} to bounded symmetric domains. Some of the QHA theory for $\Berg$ was also discussed in \cite{D25}. 

\subsection{Convolutions of functions}
Recall that the left translation of a function \eqref{eq:translation} $a:G\to \C$ by $x\in G$, is given by
\[
   (\actfL{x}{a})(z)=a(x^{-1}\cdot z), \quad z\in \bsd.
\]
Given two functions $\psi:G\to \C$ and $a:\bsd\to \C$, the convolution $\psi\ast a:\bsd\to \C$ is defined formally by
\[
   (\psi\ast a)(z)=\int_{G} \psi(x)(\actfL{x}{a})(z)\haar{x}=\int_{G} \psi(x)a(x^{-1}\cdot z)\haar{x}
\]
for all $z\in G$ for which the integral converges.
Let $p,p',r\in[1,\infty]$ be such that $\frac{1}{p}+\frac{1}{p'}=1+\frac{1}{r}$. Then, by the Young's inequality, if $\psi\in L^p(G)$ and $a\in L^{p'},(\bsd,d\lambda)$ $\psi\ast a\in \bddf$ with
\[
   \|\psi\ast a\|_r\leq \|\psi\|_{p}\|a\|_{p'}.
\]

We say that a function $\psi:\bsd\to \C$ is \textbf{radial} if $\actfL{k}{\psi}=\psi$ for all $k\in K$. Radial functions on $\bsd$ are precisely the $K$-bi-invariant functions on $G$. The radialization of a function $\psi:\bsd\to \C$, is given by
$$\rad{\psi}=\int_K \actfL{k}{\psi} \haarK{k},$$
where $\mu_K$ is the normalized Haar measure on $K$. Then a function $\psi:\bsd\to \C$ is radial if and only only if $\rad{\psi}=\psi$. Given a space of functions $V$, we denote the space of all radial (i.e., $K$-invariant) elements in $V$ by $V^K$. It is well known that the convolution of radial functions is commutative and hence $\Lone^K$ is a commutative Banach algebra, i.e. $\bsd$ is a commutative space. The following lemma supports this fact.

\begin{lemma}\label{lem:radial_functions}
   Let $\psi:\bsd\to \C$ be radial. Then, for all $x\in G$,
   \begin{enumerate}
      \item $(\actfL{x}{\psi})(z)=\psi(\act{x\cdot 0}{z}),\quad z\in \bsd$.
      \item $\Tilde{\psi}(x)=\Tilde{\psi}(x^{-1})$.
   \end{enumerate}
\end{lemma}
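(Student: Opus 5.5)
Both parts follow from Lemma~\ref{lem:Aut} and the $K$-invariance of $\psi$.

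For (1), fix $x\in G$ and write $x=\tau_{x\cdot 0}k_x$ with $k_x\in K$, as in Lemma~\ref{lem:Aut}. Since $\tau_{x\cdot 0}$ is an involution, $x^{-1}=k_x^{-1}\tau_{x\cdot 0}^{-1}=k_x^{-1}\tau_{x\cdot 0}$. Hence, for $z\in\bsd$,
\[
(\actfL{x}{\psi})(z)=\psi(x^{-1}\cdot z)=\psi\bigl(k_x^{-1}\cdot\act{x\cdot 0}{z}\bigr)=\psi(\act{x\cdot 0}{z}),
\]
where the last equality uses radiality: $\actfL{k}{\psi}=\psi$ for all $k\in K$, i.e.\ $\psi(k^{-1}\cdot w)=\psi(w)$ for every $w\in\bsd$ and $k\in K$.

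For (2), take $z=0$ in (1) and use that $\tau_{x\cdot 0}$ interchanges $0$ and $x\cdot 0$:
\[
\Tilde{\psi}(x^{-1})=\psi(x^{-1}\cdot 0)=(\actfL{x}{\psi})(0)=\psi(\act{x\cdot 0}{0})=\psi(x\cdot 0)=\Tilde{\psi}(x).
\]

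The only point needing care is whether $\tau_{x\cdot 0}$ belongs to $G$ rather than merely to the full automorphism group, since Lemma~\ref{lem:Aut} as stated presupposes this. If that is in doubt, I would argue directly from the defining property of $\tau_{x\cdot 0}$. The map $\tau_{x\cdot 0}\circ x$ is a biholomorphism of $\bsd$ fixing $0$. By Cartan's uniqueness theorem for circled bounded domains, it is therefore a linear unitary map $u$, and $\psi$ is invariant under such maps because the Jordan-frame description \eqref{eq:jframe} shows radial functions depend only on $t_1(w),\ldots,t_r(w)$. Then $x^{-1}=u^{-1}\tau_{x\cdot 0}$, and the same computations go through.
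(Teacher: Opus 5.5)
Your argument is correct and is evidently the intended one. The paper states this lemma without proof, but the proof of Proposition~\ref{prop:multiplier} uses exactly the decomposition $x^{-1}=k_x^{-1}\tau_{x\cdot 0}$ from Lemma~\ref{lem:Aut} together with $K$-invariance, and (2) then follows by evaluating (1) at $z=0$, as you do.

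Your fallback is unnecessary, because $\tau_{x\cdot 0}\in G$. The symmetry at $0$ is $z\mapsto -z$, which lies on the circle $z\mapsto e^{i\theta}z$ inside $K$, and $\tau_{x\cdot 0}=\sigma_{z_0}$ is the conjugate of $z\mapsto -z$ by any $y\in G$ with $y\cdot 0=z_0$. As written, the fallback would in any case need the further, unproved fact that linear automorphisms of $\bsd$ preserve the spectral values $t_j$.
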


For $\alpha>\gen-1$, we define the function $\varphi_\alpha:\bsd\to \C$ by
\begin{equation}\label{eq:phinu}
    \varphi_\alpha(z)=c_\alpha h(z,z)^\alpha
\end{equation}
for all $z\in\bsd$.  These functions $\varphi_\alpha$ are radial and play an important role in what follows. In particular, the parametrized Berezin transform of a function can be written as a convolution with $\varphi_\alpha$:

\begin{lemma}\label{lem:function Berezin transform convol}
    Let $1\leq p\leq \infty$ and let $a\in \Lp$. Then for $\alpha>\gen-1$
    $$ B_\alpha(a)=a\ast \varphi_\alpha.$$
\end{lemma}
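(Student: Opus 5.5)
We compute $B_\alpha(a)(z)$ directly from the definition and change variables to turn it into a group integral against $\varphi_\alpha$. By definition and the formula \eqref{eq:definition_of_normalized_reproducing_kernel} for the normalized kernels,
\[
B_\alpha(a)(z)=\int_\bsd a(w)\,|k^\alpha_z(w)|^2\,d\mu_\alpha(w)
= c_\alpha\int_\bsd a(w)\,\frac{h(z,z)^{\alpha}h(w,w)^{\alpha}}{|h(z,w)|^{2\alpha}}\inv{w}.
\]
The first step is to recognize the integrand as a translate of $\varphi_\alpha$. Statement (8) of Lemma~\ref{lem:triple_determinant_properties} with $\xi=z$ and both entries equal to $w$, together with $h(\xi,w)=\overline{h(w,\xi)}$ from statement (3), gives
\[
h(\act{z}{w},\act{z}{w})=\frac{h(z,z)h(w,w)}{|h(z,w)|^2}.
\]
Hence $|k^\alpha_z(w)|^2h(w,w)^\alpha c_\alpha=\varphi_\alpha(\act{z}{w})$, and therefore
\[
B_\alpha(a)(z)=\int_\bsd a(w)\varphi_\alpha(\act{z}{w})\inv{w}.
\]

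Next we pass to the group. Fix $z$ and choose $x_0\in G$ with $x_0\cdot 0=z$. Then Lemma~\ref{lem:radial_functions}(1) gives $\varphi_\alpha(\act{z}{w})=(\actfL{x_0}{\varphi_\alpha})(w)=\widetilde{\varphi_\alpha}(x_0^{-1}y)$ when $w=y\cdot 0$. Using \eqref{eq:integral} to write the integral over $G$, we get
\[
B_\alpha(a)(z)=\int_G \tilde a(y)\,\widetilde{\varphi_\alpha}(x_0^{-1}y)\haar{y}.
\]
Now substitute $y=x_0x^{-1}$. This is legitimate since $G$ is unimodular, being semisimple, so Haar measure is invariant under inversion and translation. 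The integral becomes
\[
\int_G \widetilde{\varphi_\alpha}(x^{-1})\,\tilde a(x_0x^{-1})\haar{x}.
\]
By Lemma~\ref{lem:radial_functions}(2), $\widetilde{\varphi_\alpha}(x^{-1})=\widetilde{\varphi_\alpha}(x)$. Interpreting $a\ast\varphi_\alpha$ as the convolution on $G$ of the right-$K$-invariant function $\tilde a$ with $\widetilde{\varphi_\alpha}$, this is $\int_G\tilde a(x_0x^{-1})\widetilde{\varphi_\alpha}(x)\,d\mu_G(x)=(\tilde a\ast\widetilde{\varphi_\alpha})(x_0)$, and it is independent of the choice of $x_0$ by right $K$-invariance. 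Equivalently, substituting $y=x$ directly, it equals $\int_G\tilde a(y)\widetilde{\varphi_\alpha}(y^{-1}x_0)\,d\mu_G(y)$, which is the standard form of $a\ast\varphi_\alpha$.

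For convergence and well-definedness when $a\in\Lp$, note that $\varphi_\alpha\in L^1(\bsd,d\mu_0)\cap L^\infty$ because $c_\alpha\int h^{\alpha}\,d\mu_0=1$ for $\alpha>\gen-1$. Young's inequality, or simply Hölder, then shows that both sides are finite everywhere and bounded by $\|a\|_p\|\varphi_\alpha\|_{p'}$.

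The main obstacle is bookkeeping rather than analysis. One has to make sure the convolution convention for $a\ast\varphi_\alpha$, with the function on the left and the radial function on the right, matches the group integral obtained. The radial symmetry $\widetilde{\varphi_\alpha}(x)=\widetilde{\varphi_\alpha}(x^{-1})$ together with unimodularity is exactly what reconciles the possible orderings.
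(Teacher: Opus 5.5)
Your proof is correct and is essentially the paper's argument: you write $B_\alpha(a)(z)$ as an integral against $d\mu_0$, recognize the kernel $c_\alpha h(z,z)^\alpha h(w,w)^\alpha/|h(z,w)|^{2\alpha}$ as a translate of $\varphi_\alpha$ via the transformation law of $h$, and pass to $G$ using \eqref{eq:integral}. The only difference is bookkeeping. The paper applies Lemma~\ref{lem:JTdet} directly to $\actfL{y}{\varphi_\alpha}(z)$ with $y$ the integration variable, so its integral already has the defining form $\int_G a(y\cdot 0)\,\actfL{y}{\varphi_\alpha}(z)\,d\mu_G(y)$ of $a\ast\varphi_\alpha$. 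You instead translate by $x_0$, and then need $\widetilde{\varphi_\alpha}(x)=\widetilde{\varphi_\alpha}(x^{-1})$ to match that convention.
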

\begin{proof}
    Let $x\in G$ and let $z=x\cdot 0$. Note that
    \begin{align*}
    B_\alpha(a)(z)&= \ipnu{ak^\alpha_z}{k^\alpha_z}\\
    &=\int_\bsd a(w)\frac{h(z,z)^\alpha}{|h(z,w)|{2\alpha}} \mu_\alpha(w)\\
    &= c_\alpha \int_\bsd a(w)\frac{h(z,z)^\alpha h(w,w)^{\alpha}}{|h(z,w)|^{2\alpha}} \inv{w}.
\end{align*}
    Also, by Lemma \ref{lem:JTdet}, we have 
    $$\actfL{y}{\varphi_\alpha}(z)=c_\alpha \frac{h(z,z)^\alpha h(y\cdot 0,y\cdot 0)^{\alpha}}{|h(z,y\cdot 0)|^{2\alpha}}, \quad y\in G.$$
    Therefore,
    \begin{align*}
        B_\alpha(a)(z)
        &= \int_G a(y\cdot 0)\actfL{y}{\varphi_\alpha}(z) \haar{y}\quad \text{(by \eqref{eq:integral})}\\
        &= (a\ast \varphi_\alpha)(z). \qedhere
    \end{align*}
\end{proof}

\subsection{Translations and convolutions of operators}
The projective unitary representation $\pi_\nu$ gives rise to a genuine unitary representation on $\bdd$ that acts by conjugation, which we refer to as the translation of operators.
For $x\in G$, we define the operator $\ActopL{x}:\bdd\to \bdd$ by
\begin{equation}\label{eq:translationops}
    \actopL{x}{S} := \pi_\nu(x)S\pi_\nu(x)^*.
\end{equation}
It is not difficult to see that $\ActopL{x}$ is an isometry with respect to the operator norm on $\bdd$.  

Furthermore, although $\pi_\nu:G\rightarrow\U(\cA^2_\nu)$ is, in general, merely a projective unitary representation of $G$, it turns out that $\ActopL{x}:G\rightarrow\GL(\bdd)$ is a genuine representation of $G$:  

\begin{lemma}
   If $x,y\in G$, then
   \[
   \ActopL{xy} = \ActopL{x}\circ \ActopL{y}.
   \]
\end{lemma}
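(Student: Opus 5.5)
The plan is to substitute the multiplier relation of Proposition~\ref{prop:multiplier} into the definition \eqref{eq:translationops} and check that the scalar factor cancels. Fix $x,y\in G$ and $S\in\bdd$. By definition,
\[
   \ActopL{x}\circ\ActopL{y}(S)=\pi_\nu(x)\pi_\nu(y)\,S\,\pi_\nu(y)^*\pi_\nu(x)^*=\bigl(\pi_\nu(x)\pi_\nu(y)\bigr)\,S\,\bigl(\pi_\nu(x)\pi_\nu(y)\bigr)^*.
\]
By Proposition~\ref{prop:multiplier}, $\pi_\nu(x)\pi_\nu(y)=m_\nu(x,y)\pi_\nu(xy)$, and taking adjoints gives $\bigl(\pi_\nu(x)\pi_\nu(y)\bigr)^*=\overline{m_\nu(x,y)}\,\pi_\nu(xy)^*$.

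Substituting these, the right-hand side becomes $|m_\nu(x,y)|^2\,\pi_\nu(xy)S\pi_\nu(xy)^*$. Since $m_\nu(x,y)\in\T$, this equals $\actopL{xy}{S}$.

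The only point needing care is that the multiplier is genuinely unimodular, which holds because $m_\nu(x,y)=|h(x,y)|^\nu/h(x,y)^\nu$ and $h(x,y)\neq0$ by Lemma~\ref{lem:triple_determinant_properties}(1). No step is a real obstacle, since the projective ambiguity is by a phase, and conjugation by a phase is trivial.
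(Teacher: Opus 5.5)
Your proof is correct and is essentially the paper's own argument: substitute $\pi_\nu(x)\pi_\nu(y)=m_\nu(x,y)\pi_\nu(xy)$ from Proposition~\ref{prop:multiplier} into the definition of $\ActopL{x}$, and use $m_\nu(x,y)\in\T$ to cancel the phase. Your version also writes the adjoints in the correct order $\pi_\nu(y)^*\pi_\nu(x)^*$; the paper's displayed computation has them transposed, which is a harmless typo there.
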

\begin{proof}
   Using Proposition~\ref{prop:multiplier}, we see that for all $S\in\bdd$ and $x,y\in G$, we have that
   \begin{align*}
       \actopL{xy}{S} &= \pi_\nu(xy)S\pi_\nu(xy)^{-1}\\
       &=m_\nu(x,y)\pi_\nu(xy) S (m_\nu(x,y)^{-1} \pi_\nu(xy)^{-1})\\
       &=\pi_\nu(x)\pi_\nu(y)S\pi_\nu(x)^*\pi_\nu(y)^*\\
       &= \actopL{x}{\actopL{y}{S}}
   \end{align*}
   where we are using that $m_\nu(x,y)\in \T$. 
\end{proof}

One also shows that, for all $1\leq p < \infty$, the Schatten $p$-class $\cS^p(\cA^2_\nu)$ of operators is invariant under the action of $\ActopL{x}$.  Furthermore, the representation
\[
   \ActopL{x} : G \rightarrow \GL(\cS^p(\cA^2_\nu))
\]
is strongly continuous, where $\cS^p(\cA^2_\nu)$ is considered with Schatten $p$-norm.  When $p=\infty$, one has that $\cS^\infty(\cA^2_\nu)=\bdd$, and the representation
\[
   \ActopL{x} : G\rightarrow \GL(\bdd)
\]
is \textit{not} strongly continuous when $\bdd$ is considered with respect to the operator-norm topology, although it is strongly continuous with respect to the strong operator topology on $\bdd$.

In the case $p=2$, we see that $\cS^2(\cA^2_\nu)$ is the space of Hilbert-Schmidt operators on $\cA^2_\nu$.  Furthermore, the representation $\ActopL{x}$ on $\cS^2(\cA^2_\nu)$ is unitary, and coincides with the tensor-product representation $\pi_\nu\otimes\overline{\pi_\nu}$.

In quantum harmonic analysis, one studies spaces of operators by treating them as noncommutative versions of spaces of functions. In particular, one says that the operator $\ActopL{x}(x)S$ is the \textbf{left translation of the operator $S$ by $x\in G$}. 

Continuing the analogy with function spaces, we now we define the convolution of a function and an operator using the integrated representation of $\ActopL{x}$. In particular, given a function $\psi:G\to \C$ and and an operator $S\in \bdd$, we define $\psi\ast S$ formally by the weak integral
\[
   \psi\ast S:= \ActopL{x}(\psi)S =\int_G\psi(x) \actopL{x}{S} \haar{x}.
\]

The following QHA-Young's inequalities hold: for $1\leq p,q,r\leq\infty$ such that $\frac{1}{p}+\frac{1}{q}=1+\frac{1}{r}$,
\begin{equation}\label{eq:QHAYoung}
    \|\psi\ast S\|_r\leq \Big(\frac{1}{c_\nu}\Big)^{1-q/r}\|\psi\|_p\|S\|_q.
\end{equation}
where the constant $c_\nu$ is as in \eqref{eq:cnu}.

\begin{remark}
    {\rm In \cite{H23} and \cite{DDMO24}, the inequality \eqref{eq:QHAYoung} wasn't stated in full generality. However, this follows by Riesz-Thorin interpolation, and Calder\'on's interpolation theorem for Schaten classes (see, \cite{BerghLofstrom76}). To achieve this, one uses the end point inequalities:
    $$\|\psi \ast S\|_q\leq \|\psi\|_1\|S\|_q,$$
    and
    $$\|\psi\ast S\|\leq \frac{1}{c_\nu} \|\psi\|_q\|S\|_{q'},$$
    where $q'$ satiesfies $\frac{1}{q}+\frac{1}{q'}$.}
\end{remark}

One also defines the convolution of two operators. Recall that the duality between $\schop$ and $\schopq{p'}$ is given by the trace, which will now play the role of the integral. Given two operators $S\in \bdd$ and $A\in \traceop$, the convolution $S\ast A:G \to \C$ is the function  given by
\[
   (S\ast A)(x):=\tr(S(\actopL{x}{A}))=\tr(S\pi_\nu(x)A\pi_\nu(x)^{-1})
\]
for all $x\in G$. The above convolution is not commutative in general. Rather, one has that $(S\ast A)(x)=(A\ast S)(x^{-1})$ for all $x\in G$.
The convolution of two operators also satisfy a QHA Young's inequality: for $1\leq p,q,r\leq\infty$ such that $\frac{1}{p}+\frac{1}{q}=1+\frac{1}{r}$,
\begin{equation}\label{eq:QHAYoung ops}
    \|S\ast A\|_r\leq \Big(\frac{1}{c_\nu}\Big )^{\frac{1}{r}} \|S\|_p\|A\|_q.
\end{equation}

\subsection{Duality}
Let $1\leq p<\infty$, and let $p'\in (0,\infty]$ such that $\frac{1}{p}+\frac{1}{p'}=1$.
The Banach space dual to $\LpG$ is $L^{p'}(G)$, with the duality pairing given by
$$\ip{f}{g}_{L^1}:=\int_G f(x)g(x)\haar{x},\quad f\in\LpG, g\in L^{p'}(G).$$
Similarly, the dual of Schatten-$p$-class of operators $\schop$ is the space $\cS^{p'}(\cH)$, with duality pairing given by
$$\ip{S}{T}_{\tr}:=\tr(ST), \quad S\in \schop, T\in\schopq{p'}.$$

Let $1\leq p<\infty$, and let $p'\in (0,\infty]$ such that $\frac{1}{p}+\frac{1}{p'}=1$. Then for $\psi\in \LoneG$, $a\in \bddfG$, $S\in \cS^p(\cH)$, and $B\in \cS^{p'}(\cH)$, we have
\begin{equation}\label{eq:duality}
    \ip{B\ast S}{\psi}_{L^1}=\ip{B}{\psi\ast S}_{\tr}\ \ \forall B\in \cS^{p^\prime}(\cH).
\end{equation}

Furthermore, for $S\in \traceop$,
\begin{equation}
    1\ast S= \frac{1}{c_\nu}\tr(S) \id.
\end{equation}
Also, for $S,T\in \traceop$, we have
\begin{equation}\label{eq:int of convol}
    \int_G (S\ast T)(x)\haar{x}=\frac{1}{c_\nu}\tr(S)\tr(T).
\end{equation}

The translations of convolutions behave as follows:
\begin{lemma}\label{lem:translation_convol}
    Let $\psi:G\to \C$ and $S,T\in \bdd$ be s. t. the following convolutions are defined. Then for $x\in G$,
    \begin{enumerate}
        \item $\actopL{x}{(\psi\ast S)}=(\actfL{x}{\psi})\ast S$
        \item $\actfL{x}{(S\ast T)}=(\actopL{x}{S})\ast T$
    \end{enumerate}
\end{lemma}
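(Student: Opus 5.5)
Both identities are direct computations from the definitions, using only that $\ActopL{\cdot}$ is a genuine representation of $G$ (shown just above), left invariance of Haar measure $\mu_G$ (and unimodularity of $G$), and cyclicity of the trace. The plan is to verify them on well-behaved elements and then pass to the general case by the continuity built into the definitions.

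For (1), I write $\psi\ast S$ as the weak integral $\int_G\psi(y)\actopL{y}{S}\haar{y}$. Since $\ActopL{x}$ is bounded and the integral is weak, $\ActopL{x}$ can be moved inside; concretely, one pairs with $f,g\in\Berg$ and uses that $\pi_\nu(x)$ is unitary. This gives
\[
\actopL{x}{(\psi\ast S)}=\int_G\psi(y)\,\actopL{xy}{S}\haar{y},
\]
where the lemma above gives $\ActopL{x}\ActopL{y}=\ActopL{xy}$. The projective multiplier $m_\nu$ does not enter, because conjugation kills it. I then substitute $y\mapsto x^{-1}y$, using left invariance of $\mu_G$, to get $\int_G\psi(x^{-1}y)\actopL{y}{S}\haar{y}=(\actfL{x}{\psi})\ast S$.

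For (2), I evaluate both sides at $y\in G$. The left side is
\[
\actfL{x}{(S\ast T)}(y)=(S\ast T)(x^{-1}y)=\tr\bigl(S\,\actopL{x^{-1}y}{T}\bigr)=\tr\bigl(S\,\pi_\nu(x)^*\,\actopL{y}{T}\,\pi_\nu(x)\bigr),
\]
using $\ActopL{x^{-1}y}=\ActopL{x^{-1}}\ActopL{y}$ and $\ActopL{x^{-1}}=(\ActopL{x})^{-1}$, that is, conjugation by $\pi_\nu(x)^*$. Cyclicity of the trace then turns this into $\tr\bigl(\pi_\nu(x)S\pi_\nu(x)^*\,\actopL{y}{T}\bigr)=((\actopL{x}{S})\ast T)(y)$. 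Cyclicity is legitimate here because one of $S,T$ is trace class and the other bounded, which is exactly the setting in which the convolution is defined.

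The only mild obstacle is justifying the exchange of $\ActopL{x}$ with the weak integral in (1) when $S$ is merely bounded, or when $\psi$ lies only in some $L^p$. I would handle this by testing against rank-one operators, or equivalently against trace-class $B$ via the duality \eqref{eq:duality}, so that everything reduces to scalar integrals where Fubini and the change of variables apply directly.
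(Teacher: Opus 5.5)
Your proposal is correct. Both identities follow exactly as you compute: use $\ActopL{x}\ActopL{y}=\ActopL{xy}$, pair the weak integral against $f,g\in\Berg$ and apply left invariance of $\mu_G$, and use cyclicity of the trace with one trace-class factor. The paper states this lemma without proof, deferring to \cite{H23} and \cite[Appendix A]{DDMO24}, so your direct computation is essentially the intended argument. The obstacle you flag is already handled by pairing with $f,g$, because that pairing is the definition of the weak integral; no Fubini argument is needed.
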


Moreover, the QHA convolutions are associative: that is, for all $\psi,\psi_1,\psi_2:G\to \C$ and $S,T\in \bdd$ such that the following convolutions are defined, we have:
\begin{enumerate}
    \item $\psi_1\ast (\psi_2\ast S)=(\psi_1\ast \psi_2)\ast S$
    \item $\psi\ast (S\ast T)=(\psi\ast S)\ast T$
\end{enumerate}
whenever the integrals and the trace defining these convolutions exist.

As a consequence of \cite[Theorem A.16]{DDMO24}, we also have the following convergence properties:
\begin{theorem}\label{theo:convergence}
    Let $\{S_k\}_{k\in\N}$ be a sequence of bounded operators converging to $S$ in the strong operator topology (SOT). Then the following holds for $\psi \in\LoneG$ and $A,B\in \traceop$:
    \begin{enumerate}
        \item $\psi\ast S_k \stackrel{k\rightarrow \infty}{\longrightarrow} \psi\ast S$.
        \item $S_k\ast A \stackrel{k\rightarrow \infty}{\longrightarrow}  S\ast A$ pointwise.  
        \item 
            \(
                (S_k\ast A)\ast B\stackrel{k\rightarrow \infty}{\longrightarrow}  (S\ast A)\ast B
            \)
            in the weak operator topology (WOT). 
    \end{enumerate}
\end{theorem}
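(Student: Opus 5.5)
The plan is to prove the three statements of Theorem~\ref{theo:convergence} in order. Each reduces to dominated convergence combined with the uniform boundedness principle. Since $S_k\to S$ in SOT, the Banach--Steinhaus theorem gives $M:=\sup_k\|S_k\|<\infty$.

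\emph{Statement (2).} Fix $x\in G$. We have
\[
(S_k\ast A)(x)=\tr(S_k\,\actopL{x}{A}),
\]
and $\actopL{x}{A}$ is again trace class, so it suffices to show that $\tr(S_kT)\to\tr(ST)$ for every fixed $T\in\traceop$.
\begin{itemize}
\item Write $T=\sum_j s_j\,\langle\cdot,e_j\rangle_\nu f_j$ with $\sum_j s_j<\infty$ and orthonormal systems $(e_j)$, $(f_j)$.
\item Then $\tr(S_kT)=\sum_j s_j\langle S_kf_j,e_j\rangle_\nu$.
\item Each term converges by SOT convergence and is bounded by $Ms_j$.
\item Dominated convergence for series gives the claim.
\end{itemize}
The same argument shows the convergence is uniform on norm-compact sets of trace-class operators. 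This is not needed for pointwise convergence, but it is harmless.

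\emph{Statement (1).} Here I read the convergence as SOT convergence. For $f\in\Berg$,
\[
(\psi\ast S_k)f=\int_G\psi(x)\,\pi_\nu(x)S_k\pi_\nu(x)^*f\haar{x}
\]
as a Bochner integral, which is justified by strong continuity of $\pi_\nu$ up to phase; the phases cancel in $\actopL{x}{\cdot}$.
\begin{itemize}
\item For each $x$, $\pi_\nu(x)S_k\pi_\nu(x)^*f\to\pi_\nu(x)S\pi_\nu(x)^*f$ in norm.
\item The integrands are dominated by $|\psi(x)|\,M\|f\|$, which lies in $L^1(G)$.
\item The vector-valued dominated convergence theorem then gives $(\psi\ast S_k)f\to(\psi\ast S)f$.
\end{itemize}

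\emph{Statement (3).} I will use the definition of function-times-operator convolution. For $f,g\in\Berg$,
\[
\langle (S_k\ast A)\ast B\,f,g\rangle_\nu=\int_G (S_k\ast A)(x)\,\langle \actopL{x}{B}f,g\rangle_\nu\haar{x}.
\]
\begin{itemize}
\item By (2), the first factor converges pointwise.
\item It is bounded by $\|S_k\|\,\|A\|_1\le M\|A\|_1$.
\item The second factor is a matrix coefficient of $B$: it equals $(B\ast(g\otimes f^*))(x)$ up to the ordering convention, where $g\otimes f^*$ is the rank-one operator $h\mapsto\langle h,f\rangle_\nu g$.
\item By the QHA Young inequality \eqref{eq:QHAYoung ops} with $p=q=1$, $r=1$, this function is in $L^1(G)$.
\item Dominated convergence yields the WOT convergence.
\end{itemize}

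\emph{Main obstacle.} The only step requiring care is the integrability in (3). One must apply the convolution conventions and the identity $(S\ast A)(x)=(A\ast S)(x^{-1})$ correctly, so that $x\mapsto\langle\actopL{x}{B}f,g\rangle_\nu$ is recognised as a convolution of two trace-class operators. Unimodularity of $G$ makes the inversion harmless. Alternatively, one can invoke the duality \eqref{eq:duality} with $\psi=S_k\ast A\in L^\infty(G)$ and test against the trace-class $B$, which reduces (3) directly to (2) plus dominated convergence on $G$. As stated, all of this is the content of \cite[Theorem A.16]{DDMO24}, and the argument there transfers verbatim because it uses only unimodularity and the fact that $\ActopL{x}$ is a genuine representation.
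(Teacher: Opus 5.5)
Your proof is correct. The paper gives no argument of its own for this statement: it simply imports it from \cite[Theorem A.16]{DDMO24}, which is proved in the general unimodular framework of \cite[Appendix A]{DDMO24}. You instead verify the three items directly in the present setting. Banach--Steinhaus gives $M=\sup_k\|S_k\|<\infty$, and each item is then a dominated-convergence argument: for the Schmidt series in (2), for the $\Berg$-valued Bochner integral in (1), and for the scalar integral over $G$ in (3). In (3) the needed integrability of $x\mapsto\ipnu{\actopL{x}{B}f}{g}=\tr\big((f\otimes\overline{g})\,\actopL{x}{B}\big)$ is exactly \eqref{eq:QHAYoung ops} with $p=q=r=1$. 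The citation is shorter and covers every group and representation in that framework at once. Your version is self-contained and makes two implicit points visible. First, (1) has to be read as SOT convergence; norm convergence fails in general. For instance, take finite-rank projections $S_k\to\id$ and $\int_G\psi\neq 0$: each $\psi\ast S_k$ is then trace class by \eqref{eq:QHAYoung}, while $\psi\ast\id=(\int_G\psi)\,\id$ is not compact. Second, the uniform bound on $(S_k)$, which is automatic for sequences but not for nets, is what drives all three dominations.
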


\subsection{Toeplitz operators}
Now we observe that Toeplitz operators and Berezin transforms can be written as QHA convolutions. Let $\Phi^\nu$ be the rank one operator $\Phi^\nu=1\otimes 1$. 

The following Lemma is \cite[Lemma 3.1]{D25}. We include its short proof for completeness.
\begin{lemma}\label{lem:Toeplitz}
   Let $1\leq p\leq \infty$ and let $a\in \Lp$. Then the Toeplitz operator $T^\nu_a$ can be written as
   \[
      T^\nu_a=c_\nu (a\ast \Phi^\nu)= a\ast (c_\nu \Phi^\nu).
   \]
\end{lemma}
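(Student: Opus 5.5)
We compare both sides as operators on $\Berg$ by testing against normalized reproducing kernels. Since the Berezin transform $B^\nu$ is injective on $\bdd$, it suffices to show that $\ipnu{c_\nu(a\ast\Phi^\nu)k^\nu_z}{k^\nu_z}$ agrees with $\ipnu{T^\nu_a k^\nu_z}{k^\nu_z}$ for all $z\in\bsd$. A cleaner route is to compare the sesquilinear forms $\ipnu{(\cdot)f}{g}$ for $f,g\in\Berg$ directly, which avoids any injectivity argument. The first step is to compute the translates of $\Phi^\nu=1\otimes 1$. Since $\pinu(x)1(z)=k^\nu_{x\cdot0}(z)\cdot 1$, we have
\[
\actopL{x}{\Phi^\nu}=\pinu(x)(1\otimes 1)\pinu(x)^*=(\pinu(x)1)\otimes(\pinu(x)1)=k^\nu_{x\cdot 0}\otimes k^\nu_{x\cdot0}.
\]
The unimodular phase cancels because the same vector appears on both sides of the tensor.

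The second step uses the weak-integral definition of the convolution:
\[
\ipnu{(a\ast\Phi^\nu)f}{g}=\int_G a(x\cdot 0)\ipnu{f}{k^\nu_{x\cdot0}}\ipnu{k^\nu_{x\cdot0}}{g}\haar{x}.
\]
By the reproducing property, $\ipnu{f}{k^\nu_w}=h(w,w)^{\nu/2}f(w)$, and similarly for $g$. The integrand therefore becomes $a(w)f(w)\overline{g(w)}h(w,w)^{\nu}$ with $w=x\cdot0$. By \eqref{eq:integral} and \eqref{eq:integral_nu}, the integral equals
\[
\int_\bsd a f\overline g\, h(w,w)^\nu\inv{w}=\frac1{c_\nu}\int_\bsd af\overline g\,\dv=\frac1{c_\nu}\ipnu{T^\nu_af}{g},
\]
where the last equality holds because $P^\nu$ is the orthogonal projection and $g\in\Berg$. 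Multiplying by $c_\nu$ gives $T^\nu_a=c_\nu(a\ast\Phi^\nu)$. The second equality in the lemma, $c_\nu(a\ast\Phi^\nu)=a\ast(c_\nu\Phi^\nu)$, is just linearity.

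The main point requiring care is justifying the weak integral and the interchange for general $a\in\Lp$. I would first take $f,g$ in a dense class, such as polynomials, which are bounded. Then the integrand $af\overline g\,h^\nu$ is integrable against $\mu_0$: for $p=\infty$ this follows because $h^\nu\in L^1(\mu_0)$ since $\nu>\gen-1$, and for finite $p$ it follows by Hölder's inequality, using $h^{\nu p'}\in L^1(\mu_0)$. The identity of forms then holds on this dense set. Since both sides define bounded operators, by the QHA Young inequality \eqref{eq:QHAYoung} and the bound $\|T^\nu_a\|_p\le\|a\|_p$, the identity extends by density to all of $\Berg$.
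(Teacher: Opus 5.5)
Your proof is correct and is essentially the paper's argument. You compute $\actopL{x}{\Phi^\nu}=k^\nu_{x\cdot 0}\otimes k^\nu_{x\cdot 0}$ (the paper writes this as $h(x,x)^\nu K^\nu_{x\cdot0}\otimes\overline{K^\nu_{x\cdot0}}$), pair the weak integral against $f,g\in\Berg$, and combine the reproducing property with \eqref{eq:integral} and \eqref{eq:integral_nu}, just in a slightly different order. Your extra density step is harmless but unnecessary, since $|f(w)|\,h(w,w)^{\nu/2}=|\ipnu{f}{k^\nu_w}|\le\normnu{f}$ already makes the integrand absolutely integrable for all $f,g\in\Berg$ and every $a\in\Lp$.
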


\begin{proof}
   First note that 
   \[
      \actopL{x}{\Phi^\nu}=\pinu(x)1\otimes \overline{\pinu(x)1}= h(x,x)^\nu (K_{x\cdot 0}^\nu\otimes \overline{K_{x\cdot 0}^\nu)}.
   \]
   Note that we consider $a$ as a function on $G$ as usual, by writing $a(x)=a(x\cdot 0)$, for $x\in G$. Then for $f_1,f_2\in \Berg$,
   \begin{align*}
      \ipnu{(a\ast \Phi^\nu)f_1}{f_2} & = \int_G a(x\cdot 0) \ipnu{\actopL{x}{\Phi^\nu}f_1}{f_2} \haar{x}\\
        & = \int_G h(x,x)^\nu a(x\cdot 0) \ipnu{(K_{x\cdot 0}^\nu\otimes \overline{K_{x\cdot 0}^\nu})f_1}{f_2} \haar{x}\\
        & = \frac{1}{c_\nu}\int_{\bsd} a(z) \ipnu{(K_z^\nu\otimes \overline{K_z^\nu})f_1}{f_2} d\mu_\nu(z)\ \ \text{(by \ref{eq:integral_nu})}\\
        & =\frac{1}{c_\nu}\int_{\bsd} a(z) f_1(z)\overline{f_2(z)} d\mu_\nu(z)\\
        & =\frac{1}{c_\nu} \ipnu{af_1}{f_2}\\
        & =\frac{1}{c_\nu} \ipnu{T^\nu_af_1}{f_2}.\qedhere
   \end{align*}
\end{proof}

Now we remark that the Berezin transform of an operator can be written as a QHA convolution as well.
\begin{lemma}
   Let $S\in \bdd$. Then 
   \(
      B^\nu(S)=S\ast \Phi^\nu.
   \)
\end{lemma}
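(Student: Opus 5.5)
We show that $(S\ast\Phi^\nu)(x)=B^\nu(S)(x\cdot 0)$ for every $x\in G$, by unwinding the definition of the operator–operator convolution and reading off the translate of $\Phi^\nu$ from the proof of Lemma~\ref{lem:Toeplitz}. Here $B^\nu(S)$ is regarded as a function on $G$ via $x\mapsto B^\nu(S)(x\cdot 0)$.

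\textbf{Computing the translate of $\Phi^\nu$.} By definition,
\[
(S\ast\Phi^\nu)(x)=\tr\bigl(S\,\actopL{x}{\Phi^\nu}\bigr).
\]
This is well defined because $\Phi^\nu=1\otimes 1$ is rank one, hence trace class. In the proof of Lemma~\ref{lem:Toeplitz} it was observed that
\[
\actopL{x}{\Phi^\nu}=\pinu(x)1\otimes\overline{\pinu(x)1}.
\]
From \eqref{eq:rep} we get $\pinu(x)1=k^\nu_{x\cdot 0}$. Therefore $\actopL{x}{\Phi^\nu}$ is the rank-one projection $f\mapsto \ipnu{f}{k^\nu_{x\cdot 0}}\,k^\nu_{x\cdot 0}$. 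The phase ambiguity of the projective representation is irrelevant here: each phase appears together with its conjugate and cancels.

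\textbf{Evaluating the trace.} For any bounded $S$ and unit vector $u$, one has $\tr\bigl(S(u\otimes\overline u)\bigr)=\ipnu{Su}{u}$. To see this, compute the trace in an orthonormal basis whose first element is $u$. Taking $u=k^\nu_z$ with $z=x\cdot 0$ gives
\[
(S\ast\Phi^\nu)(x)=\ipnu{Sk^\nu_z}{k^\nu_z}=B^\nu(S)(z),
\]
which is the claimed identity by \eqref{eq:operator Berezin transform}.

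\textbf{Remaining checks.} The only points needing care are, first, the normalization $\|k^\nu_z\|_\nu=1$, which follows from \eqref{eq:definition_of_normalized_reproducing_kernel}. Second, the identity $\pinu(x)1=k^\nu_{x\cdot 0}$, which is immediate from \eqref{eq:rep} since the constant function $1$ is invariant under composition with $x^{-1}$. Neither step is a real obstacle.
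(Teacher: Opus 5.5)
Your proof is correct and follows the paper's argument: the paper also writes $(S\ast\Phi^\nu)(x)=\Tr(S\,\actopL{x}{\Phi^\nu})$, identifies $\actopL{x}{\Phi^\nu}$ with the rank-one projection $k^\nu_{x\cdot 0}\otimes\overline{k^\nu_{x\cdot 0}}$, and evaluates the trace as $\ipnu{Sk^\nu_{x\cdot 0}}{k^\nu_{x\cdot 0}}=B^\nu(S)(x\cdot 0)$. Your extra checks, that $\pinu(x)1=k^\nu_{x\cdot 0}$ and that $k^\nu_z$ is a unit vector, make explicit what the paper leaves implicit.
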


\begin{proof}
   Note that
   \begin{align*}
      (S \ast \Phi^\nu)(x)&=\Tr(S\actopL{x}{\Phi^\nu})\\
         & =\Tr(S (k^\nu_{x\cdot0}\otimes \overline{k^\nu_{x\cdot 0}}))\\
         & = \ip{Sk^\nu_{x\cdot0}}{ k^\nu_{x\cdot 0}}\\
         & = B^\nu(S)(x\cdot 0).\qedhere
   \end{align*}
\end{proof}

We also compute $\Phi^\nu \ast \Phi^\nu$ in terms of the function $\varphi_\nu$ \eqref{eq:phinu}.
\begin{lemma}\label{lem:convolution of Phinu}
   We have that
   \(
      \Phi^\nu \ast \Phi^\nu=\frac{1}{c_\nu}\varphi_\nu.
   \)
\end{lemma}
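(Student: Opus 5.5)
We compute the convolution $\Phi^\nu\ast\Phi^\nu$ directly from the definition of the convolution of two operators. It is then a matter of evaluating the trace of a product of two rank-one operators.

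By definition,
\[
   (\Phi^\nu\ast\Phi^\nu)(x)=\tr\big(\Phi^\nu\,\actopL{x}{\Phi^\nu}\big), \quad x\in G .
\]
From the proof of Lemma~\ref{lem:Toeplitz} we already know
\[
   \actopL{x}{\Phi^\nu}=\pinu(x)1\otimes\overline{\pinu(x)1}=k^\nu_{x\cdot 0}\otimes\overline{k^\nu_{x\cdot 0}},
\]
since $\pinu(x)1=k^\nu_{x\cdot 0}$ by \eqref{eq:rep}. Here $1$ is a unit vector in $\Berg$, because $\dv$ is a probability measure.

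For rank-one operators, $\tr\big((u\otimes\bar u)(v\otimes\bar v)\big)=|\ipnu{u}{v}|^2$. Therefore
\[
   (\Phi^\nu\ast\Phi^\nu)(x)=|\ipnu{1}{k^\nu_{x\cdot0}}|^2 .
\]

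Next we evaluate this inner product with the reproducing property. Writing $z=x\cdot 0$, we have $\ipnu{1}{K^\nu_z}=1$, so
\[
   \ipnu{1}{k^\nu_z}=\frac{1}{\normnu{K^\nu_z}}=h(z,z)^{\nu/2}.
\]
Here we use $\normnu{K^\nu_z}^2=K^\nu(z,z)=h(z,z)^{-\nu}$, together with \eqref{eq:definition_of_normalized_reproducing_kernel}. The quantity $h(z,z)$ is positive by \eqref{eq:formula_for_Jordan_triple_system}, so no branch issues arise. Hence
\[
   (\Phi^\nu\ast\Phi^\nu)(x)=h(x\cdot 0,x\cdot 0)^\nu=\frac{1}{c_\nu}\,\varphi_\nu(x\cdot 0)
\]
by \eqref{eq:phinu}, which is the identity claimed in Lemma~\ref{lem:convolution of Phinu}, read as functions on $G$ via $\tilde f(x)=f(x\cdot0)$.

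The only point requiring care is the convention for the tensor notation: $u\otimes\bar v$ is the operator $f\mapsto\ipnu{f}{v}u$, and the trace identity for rank-one products uses it. Everything else is immediate. An alternative route is to use $B^\nu(S)=S\ast\Phi^\nu$ with $S=\Phi^\nu$, which gives $B^\nu(\Phi^\nu)(z)=|\ipnu{k^\nu_z}{1}|^2$ at once. This yields the same computation.
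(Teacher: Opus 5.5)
Your proof is correct and follows essentially the paper's argument. The paper evaluates $(\Phi^\nu\ast\Phi^\nu)(x)$ as $\ipnu{\Phi^\nu k^\nu_{x\cdot 0}}{k^\nu_{x\cdot 0}}=\ipnu{k^\nu_{x\cdot 0}}{1}\ipnu{1}{k^\nu_{x\cdot 0}}$, which is the Berezin-transform route you mention at the end and the same quantity as your $|\ipnu{1}{k^\nu_{x\cdot 0}}|^2$ from the rank-one trace identity, and then uses $\ipnu{1}{K^\nu_z}=1$ to obtain $h(z,z)^\nu=\frac{1}{c_\nu}\varphi_\nu(z)$.
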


\begin{proof}
   Note that
   \begin{align*}
      (\Phi^\nu \ast \Phi^\nu)(z)&= \ipnu{\Phi^\nu k^\nu_{x\cdot0}}{ k^\nu_{x\cdot 0}}\\
         & =\ipnu{k^\nu_{x\cdot 0}}{1}\ipnu{1}{k^\nu_{x\cdot 0}}\\
         & =h(z,z)^\nu\ipnu{K^\nu_{x\cdot 0}}{1}\ipnu{1}{K^\nu_{x\cdot 0}}\\
         & =h(z,z)^\nu. \qedhere
   \end{align*}
\end{proof}

\subsection{Radial operators and their convolutions}

Recall that an operator $S\in \bdd$ is said to be $K$-invariant (radial) if $\actopL{k}{S}=S$ for all $k\in K$. Utilizing the fact that $\bsd$ is a commutative space, we prove additional commutativity properties satisfied by the QHA convolutions of $ K$-invariant operators. As Toeplitz operators are QHA convolutions with the rank one radial operator $\Phi^\nu$, these results play a central role in the theory of Toeplitz operators on $\Berg$. The results we proved here were proved for the unweighted Bergman space of the unit ball $\B^n$ in $\C^n$ in \cite{DDMO24}.

Note that for $k\in K$ and $f\in \Berg$, we have $\pinu(k)f=k^\nu_{k\cdot 0}\actfL{k}{f}=\actfL{k}{f}$ \eqref{eq:translation}. Therefore, by Theorem \ref{theo:FK radial decomposition} and the Schur's lemma, a radial operator $S\in \bdd$ is diagonalizable and acts by constant multiples of the identity on the polynomial spaces $\cP_{\textbf{m}}$.

Recall that the radialization (the $K$-average) of an operator $S\in \bdd$ is given by
\[
    \Rad{S}=\int_{K} \actopL{k}{S} \haarK{k} 
\]
where $d\mu_K$ is the normalized Haar measure on $K$. Then $\|\Rad{S}\|\leq \|S\|$, and moreover, an operator $S\in \bdd$ is radial if and only if $\Rad{S}=S$. In particular, for the projections $P_\textbf{m}:\Berg\to \cP_\textbf{m}$, we have that $\Rad{P_{\textbf{m}}}=P_\textbf{m}$.

\begin{lemma}\label{lem:convolution_rad}
    Let $\psi:G\to \C$ be a function and let $S\in \bdd$ such that the convolution $\psi\Last S$ is defined. Then 
    \begin{enumerate}
        \item If $\psi$ is right $K$-invariant (i.e. $\psi$ is a function on $\bsd$), then $\psi \Last S=\psi \Last \Rad{S}.$
        \item If $\psi$ is left $K$-invariant, then $\psi\ast S$ is $K$-invariant.
    \end{enumerate}
\end{lemma}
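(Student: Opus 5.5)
Both parts come down to changing variables in the weak integral that defines $\psi\ast S$, combined with the homomorphism property $\ActopL{xy}=\ActopL{x}\circ\ActopL{y}$ and the invariance of the Haar measure $\mu_G$. $G$ is unimodular, so $\mu_G$ is both left and right invariant. I would first prove both identities in the weak sense, pairing against $f_1,f_2\in\Berg$ (or against trace-class $B$ via \eqref{eq:duality}). Then Fubini applies whenever the defining integral converges absolutely, for instance for $\psi\in L^1(G)$; other cases follow by the same weak-integral manipulation.

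For (1), suppose $\psi(xk)=\psi(x)$ for all $k\in K$. For a fixed $k\in K$ I would substitute $x\mapsto xk$. Right invariance of Haar measure and of $\psi$ give
\[
\psi\ast S=\int_G\psi(xk)\,\actopL{xk}{S}\haar{x}=\int_G\psi(x)\,\actopL{x}{\actopL{k}{S}}\haar{x}=\psi\ast\actopL{k}{S}.
\]
Next I integrate this identity over $k\in K$ against the normalized Haar measure $\mu_K$. Fubini on $G\times K$ is justified because $\|\actopL{x}{\actopL{k}{S}}\|=\|S\|$, so for $\psi\in L^1$ the integrand is absolutely integrable after pairing with vectors. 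Since $\psi$ does not depend on $k$, the $K$-integral passes inside, and the right-hand side becomes $\int_G\psi(x)\,\actopL{x}{\Rad{S}}\haar{x}=\psi\ast\Rad{S}$. This gives $\psi\ast S=\psi\ast\Rad{S}$.

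For (2), suppose $\psi(kx)=\psi(x)$. By Lemma~\ref{lem:translation_convol}(1), $\actopL{k}{(\psi\ast S)}=(\actfL{k}{\psi})\ast S$. Here $\actfL{k}{\psi}(x)=\psi(k^{-1}x)=\psi(x)$, so $\actfL{k}{\psi}=\psi$ and $\actopL{k}{(\psi\ast S)}=\psi\ast S$ for all $k\in K$, which is the definition of radiality. If one prefers not to rely on that lemma, it can be verified directly by substituting $x\mapsto kx$ and using left invariance of $\mu_G$.

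The only delicate point I expect is the interchange of the $K$- and $G$-integrals in (1) when $\psi\ast S$ is defined only as a weak integral under weaker hypotheses, for example $\psi\in L^p$ with $S\in\cS^{p'}$. I would handle this by pairing with $B$ and noting that $(x,k)\mapsto\psi(x)\tr(B\,\actopL{xk}{S})$ is dominated by $|\psi(x)|\,|(B\ast S)(x)|$ up to the $K$-average. Hölder and the compactness of $K$ then make this integrable whenever $\psi\ast S$ is defined.
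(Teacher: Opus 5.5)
Your proposal is correct and follows essentially the same route as the paper. For (1), the paper also shows $\psi\ast\lambda^\nu_k(S)=(r_{k^{-1}}\psi)\ast S=\psi\ast S$ by the right-translation change of variables and then averages over $K$; for (2), it likewise uses Lemma~\ref{lem:translation_convol}(1) together with $\ell_k\psi=\psi$. Your Fubini justification supplies the detail the paper leaves to \cite[Lemma 3.2]{DDMO24}.
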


\begin{proof}
    Note that for $k\in K$, $\psi\ast \actopL{k}{S}=(\actfR{k^{-1}}{\psi})\ast S= \psi\ast S$. Then (1) follows by averaging over $K$. See \cite[Lemma 3.2]{DDMO24} for a complete similar proof.

    Similarly, $\actopL{k}{\psi\ast S}= (\actfL{k}{\psi})\ast S=\psi\ast S$ for $k\in K$. Hence $\psi\ast S$ is $K$-invariant.
\end{proof}

The proof of the following proposition is similar to the proof of \cite[Lemma 3.11]{DDMO24}. However, we include it due to its importance.

\begin{theorem}\label{theo:radial_convol}
   Let $S\in \bdd$ and let $A,B\in \traceop^K$. Then
   \begin{enumerate}
      \item $S\ast A$ is a function on $\bsd$.
      \item If $S$ is $K$-invariant $S\ast A=A\ast S$.
      \item  $(S\ast A)\ast B=(S\ast B)\ast A$.
      \item If $S$ is also $K$-invariant, $(S\ast A)\ast B=(A\ast B)\ast S$.
   \end{enumerate}
\end{theorem}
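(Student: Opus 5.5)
Each part reduces to a fact already available: $K$-invariance of $A$ and $B$, the symmetry relation $(S\ast A)(x)=(A\ast S)(x^{-1})$, Lemma~\ref{lem:radial_functions}, the duality \eqref{eq:duality}, and the commutativity of convolution of radial functions on $\bsd$. I will prove them in order.

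\textbf{(1).} For $k\in K$ and $x\in G$, the fact that $\ActopL{}$ is a representation gives
\[
(S\ast A)(xk)=\tr\bigl(S\,\actopL{x}{\actopL{k}{A}}\bigr)=\tr\bigl(S\,\actopL{x}{A}\bigr)=(S\ast A)(x).
\]
So $S\ast A$ is right $K$-invariant, i.e.\ a function on $\bsd$.

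\textbf{(2).} If $S$ is also $K$-invariant, the same computation on the left gives $(S\ast A)(kx)=\tr(\actopL{k^{-1}}{S}\,\actopL{x}{A})=(S\ast A)(x)$, using cyclicity of the trace. Hence $S\ast A$ is $K$-bi-invariant, i.e.\ radial. By Lemma~\ref{lem:radial_functions}(2), $\widetilde{\psi}(x)=\widetilde{\psi}(x^{-1})$ for radial $\psi$. Combining this with $(S\ast A)(x)=(A\ast S)(x^{-1})$ gives $S\ast A=A\ast S$.

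\textbf{(3).} I would test both sides weakly. For $T\in\traceop$, the duality \eqref{eq:duality} together with the identity $(T\ast S)(x)=(S\ast T)(x^{-1})$ gives
\[
\tr\bigl(T\,((S\ast A)\ast B)\bigr)=\int_G (S\ast A)(x)\,(B\ast T)(x)\haar{x}.
\]
Associativity (ii) lets me rewrite $(S\ast A)\ast B$, but the cleanest route is to reduce to radial functions. Let $\varphi_1=S\ast A$, $\varphi_2=B\ast T$; let $\psi_1=T\ast A$, $\psi_2=B\ast S$.
\begin{itemize}
\item By (1), $\varphi_1$ is right $K$-invariant. Since $B$ is radial, $\varphi_2$ is left $K$-invariant.
\item The pairing can therefore be written as $\int_G \varphi_1(x)\,\varphi_2(x)\haar{x}=(\varphi_1\ast\check\varphi_2)(e)$, where $\check\varphi(x)=\varphi(x^{-1})$.
\item Swapping $A$ and $B$ should give the same value by the commutativity of the algebra $L^1(K\backslash G/K)$. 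To invoke it, I first replace $S$ and $T$ by their radializations.
\item Radialization is allowed because both $A$ and $B$ are $K$-invariant: integrating the expression against $k\in K$ and applying $\actopL{k}{}$ to $S$ and $T$ jointly leaves the pairing unchanged.
\item This is the analogue of Lemma~\ref{lem:convolution_rad}(1), and the main bookkeeping step.
\end{itemize}
Once $S$ and $T$ are radial, every function involved is radial. Then (2) and the commutativity of radial convolution give
\[
\int (S\ast A)(B\ast T)=\int (S\ast B)(A\ast T).
\]

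\textbf{(4).} This follows by combining (3) with (2) applied to the radial pair $(S,A)$:
\[
(S\ast A)\ast B=(A\ast S)\ast B=(A\ast B)\ast S,
\]
where the last equality is (3) with $A$ in the role of the general operator and $S,B$ as the radial ones. This requires $S$ to be trace class. Otherwise I would approximate $S$ in SOT by radial finite-rank truncations $\sum_{|\mathbf m|\le N} s_{\mathbf m}P_{\mathbf m}$ and pass to the limit in WOT via Theorem~\ref{theo:convergence}(3).

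The main obstacle is the justification in (3): making the double-radialization argument precise, and handling the trace-class and integrability requirements, e.g.\ via Young's inequality \eqref{eq:QHAYoung ops} and Fubini.
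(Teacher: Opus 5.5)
\textbf{There is a genuine gap in part (3).}

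Parts (1) and (2) are correct and match the paper: right and left $K$-invariance of $S\ast A$, then Lemma~\ref{lem:radial_functions}(2) combined with $(S\ast A)(x)=(A\ast S)(x^{-1})$.

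The radialization step in (3) fails. By (1), your pairing is $\int_{\bsd} f_1 f_2\, d\mu_0$ with $f_1=S\ast A$ and $f_2=T\ast B$. (After \eqref{eq:duality} it is $T\ast B$, not $B\ast T$; the two differ by $x\mapsto x^{-1}$.) Since $\lambda^\nu_k(S)\ast A=\ell_k(S\ast A)$, replacing $S,T$ by $S^\#,T^\#$ replaces $f_1,f_2$ by their radializations $f_1^\#,f_2^\#$.

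The joint invariance you observed only gives $\int(\ell_k f_1)(\ell_k f_2)=\int f_1f_2$. That is invariance under the diagonal of $K\times K$, and it does not make either factor radial. In general $\int f_1f_2\neq\int f_1^\# f_2^\#=\int f_1 f_2^\#$, because the non-radial parts of $f_1$ and $f_2$ pair nontrivially.

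Even when $S,T,A,B$ are all radial, the identity $\int(S\ast A)(T\ast B)=\int(S\ast B)(T\ast A)$ exchanges $A$ and $B$ between two different factors. Commutativity of radial convolution evaluated at $e$ only says $\int f_1f_2=\int f_2f_1$. If you unwind it with associativity and (2), you are led back to (3) itself.

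\textbf{The missing idea} is injectivity of the Berezin transform $S\mapsto S\ast\Phi^\nu$, used to promote a function identity to an operator identity. Set $a=A\ast\Phi^\nu=\Phi^\nu\ast A$ and $b=B\ast\Phi^\nu=\Phi^\nu\ast B$; these are radial functions, and the equalities hold by (2). Associativity gives $((\Phi^\nu\ast B)\ast A)\ast\Phi^\nu=b\ast a$. Commutativity of radial functions, as functions rather than at a point, gives $b\ast a=a\ast b=((\Phi^\nu\ast A)\ast B)\ast\Phi^\nu$. Injectivity then yields $(\Phi^\nu\ast B)\ast A=(\Phi^\nu\ast A)\ast B$.

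For arbitrary $S$ no radialization is needed, because $S$ enters linearly. A change of variables gives $(S\ast A)\ast b=S\ast(b\ast A)$ for radial $b$. So the Berezin transforms of $(S\ast A)\ast B$ and $(S\ast B)\ast A$ are $S\ast(b\ast A)$ and $S\ast(a\ast B)$, which coincide, and injectivity again finishes. The paper does the same step via the duality \eqref{eq:duality}.

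Your part (4) is otherwise sound. It only uses (3) with the radial operator $A$ in the general slot. Your SOT approximation of a bounded radial $S$ by $\sum_{|\mathbf m|\le N}s_{\mathbf m}P_{\mathbf m}$, with the limit taken via Theorem~\ref{theo:convergence}, correctly handles the trace-class requirement that the paper's one-line proof of (4) passes over silently.
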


\begin{proof}
    (1) follows from $\actfR{k}{(S\ast A)}=S\ast \actopL{k}{A}=S\ast A$ for $k\in K$. And (2) follows from the fact that $\actfL{k}{(S\ast A)}=(\actopL{k}{S})\ast A=S\ast A$.

    To prove (3), first we note that
    \begin{align*}
        (\Phi^\nu\ast B)\ast (A\ast \Phi^\nu) = (\Phi^\nu\ast A)\ast (B\ast \Phi^\nu)
    \end{align*}
    by (1) and (2), along with the commutativity of convolutions of radial functions. Then by the injectivity of the Berezin transform, we obtain 
    \begin{equation}\label{eq:radial assoc}
        (\Phi^\nu\ast B)\ast A= (\Phi^\nu\ast A)\ast B.
    \end{equation}
    Now by \eqref{eq:duality}, for any $\psi\in L^1(G)$, 
    \begin{align*}
        \langle (S\ast A)\ast (B\ast \Phi^\nu),\psi\rangle_{L^1}&= \langle(S\ast A)\ast B, \psi\ast \Phi^\nu\rangle_{\operatorname{Tr}}\\
        &= \langle S\ast A, \psi\ast (\Phi^\nu\ast B)\rangle_{L^1}\\
        &= \langle S,\psi\ast (\Phi^\nu\ast B)\ast A\rangle_{\operatorname{Tr}}\\
        &= \langle S, \psi\ast (\Phi^\nu\ast A)\ast B\rangle_{\operatorname{Tr}}\quad \text{(by \ref{eq:radial assoc})}\\
        &=\langle (S\ast B)\ast (A\ast \Phi^\nu), \psi\rangle_{L^1}.
    \end{align*}
    Then it follows that $(S\ast A)\ast (B\ast \Phi^\nu)=(S\ast B)\ast (A\ast \Phi^\nu)$. Again by the injectivity of the Berezin transform, we have $(S\ast A)\ast B=(S\ast B)\ast A$.

    For the proof of (4), note that  $(S\Last A)\Last B= (A\Last S)\Last B=(A\Last B)\Last S$.
\end{proof}

\subsection{Uniform continuity and QHA}
The notion of uniform continuity for functions can be defined using the group $G$. However, since $G$, is noncommutative, the definition depends on the type of translation chosen. 

The right-translation of a function $f$ on $\bsd$ by $g\in \G$, denoted $\actfR{x}{f}$, is  the function on $\G$ given by
\[
    (\actfR{x}{f})(y)=f(yx\cdot 0), \ \ \forall y\in \G.
\]
The right translation $\actfR{x}{f}$ is generally not a right-$K$-invariant function on $G$ and hence
does not define a function on $\bsd$. 
We also define the right translation of $f$ by $z\in \bsd$ as follows:
\begin{equation}\label{eq:ztranslation}
    \actfR{z}{f}:=\actfR{\tau_{z}}{f},
\end{equation}

where $\tau_z$ is the involution interchanging $0$ and $z$.
Since each $x\in \G$ can be written as $x=\tau_{x\cdot 0}k_x$ for some $k_x\in K$, we have that  \( \actfR{x\cdot 0}{f}=\actfR{x}{f}.\)
\begin{definition}
    Let $f\in L^\infty(G)$. 
    \begin{enumerate}
        \item We say that $f$ is \textbf{left-$G$-uniformly continuous} if the map $G\to \bddfG$, $x\mapsto\actfL{x}{f}$, is continuous with respect to the norm topology on $L^\infty(G)$. 
        \item We say that $f$ is \textbf{right-$G$-uniformly continuous} if the map $G\to \bddfG$, $x\mapsto\actfR{x}{f}$, is continuous with respect to the norm topology on $L^\infty(G)$.
    \end{enumerate}
    The set of all bounded, left-$G$-uniformly continuous functions on $\bsd$ is denoted by $\cbufL$, and the set of all bounded, right-$G$-uniformly continuous functions on $\bsd$ is denoted by $\cbufR$ (recall here that we are identifying functions on $\bsd$ with right-$K$-invariant functions on $G$.
\end{definition}

The uniformly continuous functions $\cbufL$ and $\cbufR$ form $C^*$-subalgebras of $\bddf$. However, our primary focus is $\cbufR$. The following lemma is easy to prove:
Moreover, the translations of convolutions behave as follows:
\begin{lemma}\label{lem:rtranslation_convol}
    Let  $S,T\in\bdd$ s.t. $S\ast T$ is defined. Then for $x\in G$,
    $$\actfR{x}{(S\ast T)}=S\ast \actopL{x}{T}.$$
\end{lemma}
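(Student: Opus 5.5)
This is a direct computation from the definitions; no earlier result beyond the fact that $\ActopL{}$ is a genuine representation is needed. Write both sides as functions on $G$ and evaluate at an arbitrary $y\in G$.

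First I unfold the definitions. By the definition of right translation, $\actfR{x}{(S\ast T)}(y)=(S\ast T)(yx)$. By the definition of the operator convolution, this equals
\[
   \tr\bigl(S\,\actopL{yx}{T}\bigr).
\]

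Next I use that $\ActopL{}$ is a genuine representation of $G$ (the lemma proved after \eqref{eq:translationops}, which absorbs the multiplier $m_\nu$ from Proposition~\ref{prop:multiplier}). It gives $\ActopL{yx}=\ActopL{y}\circ\ActopL{x}$. Hence
\[
   \tr\bigl(S\,\actopL{yx}{T}\bigr)=\tr\bigl(S\,\actopL{y}{\actopL{x}{T}}\bigr)=\bigl(S\ast \actopL{x}{T}\bigr)(y),
\]
where the last equality is again the definition of the convolution, now with $T$ replaced by $\actopL{x}{T}$.

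The only point needing care is that the right-hand convolution is defined whenever the left-hand one is. Suppose $T$ lies in some Schatten class $\cS^q$ and $S$ in the dual class. Since $\ActopL{x}$ preserves each $\cS^q$ isometrically, $\actopL{x}{T}$ lies in the same class, so the trace exists. The roles can also be reversed, with $S$ trace class and $T$ bounded, and then the same argument applies.

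The identity holds pointwise on $G$, so it holds as functions. I do not expect any real obstacle here. The one thing to watch is the order of composition: the right translation by $x$ becomes the inner factor $\ActopL{x}$, applied to $T$ before $\ActopL{y}$.
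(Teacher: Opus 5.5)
Your proof is correct and is exactly the direct calculation the paper has in mind (it omits the proof): evaluating at $y\in G$, $(S\ast T)(yx)=\tr\bigl(S\,\actopL{yx}{T}\bigr)=\tr\bigl(S\,\actopL{y}{\actopL{x}{T}}\bigr)=\bigl(S\ast\actopL{x}{T}\bigr)(y)$, using that $\lambda^\nu$ is a genuine representation because the unimodular multiplier $m_\nu$ cancels under conjugation. Your remark that $\lambda^\nu_x$ acts isometrically on each Schatten class, and on $\bdd$, also settles why the right-hand convolution is defined whenever the left-hand one is.
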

The proof is a direct calculation which we omit.

\begin{lemma}\label{lem:convolution_in_cbuR}
        Let $S\in \bdd$ and let $A\in \traceop$ be a radial operator. Then $S\Last A\in \cbufR$.
    \end{lemma}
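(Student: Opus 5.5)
We need to show three things: $S\ast A$ is a bounded function on $\bsd$, it is right-$K$-invariant, and the map $x\mapsto \actfR{x}{(S\ast A)}$ from $G$ into $\bddfG$ is norm continuous. The plan is to move the right translation onto the operator $A$ using Lemma~\ref{lem:rtranslation_convol}. The problem then reduces to the strong continuity of $\ActopL{x}$ on $\traceop$, which is where the hypothesis $A\in\traceop$ matters.

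First, the function is well defined and bounded. Since $A$ is radial, Theorem~\ref{theo:radial_convol}(1) shows that $S\ast A$ is right-$K$-invariant, so it is a function on $\bsd$. Boundedness follows from
\[
|(S\ast A)(y)| = |\tr(S\actopL{y}{A})| \le \|S\|\,\|\actopL{y}{A}\|_1 = \|S\|\,\|A\|_1 .
\]
Here we use that each $\ActopL{y}$ is conjugation by a unitary, and therefore an isometry of $\traceop$.

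Second, uniform continuity. Fix $x_0\in G$. By Lemma~\ref{lem:rtranslation_convol}, for every $x\in G$,
\[
\actfR{x}{(S\ast A)}-\actfR{x_0}{(S\ast A)} = S\ast\bigl(\actopL{x}{A}-\actopL{x_0}{A}\bigr).
\]
Applying the same trace estimate pointwise in $y$, with $\actopL{x}{A}-\actopL{x_0}{A}$ in place of $A$, gives
\[
\bigl\|\actfR{x}{(S\ast A)}-\actfR{x_0}{(S\ast A)}\bigr\|_\infty \le \|S\|\,\bigl\|\actopL{x}{A}-\actopL{x_0}{A}\bigr\|_1 .
\]
The right-hand side tends to $0$ as $x\to x_0$, by the strong continuity of the representation $\ActopL{x}$ on $\cS^1(\cA^2_\nu)$ stated in Section~\ref{sec:QHA}. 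Hence $x\mapsto \actfR{x}{(S\ast A)}$ is norm continuous into $\bddfG$, and $S\ast A\in\cbufR$.

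The only step needing care is Lemma~\ref{lem:rtranslation_convol}, whose proof the paper omits. I would verify it directly:
\[
\actfR{x}{(S\ast T)}(y) = (S\ast T)(yx) = \tr\bigl(S\actopL{yx}{T}\bigr) = \tr\bigl(S\actopL{y}{\actopL{x}{T}}\bigr) = \bigl(S\ast\actopL{x}{T}\bigr)(y).
\]
The third equality uses that $\ActopL{}$ is a genuine representation, not merely a projective one, which was established above via Proposition~\ref{prop:multiplier}. Radiality of $A$ is used only to ensure that $S\ast A$ descends to $\bsd$.
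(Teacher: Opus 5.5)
Your proof is correct and follows the paper's argument. You get right-$K$-invariance from Theorem~\ref{theo:radial_convol}(1), move the right translation onto $A$ via Lemma~\ref{lem:rtranslation_convol}, and combine trace-norm continuity of $x\mapsto\actopL{x}{A}$ with the bound $\|S\ast T\|_\infty\le\|S\|\,\|T\|_1$, which is exactly the case of the QHA Young inequality the paper invokes. Your direct verification of Lemma~\ref{lem:rtranslation_convol}, using that $\ActopL{x}$ is a genuine representation, supplies the computation the paper omits.
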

    \begin{proof}
    By Theorem \ref{theo:radial_convol}, we have that $S\ast A$ is a function on $\bsd$. Additionally, $\actfR{x}{(S\ast A)}=S\ast \actopL{x}{A}$, and the map $x\mapsto \actopL{x}{A}$ is continuous from $G$ to $\traceop$. Then the result is true by QHA Young's inequality.
    \end{proof}

Recall the notion of Dirac sequences for $L^1(G/K)\simeq \Lone$.

\begin{definition}
    A sequence $\{\psi_\alpha\}_{\alpha\in \N}\subset \Lone$ is called a $G/K$-Dirac sequence if it satisfies the following:
    \begin{enumerate}
        \item $\psi_\alpha\geq 0$,
        \item $\|\psi_\alpha\|_1=1$,
        \item For each neighborhood $U$ of $0$, we have
        $\int_{\bsd\setminus U} \psi_\alpha(z) \inv{z}\to 0,$
        \item $\psi_\alpha$ is radial.
    \end{enumerate}
\end{definition}

Any $G/K$-Dirac sequence $\{\psi_\alpha\}_{\alpha\in \N}\subset \Lone$ forms a right approximate identity for $\Lone$. Moreover, we have the following :
\begin{proposition}\label{prop:cbuRf}
    Let $\{\psi_\alpha\}\subset \Lone$ be a $G/K$-Dirac sequence. Then the space $\cbufR$ has the following equivalent characterizations.
    \begin{align*}
    \cbufR&=\{a\in \bddf\mid g\mapsto \actfR{g}{a} \text{ is continuous from $\G$ to $\bddfG$} \}\\
    &=\{a\in \bddf\mid z\mapsto \actfR{z}{a}\   \text{ is continuous from $\bsd$ to $\bddfG$} \}\\
    &= \{a\in \bddf\mid a\ast \psi_\alpha\stackrel{\alpha\rightarrow\infty}{\longrightarrow} a \text{ in $\bddf$ with respect to  $\|\cdot\|_\infty$} \}\\
    &=\overline{\bddf\last \Lone}^{L^\infty}\\
    &=\cbufR\last \Lone.
\end{align*}
where $\tau_z$ is as in \eqref{eq:ztranslation}.
\end{proposition}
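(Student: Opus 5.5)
We prove the five descriptions of $\cbufR$ equal by a cycle of inclusions. Call the sets $X_1,\dots,X_5$ in the order listed, so $X_1=\cbufR$. Then $X_1=X_2$ is the definition of $\cbufR$. For $X_2=X_3$ we use Lemma~\ref{lem:Aut}: each $x\in G$ factors as $x=\tau_{x\cdot 0}k_x$, and $a$ is a function on $\bsd$. Together these give $\actfR{x}{a}=\actfR{x\cdot 0}{a}$.

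\emph{Direction $X_2\subseteq X_3$.} The map $z\mapsto \tau_z$ is continuous, being built from geodesic midpoints and the symmetries $\sigma_{z_0}$. Hence continuity of $x\mapsto\actfR{x}{a}$ gives continuity of $z\mapsto \actfR{z}{a}$.

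\emph{Direction $X_3\subseteq X_2$.} The map $x\mapsto \actfR{x}{a}=\actfR{x\cdot 0}{a}$ is the composition of the continuous projection $G\to\bsd$ with a continuous map. Hence it is continuous.

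Next we show $X_2\subseteq X_4$, the approximate-identity step. Since $\psi_\alpha$ is radial, by Lemma~\ref{lem:radial_functions} and the usual unfolding of the convolution we can write
\[
(a\ast\psi_\alpha)(y)-a(y)=\int_G \psi_\alpha(x)\big(a(yx\cdot 0)-a(y\cdot 0)\big)\haar{x},
\]
using $\tilde\psi(x)=\tilde\psi(x^{-1})$ and unimodularity. Taking the sup over $y$ gives
\[
\|a\ast\psi_\alpha-a\|_\infty\le\int_G\psi_\alpha(x)\,\|\actfR{x}{a}-a\|_\infty\haar{x}.
\]
Fix $\epsilon>0$ and choose a neighborhood $U$ of $0$ such that $\|\actfR{x}{a}-a\|_\infty<\epsilon$ whenever $x\cdot 0\in U$. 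Split the integral over $U$ and over its complement. On $U$ the contribution is at most $\epsilon$, using $\|\psi_\alpha\|_1=1$ and $\psi_\alpha\geq 0$. Off $U$ it is at most $2\|a\|_\infty\int_{\bsd\setminus U}\psi_\alpha\to0$ by property (3) of a Dirac sequence.

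Then $X_4\subseteq X_5$ is immediate, since $a$ is the $L^\infty$-limit of $a\ast\psi_\alpha\in \bddf\last\Lone$.

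For $X_5\subseteq X_2$, first take $b\in\bddf$ and $\psi\in\Lone$. A change of variables gives
\[
\actfR{x}{(b\ast\psi)}=b\ast(\actfR{x}{\psi}).
\]
Therefore
\[
\|\actfR{x}{(b\ast\psi)}-b\ast\psi\|_\infty\le\|b\|_\infty\|\actfR{x}{\psi}-\psi\|_{L^1(G)}.
\]
The right-hand side tends to $0$ as $x\to e$ by continuity of right translation in $L^1(G)$, which holds because $G$ is unimodular. This establishes continuity at $e$, and continuity at an arbitrary point $x_0$ follows from $\actfR{x}{}=\actfR{x_0}{}\actfR{x_0^{-1}x}{}$ and isometry. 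Finally, the set of $a$ with $x\mapsto\actfR{x}{a}$ continuous is norm closed, by a standard $\epsilon/3$ argument. So the closure of $\bddf\last\Lone$ lies in $X_2$.

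It remains to place $X_6:=\cbufR\last\Lone$ in the chain. Trivially $X_6\subseteq X_5$. Conversely, we need a Cohen–Hewitt factorization: $\cbufR$ is a Banach right $\Lone$-module, via $L^1(G)$ acting by convolution on the right, which preserves right-uniform continuity as shown above. By the step $X_2\subseteq X_4$, a bounded approximate identity acts continuously on it. Cohen's factorization theorem then gives $\cbufR=\cbufR\last L^1$.

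The main obstacle is this last step. The $L^1$ here must be an algebra, so we may need to work with $L^1(G)$ acting on right-$K$-invariant functions rather than $\Lone$ itself. The factor from Cohen's theorem must then be shown to be $K$-invariant, or can be replaced by its radialization without changing the product. We would handle this using Lemma~\ref{lem:convolution_rad}-type averaging over $K$ and the fact that $\Lone^K$ is a commutative algebra containing the Dirac sequence.
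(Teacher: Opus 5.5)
Your proof is correct and is the standard argument the paper omits (it only refers to \cite[Proposition 4.7]{DDMO24}): the cycle of inclusions via $x=\tau_{x\cdot 0}k_x$, the approximate-identity estimate using radiality of $\psi_\alpha$, continuity of right translation in $L^1(G)$ for elements of $\bddf\last\Lone$, and Cohen--Hewitt factorization for the last equality. The obstacle you flag at the end is not real. $\Lone$ is already a Banach algebra, since right $K$-invariance of $f\ast g$ is inherited from $g$, and any $G/K$-Dirac sequence is a bounded right approximate identity for it (run the same splitting argument with $\|\actfR{x}{f}-f\|_{L^1}$). So Cohen--Hewitt applies directly to the right $\Lone$-module $\cbufR$; passing to $L^1(G)$ would take you out of $\cbufR$, because right convolution by non-right-$K$-invariant functions does not preserve it, though your fallback via the commutative algebra $\Lone^K$ also works.
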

\begin{proof}
    This only uses standard ideas. For a similar proof, we point to \cite[Proposition 4.7]{DDMO24}.
\end{proof}

Now we define uniform continuity of operators.
\begin{definition}
    An operator $S\in \bdd$ is said to be left uniformly continuous if the translation map $x\mapsto \actopL{x}{S}$ is a continuous map from $G$ to $\bdd$, with respect to the norm topology. 
\end{definition}
The set of all left uniformly continuous operators is denoted by $\cbuopL$, which forms a $C^*$-subalgebra of $\bdd$. For details on what follows, we point to \cite[Section 4]{DDMO24}.

\begin{lemma} The following statements are true:
   \begin{enumerate}
        \item All compact operators are in $\cbuopL$.
        \item If $\psi\in \LoneG$ and $S\in \bdd$, then $\psi\Last S\in \cbuopL$.
        \item If $\psi\in \cbufL$ and $S\in \traceop$, then $\psi\Last S\in \cbuopL$.
    \end{enumerate}
\end{lemma}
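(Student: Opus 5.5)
Each of the three statements will be proved directly from the definition of $\cbuopL$: we have to show that $x\mapsto \actopL{x}{T}$ is norm continuous. Since $\ActopL{x}$ is a group representation acting by isometries in operator norm, it is enough to check continuity at the identity $e\in G$. Indeed, $\|\actopL{x}{T}-\actopL{y}{T}\| = \|\actopL{y^{-1}x}{T}-T\|$.

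\emph{Statement (1).} Let $F=u\otimes \overline{v}$ be a rank one operator. Then $\actopL{x}{F}=(\pinu(x)u)\otimes\overline{\pinu(x)v}$, and the phases coming from the multiplier $m_\nu$ cancel. This gives
\[
\|\actopL{x}{F}-F\|\leq \|\pinu(x)u-u\|\,\|v\|+\|u\|\,\|\pinu(x)v-v\|.
\]
The right-hand side tends to $0$ as $x\to e$ by strong continuity of $\pinu$. Strong continuity holds because $\widetilde{\pi}_\nu=\pinu\circ q$ differs from the genuine continuous representation $\widetilde{\sigma}_\nu$ by the continuous phase $J_\nu(x,0)$, and $q$ is a local homeomorphism. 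Finite rank operators are therefore in $\cbuopL$. Since $\cbuopL$ is norm closed (a uniform limit of continuous maps into $\bdd$ is continuous) and the compact operators are the norm closure of the finite rank ones, (1) follows.

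\emph{Statement (2).} By Lemma~\ref{lem:translation_convol}(1), $\actopL{x}{(\psi\ast S)}=(\actfL{x}{\psi})\ast S$. Hence
\[
\|\actopL{x}{(\psi\ast S)}-\psi\ast S\|=\|(\actfL{x}{\psi}-\psi)\ast S\|\leq \|\actfL{x}{\psi}-\psi\|_{L^1(G)}\|S\|.
\]
The inequality is the case $p=1$, $q=r=\infty$ of the QHA Young inequality \eqref{eq:QHAYoung}, where the constant equals $1$. The right-hand side tends to $0$ because left translation is continuous on $L^1(G)$.

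\emph{Statement (3).} We use the same identity, now with $\psi\in \cbufL$ and $S\in\traceop$, and apply \eqref{eq:QHAYoung} with $p=\infty$, $q=1$, $r=\infty$. This gives
\[
\|(\actfL{x}{\psi}-\psi)\ast S\|\leq \tfrac{1}{c_\nu}\|\actfL{x}{\psi}-\psi\|_\infty\|S\|_1,
\]
which tends to $0$ by the definition of left uniform continuity of $\psi$.

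The only point needing care is the strong continuity of the projective representation $\pinu$ in (1). It is handled through the lift to $\widetilde{G}$ as described above. Alternatively, one can check it directly on reproducing kernels and use density.
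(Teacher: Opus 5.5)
Your proof is correct and is essentially the standard argument the paper defers to (\cite[Section 4]{DDMO24}): approximate compact operators in norm by finite-rank ones and use strong continuity of $\pinu$ for (1), and use the covariance $\actopL{x}{(\psi\ast S)}=(\actfL{x}{\psi})\ast S$ together with the endpoint QHA Young inequalities for (2) and (3). One minor remark: no multiplier phases actually arise in $\pinu(x)F\pinu(x)^*$, because only $\pinu(x)$ and its adjoint appear there, so that comment in your argument for (1) can simply be dropped.
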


\begin{proposition}\label{prop:cbu_characterizations}
    Let $\psi\in \LoneG$ be a cyclic vector for $\LoneG$ and let $\{\psi_\alpha\}_{\alpha\in\N}$ be a Dirac sequence in $\LoneG$. Then we have the following characterizations of $\cbuopL$:
    \begin{align*}
        \cbuopL&= \overline{\LoneG \Last \bdd}^{\cB} \\
        &= \{S\in \bdd \mid \psi_\alpha \Last S \to S \text{ in norm}\}\\
        &=\overline{\psi\Last \bdd}^{\cB}\\
        &=\LoneG \Last \cbuopL.
    \end{align*}
\end{proposition}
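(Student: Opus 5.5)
The plan is to prove a cycle of inclusions. Write $\mathcal{X}_1=\overline{\LoneG\Last\bdd}^{\cB}$, $\mathcal{X}_2=\{S\mid \psi_\alpha\Last S\to S\}$, $\mathcal{X}_3=\overline{\psi\Last\bdd}^{\cB}$ and $\mathcal{X}_4=\LoneG\Last\cbuopL$. I will show
\[
\cbuopL\subseteq\mathcal{X}_2\subseteq\mathcal{X}_1\subseteq\cbuopL,
\]
then place $\mathcal{X}_3$ and $\mathcal{X}_4$ inside this cycle.

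\textbf{Step 1: $\cbuopL\subseteq\mathcal{X}_2$.} Let $S\in\cbuopL$. Since $\psi_\alpha\geq0$ and $\|\psi_\alpha\|_1=1$, I write
\[
\psi_\alpha\Last S-S=\int_G\psi_\alpha(x)\bigl(\actopL{x}{S}-S\bigr)\haar{x}.
\]
Given $\varepsilon>0$, norm continuity of $x\mapsto\actopL{x}{S}$ at $e$ gives a neighborhood $U$ of $e$ with $\|\actopL{x}{S}-S\|<\varepsilon$ for $x\in U$. Split the integral over $U$ and $G\setminus U$. The first piece is at most $\varepsilon$. The second is at most $2\|S\|\int_{G\setminus U}\psi_\alpha$, which tends to $0$ by the Dirac property. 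Here the Dirac property is used in its form on $G$; for radial $\psi_\alpha$ this follows from the $G/K$ version, since $K$ is compact and a neighbourhood of $0$ in $\bsd$ pulls back to a neighbourhood of $K$ in $G$.

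\textbf{Step 2: $\mathcal{X}_2\subseteq\mathcal{X}_1$ and $\mathcal{X}_1\subseteq\cbuopL$.} The first inclusion is trivial, since $\psi_\alpha\Last S\in\LoneG\Last\bdd$. For the second, use Lemma (2) above: each $\psi\Last S$ lies in $\cbuopL$. Concretely, $\actopL{x}{\psi\Last S}=(\actfL{x}{\psi})\Last S$, and $x\mapsto\actfL{x}{\psi}$ is continuous into $\LoneG$, so the estimate $\|\phi\Last S\|\le\|\phi\|_1\|S\|$ gives norm continuity. Finally, $\cbuopL$ is norm closed: a uniform limit of maps continuous into $\bdd$ is continuous, because each $\ActopL{x}$ is an isometry. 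This closes the cycle, so $\cbuopL=\mathcal{X}_1=\mathcal{X}_2$.

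\textbf{Step 3: $\mathcal{X}_4$ (Cohen factorization).} The inclusion $\mathcal{X}_4\subseteq\cbuopL$ follows from Step 2. For the reverse, $\cbuopL$ is a Banach left $\LoneG$-module: the convolution $\phi\Last S$ is an integrated form of a bounded representation, it preserves $\cbuopL$ by Step 2, and $\|\phi\Last S\|\le\|\phi\|_1\|S\|$. The algebra $\LoneG$ has a bounded approximate identity. By Step 1, applied with a Dirac net in $\LoneG$, this approximate identity acts nondegenerately on $\cbuopL$. The Cohen--Hewitt factorization theorem then gives $\cbuopL=\LoneG\Last\cbuopL$.

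\textbf{Step 4: $\mathcal{X}_3$ (main obstacle).} Clearly $\mathcal{X}_3\subseteq\mathcal{X}_1$. For the reverse, I read "cyclic vector" as meaning that the left translates $\{\actfL{y}{\psi}\}$ span a dense subspace of $\LoneG$; equivalently, $\LoneG\Last\psi$ is dense. Take $\phi\Last S$ with $\phi\in\LoneG$, $S\in\bdd$, and approximate $\phi$ in $L^1$ by $\eta\Last\psi$, or by finite combinations $\sum c_i\actfL{y_i}{\psi}$. By associativity,
\[
(\eta\Last\psi)\Last S=\psi\Last\bigl(\cdots\bigr)
\]
is not immediate, because convolution is noncommutative: $\psi$ sits on the wrong side.

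To handle this, I will use translates. The right-translation identity $\psi\Last\actopL{y}{S}=(\actfR{y^{-1}}{\psi})\Last S$, from the proof of Lemma~\ref{lem:convolution_rad}, converts right translates of $\psi$ into elements of $\psi\Last\bdd$. So the correct hypothesis is that the right translates of $\psi$ are total, i.e.\ that $\psi$ is cyclic for the right regular action. I will interpret cyclicity that way, as in \cite{DDMO24}. Then
\[
\sum c_i(\actfR{y_i}{\psi})\Last S=\psi\Last\Bigl(\sum c_i\actopL{y_i^{-1}}{S}\Bigr)\in\psi\Last\bdd,
\]
and the $L^1$ approximation together with Young's inequality gives $\phi\Last S\in\mathcal{X}_3$. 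Pinning down the correct side of the cyclicity condition is the delicate point of the argument.
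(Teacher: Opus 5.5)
Your proof is correct and follows the standard route that the paper defers to \cite[Section 4]{DDMO24} (it gives no argument of its own): the cycle $\cbuopL\subseteq\mathcal{X}_2\subseteq\mathcal{X}_1\subseteq\cbuopL$, Cohen--Hewitt for $\LoneG\Last\cbuopL$, and the identity $\psi\Last\actopL{y}{S}=(\actfR{y^{-1}}{\psi})\Last S$ for $\mathcal{X}_3$. You are also right that for this argument cyclicity has to mean density of the right translates of $\psi$; the left-translate notion in the paper's definition of $p$-cyclicity does not plug in.

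The only thing to fix is the aside in Step 1. A radial ($G/K$-)Dirac sequence does not give the Dirac property on $G$, because a bi-$K$-invariant function spreads its mass along $K$ rather than concentrating at $e$. Indeed, by Lemma~\ref{lem:convolution_rad} such a sequence gives $\psi_\alpha\Last S=\psi_\alpha\Last\Rad{S}\to\Rad{S}$, so the second characterization genuinely requires a Dirac sequence on $G$.
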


In addition, as the translation map $x\mapsto \lambda^\nu_x$ is strongly continuous on $\schop$ for $1\leq p<\infty$, we have the following:
\begin{lemma}\label{lem:schop_convergence}
    Let $1\leq p<\infty$, and let $\{\psi_\alpha\}$ is a Dirac sequence in $L^1(G)$. Then for $S\in \schop$ we have that
    $\psi_\alpha\ast S\to S$
    in $\schop$.
\end{lemma}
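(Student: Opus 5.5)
The plan is to approximate $\psi_\alpha\ast S$ by a finite Riemann-type sum of translates of $S$, and to control the error using the strong continuity of $x\mapsto\actopL{x}{S}$ on $\schop$ together with the concentration of the Dirac sequence near the identity.

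First I fix the meaning of a Dirac sequence on $G$ by analogy with the $G/K$ definition: $\psi_\alpha\geq 0$, $\|\psi_\alpha\|_1=1$, and $\int_{G\setminus U}\psi_\alpha\haar{x}\to 0$ for every neighbourhood $U$ of the identity $e$. Since $\int_G\psi_\alpha\haar{x}=1$, I write
\[
   \psi_\alpha\ast S - S=\int_G \psi_\alpha(x)\bigl(\actopL{x}{S}-S\bigr)\haar{x}.
\]
This integral converges as a Bochner integral in $\schop$. The map $x\mapsto\actopL{x}{S}$ is continuous into $\schop$ for $1\leq p<\infty$, as stated above. It is also bounded, because each $\ActopL{x}$ is an isometry of $\schop$: conjugation by a unitary preserves singular values. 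The Bochner integral agrees with the weak integral defining $\psi_\alpha\ast S$, since pairing with finite-rank operators gives the same values.

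Next I apply the triangle inequality for Bochner integrals:
\[
   \|\psi_\alpha\ast S-S\|_p\leq \int_G\psi_\alpha(x)\,\|\actopL{x}{S}-S\|_p\haar{x}.
\]
Given $\varepsilon>0$, strong continuity at $e$ gives a neighbourhood $U$ of $e$ with $\|\actopL{x}{S}-S\|_p<\varepsilon$ for $x\in U$. On $U$ the integrand contributes at most $\varepsilon\int_U\psi_\alpha\leq\varepsilon$. On $G\setminus U$ I use $\|\actopL{x}{S}-S\|_p\leq 2\|S\|_p$, so that part contributes at most $2\|S\|_p\int_{G\setminus U}\psi_\alpha$, which tends to $0$ by the concentration property. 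Hence $\limsup_\alpha\|\psi_\alpha\ast S-S\|_p\leq\varepsilon$ for every $\varepsilon>0$, and the claim follows.

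The main obstacle is justifying the Bochner integral and its agreement with the weak definition of $\psi\ast S$. Measurability follows from continuity, and $\schop$ is separable because $\Berg$ is separable. Equality of the two integrals is checked by pairing with finite-rank operators in $\cS^{p'}(\Berg)$, which separate points of $\schop$ via the trace duality. This step is where $p<\infty$ is essential: for $p=\infty$, strong continuity in operator norm fails, as noted in the text.
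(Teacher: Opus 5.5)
Your proof is correct and follows the same route as the paper. The paper gives no separate proof: it only remarks that the lemma follows because $x\mapsto\lambda^\nu_x(S)$ is strongly continuous on $\mathcal{S}^p(\mathcal{A}^2_\nu)$ for $1\leq p<\infty$, and your Bochner-integral estimate with the $\varepsilon$-splitting over a neighbourhood $U$ of $e$ is the standard approximate-identity argument that turns that continuity into the stated convergence.
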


\subsection{Wiener's Tauberian theorem}
The classical Wiener's Tauberian theorem discusses when the translates of a function span a dense subset of $L^p$. The QHA analogue of the same addresses when the translates of a trace-class operator span a dense subset of the Schatten-$p$ class.

\begin{definition}
    A function $\psi\in \Lp$ (respectively $\psi\in \LpG$) is said to be $p$-cyclic, also referred to as $p$-regular, if the left translations $\actfL{x}{\psi}$, $x\in G$, span a dense subset of $\Lp$ ($\LpG$). Similarly, $S\in \schop$ is said to be $p$-cyclic is translates of $S$ span a dense subset of $\schop$.
\end{definition}

 Since Werner's original work \cite{W84}, several versions of the Wiener's Tauberian theorem have been developed: for $G=\mathbb{R}^n$, 
 see \cite{FLW26,LS18,W84}: for the abelian phase space, see \cite{FG23}; for general 
 locally compact groups, see \cite{H23}. Moreover, the version discussed in \cite{DDMO24} for the unit ball 
 generalizes naturally to bounded symmetric domains $\bsd=G/K$.

\begin{theorem}[Wiener's Tauberian theorem for $\Lp$]\label{theo:WienerT for GK}
    Let $1\leq p<\infty$, choose $p^\prime$  so that $\frac{1}{p}+\frac{1}{p^\prime}=1$ and let $\Psi\in \schop$ be a radial operator. Then the following are equivalent:
\begin{enumerate}
        \item $\Psi$ is $p$-cyclic 
        \item $\schop=\overline{\Lone \Last \Psi}^{\cS^p}$
        \item $S\mapsto S\Last \Psi$ is injective from $\schopq{p^\prime}  \to \bddf$.
        \item $\psi \mapsto \psi\Last \Psi$ is injective from $L^{p^\prime}(\G) \to \bdd$.
        \item $\Lp =\overline{\cS^1(\cA^2)\ast \Psi}^{L^p}$.
    \end{enumerate}
\end{theorem}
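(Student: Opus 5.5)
The plan is to prove the equivalences through the duality relation \eqref{eq:duality}, the Hahn--Banach theorem, and the fact that for a radial $\Psi$ the convolutions $S\ast\Psi$ are functions on $\bsd$ (Theorem~\ref{theo:radial_convol}(1)). Throughout, fix $\Psi\in\schop$ radial and write $\langle\cdot,\cdot\rangle$ for both the $L^1$ and the trace pairings. The chain is $(1)\Leftrightarrow(2)\Leftrightarrow(3)$, then $(3)\Leftrightarrow(4)$, and finally $(5)$ is tied to $(3)$ by duality.

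\emph{$(1)\Leftrightarrow(2)$.} For $\psi\in L^1(G)$, the integral $\psi\ast\Psi=\int_G\psi(x)\actopL{x}{\Psi}\haar{x}$ is a Bochner integral in $\schop$, because $x\mapsto\actopL{x}{\Psi}$ is norm continuous there. Hence $\psi\ast\Psi$ lies in the closed span of the translates, which gives $(2)\Rightarrow(1)$. For $(1)\Rightarrow(2)$, take a Dirac sequence $\psi_\alpha$ in $L^1(G)$. By Lemma~\ref{lem:translation_convol}, $(\actfL{x}{\psi_\alpha})\ast\Psi=\actopL{x}{(\psi_\alpha\ast\Psi)}$, and by Lemma~\ref{lem:schop_convergence} this converges to $\actopL{x}{\Psi}$ in $\schop$. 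Since $\psi\ast\Psi$ is right-$K$-averaged by radiality (Lemma~\ref{lem:convolution_rad}), I may replace $\psi$ by its right $K$-average. This shows that $L^1(G)\ast\Psi=\Lone\ast\Psi$, so the closure in (2) contains all the translates.

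\emph{$(2)\Leftrightarrow(3)$.} By Hahn--Banach, $\Lone\ast\Psi$ is dense in $\schop$ iff every $B\in\schopq{p'}$ annihilating it is $0$. By \eqref{eq:duality}, $\langle B,\psi\ast\Psi\rangle=\langle B\ast\Psi,\psi\rangle$. The function $B\ast\Psi$ is right $K$-invariant, and it is bounded by \eqref{eq:QHAYoung ops}. Therefore $B$ annihilates $\Lone\ast\Psi$ for all $\psi$ iff $B\ast\Psi=0$, which is exactly injectivity in (3).

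\emph{$(3)\Leftrightarrow(4)$.} I will show that both statements say that no nonzero element of a suitable dual space is killed by the map. Using \eqref{eq:duality} again, $\langle \psi\ast\Psi, A\rangle_{\tr}=\langle A\ast\Psi,\psi\rangle$ for trace-class $A$. Hence $\psi\ast\Psi=0$ iff $\psi$ annihilates $\cS^1\ast\Psi$, and so (4) holds iff $\cS^1\ast\Psi$ is dense in $L^p(G)$. Only right-$K$-invariant $\psi$ matter here, because $\psi\ast\Psi$ depends only on the right $K$-average of $\psi$. This is a subtle point: strictly, (4) must be read on $L^{p'}(G)$ modulo the kernel of right $K$-averaging, i.e.\ on $L^{p'}(\bsd)$, and I would state that interpretation explicitly. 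So (4) amounts to (5). Finally, (5)$\Leftrightarrow$(3) follows by a symmetric argument: $\cS^1\ast\Psi$ is dense in $\Lp$ iff no $a\in L^{p'}$ satisfies $a\ast\Psi=0$ with $a\neq0$. For $p'$-regularity I then need $a\ast\Psi=0\Leftrightarrow$ ``$B\ast\Psi=0$ for some $B$'' to transfer between the function and operator sides.

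The key transfer step is the following. If $B\ast\Psi=0$ with $B\neq 0$, I set $a=B\ast\Phi^\nu$, the Berezin transform, which is nonzero by injectivity of $B^\nu$. Theorem~\ref{theo:radial_convol}(3) gives $a\ast\Psi=(B\ast\Phi^\nu)\ast\Psi=(B\ast\Psi)\ast\Phi^\nu=0$. Conversely, if $a\ast\Psi=0$, set $B=a\ast\Phi^\nu=T^\nu_a/c_\nu$, which is nonzero by injectivity of Toeplitz quantization. Then associativity and the commutativity of radial convolutions give $B\ast\Psi=(a\ast\Phi^\nu)\ast\Psi=a\ast(\Phi^\nu\ast\Psi)=a\ast(\Psi\ast\Phi^\nu)=(a\ast\Psi)\ast\Phi^\nu=0$.

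The main obstacle is justifying these associativity and commutativity manipulations for $a\in L^{p'}$ and $B\in\schopq{p'}$ when $p'=\infty$, i.e.\ $p=1$, since Theorem~\ref{theo:radial_convol} is stated for bounded $S$ and trace-class $A,B$. I would first verify the identities for nice elements, then extend them via \eqref{eq:QHAYoung}, \eqref{eq:QHAYoung ops} and weak-$*$ continuity, using Theorem~\ref{theo:convergence} to pass to limits.
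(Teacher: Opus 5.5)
Your proposal is correct and follows essentially the route the paper defers to (the proof of \cite[Theorem 5.2]{DDMO24}). It uses Hahn--Banach duality via \eqref{eq:duality} for (1)$\Leftrightarrow$(2)$\Leftrightarrow$(3) and for (4)$\Leftrightarrow$(5), and bridges (3)$\Leftrightarrow$(4) using the injectivity of the Berezin transform and of Toeplitz quantization together with commutativity of convolutions with radial operators. You are also right that (4) must be read on $L^{p'}(\bsd)$, since every $\psi\in L^{p'}(G)$ with vanishing right $K$-average satisfies $\psi\ast\Psi=0$.

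One correction to your last paragraph: the case that needs the approximation argument is $p>1$, not $p=1$. For $p>1$, $\Psi$ need not be trace class, so Theorem~\ref{theo:radial_convol}(3) does not apply verbatim. Approximating $\Psi$ in $\cS^p$ by radial finite-rank operators and passing to the limit with \eqref{eq:QHAYoung} and \eqref{eq:QHAYoung ops} closes this exactly as you intended.
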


\begin{remark}
    In \cite[Theorem 5.2]{DDMO24}, statement (5) contains a typo: $\Lq$ was written where it should be $\Lp$.
\end{remark}

\begin{theorem} 
    Let the notation be as above, let $1\leq p < \infty$, and assume that $\Psi\in \traceop$ is a radial operator. Then the following are equivalent:
    \begin{enumerate}
        \item $\Psi$ is $p$-cyclic
        \item $\schop=\overline{\Lp \Last \Psi}^{\cS^p}$
        \item $S\mapsto S\Last \Psi$ is injective from $\schopq{p'}  \to \Lq$.
        \item $a\mapsto a\Last \Psi$ is injective from $L^{p'}(G) \to \schopq{p'}$
        \item $\Lp=\overline{\schop\Last \Psi}^{L^p}$.
    \end{enumerate}
\end{theorem}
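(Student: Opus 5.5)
We prove the cycle (1)$\Rightarrow$(3)$\Rightarrow$(4)$\Rightarrow$(1), and then attach (2) and (5) to it. The main tools are the duality relation \eqref{eq:duality}, the QHA Young inequalities \eqref{eq:QHAYoung} and \eqref{eq:QHAYoung ops}, Hahn--Banach, and the commutativity results of Theorem~\ref{theo:radial_convol}. These are valid because $\Psi$ is trace class and radial. Since $\Psi\in\traceop$, Young's inequality shows that $a\mapsto a\ast\Psi$ is bounded from $\Lp$ to $\schop$ and from $L^{p'}$ to $\schopq{p'}$. Likewise $S\mapsto S\ast\Psi$ is bounded from $\schop$ to $\Lp$ and from $\schopq{p'}$ to $\Lq$. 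Theorem~\ref{theo:radial_convol}(1) guarantees that $S\ast\Psi$ is a function on $\bsd$. So all maps in the statement are well defined and continuous.

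\emph{(1)$\Leftrightarrow$(3).} Suppose $B\in\schopq{p'}$ satisfies $B\ast\Psi=0$. Then $(B\ast\Psi)(x)=\tr(B\,\actopL{x}{\Psi})=0$ for all $x$, so $B$ annihilates every translate of $\Psi$. By Hahn--Banach and the duality $(\schop)^*=\schopq{p'}$, $\Psi$ is $p$-cyclic if and only if this forces $B=0$. This is exactly injectivity in (3).

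\emph{(1)$\Leftrightarrow$(2).} Translates of $\Psi$ lie in $\overline{\Lp\ast\Psi}$. To see this, take a Dirac sequence $\psi_\alpha$. Then $(\actfL{x}{\psi_\alpha})\ast\Psi=\actopL{x}{(\psi_\alpha\ast\Psi)}\to\actopL{x}{\Psi}$ in $\cS^p$, by Lemma~\ref{lem:schop_convergence} and Lemma~\ref{lem:translation_convol}. Conversely, $\Lp\ast\Psi$ is contained in the closed span of the translates, because the Bochner integral converges in $\cS^p$. Hence the two closures coincide.

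\emph{(3)$\Leftrightarrow$(4).} Using \eqref{eq:duality}, the map $a\mapsto a\ast\Psi$ from $\Lp$ to $\schop$ has Banach adjoint $B\mapsto B\ast\Psi$ from $\schopq{p'}$ to $\Lq$. Dense range of one map is equivalent to injectivity of its adjoint. So (2) is equivalent to (3). For (4) we use the adjoint of $B\mapsto B\ast\Psi$ on $\schop\to\Lp$, which is $a\mapsto a\ast\Psi$ on $L^{p'}$. Thus (4) is equivalent to density of $\schop\ast\Psi$ in $\Lp$, which is (5). The one subtlety is $p=1$: then $p'=\infty$, and we must use weak-$*$ versions of the adjoint, which are still available since $L^\infty=(L^1)^*$ and $\cB=(\cS^1)^*$.

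\emph{Linking (3) and (4).} This is the main obstacle. Here radiality is essential, and we adapt the Berezin-injectivity trick from the proof of Theorem~\ref{theo:radial_convol}.

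For (3)$\Rightarrow$(4): suppose $a\in L^{p'}$ satisfies $a\ast\Psi=0$. Convolve with an arbitrary $S\in\traceop$ and use associativity: $(a\ast\Psi)\ast S=a\ast(\Psi\ast S)=0$. By Theorem~\ref{theo:radial_convol}(2) we can replace $\Psi\ast S$ by $S\ast\Psi$ when $S$ is also radial. The goal is a function $S\ast\Psi$ such that $(a\ast\Psi)\ast\Phi^\nu=a\ast(\Psi\ast\Phi^\nu)$. This reduces the question to whether $a$ is killed by convolution with the radial function $\Psi\ast\Phi^\nu=B^\nu(\Psi)$.

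One then shows that injectivity in (3) forces the spherical transform of $B^\nu(\Psi)$ to be nonvanishing wherever it is needed. Equivalently, one can argue directly by duality: $a\ast\Psi=0$ means $\langle a,T\ast\Psi\rangle_{L^1}=0$ for all $T\in\schop$. The task is then to show that $\schop\ast\Psi$ is dense in $\Lp$ given (3).

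For this I would first try to prove $\overline{\schop\ast\Psi}\supseteq\overline{\Lp\ast(\Phi^\nu\ast\Psi)}$. This uses $(a\ast\Phi^\nu)\ast\Psi=a\ast(\Phi^\nu\ast\Psi)$ together with $T^\nu_a\in\schop$. Next, I would use (2) to approximate $\Phi^\nu$ in $\cS^p$ by elements $b\ast\Psi$ with $b\in\Lp$, so that $(b\ast\Psi)\ast\Psi$-type terms are controlled by Young's inequality. Finally, I would compare with a Dirac sequence $\psi_\alpha\ast(\Phi^\nu\ast\Phi^\nu)$.

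The reverse direction (4)$\Rightarrow$(3) is symmetric. If $B\ast\Psi=0$, then $B\ast\Psi\ast\Phi^\nu=0$, and commutativity from Theorem~\ref{theo:radial_convol}(3) gives $(B\ast\Phi^\nu)\ast\Psi=0$. So the function $B^\nu(B)\in L^{p'}$ is killed by $\Psi$. By (4), $B^\nu(B)=0$, and injectivity of the Berezin transform then gives $B=0$. This clean argument is the template I would try to mirror for (3)$\Rightarrow$(4).
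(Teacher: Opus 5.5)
Your arguments for (1)$\Leftrightarrow$(2)$\Leftrightarrow$(3) and (4)$\Leftrightarrow$(5) are fine; the paragraph headed ``(3)$\Leftrightarrow$(4)'' actually proves (2)$\Leftrightarrow$(3) and (4)$\Leftrightarrow$(5). Your proof of (4)$\Rightarrow$(3) via Theorem~\ref{theo:radial_convol}(3) and injectivity of the Berezin transform is also fine. One minor point: for $p>1$ the integral defining $a\ast\Psi$ need not converge absolutely in $\schop$. Approximate $a$ by functions in $L^1\cap L^p$ and use Young's inequality instead.

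\textbf{The gap is (3)$\Rightarrow$(4).} Nothing in the proposal proves it; it is only a list of things you ``would try.'' As written you have shown only that $\{(4),(5)\}$ implies $\{(1),(2),(3)\}$, not the equivalence. Neither sketched route closes it. For $p'>2$, nonvanishing of the spherical transform of $B^\nu(\Psi)$ on the real spectrum does not give injectivity of $a\mapsto a\ast B^\nu(\Psi)$ on $L^{p'}$, because bounded spherical functions with complex parameter lie in $L^{p'}$. That route would need a genuine Wiener--Tauberian theorem on $G/K$, and you do not say how (3) supplies it. Your density sketch would additionally need the $p$-cyclicity of $\varphi_\nu$, and its final Dirac-sequence step is left unspecified.

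\textbf{The missing step} is the mirror image of your (4)$\Rightarrow$(3) argument, with Toeplitz quantization in place of the Berezin transform. Let $a\in\Lq$ with $a\ast\Psi=0$. By Lemma~\ref{lem:Toeplitz}, $T^\nu_a=c_\nu\,a\ast\Phi^\nu\in\schopq{p'}$. By associativity and Theorem~\ref{theo:radial_convol}(2), applied to the radial operators $\Phi^\nu$ and $\Psi$,
\[
T^\nu_a\ast\Psi=c_\nu\,(a\ast\Phi^\nu)\ast\Psi=c_\nu\,a\ast(\Phi^\nu\ast\Psi)=c_\nu\,a\ast(\Psi\ast\Phi^\nu)=c_\nu\,(a\ast\Psi)\ast\Phi^\nu=0 .
\]
Hence (3) gives $T^\nu_a=0$, and injectivity of $a\mapsto T^\nu_a$ gives $a=0$.

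You had in fact already reached $a\ast B^\nu(\Psi)=0$. The point is that $a\ast B^\nu(\Psi)=c_\nu^{-1}\,T^\nu_a\ast\Psi$, so (3) applies directly to the operator $T^\nu_a$ and no spectral information about $B^\nu(\Psi)$ is needed.

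Finally, (4) must be read with $a\in\Lq$, as you implicitly do. Since $\lambda^\nu_{xk}(\Psi)=\lambda^\nu_x(\Psi)$ for $k\in K$, on all of $L^{p'}(G)$ the map $a\mapsto a\ast\Psi$ kills every function with vanishing right $K$-average. Injectivity of the Toeplitz quantization is available only for functions on $\bsd$.
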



\section{The \texorpdfstring{$\alpha$}{alpha}-Berezin transform} 
\label{sec:alpha_berezin_section}
In \cite{D17}, Dieudonne discusses the $\alpha$-Berezin transform of operators for Bergman spaces over bounded symmetric domains. This notion was first introduced by Su\'arez for the unit disc in \cite{S04,S05}, and then generalized to the unit ball in \cite{BHV14b,BHV14}. Here we study this notion systematically using the QHA framework, and obtain some new results.

\subsection{An approximate identity for \texorpdfstring{$\Lone$}{L1}}
Recall the function $\varphi_\alpha$ \eqref{eq:phinu} defined for $\alpha>\gen-1$. Then $\{\varphi_\alpha\}$ is a $G/K$-Dirac sequence, and therefore, a right-approximate identity for $\Lone$.

\begin{lemma}
   The family of functions $\{\varphi_\alpha\}_{\alpha>\gen-1}$ is a $G/K$-Dirac sequence.
\end{lemma}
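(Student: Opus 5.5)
We verify the four defining properties of a $G/K$-Dirac sequence for $\varphi_\alpha(z)=c_\alpha h(z,z)^\alpha$ as $\alpha\to\infty$ (along $\alpha\in\N$, $\alpha>\gen-1$).

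\emph{Positivity and radiality.} Positivity is immediate from \eqref{eq:formula_for_Jordan_triple_system}: writing $z=k_z(\sum_j t_j(z)e_j)$ we have $h(z,z)=\prod_{j=1}^r(1-t_j(z)^2)\in(0,1]$, and $c_\alpha>0$. Radiality follows from statement (5) of Lemma~\ref{lem:triple_determinant_properties}, since $h(k\cdot z,k\cdot z)=h(z,z)$ for all $k\in K$.

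\emph{Normalization.} By the definition of $\mu_\alpha$ (with $\alpha$ in place of $\nu$),
\[
\|\varphi_\alpha\|_1=\int_\bsd c_\alpha h(z,z)^\alpha\inv{z}=\int_\bsd d\mu_\alpha(z)=1 .
\]
The integral is finite precisely because $\alpha>\gen-1$.

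\emph{Concentration.} This is the only step requiring an estimate. Let $U$ be a neighborhood of $0$. Since $\bsd$ is bounded and circled with the $K$-orbit description \eqref{eq:jframe}, the function $z\mapsto h(z,z)$ is continuous with $h(0,0)=1$. Moreover, there is $\delta>0$ such that $h(z,z)\le 1-\delta$ for $z\notin U$. To see this, note that $h(z,z)$ close to $1$ forces all $t_j(z)$ to be small, hence $\|z\|$ small, since $K$ acts unitarily and $\|z\|^2=\sum_j t_j(z)^2$ up to the normalization of the frame. For $\alpha$ larger than some fixed $\beta>\gen-1$, we then bound
\[
\int_{\bsd\setminus U}\varphi_\alpha\,d\mu_0 \le \frac{c_\alpha}{c_\beta}(1-\delta)^{\alpha-\beta}\int_\bsd c_\beta h(z,z)^\beta\inv{z}=\frac{c_\alpha}{c_\beta}(1-\delta)^{\alpha-\beta}.
\]
By \eqref{eq:cnu}, $c_\alpha$ is a rational function of $\alpha$, so it grows at most polynomially. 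The exponential factor $(1-\delta)^{\alpha}$ therefore wins, and the integral tends to $0$.

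The main obstacle is making the step ``$h(z,z)\le 1-\delta$ off $U$'' rigorous. The cleanest route is $\|z\|^2\le\sum_j t_j^2$, combined with $1-\prod_j(1-t_j^2)\ge\max_j t_j^2$, which gives $\|z\|^2\le r\bigl(1-h(z,z)\bigr)$. Complementing this, one needs the polynomial growth of $c_\alpha$, which we read off from the Gindikin Gamma ratio in \eqref{eq:cnu}. Each factor $\Gamma(\alpha-(j-1)a/2)/\Gamma(\alpha-n/r-(j-1)a/2)$ grows like $\alpha^{n/r}$.
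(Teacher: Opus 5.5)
Your proof is correct and follows the paper's route: positivity, radiality and normalization are immediate, and concentration comes from $h(w,w)\le 1-\delta$ off a neighborhood of $0$, played against the polynomial growth of $c_\alpha$. The paper's sets $V_\delta=\{w : t_j(w)^2<\delta \text{ for all } j\}$ encode exactly your inequality $1-h(z,z)\ge\max_j t_j(z)^2$, and the remaining differences are cosmetic: the paper integrates against normalized Lebesgue measure $dz=h(z,z)^{\gen}\,d\mu_0(z)$, which is your comparison with $\beta=\gen$ (where $c_{\gen}=1$), while your estimate $\|z\|^2\le C\,r\,(1-h(z,z))$ justifies the neighborhood-base claim that the paper simply asserts.
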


\begin{proof}
   The function $\varphi_\alpha$ is radial, positive and has norm 1.
   
   Let $0<\delta<1$, and denote by $V_\delta$ the set:
   $$V_\delta=\{w\in\bsd \mid t_j(w)^2<\delta \text{ for all } 1\leq j \leq r\}$$
   where $t_j$ are as in \eqref{eq:jframe}.
   Then the collection $\{V_\delta\}_{\delta>0}$ is a neighborhood base of the origin.

   Now, for each $w\in \bsd\backslash V_\delta$, there exists $1\leq k \leq r$ such that $t_k(w)^2>\delta$ and hence $(1-t_k(w)^2)\leq (1-\delta)$. Also, $(1-t_j(w)^2) \leq 1$ for all $1\leq j\leq r$.  Recalling (\ref{eq:formula_for_Jordan_triple_system}), if follows that
   \[
      h(w,w)=  \prod_{j=1}^r (1-t_j(w)^2)
      \leq  (1-\delta)
   \]
   We therefore have that for $\alpha>\gen$,
   \begin{align*}
       \int_{\bsd\setminus V_\delta} \varphi_\alpha(w) \inv{w}\leq c_\alpha \int_{\bsd\setminus V_\delta} h(w,w)^{\alpha-\gen} dz
       \leq c_\alpha(1-\delta)^{\alpha-\gen}.
   \end{align*}
   Note that $c_\alpha (1-\delta)^{\alpha-\gen} \to 0$ as $\alpha\to \infty$, since $0<1-\delta<1$ and $c_\alpha$ is a polynomial in $\alpha$. This completes the proof.
\end{proof}

\subsection{The trace class operator $\Phi^\nu_\alpha$}
In order to define the $\alpha$-Berezin transform, we need to first define certain trace-class operators (in fact, it will turn out that they will have trace $1$).  Recall that by Lemma \ref{lem:convolution of Phinu}, we have that 
$B^\nu(\Phi^\nu)=\Phi^\nu\ast \Phi^\nu=\frac{1}{c_\nu}\varphi_\nu$. Extending this property, for 
each $\nu > \gen-1$ and $\alpha\geq 0$, we define an
operator $\Phi^\nu_\alpha$ whose Berezin transform is $\frac{1}{c_\nu}\varphi_{\nu+\alpha}$.
We will now calculate this operator and show that it is in fact of trace class (that is, $\Phi^\nu_\alpha \in \cS^1(\cA^2_\nu)$). Furthermore, if $\alpha\in\N_0$, we will see that it is in fact a finite-rank operator.

First we state the following well known lemma that will be used several
time in the following proof:
 
\begin{lemma}\label{lem:Gamma} Let $x,y\ge 0$, $x\not\in \N_0$ and let $c>0$.  Then there exists a constant $C>0$ such that 
\begin{equation}\label{eq:Gamma} \left| \frac{\Gamma (t -x)}{\Gamma (t+y)}\right| \le Ct^{-x-y},\quad \text{for all }\quad t\ge c>x .
\end{equation}
\end{lemma}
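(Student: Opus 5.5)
The natural tool is Stirling's formula in the form
\[
\log\Gamma(s)=\left(s-\tfrac12\right)\log s - s+\tfrac12\log(2\pi)+O(1/s),\qquad s\to+\infty .
\]
I will show that $t^{x+y}\,|\Gamma(t-x)/\Gamma(t+y)|$ tends to a finite limit as $t\to\infty$, and that it is bounded on every compact subinterval of $[c,\infty)$. Together these give the constant $C$.

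\textbf{Compact part.} Since $t\ge c>x$, we have $t-x\ge c-x>0$. So $\Gamma(t-x)$ is finite, continuous and positive on $[c,\infty)$. Also $\Gamma(t+y)>0$ there, because $t+y>0$. Hence the function
\[
F(t)=t^{x+y}\frac{\Gamma(t-x)}{\Gamma(t+y)}
\]
is continuous and positive on $[c,\infty)$. It is therefore bounded on $[c,T]$ for any $T$.

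Note that the hypothesis $x\notin\N_0$ is not actually needed once $t-x$ stays positive. It only guards against poles if one allows $t-x\le 0$. I will remark that it is harmless and not use it.

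\textbf{Asymptotic part.} This is the only real step. Write $s=t$ and apply Stirling's formula to $\Gamma(t-x)$ and $\Gamma(t+y)$ separately. This gives
\[
\log\frac{\Gamma(t-x)}{\Gamma(t+y)}
=(t-x-\tfrac12)\log(t-x)-(t+y-\tfrac12)\log(t+y)+(x+y)+o(1).
\]
Next expand $\log(t-x)=\log t-x/t+O(t^{-2})$ and $\log(t+y)=\log t+y/t+O(t^{-2})$. The $t\log t$ terms cancel. The remaining $\log t$ coefficient is $-x-y$. The terms $t\cdot(-x/t)$ and $-t\cdot(y/t)$ contribute $-x-y$, which cancels the constant $x+y$. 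Everything else is $o(1)$.

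Hence $\log F(t)\to 0$, that is, $F(t)\to 1$. This is the classical fact $\Gamma(t+a)/\Gamma(t+b)\sim t^{a-b}$ with $a=-x$ and $b=y$. One could cite it directly, or derive it instead from the log-convexity of $\Gamma$ (Wendel's inequality).

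\textbf{Conclusion.} Choose $T$ so large that $F(t)\le 2$ for $t\ge T$. Set $C=\max\bigl(2,\ \max_{[c,T]}F\bigr)$. Then $|\Gamma(t-x)/\Gamma(t+y)|\le C\,t^{-x-y}$ for all $t\ge c$.

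The only point needing care is bookkeeping the expansion of the logarithms. There is no genuine obstacle.
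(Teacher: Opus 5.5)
Your proof is correct and follows essentially the same route as the paper. Both arguments bound $t^{x+y}\,|\Gamma(t-x)/\Gamma(t+y)|$ by continuity on a compact interval $[c,R]$, use the asymptotics $\Gamma(t-x)/\Gamma(t+y)\sim t^{-x-y}$ for large $t$, and take the larger of the two constants; the only difference is that you derive this asymptotic from Stirling's formula, whereas the paper cites the expansion $t^{-x-y}\bigl[1-\frac{(x+y)(y-x-1)}{2t}+O(t^{-2})\bigr]$ from Lebedev, and your remark that the hypothesis $x\notin\N_0$ is superfluous (since $t-x\ge c-x>0$) is also right.
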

\begin{proof} By \cite[Ex.\ 76, p.\ 7]{Lebedev72}, it is known that 
\[
   \frac{\Gamma (t -x)}{\Gamma (t+y)} = t^{-x-y} \left[1 - \frac{(x+y)(y-x-1)}{2t} + O(t^{-2})\right]
\]
as $t\rightarrow \infty$.  Thus, there exists $R>0$ and a positive constant $C_1$ such the absolute value of the right hand side is bounded by $C_1t^{-x-y}$ for all $t>R$.  
Furthermore, the function
\[ [c,R] \ni t\mapsto t^{x+y}\left|\frac{\Gamma (t -x)}{\Gamma (t+y)}\right| \]
is bounded by some constant $C_2$ as $[c,R]$ is compact and the function is continuous. The claim now follows by taking $C=\max\{C_1,C_2\}$.
\end{proof}

\begin{theorem}\label{theo:Phi alpha}
   Let $\nu > \gen-1$ and $\alpha>0$.  For each $\textbf{m}\geq 0$, let $P^\nu_\textbf{m}$ denote the orthogonal projection of $\cA^2_\nu$ onto the polynomial space $\cP_\textbf{m}$.  Then the radial operator $\Phi_\alpha^\nu$ defined by the spectral decomposition:
   \[
       \Phi_\alpha^\nu = \frac{c_{\nu+\alpha}}{c_\nu}\sum_{\textbf{m}\geq 0} \frac{\Gamma_\bsd(-\alpha + \textbf{m})\Gamma_{\bsd}(\nu)}{\Gamma_{\bsd}(-\alpha)\Gamma_{\bsd}(\nu+\textbf{m})} P^\nu_{\textbf{m}} 
   \]
   is trace class (that is $\Phi_\alpha^\nu\in \cS^1(\cA^2_\nu)$) and
   \[
      B^\nu \Phi_\alpha^\nu = \frac{1}{c_\nu}\varphi_{\nu+\alpha} .
   \]
   Furthermore, 
   \[
      \Tr \Phi_\alpha^\nu = 1.
   \]
\end{theorem}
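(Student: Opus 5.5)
The proof has three parts: first the Berezin transform identity, which is a generating-function computation; then trace-class membership from Gamma-function asymptotics; then the trace itself, obtained by integrating the Berezin transform.

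\emph{Step 1: the Berezin transform.} Since $\Phi^\nu_\alpha$ is radial, $B^\nu(\Phi^\nu_\alpha)$ is a radial function. I would compute it at $z$ using the kernel expansion. The key input is the Faraut--Korányi expansion
\[
h(z,w)^{-s}=\sum_{\textbf{m}\geq 0}(s)_{\textbf{m}}K_{\textbf{m}}(z,w),
\qquad (s)_{\textbf{m}}=\frac{\Gamma_\bsd(s+\textbf{m})}{\Gamma_\bsd(s)},
\]
where $K_{\textbf{m}}$ is the reproducing kernel of $\cP_{\textbf{m}}$ for the Fischer inner product. By \eqref{eq:relationship_between_Fischer_and_Bergman_inner_products}, $P^\nu_{\textbf{m}}K^\nu_z=(\nu)_{\textbf{m}}K_{\textbf{m}}(\cdot,z)$ and $\|P^\nu_{\textbf{m}}K^\nu_z\|_\nu^2=(\nu)_{\textbf{m}}K_{\textbf{m}}(z,z)$. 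With $\lambda_{\textbf{m}}$ the eigenvalue in the statement, this gives
\[
B^\nu(\Phi^\nu_\alpha)(z)=h(z,z)^\nu\sum_{\textbf{m}}\lambda_{\textbf{m}}(\nu)_{\textbf{m}}K_{\textbf{m}}(z,z)
=\frac{c_{\nu+\alpha}}{c_\nu}h(z,z)^\nu\sum_{\textbf{m}}(-\alpha)_{\textbf{m}}K_{\textbf{m}}(z,z).
\]
Applying the same expansion with $s=-\alpha$ identifies the last sum as $h(z,z)^{\alpha}$. The right-hand side is therefore $\frac{1}{c_\nu}c_{\nu+\alpha}h(z,z)^{\nu+\alpha}=\frac1{c_\nu}\varphi_{\nu+\alpha}(z)$.

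For the binomial expansion with $s=-\alpha$ to hold pointwise on $\bsd$, I need convergence, which I get from Step 2's bounds together with $K_{\textbf{m}}(z,z)\le \dim\cP_{\textbf{m}}\,|z|^{2|\textbf{m}|}$. If $\alpha\in\N_0$, then $(-\alpha)_{\textbf{m}}=0$ once $m_1>\alpha$, so the operator has finite rank and there is nothing to check.

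\emph{Step 2: trace class.} The trace norm is $\sum_{\textbf{m}}|\lambda_{\textbf{m}}|\dim\cP_{\textbf{m}}$, and $\lambda_{\textbf{m}}$ factors over $j$ as
\[
\prod_{j=1}^r\frac{\Gamma(m_j-\alpha-(j-1)a/2)}{\Gamma(m_j+\nu-(j-1)a/2)}.
\]
Lemma~\ref{lem:Gamma}, applied for large $m_j$ (finitely many exceptional small values are handled separately, and nonintegrality is only needed where a pole could occur), bounds each factor by $C\,m_j^{-\alpha-\nu}$, so $|\lambda_{\textbf{m}}|\lesssim\prod_j(1+m_j)^{-\alpha-\nu}$. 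Theorem~\ref{thm:dimPm} together with Stirling gives $\dim\cP_{\textbf{m}}\lesssim \prod_j(1+m_j)^{b}\prod_{i<j}(1+m_i-m_j)^{a}$, and this is at most $\prod_j(1+m_1)^{n/r-1}$-type growth, i.e.\ polynomial of total degree $n-r$. The point to verify is that $\nu>\gen-1$ makes the sum over $m_1\ge\cdots\ge m_r\ge0$ converge. I would bound the sum by $\prod_j\sum_{m_j}(1+m_j)^{-\nu-\alpha+b+a(r-1)}$; since $\gen-1=(r-1)a+b+1$, each exponent is less than $-1-\alpha$. This crude bookkeeping with exponents is where I expect the main obstacle to lie.

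\emph{Step 3: the trace.} Using \eqref{eq:int of convol} with $S=\Phi^\nu_\alpha$ and $T=\Phi^\nu$, and noting $\tr\Phi^\nu=1$,
\[
\frac1{c_\nu}\tr\Phi^\nu_\alpha=\int_G(\Phi^\nu_\alpha\ast\Phi^\nu)\haar{x}=\frac1{c_\nu}\|\varphi_{\nu+\alpha}\|_{L^1(\mu_0)}=\frac1{c_\nu}.
\]
The last equality is the normalization of $c_{\nu+\alpha}$. Hence $\tr\Phi^\nu_\alpha=1$.
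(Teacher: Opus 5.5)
Your proposal is correct and follows essentially the same route as the paper. For the Berezin transform you use the Faraut--Kor\'anyi expansion $h(z,w)^{\alpha}=\sum_{\textbf{m}}(-\alpha)_{\textbf{m}}K_{\textbf{m}}(z,w)$ together with $P^\nu_{\textbf{m}}K^\nu_z=(\nu)_{\textbf{m}}K_{\textbf{m}}(\cdot,z)$. For trace class you use Lemma~\ref{lem:Gamma} and Theorem~\ref{thm:dimPm}, which give summability precisely because $\nu+\alpha-b-a(r-1)>1$. For the trace you use \eqref{eq:int of convol} with $\tr\Phi^\nu=1$.

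Two small refinements are slightly cleaner than the paper's version. For $\alpha\in\N_0$, you note that the $j=1$ Pochhammer factor alone already forces finite rank, whereas the paper requires $\alpha+(j-1)a/2\in\N$ for all $j$. In the trace step you integrate the convolution directly instead of taking its $L^1$-norm.
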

 
\begin{proof}
    Note that
    \begin{align*}
        B^\nu(\Phi^\nu_\alpha)&= \Phi^\nu_\alpha\ast \Phi^\nu\\
        &=  \frac{c_{\nu+\alpha}}{c_\nu} \sum_{\textbf{m}\geq 0} \frac{\Gamma_\bsd(-\alpha + \textbf{m})\Gamma_{\bsd}(\nu)}{\Gamma_{\bsd}(-\alpha)\Gamma_{\bsd}(\nu+\textbf{m})} (P^\nu_{\textbf{m}} \ast \Phi^\nu),
    \end{align*}
    where the convergence is considered pointwise (see Theorem \ref{theo:convergence}).
    Furthermore, we know from \cite[Theorem 3.8]{FK90} that 
   \[
      h(z,w)^\alpha = \sum_{\textbf{m} \geq 0 } \frac{\Gamma_{\bsd}(-\alpha+\textbf{m})}{\Gamma_\bsd(-\alpha)}K_{\textbf{m}}(z,w),
   \]
   where $K_{\textbf{m}}$ is the reproducing kernel for the finite-dimensional space $P_{\textbf{m}}$ with respect to the Fischer inner product $\ip{\cdot}{\cdot}_F$ on $\cP_{\textbf{m}}$.  
   Since $K_z^\nu=h(\cdot,z)^{-\nu}$, we have that $P^\nu_{\textbf{m}}K^\nu_z=\frac{\Gamma_{\bsd}(\nu+\textbf{m})}{\Gamma_\bsd(\nu)}K_{\textbf{m}}(\cdot,z)$, and hence,
   \begin{align*}
       (P^\nu_{\textbf{m}} \ast \Phi^\nu)(z)&= \ipnu{P^\nu_{\textbf{m}}k^\nu_z}{k^\nu_z}\\
       &=h(z,z)^\nu \ipnu{P^\nu_{\textbf{m}}K^\nu_z}{K^\nu_z}\\
       &= \frac{\Gamma_{\bsd}(\nu+\textbf{m})}{\Gamma_\bsd(\nu)} h(z,z)^\nu K_{\textbf{m}}(z,z).
   \end{align*}
   Therefore,
   \begin{align*}
       B^\nu(\Phi^\nu_\alpha)(z)&=   \frac{c_{\nu+\alpha}}{c_\nu} h(z,z)^\nu \sum_{\textbf{m}\geq 0} \frac{\Gamma_{\bsd}(-\alpha+\textbf{m})}{\Gamma_\bsd(-\alpha)} K_{\textbf{m}}(z,z)\\
       &=  \frac{c_{\nu+\alpha}}{c_\nu} h(z,z)^\nu  h(z,z)^\alpha\\
       &=  \frac{1}{c_\nu}\varphi_{\nu+\alpha}(z)
   \end{align*}
   as required. 

    If $\Phi^\nu_\alpha$ is indeed trace-class, we have that
   $$\frac{1}{c_\nu}\tr(\Phi^\nu_\alpha)\tr(\Phi^\nu)=\|\Phi^\nu_\alpha\ast \Phi^\nu\|_1=\frac{1}{c_\nu}\|\varphi_{\nu+\alpha}\|_1=\frac{1}{c_\nu}.$$
   Since $\tr(\Phi^\nu)=1$, we obtain that $\tr(\Phi^\nu_\alpha)=1$, in that case.
   
    Finally, we show that $\Pna$ is of trace class in several steps, dealing with each of the parts $\dim \cP_m$ and
  the quotient of $\Gamma$-factors separately.
  First, if $\alpha +(j-1)a/2 $ is a positive integer for all $j$, then the sum defining $\Pna$ is finite and hence $\Pna$ is of trace class
  in that case. 

  We will now assume for the moment that
  $\alpha+ (j-1)a/2$ is not an integer for all $1\le j\le r$. We can then write
   \[\Pna =  d (r,\nu,\alpha ) \sum_{\bm \ge 0} \frac{\GO (\bm - \alpha)}{\GO (\bm + \nu )} P_\bm .\]
  
  We will estimate $| \GO (\bm - \alpha)/\GO (\bm + \nu ) |$ 
and $\dim \cP_m$ to show that the sum converges absolutely. We do si by estimating each of the two parts separately, starting with the quotient of the $\Gamma$-functions. In the following, $C$ is a positive constant that may change as we move from line to line.
  \medskip
  
 \noindent
 {\bf Step 1:}
 Assume that $m_1\ge m_2\ge \cdots \ge m_1\ge 1$, so that we can use
 \eqref{eq:Gamma} to get 
\[
  \frac{\GO (\bm -\alpha)}{\GO (\bm +\nu)} =C\prod_{j=1}^r \frac{\Gamma (m_j - \alpha - (j-1)a/2)}{ \Gamma (m_j + \nu -(j-1)a/2)} = C  \prod_{j=1}^r m_j^{-\nu - \alpha}.\]
 \medskip 
 
 \noindent
 {\bf Step 2:} If $m_{k}\ge 1$ and $m_{k+1}=m_{k+2} =\ldots = m_r=0$ for some $1\le k \le r$ then 
   \[\prod_{i=k+1}^r \left|\frac{\Gamma (m_j - \alpha - (j-1)a/2)}{ \Gamma (m_j + \nu -(j-1)/2)}\right|=
   \prod_{i=k+1}^r \left|\frac{\Gamma (- \alpha - (j-1)a/2)}{ \Gamma (\nu -(j-1)a/2)}
   \right|   \le C\quad\text{is bounded}\]
   and the product for the other factors can be treated like above reducing it to the product
   \[\prod_{j=1}^{k} \left|\frac{\Gamma (m_j - \alpha - (j-1)a/2)}{ \Gamma (m_j + \nu -(j-1)/2)}\right| = C \prod_{j=1}^k x^{-\nu -\alpha} \]
as in the case $k=r$ above.
\medskip 
 
 \noindent
 {\bf Step 3:}  If $\alpha+(j-1)a/2\in\N_0$ for some $0\le j \le r$ then one has to treat the cases $m_r$ big enough such that 
 $\alpha+ (j-1)a <m_r$ where we can use the argument from above to estimate the quotient. If $m_j - \alpha - (j-1)a/2$ is
 a negative integer then then the corresponding part in the sum in a finite sum. For the other $\bm$'s  use the above
 estimate to deal with the sum. 
\medskip 
 
 \noindent
 {\bf Step 4:} 
We now estimate the dimension of $\cP_\bm$. The idea is the same as above so we do not discuss
all the possible different cases in so much details. Still working with the assumption that $m_1\ge m_2\ge \cdots \ge m_r\ge 1$ and that
$\alpha + (j-1)a/2\not\in \N_0$  we
get for the factor $\GO (v_r+\bm)/\GO (u_r+\bm)$ in the dimension formula, Theorem \ref{thm:dimPm} we have
   as above:
\[
\frac{ \GO (v_r+\bm)}{\GO (u_r+\bm)}=\prod_{j=1}^r \frac{\Gamma \left(m_j + v_r -\frac{a(j-1)}{2}\right)}{\Gamma \left(m_j + u_r -\frac{a(j-1)}{2}\right)}
\le C \prod_{j=1}^r m_j^b .\]
\medskip 
 
 \noindent
 {\bf Step 5:} 
For the last factor  
\begin{equation}\label{eq:Step5}
\prod_{1\le i< j\le r} \left(m_i-m_j +\frac{a(j-i)}{2}\right)
\frac{\Gamma \left(m_i-m_j +\frac{a(j-i+1)}{2}\right)}{\Gamma \left(m_j -m_j +1+ \frac{a(j-i -1)}{2}\right)}
\end{equation}
we assume first that $m_i-m_j> ar/2$. Using that $m_i-m_j \le m_i$ we get for those $\bm$'s using the same
argument as above:
\begin{align*}
\eqref{eq:Step5} &\le C_1
\prod_{1\le i< j\le r}\left(m_i-m_j +\frac{a(j-i)}{2}\right) (m_i-m_j)^{a-1}\\
& \le
C_1\prod_{1\le i< j\le r} (m_i-m_j)^{a}\\
&\le C_2\left( \prod_{j=1}^r m_j\right)^{a(r-1)} .
\end{align*}
 \medskip 
 
 \noindent
 {\bf Step 6:} 
Combining all of those estimates using the assumption that $\alpha + (j-1)/a\not\in \N_0$ we get
\begin{align*}
\sum_{\bm>0} \left| \frac{\GO (\bm - \alpha)}{\GO (\bm + \nu )}\right| \dim \cP_m
&\le
C\sum_{\bm>0 } \prod_{j=1}^r m_j^{-(\nu + \alpha -b - a(r-1))}\\
\le C \left(\sum_{j=1}^r \frac{1}{j^{(\nu + \alpha -b -a(r-1)))}}\right)
\end{align*} 
and the last sum is finite if and only if 
\[\nu + \alpha - b -a(r-1)> 1.\]
But, using the assumption $\alpha >0$ and $\nu > g-1=a(r-1) + b +1$, gives
\[\nu + \alpha - b - a(r-1)  > \alpha  +1 >1\]
which shows that the sum is finite.
\medskip 
 
 \noindent
 {\bf Step 7:} For the remaining cases we apply the same arguments as in step 2 and step 3 to derive an estimate and reduce the sum to
 $1\le k \le r-1$.
\end{proof}

\subsection{The $\alpha$-Berezin transform}
\begin{definition}
   For $\alpha>\gen-1$, and $S\in \bdd$, we define the $\alpha$-Berezin transform of $S$ to be the function $B^\nu_\alpha(S):\bsd\to \C$, by
   \[
      B^\nu_\alpha(S)=S\ast \Phi^\nu_\alpha.
   \]
\end{definition}

Clearly $B^\nu_\alpha(S)\in \bddf$ with $\|B^\nu_\alpha(S)\|_\infty\leq \|S\|\|\Phi^\nu_\alpha\|_1$.
 
By Theorem \ref{theo:Phi alpha}, we obtain that
\begin{equation}
    B^\nu_\alpha(S) =\frac{c_{\nu+\alpha}}{c_\nu}\sum_{\textbf{m}\geq 0} \frac{\Gamma_\bsd(-\alpha + \textbf{m})\Gamma_{\bsd}(\nu)}{\Gamma_{\bsd}(-\alpha)\Gamma_{\bsd}(\nu+\textbf{m})} (S\ast P^\nu_{\textbf{m}}).
\end{equation}

\begin{lemma}\label{lem:properties Berezin}
   Let $\alpha,\beta\in\N_0$, and suppose that $a\in \bddf$ and $S\in \bdd$. Then
   \begin{enumerate}
      \item $B^\nu_\alpha(S)\in \cbufR$.
      \item $B_\alpha^\nu(T^\nu_a) = {B_{\nu+\alpha}(a)}$
      \item $B_{\nu+\alpha}(B^\nu_\beta(S)) = B_{\nu+\beta}(B^\nu_\alpha(S))$.
   \end{enumerate}
\end{lemma}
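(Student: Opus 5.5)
Each item is a formal consequence of the QHA machinery of Section~\ref{sec:QHA}, once we know that $\Phi^\nu_\alpha$ is a radial trace-class operator (Theorem~\ref{theo:Phi alpha}) whose Berezin transform is $\frac{1}{c_\nu}\varphi_{\nu+\alpha}$.

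For (1), I would simply invoke Lemma~\ref{lem:convolution_in_cbuR}. Since $B^\nu_\alpha(S)=S\ast\Phi^\nu_\alpha$ with $S\in\bdd$, and $\Phi^\nu_\alpha\in\traceop$ is radial, that lemma gives $B^\nu_\alpha(S)\in\cbufR$ directly. (When $\alpha=0$ we take $\Phi^\nu_0=\Phi^\nu$, which is also radial and rank one.)

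For (2), I would write the Toeplitz operator as a convolution using Lemma~\ref{lem:Toeplitz}: $T^\nu_a=c_\nu(a\ast\Phi^\nu)$. Associativity of the mixed convolutions then gives
\[
B^\nu_\alpha(T^\nu_a)=c_\nu\,(a\ast\Phi^\nu)\ast\Phi^\nu_\alpha=c_\nu\, a\ast(\Phi^\nu\ast\Phi^\nu_\alpha).
\]
Next, Theorem~\ref{theo:radial_convol}(2) lets us swap the order of the two radial operators, so $\Phi^\nu\ast\Phi^\nu_\alpha=\Phi^\nu_\alpha\ast\Phi^\nu=B^\nu(\Phi^\nu_\alpha)=\frac{1}{c_\nu}\varphi_{\nu+\alpha}$. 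Hence $B^\nu_\alpha(T^\nu_a)=a\ast\varphi_{\nu+\alpha}$, and Lemma~\ref{lem:function Berezin transform convol} identifies this with $B_{\nu+\alpha}(a)$. The main care needed here is justifying associativity for $a\in\bddf$ and a trace-class operator. I would do this via duality \eqref{eq:duality}: pair both sides with $\psi\in L^1(G)$, and use Fubini, which is legitimate because $\|\actopL{x}{\Phi^\nu_\alpha}\|_1$ is bounded and $\Phi^\nu\ast\Phi^\nu_\alpha\in L^1$ by \eqref{eq:int of convol}.

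For (3), I would use Lemma~\ref{lem:function Berezin transform convol} again to write $B_{\nu+\alpha}(B^\nu_\beta(S))=(S\ast\Phi^\nu_\beta)\ast\varphi_{\nu+\alpha}$, where $\varphi_{\nu+\alpha}=c_\nu\,\Phi^\nu_\alpha\ast\Phi^\nu$. This gives
\[
c_\nu\,(S\ast\Phi^\nu_\beta)\ast(\Phi^\nu_\alpha\ast\Phi^\nu)=c_\nu\,\big((S\ast\Phi^\nu_\beta)\ast\Phi^\nu_\alpha\big)\ast\Phi^\nu
\]
by associativity. Theorem~\ref{theo:radial_convol}(3) then says $(S\ast\Phi^\nu_\beta)\ast\Phi^\nu_\alpha=(S\ast\Phi^\nu_\alpha)\ast\Phi^\nu_\beta$. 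Reversing the same steps with $\alpha$ and $\beta$ interchanged yields $B_{\nu+\beta}(B^\nu_\alpha(S))$. The main obstacle is the same as in (2): verifying associativity of function$\ast$operator$\ast$operator in this bounded/trace-class setting. Again I would handle it by duality against $L^1$ functions together with Fubini, exactly as in the proof of Theorem~\ref{theo:radial_convol}.
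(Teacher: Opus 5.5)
Your proposal is correct and follows the paper's proof essentially step for step: Lemma~\ref{lem:convolution_in_cbuR} for (1); $T^\nu_a=c_\nu(a\ast\Phi^\nu)$, associativity, and $\Phi^\nu\ast\Phi^\nu_\alpha=\frac{1}{c_\nu}\varphi_{\nu+\alpha}$ for (2); and $\varphi_{\nu+\alpha}=c_\nu\,\Phi^\nu_\alpha\ast\Phi^\nu$, associativity, and Theorem~\ref{theo:radial_convol}(3) for (3). Your duality/Fubini remark adds justification that the paper leaves implicit, with one small correction: integrability of $\Phi^\nu\ast\Phi^\nu_\alpha$ follows from \eqref{eq:QHAYoung ops}, or from its identification with $\frac{1}{c_\nu}\varphi_{\nu+\alpha}$, not from \eqref{eq:int of convol}, because $\Phi^\nu_\alpha$ is not a positive operator for $\alpha\geq 1$.
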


\begin{proof}
   By Lemma \ref{lem:convolution_in_cbuR}, we obtain that $B^\nu_\alpha(S)\in \cbufR$.

   Note that
   \[
      B^\nu_\alpha(T^\nu_a)=c_\nu (a\ast \Phi^\nu)\ast \Phi^\nu_\alpha=c_\nu a\ast (\Phi^\nu\ast \Phi^\nu_\alpha)=a\ast \varphi_{\nu+\alpha}=B_{\nu+\alpha}(a). 
   \]
   Similarly,
   \begin{align*}
      B_{\nu+\alpha}(B^\nu_\beta(S)) & = B_{\nu+\alpha}(S * \Phi^\nu_\beta)  \\
      & = (S * \Phi^\nu_\beta) * \varphi_{\nu+\alpha} \\
      & = c_\nu(S * \Phi^\nu_\beta) * (\Phi^\nu_\alpha\ast \Phi^\nu) \\
      & = c_\nu((S * \Phi^\nu_\beta) * \Phi^\nu_\alpha)\ast \Phi^\nu) \\
       & = c_\nu((S * \Phi^\nu_\alpha) * \Phi^\nu_\beta)\ast \Phi^\nu) \quad \text{(by radiality of $\Phi^\nu_\alpha$ and Theorem \ref{theo:radial_convol})}\\
      & = B_{\nu+\beta}(B^\nu_\alpha(S)),
   \end{align*}
   where the last line followed by interchanging the roles of $\alpha$ and $\beta$ in the preceding calculation.
\end{proof}

\subsection{Toeplitz Criterion}
Here, we observe that the uniform boundedness of the $\alpha$-Berezin transform of $S\in \bdd$, forces $S$ to be a Toeplitz operator. And as the converse is also true, this provides a classification of Toeplitz operators.

\begin{theorem}\label{theo:Toeplitz criterion}
    Let $1<p\leq \infty$ and let $S\in \schop$. Then the following are equivalent:
    \begin{enumerate}
        \item $S=T^\nu_a$ for some $a\in \Lp$.
        \item There exists $C>0$ such that $\|B^\nu_\alpha(S)\|_p < C$ for all $\alpha\in\N$.
    \end{enumerate}
\end{theorem}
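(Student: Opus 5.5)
The direction (1)$\Rightarrow$(2) is immediate from results already in place. If $S=T^\nu_a$ with $a\in\Lp$, then Lemma~\ref{lem:properties Berezin}(2) gives $B^\nu_\alpha(S)=B_{\nu+\alpha}(a)=a\ast\varphi_{\nu+\alpha}$. Young's inequality for functions, together with $\|\varphi_{\nu+\alpha}\|_1=1$, then yields $\|B^\nu_\alpha(S)\|_p\le\|a\|_p$ for all $\alpha$. (The argument of Lemma~\ref{lem:properties Berezin}(2) only uses associativity and $c_\nu\Phi^\nu\ast\Phi^\nu_\alpha=\varphi_{\nu+\alpha}$, so it works equally for $a\in\Lp$ in place of $a\in\bddf$.)

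For (2)$\Rightarrow$(1), the plan is to build the symbol $a$ as a weak-$*$ limit of the functions $a_\alpha:=B^\nu_\alpha(S)$.
\begin{enumerate}
\item Since $1<p\le\infty$, $\Lp$ is the dual of $L^{p'}(\bsd,d\mu_0)$ with $1\le p'<\infty$, and this predual is separable. Banach--Alaoglu therefore gives a subsequence $\alpha_k\to\infty$ and some $a\in\Lp$, $\|a\|_p\le C$, with $a_{\alpha_k}\to a$ weak-$*$.
\item Fix an arbitrary $B\in\schopq{p'}$, or $B\in\traceop$ when $p=\infty$. Using the duality relation \eqref{eq:duality} and Lemma~\ref{lem:Toeplitz}, I would compute
\[
\tr(T^\nu_{a_\alpha}B)=c_\nu\,\ip{a_\alpha}{B\ast\Phi^\nu}_{L^1}.
\]
Here $B\ast\Phi^\nu\in L^{p'}$ by the QHA Young inequality \eqref{eq:QHAYoung ops}. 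Passing to the limit along $\alpha_k$ shows $T^\nu_{a_{\alpha_k}}\to T^\nu_a$ weakly, in the weak-$*$ sense of $\schop$.
\item Rewrite $T^\nu_{a_\alpha}=c_\nu (S\ast\Phi^\nu_\alpha)\ast\Phi^\nu$. By associativity and Theorem~\ref{theo:radial_convol}(3)/(4), since $\Phi^\nu$ and $\Phi^\nu_\alpha$ are radial, this equals $c_\nu (S\ast\Phi^\nu)\ast\Phi^\nu_\alpha$. Equivalently, it is $(c_\nu\Phi^\nu\ast\Phi^\nu_\alpha)\ast S=\varphi_{\nu+\alpha}\ast S$, where I use that the convolution of two operators is a function on $G$ and apply the operator-by-function convolution.
\item $\{\varphi_{\nu+\alpha}\}$ is a $G/K$-Dirac sequence. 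For $p<\infty$, Lemma~\ref{lem:schop_convergence} gives $\varphi_{\nu+\alpha}\ast S\to S$ in $\schop$. For $p=\infty$, I would instead test against $B\in\traceop$: by \eqref{eq:duality}, $\tr((\varphi_{\nu+\alpha}\ast S)B)$ is a pairing of $\varphi_{\nu+\alpha}$ with the function $x\mapsto\tr(\actopL{x}{S}B)$. This function is continuous because $\lambda^\nu$ is strongly continuous on $\traceop$, so it is evaluated at the identity in the limit, giving $\tr(SB)$. In either case $T^\nu_{a_\alpha}\to S$ weakly.
\item Uniqueness of weak limits then gives $S=T^\nu_a$.
\end{enumerate}

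The main obstacle is step 3. It requires justifying the identity $(S\ast\Phi^\nu_\alpha)\ast\Phi^\nu=\frac{1}{c_\nu}\varphi_{\nu+\alpha}\ast S$ rigorously. Concretely, one must check that the integrals and traces converge for $S$ merely in $\schop$, and that the associativity and commutativity statements of Theorem~\ref{theo:radial_convol} apply in this generality. My first attempt is to verify the identity by pairing both sides with an arbitrary trace-class $B$ and using \eqref{eq:duality} together with Theorem~\ref{theo:radial_convol}(4). The convergence issues are handled by noting that $\Phi^\nu$ and $\Phi^\nu_\alpha$ are trace class (Theorem~\ref{theo:Phi alpha}), so every convolution involved falls within the QHA Young inequalities. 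A secondary subtlety is that the weak-$*$ compactness and $p'$-duality are stated along $\alpha\in\N$, and one should make sure $\Phi^\nu_\alpha$ is defined for these values, as it is by Theorem~\ref{theo:Phi alpha}.
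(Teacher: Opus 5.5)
Your direction (1)$\Rightarrow$(2) and Steps 1--2 are fine. So is the first identity in Step 3, $T^\nu_{a_\alpha}=c_\nu(S\ast\Phi^\nu)\ast\Phi^\nu_\alpha$, which is Lemma~\ref{lem:Berezin_convol}(1). The gap is the next identity, $T^\nu_{a_\alpha}=\varphi_{\nu+\alpha}\ast S$, together with Step 4. The issue is not convergence of integrals: both claims are false for non-radial $S$. That identity is Lemma~\ref{lem:Berezin_convol}(2), which needs $S\in\bdd^K$, as does Theorem~\ref{theo:radial_convol}(4), which you invoke.

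To see that it genuinely fails, note that $\varphi_{\nu+\alpha}$ is left $K$-invariant, so $\varphi_{\nu+\alpha}\ast S$ is always radial (Lemma~\ref{lem:convolution_rad}(2)). It is also right $K$-invariant, so $\varphi_{\nu+\alpha}\ast S=\varphi_{\nu+\alpha}\ast\Rad{S}$ (Lemma~\ref{lem:convolution_rad}(1)). By contrast, $T^\nu_{a_\alpha}$ is in general not radial: for $S=T^\nu_a$ with $a$ non-radial it equals $T^\nu_{B_{\nu+\alpha}(a)}$. Likewise, viewed on $G$, $\{\varphi_{\nu+\alpha}\}$ is not a Dirac sequence in $L^1(G)$, since it concentrates on $K$ rather than at $e$. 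So Lemma~\ref{lem:schop_convergence} does not apply, and
$\int_G\varphi_{\nu+\alpha}(x)\tr(\actopL{x}{S}B)\haar{x}\to\int_K\tr(\actopL{k}{S}B)\haarK{k}=\tr(\Rad{S}B)$, not $\tr(SB)$. As written, your argument therefore only handles radial $S$.

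The missing idea is to compare Berezin transforms, where the approximate identity sits to the right of a right-uniformly continuous function. This is what Lemma~\ref{lem:properties Berezin}(1),(3) are set up for; the paper omits the proof and defers to \cite[Theorem 6.6]{DDMO24}. By Lemma~\ref{lem:Berezin_convol}(1), associativity, and $\Phi^\nu_\alpha\ast\Phi^\nu=B^\nu(\Phi^\nu_\alpha)=\frac{1}{c_\nu}\varphi_{\nu+\alpha}$, one gets
\[
B^\nu(T^\nu_{a_\alpha})=c_\nu\bigl(B^\nu(S)\ast\Phi^\nu_\alpha\bigr)\ast\Phi^\nu=B^\nu(S)\ast\varphi_{\nu+\alpha}\longrightarrow B^\nu(S)
\]
uniformly. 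The convergence holds because $B^\nu(S)\in\cbufR$ (Lemma~\ref{lem:convolution_in_cbuR}) and by Proposition~\ref{prop:cbuRf}.

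On the other hand, apply your Step 2 to $B=\actopL{x}{\Phi^\nu}$, for which $B\ast\Phi^\nu=\frac{1}{c_\nu}\actfL{x}{\varphi_\nu}\in L^{p'}$. This gives $B^\nu(T^\nu_{a_{\alpha_k}})(x\cdot 0)\to B^\nu(T^\nu_a)(x\cdot 0)$ for every $x\in G$. Hence $B^\nu(T^\nu_a)=B^\nu(S)$, and injectivity of the Berezin transform yields $S=T^\nu_a$ with no radiality assumption. This replaces your Steps 3--5.
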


We omit the proof of the above theorem, and point to \cite[Theorem 6.6]{DDMO24} for a similar proof. The core idea of the theorem  relies on the fact that $\Lp$ admits a predual, which explains why the theorem does not hold when $p=1$.

\subsection{Approximating operators by Toeplitz operators}
As we noted in the previous subsection, the $\alpha$-Bereazin transform characterizes Toeplitz operators among general bounded operators on $\cA^2_\nu$. A natural question is whether it can also be used to characterize operators in the Toeplitz algebra  $\toepalg$. Specifically, one would like to determine whether one has that convergence $T^\nu_{B_\alpha^\nu(S)}\to S$ as $\alpha\rightarrow\infty$ for all $S\in \toepalg$. Suárez worked on this conjecture for the Bergman space over the unit disc $\cA^2(\D)$, in a series of papers \cite{S04,S05}. In particular, he proved that the convergence holds for operators in the radial Toeplitz algebra. His results were later extended to the setting of the unit ball in \cite{BHV14,BHV14b}, and further to bounded symmetric domains in \cite{D17}. This question was also studied for the unit ball using QHA techniques in \cite{DDMO24}, thereby broadening the known results. Here, we generalize these findings in \cite{DDMO24} to the case of arbitrary bounded symmetric domains. We also note that in a recent article by Mitkovski and Ouyang \cite{MO26}, a counterexample was obtained in the setting of the Bergman space over the unit disc $\cA^2(\D)$, disproving this conjecture for the full Toeplitz algebra.

We now include two natural QHA results (Lemma \ref{lem:Berezin_convol} and Proposition \ref{prop:convergence approx identity}) that will be key to proving \ref{theo:radial covolution convergence}.

\begin{lemma}\label{lem:Berezin_convol}
   Let $\alpha\in \N_0$. Then
   \begin{enumerate}
      \item $T^\nu_{B_\alpha^\nu(S)}=c_\nu (B^\nu(S)\ast \Phi^\nu_\alpha)$.
      \item If $S$ is radial, then $T^\nu_{B_\alpha^\nu(S)}=\varphi_{\nu+\alpha}\ast S$.
   \end{enumerate}
\end{lemma}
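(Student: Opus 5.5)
Both parts follow from the QHA machinery already set up: Lemma~\ref{lem:Toeplitz}, associativity of convolutions, and Theorem~\ref{theo:radial_convol}.

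\emph{Part (1).} By Lemma~\ref{lem:Toeplitz} applied with symbol $B^\nu_\alpha(S)\in\bddf$,
\[
T^\nu_{B^\nu_\alpha(S)} = c_\nu\bigl(B^\nu_\alpha(S)\ast\Phi^\nu\bigr) = c_\nu\bigl((S\ast\Phi^\nu_\alpha)\ast\Phi^\nu\bigr).
\]
Since $\Phi^\nu_\alpha$ and $\Phi^\nu$ are both radial trace-class operators (Theorem~\ref{theo:Phi alpha}), part (3) of Theorem~\ref{theo:radial_convol} gives
\[
(S\ast\Phi^\nu_\alpha)\ast\Phi^\nu = (S\ast\Phi^\nu)\ast\Phi^\nu_\alpha = B^\nu(S)\ast\Phi^\nu_\alpha .
\]
This is exactly (1). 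The main care needed is that every convolution is defined. $S\ast\Phi^\nu_\alpha$ is a bounded function, and convolving a bounded function with a trace-class operator gives a bounded operator by \eqref{eq:QHAYoung}. So the identity holds as bounded operators, which is the setting in which Theorem~\ref{theo:radial_convol} was proved.

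\emph{Part (2).} Assume now that $S$ is radial. By part (2) of Theorem~\ref{theo:radial_convol}, $B^\nu(S)=S\ast\Phi^\nu=\Phi^\nu\ast S$. Substituting into (1) and using associativity of the operator--operator--operator convolution,
\[
c_\nu(\Phi^\nu\ast S)\ast\Phi^\nu_\alpha = c_\nu(\Phi^\nu\ast\Phi^\nu_\alpha)\ast S .
\]
By part (2) of Theorem~\ref{theo:radial_convol} and Theorem~\ref{theo:Phi alpha},
\[
\Phi^\nu\ast\Phi^\nu_\alpha = \Phi^\nu_\alpha\ast\Phi^\nu = B^\nu(\Phi^\nu_\alpha) = \tfrac{1}{c_\nu}\varphi_{\nu+\alpha},
\]
and therefore $T^\nu_{B^\nu_\alpha(S)}=\varphi_{\nu+\alpha}\ast S$.

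\emph{Alternative route for (2).} Part (4) of Theorem~\ref{theo:radial_convol} gives the same result directly: for radial $S$, $(S\ast\Phi^\nu_\alpha)\ast\Phi^\nu=(\Phi^\nu_\alpha\ast\Phi^\nu)\ast S$. This avoids invoking a separate associativity law of the form $(A\ast B)\ast C=A\ast(B\ast C)$ with $A$ a function.

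\emph{Main obstacle.} The only delicate point is justifying the associativity step when one factor is only bounded rather than trace class. Part (4) of Theorem~\ref{theo:radial_convol} settles this. If needed, one can also verify the identity by duality against $\psi\in L^1(G)$ via \eqref{eq:duality}, exactly as in the proof of that theorem, and then use injectivity of the Berezin transform.
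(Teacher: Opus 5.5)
Your proof is correct and follows the paper's argument: part (1) is exactly the paper's application of Theorem~\ref{theo:radial_convol}(3) to swap $\Phi^\nu_\alpha$ and $\Phi^\nu$. Your ``alternative route'' for (2), via Theorem~\ref{theo:radial_convol}(4) together with $\Phi^\nu_\alpha\ast\Phi^\nu=\tfrac{1}{c_\nu}\varphi_{\nu+\alpha}$, is what the paper means by ``similar''. Also, the step you call associativity in your main route for (2), $(\Phi^\nu\ast S)\ast\Phi^\nu_\alpha=(\Phi^\nu\ast\Phi^\nu_\alpha)\ast S$, is not an associativity law stated in the paper; after rewriting $\Phi^\nu\ast S=S\ast\Phi^\nu$ it is precisely part (4), so your two routes coincide.
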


\begin{proof}
   Note that since $\Phi^\nu_\alpha$ and $\Phi^\nu$ are radial operators,
   \[
      T^\nu_{B_\alpha^\nu(S)}=c_\nu(S\ast\Phi^\nu_\alpha)\ast \Phi^\nu=c_\nu(S\ast\Phi^\nu)\ast \Phi^\nu_\alpha=c_\nu (B^\nu(S)\ast \Phi^\nu_\alpha)
   \]
   by Theorem \ref{theo:radial_convol}. The proof of the second statement is similar, and follows by Theorem \ref{theo:radial_convol}.
\end{proof}

The proof of the following theorem is similar to \cite[Proposition 6.9]{DDMO24} stated for the rank one case, and we avoid repeating the same arguments.
\begin{proposition}\label{prop:convergence approx identity}
    Let $\{\psi_\alpha\}_{\alpha\in\N_0}$ be a $G/K$-Dirac sequence, and let $S\in \bdd$ be a radial operator.  Then the following convergences hold:
    \begin{enumerate}
        \item $\psi_\alpha\ast S\to S$ in $\bdd$ with respect to the SOT.
        \item If $S\in \cbuopL$,  then $\psi_\alpha\ast S\to S$ with respect to the operator norm in $\bdd$.
        \item If $S\in \schop$, then $\psi_\alpha\ast S\to S$ with respect to the Schatten-$p$ norm in $\schop$.
    \end{enumerate}
\end{proposition}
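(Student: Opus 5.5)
The strategy is to exploit radiality. By Theorem \ref{theo:FK radial decomposition} and Schur's lemma, a radial $S$ is diagonal, $S=\sum_{\textbf{m}\geq 0} s_\textbf{m} P^\nu_\textbf{m}$ with $\sup_\textbf{m}|s_\textbf{m}|\le\|S\|$. By Lemma \ref{lem:convolution_rad}(2), each $\psi_\alpha\ast S$ is again radial, since $\psi_\alpha$ is radial and hence left $K$-invariant. So $\psi_\alpha\ast S=\sum_\textbf{m} s^\alpha_\textbf{m} P^\nu_\textbf{m}$ for some scalars $s^\alpha_\textbf{m}$.

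\textbf{Part (1).} Since $\|\psi_\alpha\ast S\|\le\|\psi_\alpha\|_1\|S\|=\|S\|$ is uniformly bounded, it suffices to prove SOT convergence on the dense set of $K$-finite vectors $\cP=\bigoplus_\textbf{m}\cP_\textbf{m}$. For radial operators this reduces to showing $s^\alpha_\textbf{m}\to s_\textbf{m}$ for each fixed $\textbf{m}$. To get this:
\begin{enumerate}
\item Fix a nonzero $p\in\cP_\textbf{m}$. Then
\[
s^\alpha_\textbf{m}\|p\|^2=\int_G\psi_\alpha(x)\,\ipnu{\actopL{x}{S}p}{p}\haar{x}.
\]
\item The function $x\mapsto\ipnu{\actopL{x}{S}p}{p}=\ipnu{S\pinu(x)^*p}{\pinu(x)^*p}$ is continuous, because $\pinu$ is strongly continuous (it is the composition of $\widetilde{\pi}_\nu$ with the covering map, up to a continuous phase that cancels under conjugation).
\item At $x=e$ this function equals $s_\textbf{m}\|p\|^2$, and it is bounded by $\|S\|\|p\|^2$.
\item The Dirac properties of $\psi_\alpha$ (nonnegativity, unit mass, and mass concentrating in every neighborhood of $0$, lifted to neighborhoods of $K$ in $G$) then give convergence of the integral. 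Because the integrand is right $K$-invariant after averaging, we may first replace it by its $K$-average, which is continuous on $\bsd$, equals $s_\textbf{m}\|p\|^2$ at $0$, and is bounded.
\end{enumerate}
Hence $s^\alpha_\textbf{m}\to s_\textbf{m}$, and SOT convergence follows by an $\varepsilon/3$ argument.

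\textbf{Part (2).} For $S\in\cbuopL$, write
\[
\psi_\alpha\ast S-S=\int_G\psi_\alpha(x)\bigl(\actopL{x}{S}-S\bigr)\haar{x}.
\]
Norm continuity of $x\mapsto\actopL{x}{S}$ makes $\|\actopL{x}{S}-S\|<\varepsilon$ on a neighborhood $UK$ of $K$; radiality of $S$ is used to pass from a neighborhood of $e$ to one of $K$, since $\actopL{xk}{S}=\actopL{x}{S}$. Outside $UK$ the integrand is bounded by $2\|S\|$ times $\psi_\alpha$, whose mass there tends to $0$. This is the standard approximate-identity estimate.

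\textbf{Part (3).} The same estimate applies in $\schop$ for $1\le p<\infty$, using strong continuity of $x\mapsto\actopL{x}{S}$ in the Schatten $p$-norm; alternatively it follows from Lemma \ref{lem:schop_convergence}. For $p=\infty$ the statement reduces to part (1) or (2) according to the intended reading.

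\textbf{Main obstacle.} The delicate step is transferring the Dirac condition, stated on $\bsd$, to neighborhoods of $K$ in $G$. This requires checking that $\{x: x\cdot 0\in V\}=\pi^{-1}(V)$ forms a neighborhood base of $K$, which holds since $K$ is compact and $G\to G/K$ is open. It also requires using right $K$-invariance of $x\mapsto\actopL{x}{S}$ for radial $S$, which is exactly where the radiality hypothesis enters.
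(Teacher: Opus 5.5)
Your proof is correct. The paper gives no argument here; it only defers to the rank-one proof of \cite[Proposition 6.9]{DDMO24}. Your parts (2) and (3) are the standard approximate-identity estimate behind that deferral. For radial $S$ the orbit map $x\mapsto\actopL{x}{S}$ is right $K$-invariant, so it factors through $\bsd$, and the $G/K$-Dirac property can be applied at $0$.

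For part (1) you take a different route: the spectral decomposition $S=\sum_{\textbf{m}} s_{\textbf{m}}P^\nu_{\textbf{m}}$, density of $\cP$, and convergence of each eigenvalue. This is valid, and it gives the useful byproduct that every eigenvalue of $\psi_\alpha\ast S$ converges to the corresponding eigenvalue of $S$. It is not needed, though. For fixed $f\in\Berg$ the same estimate
\[
\|(\psi_\alpha\ast S-S)f\|\le\int_G\psi_\alpha(x)\,\|(\actopL{x}{S}-S)f\|\haar{x}
\]
works verbatim, because $x\mapsto\actopL{x}{S}f$ is continuous (strong continuity of $\lambda^\nu$) and right $K$-invariant. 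So all three parts follow from one argument with only the topology changed. A minor point in (1), step 4: no $K$-averaging is required, since radiality of $S$ already makes $x\mapsto\ipnu{\actopL{x}{S}p}{p}$ right $K$-invariant.

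Your ``alternatively'' in part (3) is wrong and should be dropped. Lemma \ref{lem:schop_convergence} is about Dirac sequences in $L^1(G)$, which concentrate at $e$. A $G/K$-Dirac sequence, viewed on $G$, instead concentrates on all of $K$. Indeed, Lemma \ref{lem:convolution_rad}(1) gives $\psi_\alpha\ast S=\psi_\alpha\ast\Rad{S}$, so the limit for non-radial $S$ is $\Rad{S}$, and the lemma cannot apply without radiality. Your direct estimate is the right proof.

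Your restriction of (3) to $1\le p<\infty$ is also correct. Each $\psi_\alpha\ast S$ lies in the norm-closed algebra $\cbuopL$, so norm convergence would force $S\in\cbuopL$, and not every radial bounded operator satisfies this.
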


Now we turn to the convergence of the sequence $\{T^\nu_{B^\nu_\alpha(S)}\}_{\alpha\in \N_0}$ to $S$.

\begin{theorem}\label{theo:radial covolution convergence}
    Let $1\leq p < \infty$.  Then the convergence $T^\nu_{B^\nu_\alpha(S)}\to S$ holds in each following cases with respect to the given topology:
    \begin{table}[H]
\centering
\begin{tabular}{l|p{10cm}} 
\toprule
Convergence & Sufficient conditions on $S$ \\
\midrule
Schatten $p$-norm &
\begin{enumerate}
    \item $S \in \schop^K$
    \item $S \in \Lone * \schop$
    \item $S \in \Lp * \mathcal{S}^1(\mathcal{A}^2_\nu)$
\end{enumerate} \\
\hline
&\\
operator norm & \begin{enumerate}
    \item $S \in \cbuopL^K$
    \item $S \in \Lone * C^{(L)}_{b,u}(\mathcal{A}^2_\nu)$
    \item $S \in L^q(\bsd, d\mu_0) * \mathcal{S}^1(\mathcal{A}^2_\nu)$ for all $1\leq q\leq \infty$
\end{enumerate} \\
\hline
&\\
SOT & \begin{enumerate}
    \item $S \in \bdd^K$
    \item $S \in \Lone * \bdd$
    \item All of the above cases
\end{enumerate} \\
\bottomrule
\end{tabular}
\end{table}
\end{theorem}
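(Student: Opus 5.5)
The plan is to reduce every case to the radial convolution identity of Lemma~\ref{lem:Berezin_convol} together with Proposition~\ref{prop:convergence approx identity}. The family $\{\varphi_{\nu+\alpha}\}_{\alpha\in\N_0}$ is a $G/K$-Dirac sequence, by the lemma at the start of this section. For radial $S$, Lemma~\ref{lem:Berezin_convol}(2) gives $T^\nu_{B^\nu_\alpha(S)}=\varphi_{\nu+\alpha}\ast S$. Proposition~\ref{prop:convergence approx identity} then yields the convergence in the three radial cases: SOT for $S\in\bdd^K$, operator norm for $S\in\cbuopL^K$, and Schatten $p$-norm for $S\in\schop^K$. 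This settles item (1) in each row.

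For the non-radial cases, I would first show that the map $S\mapsto T^\nu_{B^\nu_\alpha(S)}$ intertwines convolution. Take $S=\psi\ast R$ with $\psi$ a function and $R$ an operator. By associativity and Theorem~\ref{theo:radial_convol}, using that $\Phi^\nu_\alpha$ and $\Phi^\nu$ are radial and trace class,
\[
T^\nu_{B^\nu_\alpha(\psi\ast R)}=c_\nu((\psi\ast R)\ast\Phi^\nu_\alpha)\ast\Phi^\nu=\psi\ast\bigl(c_\nu(R\ast\Phi^\nu_\alpha)\ast\Phi^\nu\bigr)=\psi\ast(R\ast A_\alpha),
\]
where $A_\alpha:=c_\nu\,\Phi^\nu_\alpha\ast\Phi^\nu=\varphi_{\nu+\alpha}$ by Theorem~\ref{theo:Phi alpha}. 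Using the operator-operator associativity again, $(R\ast\Phi^\nu_\alpha)\ast\Phi^\nu$ equals $(R\ast\Phi^\nu)\ast\Phi^\nu_\alpha$ by Theorem~\ref{theo:radial_convol}(3). I expect to be able to rewrite this as $R'\mapsto\varphi_{\nu+\alpha}\ast_{\mathrm{right}}R'$, a right convolution by the radial function $\varphi_{\nu+\alpha}$ acting through $\Rad{}$-type averaging. The cleanest route is to verify, by pairing with trace-class $B$ via \eqref{eq:duality}, that $\psi\ast(R\ast\Phi^\nu_\alpha)\ast\Phi^\nu=(\psi\ast\varphi_{\nu+\alpha})\ast R$. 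Here $\psi\ast\varphi_{\nu+\alpha}$ is a function convolution on $G$, and radiality of $\varphi_{\nu+\alpha}$ with commutativity (as in the proof of Theorem~\ref{theo:radial_convol}) allows moving it onto $\psi$. This identity is the main obstacle; I would prove it first for $\psi,R$ nice and then extend by density and Young's inequality \eqref{eq:QHAYoung}.

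Granting $T^\nu_{B^\nu_\alpha(\psi\ast R)}=(\psi\ast\varphi_{\nu+\alpha})\ast R$, the remaining items follow from the fact that $\psi\ast\varphi_{\nu+\alpha}\to\psi$ in $L^q$ for $1\le q<\infty$ (right approximate identity), combined with Young's inequality \eqref{eq:QHAYoung}.

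\begin{itemize}
\item If $\psi\in\Lone$ and $R\in\schop$, the error $\|(\psi\ast\varphi_{\nu+\alpha}-\psi)\ast R\|_p\le\|\psi\ast\varphi_{\nu+\alpha}-\psi\|_1\|R\|_p\to0$. The same estimate in operator norm, with $R\in\cbuopL$ or $R\in\bdd$, gives the second items of the norm and SOT rows.
\item If $\psi\in\Lp$ (or $L^q$) and $R\in\traceop$, use $\|(\cdot)\ast R\|_p\le\|\cdot\|_p\|R\|_1$ to get the third Schatten item. For the operator-norm item with $q<\infty$, use $\|f\ast R\|\le\|f\|_q\|R\|_1$.
\item For $q=\infty$, I would instead use that $\psi\ast R\in\cbuopL$, reduce via Proposition~\ref{prop:cbu_characterizations} to the uniform continuity of $\psi\ast\varphi_{\nu+\alpha}$, and argue from Proposition~\ref{prop:cbuRf} after convolving with $R$. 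This case may need care, since $L^\infty$ convolution only converges for uniformly continuous symbols.
\end{itemize}

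Finally, norm and Schatten convergence imply SOT convergence, which gives ``all of the above cases''.
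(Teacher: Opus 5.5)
Your treatment of the radial items matches the paper's. The non-radial part, however, has a genuine gap. The identity $T^\nu_{B^\nu_\alpha(\psi\ast R)}=(\psi\ast\varphi_{\nu+\alpha})\ast R$, on which all the remaining items rest, is left unproved, and the route you sketch toward it does not work as written.

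After the correct step $T^\nu_{B^\nu_\alpha(\psi\ast R)}=\psi\ast T^\nu_{B^\nu_\alpha(R)}$, you try to identify $c_\nu(R\ast\Phi^\nu_\alpha)\ast\Phi^\nu$ with an ``$R\ast\varphi_{\nu+\alpha}$''. The paper has no operator--function convolution of that form. If you read it as $\varphi_{\nu+\alpha}\ast R$, the identity is false for non-radial $R$: $\varphi_{\nu+\alpha}\ast R$ is always radial by Lemma~\ref{lem:convolution_rad}(2), whereas $T^\nu_{B^\nu_\alpha(R)}$ need not be. For example, with $R=\actopL{x}{\Phi^\nu}$ and $x\notin K$, it is a Toeplitz operator with non-radial symbol $c_\nu^{-1}\ell_x\varphi_{\nu+\alpha}$.

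The missing idea is Lemma~\ref{lem:convolution_rad}(1). Since $\psi\in\Lone$ is right $K$-invariant, $\psi\ast R=\psi\ast\Rad{R}$, so you may replace $R$ by the radial operator $\Rad{R}$ from the outset. Lemma~\ref{lem:Berezin_convol}(2) and associativity then give at once
\[
T^\nu_{B^\nu_\alpha(S)}=\psi\ast T^\nu_{B^\nu_\alpha(\Rad{R})}=\psi\ast\bigl(\varphi_{\nu+\alpha}\ast\Rad{R}\bigr)=(\psi\ast\varphi_{\nu+\alpha})\ast R,
\]
using $\varphi_{\nu+\alpha}\ast R=\varphi_{\nu+\alpha}\ast\Rad{R}$ in the last step. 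No duality or density argument is needed, and this is precisely the paper's reduction.

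Your endgame also fails for the operator-norm item with $q=\infty$. You place the approximation on the symbol, $\psi\ast\varphi_{\nu+\alpha}\to\psi$. For $\psi\in L^\infty$ this holds in norm only when $\psi\in\cbufR$ (Proposition~\ref{prop:cbuRf}). Your suggested workaround via Propositions~\ref{prop:cbu_characterizations} and~\ref{prop:cbuRf} does not address the relevant expression: those results control left convolutions $\psi_\alpha\ast S$ and uniformly continuous symbols, not $(\psi\ast\varphi_{\nu+\alpha})\ast R$ with an arbitrary bounded $\psi$.

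The paper instead puts the approximation on the operator factor:
\[
T^\nu_{B^\nu_\alpha(S)}-S=\psi\ast\bigl(\varphi_{\nu+\alpha}\ast\Rad{R}-\Rad{R}\bigr).
\]
Here $\varphi_{\nu+\alpha}\ast\Rad{R}\to\Rad{R}$ by the radial case (Proposition~\ref{prop:convergence approx identity}), and Young's inequality \eqref{eq:QHAYoung} finishes the argument. For $\psi\in L^\infty$ and $R\in\traceop$ this means combining $\|\psi\ast A\|\le c_\nu^{-1}\|\psi\|_\infty\|A\|_1$ with convergence in $\traceop$, which handles every $q$ uniformly.

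For $\psi$ in $L^1$, or in $L^p$ with $p<\infty$, your symbol-side estimate does work once the identity is established. For $S\in\Lone\ast\bdd$ it even gives operator-norm convergence directly, since $\|(\psi\ast\varphi_{\nu+\alpha}-\psi)\ast R\|\le\|\psi\ast\varphi_{\nu+\alpha}-\psi\|_1\|R\|$.
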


\begin{proof}
    We will consider only the operator norm convergence, as the other cases are similar. First, we assume that $S \in \cbuopL^K$. Then by Lemma \ref{lem:Berezin_convol} and Proposition \ref{prop:convergence approx identity}, we obtain 
    $T^\nu_{B^\nu_\alpha(S)}=\varphi_{\nu+\alpha}\ast S\to S$ in operator norm. 

    To prove (2), note that $S=\psi\ast T$ for some $\psi\in \Lone$ and $T\in \schop$. Then by lemma \ref{lem:convolution_rad}, we get that $S=\psi\ast \Rad{T}$. Also, $T_{B_\alpha(\Rad{T})}\to \Rad{T}$ in operator norm by (1). Then we get
    \begin{align*}
        T^\nu_{B^\nu_\alpha(S)}&= c_\nu((\psi\ast \Rad{T})\ast \Phi^\nu_\alpha)\ast \Phi^\nu\\
        &= c_\nu\psi\ast (\Rad{T}\ast \Phi^\nu_\alpha)\ast \Phi^\nu\\
        &=\psi\ast T^\nu_{B^\nu_\alpha(\Rad{T})}\\
        &\to \psi \ast \Rad{T}=S
    \end{align*}
    in the operator norm by QHA Young's inequality. The proof of (3) is similar.
\end{proof}

\begin{corollary}\label{coro:Toeplitz algebra}
    Let $1\leq p\leq \infty$ and let $a\in \Lp$. Then $T^\nu_{B_\alpha(a)}\to T^\nu_a$ in $\schop$, and the Toeplitz algebra is generated by Toeplitz operators with right-uniformly continuous symbols. 
    
    Moreover,  for $\nu>\gen-1+(r-1)a/2$ 
    $$\toepalg=\overline{\cbufR\ast \Phi^\nu}.$$
\end{corollary}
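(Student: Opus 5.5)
The plan has three parts: the convergence $T^\nu_{B_\alpha(a)}\to T^\nu_a$, the generation statement, and the identity $\toepalg=\overline{\cbufR\ast \Phi^\nu}$.

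\textbf{Convergence.} Write $T^\nu_{B_\alpha(a)}$ as a QHA convolution. By Lemma~\ref{lem:function Berezin transform convol} we have $B_\alpha(a)=a\ast\varphi_\alpha$, and by Lemma~\ref{lem:Toeplitz},
\[
T^\nu_{B_\alpha(a)}=c_\nu\,(a\ast\varphi_\alpha)\ast\Phi^\nu .
\]
Since $\varphi_\alpha$ is radial, I would use associativity together with the commutativity of radial convolutions (Theorem~\ref{theo:radial_convol} and Lemma~\ref{lem:radial_functions}) to rewrite this as
\[
c_\nu\, a\ast(\varphi_\alpha\ast\Phi^\nu)=a\ast(\varphi_\alpha\ast c_\nu\Phi^\nu).
\]
The operator $\varphi_\alpha\ast c_\nu\Phi^\nu$ is radial and trace class. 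By Lemma~\ref{lem:schop_convergence} (with $p=1$), it converges to $c_\nu\Phi^\nu$ in $\traceop$. QHA Young's inequality \eqref{eq:QHAYoung} with exponents $(p,1,p)$ then gives
\[
\|a\ast(\varphi_\alpha\ast c_\nu\Phi^\nu-c_\nu\Phi^\nu)\|_p\le \|a\|_p\,\|\varphi_\alpha\ast c_\nu\Phi^\nu-c_\nu\Phi^\nu\|_1\to 0 .
\]
This proves $T^\nu_{B_\alpha(a)}\to T^\nu_a$ in $\schop$, including $p=\infty$. The step to check carefully is the rearrangement $(a\ast\varphi_\alpha)\ast S=a\ast(\varphi_\alpha\ast S)$, where $\varphi_\alpha\ast S$ is convolution of a radial function with an operator. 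I would verify it by writing both sides as integrals over $G\times G$ and using $\tilde\varphi_\alpha(x)=\tilde\varphi_\alpha(x^{-1})$ together with unimodularity of $G$.

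\textbf{Generation.} Take $p=\infty$ and $a\in\bddf$. The symbols $B_\alpha(a)=a\ast\varphi_\alpha$ lie in $\cbufR$ by Proposition~\ref{prop:cbuRf}, and $T^\nu_{B_\alpha(a)}\to T^\nu_a$ in operator norm. So every generator $T^\nu_a$ of $\toepalg$ is a norm limit of Toeplitz operators with symbols in $\cbufR$. Hence the $C^*$-algebra generated by $\{T^\nu_b: b\in\cbufR\}$ contains $\toepalg$, and the reverse inclusion is trivial.

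\textbf{The identity $\toepalg=\overline{\cbufR\ast\Phi^\nu}$.} By Lemma~\ref{lem:Toeplitz}, $\cbufR\ast\Phi^\nu$ is exactly the set of Toeplitz operators $T^\nu_b$ with $b\in\cbufR$, up to the factor $c_\nu$. The inclusion $\supseteq$ is therefore clear. For $\subseteq$, it suffices to show that the norm closure of this linear space is already closed under products. That is, $T_bT_c$ with $b,c\in\cbufR$ should be a norm limit of Toeplitz operators with uniformly continuous symbols. I expect this to be the main obstacle, and it is where the hypothesis $\nu>\gen-1+(r-1)a/2$ enters.

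My first attempt would adapt the approach of \cite{D25}, Xia, and \cite{DDMO24}:
\begin{enumerate}
\item Use the generation statement together with $T_bT_c\in\cbuopL$ to reduce to showing that $\cbuopL\cap\toepalg$ lies in the closed span of Toeplitz operators.
\item Use Proposition~\ref{prop:cbu_characterizations} to write such an operator as a norm limit of $\varphi_{\nu+\alpha}\ast S$.
\item Show that $\varphi_{\nu+\alpha}\ast S$ is a limit of Toeplitz operators $c_\nu B^\nu(S)\ast\Phi^\nu_\beta$, in the spirit of Lemma~\ref{lem:Berezin_convol}.
\end{enumerate}
Step 3 requires $\Phi^\nu$ to be $1$-cyclic in the sense of Theorem~\ref{theo:WienerT for GK}. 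Equivalently, the spherical transform of $\varphi_\nu$ must not vanish on the relevant spectrum. The bound $\nu>\gen-1+(r-1)a/2$ is precisely what I expect to make $\varphi_\nu$ lie in a range where its Harish-Chandra/Gindikin-type spherical transform is a nonvanishing ratio of $\Gamma_\Omega$ factors. Verifying this nonvanishing is the crux. Granting it, the Wiener Tauberian theorem gives $\traceop=\overline{\Lone\ast\Phi^\nu}$. It then follows that $\cbuopL=\overline{\bddf\ast\Phi^\nu}^{\cB}$, and the identity follows after replacing $\bddf$ by $\cbufR$ via the convergence statement above.
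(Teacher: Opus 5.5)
\textbf{Convergence and generation.} Both parts are correct and match the paper. The paper obtains the convergence from Theorem~\ref{theo:radial covolution convergence} applied to $T^\nu_a=c_\nu\, a\ast\Phi^\nu\in\Lp\ast\traceop$, together with $T^\nu_{B^\nu_\alpha(T^\nu_a)}=T^\nu_{B_{\nu+\alpha}(a)}$. Unwound, this is exactly your computation $a\ast(\varphi_{\nu+\alpha}\ast c_\nu\Phi^\nu)$.

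There is one citation fix. $\{\varphi_\alpha\}$ is a $G/K$-Dirac sequence, not a Dirac sequence in $L^1(G)$, so Lemma~\ref{lem:schop_convergence} does not apply as stated. The convergence $\varphi_\alpha\ast\Phi^\nu\to\Phi^\nu$ in $\traceop$ is Proposition~\ref{prop:convergence approx identity}(3), and it uses that $\Phi^\nu$ is radial. Young's inequality also carries a harmless constant $(1/c_\nu)^{1-1/p}$.

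\textbf{The gap: the identity $\toepalg=\overline{\cbufR\ast\Phi^\nu}$.} What is needed is that $\{T^\nu_a : a\in\bddf\}$ is norm dense in $\toepalg$. The paper does not prove this; it imports it from \cite[Theorem 5.9]{D25}, a localization result. That is the only place the hypothesis $\nu>\gen-1+(r-1)a/2$ enters. Your sketch for proving the density fails for three reasons.

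First, $\toepalg\not\subseteq\cbuopL$ in this non-abelian setting. We have $\actopL{x}{T^\nu_b}=T^\nu_{\actfL{x}{b}}$, and the Berezin transform is contractive. Hence $T^\nu_b\in\cbuopL$ forces $B_\nu(b)$ to be \emph{left}-uniformly continuous, and right-uniform continuity of $b$ does not give this. For example, on the disc in its upper half-plane model, $b=\sin(\log\operatorname{Im}z)$ is hyperbolically Lipschitz but not left-uniformly continuous, and its Berezin transform is a nonzero multiple of a phase shift of $b$. Only the radial parts of $\toepalg$ and $\cbuopL$ coincide (Theorem~\ref{theo: main theorem}). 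So your step 1, and your claimed identity $\cbuopL=\overline{\bddf\ast\Phi^\nu}$, are false.

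Second, $\varphi_{\nu+\alpha}$ is not an approximate identity on $G$. By Lemma~\ref{lem:convolution_rad}, $\varphi_{\nu+\alpha}\ast S=\varphi_{\nu+\alpha}\ast\Rad{S}\to\Rad{S}$, whereas Proposition~\ref{prop:cbu_characterizations} needs a Dirac sequence in $L^1(G)$. Likewise, $\traceop\ast\Phi^\nu$ consists only of right-$K$-invariant functions, so it cannot approximate a Dirac sequence on $G$. The Fock-space Wiener argument therefore does not transfer to non-radial operators. Your steps 2--3 together would amount to $T^\nu_{B^\nu_\beta(S)}\to S$ for non-radial $S$. That is Su\'arez's approximation problem, which the paper notes fails on the full Toeplitz algebra.

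Third, the crux you grant is not where $\nu$ matters. By Theorem~\ref{theo:WienerT for GK} with $p=1$, $\Phi^\nu$ is $1$-cyclic for every $\nu>\gen-1$, simply because $B^\nu$ is injective on $\bdd$. Moreover, the spherical transform of $\varphi_\nu$ is a ratio of Gamma functions with no zeros. If your chain of implications were valid, it would give the identity, and even $\toepalg=\cbuopL$, with no hypothesis on $\nu$ at all.

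To complete the proof, invoke the density theorem of \cite{D25} as the paper does, or supply a genuine localization argument.
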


\begin{proof}
    The convergence is due to $T^\nu_{B_{\nu+\alpha}(a)}=T^\nu_{B^\nu_\alpha(T^\nu_a)}$ and $T^\nu_a=c_\nu(a\ast\Phi^\nu)$, and $B_\alpha(a)=a\ast \varphi_\alpha\in \cbufR$. Also, with the additional assumption on $\nu$, Toeplitz operators are dense in the Toeplitz algebra \cite[Theorem 5.9]{D25} (we caution the reader of the different parametrization on $\nu$ in \cite{D25}). Therefore, the result is true.
\end{proof}

\subsection{A description of the radial Toeplitz algebra}
Now we characterize the radial Toeplitz algebra $\toepalg^K$ as follows. 

\begin{theorem}\label{theo: main theorem}
    We can characterize the radial Toeplitz algebra as follows:
    \begin{align*}
        \toepalg^{K}&= \cbuopL^{K} \\
        &= \overline{\bddf^{K}\ast \traceop}^{\|\cdot\|}\\
        &= \{S\in \bdd^K\mid \varphi_\alpha\ast S \to S \text{ in } \|\cdot\| \}\\
        &= \{S\in \bdd^K\mid T^\nu_{B^\nu_\alpha(S)} \to S \text{ in } \|\cdot\| \}\\
        &= \overline{\{T^\nu_a\mid a\in \cbufR^{K}\}}^{\|\cdot\|}\\
        &= \Lone^{K}\ast \cbuopL^{K}.
    \end{align*}
\end{theorem}

\begin{proof}
    The proof follows by Proposition \ref{prop:cbu_characterizations}, Lemmas \ref{lem:properties Berezin} and \ref{lem:Berezin_convol}, and Theorem \ref{theo:radial covolution convergence}. We omit the proof, which consists mainly of standard inclusion arguments, and point to \cite[Theorem 6.14]{DDMO24} for a similar proof.
\end{proof}


\section{Invariant differential operators on Riemannian symmetric spaces}
\label{sec:invariant_differential_operators}
In the theory of operators on the Bergman spaces associated to the unit ball in $\C^n$, a number of recent papers have made fruitful use of the so-called ``operator Laplacian'', which refers to a technique that allows one to apply the invariant Laplace differential operator of the unit ball to bounded operators on $\cA^2_\nu(\B^n)$ instead of to functions on $\B^n$. 

In this section, we show that this notion can be naturally extended to all invariant differential operators and all bounded symmetric domains. We begin this section by briefly recalling the theory of invariant differential operators on Riemannian symmetric spaces.  For an excellent and thorough introduction to this theory, we recommend \cite{H84}. As an application, in Section \ref{sec:Berezin_shifts}, we construct a differential operator $D_\alpha$ which produces a recursive formula for the $\nu$-Berezin transform of a function.

Let $M$ be a smooth manifold. Because it will be convenient to consider differential operators and vector fields with complex coefficients, we will let $C^\infty(M)$ denote the space of $\C$-valued functions on $M$ (this is different from the usual convention in differential geometry, where only $\R$-valued functions are considered, but consistent with conventions of harmonic analysis).  A \textbf{differential operator on $M$} is a $\C$-linear map $D:C^\infty(M) \rightarrow C^\infty(M)$ that is local, in the sense that $\supp Df \subseteq \supp f$ for all $f\in C^\infty(M)$.  We let $\diffops(M)$ denote the associative algebra (over $\C$) of all differential operators on $M$.  A \textbf{vector field} on $M$ is then any derivation of the algebra $C^\infty(M)$ over $\C$.  We denote by $\cX(M)$ the algebra of all vector fields on $M$.

Suppose now that $G$ is a connected semisimple Lie group with finite center and that $K$ is a maximal compact subgroup.  For any $f:G\rightarrow\C$ and any $x\in G$, we let $\actfL{x}f:G\rightarrow\C$ denote the left-translated function, given by $\actfL{x}f(x) = f(x^{-1}y)$.  Similarly, for each $x\in G$, we let $\actfR{x}f$ denote the right-translated function, given by $\actfR{x}f(y) = f(yx)$, and we define the conjugation action $c(x) = \actfL{x}\actfR{x} = \actfR{x}\actfL{x}$, so that $c(x)f(y) = f(x^{-1}yx)$ for each $f:G\rightarrow\C$ and $y\in G$.    Then $G$ acts on $\diffops(G)$ as follows: for each $D\in \diffops(G)$, we define the (left) translated differential operator $x\cdot_L D$ by $x\cdot_L D = \actfL{x}D\actfL{x}^{-1}$.  We can similarly define the right translation of $D$ by $x\cdot_R D = \actfR{x}D\actfR{x}^{-1}$.  Finally, we define the adjoint action of $x$ on $\diffops(G)$ by $\Ad(x)D = c(x)Dc(x)^{-1}$ for each $x\in G$.

A differential operator is said to be \textbf{(left) invariant} if $g\cdot_L D = D$ for all $g\in G$ (we define right-invariant operators similarly). We denote the algebra of all left-invariant differential operators on $G$ by $\D(G)$.  A differential operator is said to be \textbf{bi-invariant} if it is both left invariant and right invariant.  It is not difficult to see that a differential operator on $G$ is bi-invariant if and only if it is both left-invariant and $\Ad$-invariant, and that the bi-invariant differential operators coincide with the center $Z(\D(G))$ of the algebra of left-invariant differential operators.

As usual, we denote the Lie algebras of $G$ and $K$ by $\gg$ and $\gk$, respectively.  Then the complexification $\gg^C$ and its universal enveloping algebra $\U(\gg^C)$ can be canonically identified with the left-invariant vector fields on $G$ and the left-invariant differential operators on $G$, respectively (remember that our differential operators and vector fields have complex coefficients).  This can be done as follows.  For each $X\in\gg$, we let $X^\# \in \cX(G)$ denote the vector field defined by:
\[
   X^\# f(x) = \ddt \Big|_{t=0}  f(x \exp (tX)).
\]
Then $X\mapsto X^\#$ is an injective Lie-algebra homomorphism over $\R$ from $\gg^\C$ to $\cX(G)$, which extends naturally to an injective Lie-algebra homomorphism over $\C$ from $\gg^\C$ to $\cX(G)$.  This then extends immediately to an associative-algebra isomorphism from $\U(\gg^\C)$ to to the algebra $\D(G)$ of all left-invariant differential operators on $G$.  

We denote by $S(\gg^\C)$ the symmetric algebra generated by $\gg^\C$.  The adjoint action $\Ad$ of $G$ on $\gg$ extends naturally to $S(\gg^\C)$, and there is a unique linear isomorphism:
\[
   \lambda: S(\gg^\C) \rightarrow \D(G) 
\]
that intertwines the adjoint action of $G$ on $S(\gg^\C)$ and $\D(G)$ and sends $X^m$ to $(X^\#)^m$ for each $X\in \gg$. This map is called the \textbf{symmetrization map} and, under the identification of $\D(G)$ with $\U(\gg^\C)$, coincides with the usual symmetrization map from $S(\gg^\C)$ to $\D(G)$ defined algebraically by considering $S(\gg^\C)$ to be the associated graded representation of $G$ corresponding to the filtered representation of $G$ on $\D(G)$.  One can show that, if $d=\dim \gg$ and $X_1,\ldots,X_d$ is a basis for $\gg$, then, for any polynomial $p\in\C[z_1,\ldots, z_d]$, we have that:
\[
   \lambda(p(X_1,\ldots,X_d))f(x) = p\left(\left.\del{t_1},\ldots,\del{t_d}\right)\right|_{t_1=\cdots=t_d=0} 
   f(x \exp(t_1 X_1 + \cdots +t_d X_d))
\]
for all $x\in G$ and $f\in C^\infty(G)$. 

In what follows, we will drop the distinction between $\D(G)$ and $\U(\gg^\C)$. 

Now let us consider the quotient symmetric space $\bsd = G/K$.
We similarly define, for each $f:G/K \rightarrow \C$ and each $g\in G$, the translated function $\actfL{x}{f}:G/K\rightarrow \C$ by $\actfL{x}{f}(y) = f(x^{-1}\cdot y)$. This then allows us to define an action of $G$ on $\diffops(G/K)$ by setting $x\cdot D:= \ell_xD\ell_{x^{-1}}$ for all $x\in G$ and $D\in \diffops(G/K)$, and we once again say that a differential operator $D\in \diffops(G/K)$ is \textbf{G-invariant} if $x\cdot D = D$ for all $x\in G$.   

We let $\kappa : G\mapsto G/K$ denote the canonical projection.  Then the injection
\begin{align*}
   C^\infty(G/K)  \hookrightarrow C^\infty(G),\
         f    \mapsto f\circ \kappa
\end{align*}
allows us identify the smooth functions on $G/K$ with the smooth right-$K$-invariant functions on $G$.  It is thus natural to expect that the algebra $\D(G/K)$ should have something to do with the algebra $\U(\gg)^K$ of differential operators on $G$ that are invariant under $G$ on the left and invariant under $K$ on the right. In fact, one shows that:
\begin{equation}
    \label{eq:themumap}
    \mu: \U(\gg^\C)^K  \rightarrow \D(G/K),\
       D  \mapsto \widetilde{D},
\end{equation}
where $\widetilde{D}$ is defined by $\widetilde{D}f(gK) = D(f\circ \kappa)(g)$ for all $f\in C^\infty(G/K)$ and $g\in G$, is well-defined and surjective, with kernel $\U(\gg^\C)^K \bigcap U(\gg^\C)\gk$.

Let $\theta$ be the Cartan involution fixing $\gk$. Then we have $\gg = \gk \oplus \gs$, where $\gs$ is the $-1$-eigenspace of $\theta$.  Then $\gs$ is a $K$-invariant subspace of $\gg$.  Thus, we see that $S(\gs^\C)$ is a $K$-invariant subspace of $S(\gg^\C)$
\[
    \U(\gg^\C)^K = (\U(\gg^\C)^K \cap \U(\gg^\C)^K \gk) \oplus \lambda(S(\gs^\C)^K),
\]
where $S(\gs^\C)^K$ denotes the $K$-invariant elements of $S(\gs^\C)$.  Thus, 
\[
    \mu\circ\lambda : S(\gs^\C)^K \rightarrow \D(G/K)
\]
is a linear isomorphism.

As usual, let $\ga$ be a maximal abelian subalgebra of $\gs$, and let $\Sigma:=\Sigma(\gg,\ga) \subseteq \ga^*$ denote the system of restricted roots.  We choose a positive subsystem $\Sigma^+\subseteq \Sigma$, so that $\Sigma = \Sigma^+ \bigcup (-\Sigma^+)$.  Defining $\gn = \bigoplus_{\avar\in\Sigma^+} \gg_\avar$, we have the infinitesimal Iwasawa decomposition
\[
   \gg = \gn \oplus \ga\oplus \gk.
\]
This gives rise (see \cite[Lemma II.5.14]{H84}) to the vector-space decomposition:
\begin{equation}
   \label{eq:projection_on_a}
    \U(\gg^\C) = \U(\ga^\C) \oplus (\gn^\C \U(\gg^\C) + \U(\gg^\C)\gk^\C).
\end{equation}
For each left-invariant operator $D\in \D(G) = \U(\gg^\C)$, we define $D_{\ga} \in \U(\ga^\C)$ to be the projection of $D$ onto the first term of (\ref{eq:projection_on_a}). 

Furthermore, this projection has the following interpretation:  Consider the global Iwasawa decomposition $G = NAK$, as well as the space $C^\infty(G)^{N,K}$ of all functions on $G$ that are invariant on the left by $N$ and on the right by $K$. Since $N\times A \times K \rightarrow G, (n,a,k) \mapsto nak$ is a diffeomorphism, we note that $C^\infty(G)^{N,K}$ can be canonically identified with the space $C^\infty(G/K)^N$ of $N$-invariant functions on $G/K$ and the space $C^\infty(A)$ of functions on $A$ using the linear isomorphisms
\begin{align*}
   C^\infty(G)^{N,K}  & \rightarrow C^\infty(G/K)^N \\
           f   & \mapsto f\circ \kappa
\end{align*}
and
\begin{align*}
   C^\infty(G)^{N,K} & \rightarrow C^\infty(A) \\
           f    & \mapsto f|_A.
\end{align*}
Then $D_\ga (f|_A) = (Df)|_A$ for all $f\in C^\infty(G)^{N,K}$ (see \cite[Lemma II.5.14]{H84}).

We will, from this point on, completely identify $C^\infty(G)^{N,K}$ and $C^\infty(G/K)^N$ and use the notation $C^\infty(N\backslash G/K)$ to refer to both.

Since $\ga$ is commutative, the symmetrization map is an algebra isomorphism, and we can canonically write $\U(\ga) = S(\ga)$.  Choosing any basis $H_1,\ldots, H_r$ for $\ga$, where $r = \dim \ga$ is the rank of the symmetric space, we have that for any polynomial $p\in\C[z_1,\ldots,z_r]$,
\[
   p(H_1,\ldots,H_r) f(a) = p\left(\left.\del{t_1},\ldots,\del{t_r}\right)\right|_{t_1 = \cdots = t_r = 0} f(a \exp(t_1H_1 + \cdots t_r H_r))
\]

Recall that the exponential map $\exp: \ga\rightarrow A$ is an isomorphism of Lie groups, with inverse $\log : A\rightarrow \ga$, where $\ga$ is a Lie group under addition.  If we use these isomorphisms to transfer left-invariant differential operators on $A$ onto the Lie algebra $\ga$, then we must obtain the left-invariant differential operators on $\ga$ (considered as a Lie group under addition). In fact, we obtain:
\[
   p(H_1,\ldots, H_r) f = p\left(\del{x_1},\ldots,\del{x_r}\right)f
\]
for all $f\in C^\infty(\ga)$, where $\del{x_i}f(H) := \left.\ddt\right|_{t=0}f(H+tH_i)$.

We let $\rho=\frac{1}{2} \sum_{\avar\in\Sigma^+} m_\avar \avar\in\ga^*$ be half the sum of the positive roots with multiplicity, as usual (here, $m_\avar = \dim \gg_\avar$. We then define the linear map $\gamma: \U(\gg^\C) \rightarrow \U(\ga^\C)$
by 
\[
   \gamma(D)f(a) = a^{-\rho} D_{\ga}((\cdot)^\rho f)(a)
\]
for all $f\in C^\infty(A)$ and $a\in A$, where we define $a^\lambda := e^{\lambda(\log a)}$ for all $a\in A$.

Note that for $H\in \ga^\C \subseteq S(\ga^\C) = \U(\ga^\C)$, we have that
\[
    \gamma(H)f(a) = (Hf)(a) + \rho(H)f(a) = (H+\rho(H)1)f(a)
\]
for all $a\in A$ and $f\in C^\infty(A)$.  Thus, for any $p\in\C[z_1,\ldots,z_r]$, we see that:
\[
    \gamma(p(H_1,\ldots, H_r)) = p(H_1 + \rho_1 1, \ldots, H_r + \rho_r 1),
\]
where $\rho_i = \rho(H_i)$ for all $1\leq i\leq r$.

This so-called ``$\rho$-shift'' has another interpretation: each functional $\lambda\in\ga^*$ extends canonically to a character of the associative algebra $S(\ga^\C) = \U(\ga^\C)$ by setting
\[
    \lambda(p(H_1,\ldots, H_r)) = p(\lambda(H_1),\ldots, \lambda(H_r))
\]
for all $p\in\C[z_1,\ldots,z_r]$, and every character of the characters of the algebra $S(\ga^\C) = \U(\ga^\C)$ arrives in this way.  We can consider elements of $S(\ga^\C)$ to be polynomial functions on $(\ga^\C)^*$ by setting:
\[
    D(\lambda):= \lambda(D)
\]
for each $D\in (\ga^\C)^*$. Using this identification of $S(\ga^\C)$ with the space of polynomials on $(\ga^\C)^*$, we can write $\gamma$ as:
\[
   \gamma(D)(\lambda) = D_{\ga}(\lambda + \rho).
\]

\begin{theorem}
   The linear map $\gamma$ defined above restricts to surjective algebra homomorphism
   \[
       \gamma : \U(\gg)^K \rightarrow \U(\ga)^W
   \]
   with kernel $\U(\gg)^K \cap \U(\gg^\C)\gk^\C$. Here, $W$ is the restricted Weyl group $W = W(\gg,\ga) := N_K(\ga)/Z_K(\ga)$, and $\U(\ga^\C)^W = S(\ga^\C)^W$ consists of all $W$-invariant elements of $U(\ga^\C) = S(\ga^\C)$.
\end{theorem}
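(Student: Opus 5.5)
This is Harish-Chandra's isomorphism theorem. I would follow the standard route in \cite[Ch.~II, \S5]{H84}, in four steps: homomorphism property, kernel, $W$-invariance, and surjectivity.

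\emph{Homomorphism property.} This rests on the interpretation of $D_\ga$ recorded above: $D_\ga(f|_A) = (Df)|_A$ for $f\in C^\infty(N\backslash G/K)$. If $D\in\U(\gg^\C)^K$, then $D$ is left $G$-invariant and commutes with right translations by $K$. Hence $D$ preserves $C^\infty(G)^{N,K}$, which is the space of left-$N$-invariant, right-$K$-invariant functions. So for $D_1,D_2\in\U(\gg^\C)^K$ we get
\[
(D_1D_2)_\ga(f|_A) = (D_1D_2f)|_A = (D_1)_\ga (D_2)_\ga (f|_A).
\]
Every smooth function on $A$ arises as some $f|_A$, because $NAK\cong N\times A\times K$. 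It follows that $(D_1D_2)_\ga = (D_1)_\ga(D_2)_\ga$ on the restricted domain. Conjugating by the character $a\mapsto a^\rho$ preserves products, so $\gamma$ is multiplicative on $\U(\gg)^K$.

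\emph{Kernel.} The inclusion $\U(\gg^\C)^K\cap\U(\gg^\C)\gk^\C\subseteq\ker\gamma$ is clear from the decomposition \eqref{eq:projection_on_a}. For the reverse inclusion, suppose $\gamma(D)=0$. Then $D$ annihilates all of $C^\infty(N\backslash G/K)$. The operator $\mu(D)\in\D(G/K)$ is $G$-invariant, so it also annihilates every $G$-translate of such functions. Spherical functions are $K$-averages of functions of the form $e^{(\lambda-\rho)(\log a(\cdot))}$, and these translates separate $\D(G/K)$. Hence $\mu(D)=0$, and so $D\in\U(\gg^\C)\gk^\C$ by the kernel description of $\mu$ in \eqref{eq:themumap}.

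\emph{$W$-invariance.} This step I expect to be the main obstacle. I would use spherical functions. For $\lambda\in\ga^*_\C$, put
\[
\phi_\lambda(x)=\int_K e^{(i\lambda-\rho)(H(xk))}\haarK{k}.
\]
Each $D\in\U(\gg)^K$ acts on $e^{(i\lambda-\rho)H(\cdot)}$ by the scalar $\gamma(D)(i\lambda)$. This is exactly what the $\rho$-shift is designed to give, and it follows from the $N$-invariance computation above. Averaging over $K$, we get $D\phi_\lambda=\gamma(D)(i\lambda)\phi_\lambda$. Harish-Chandra's theorem $\phi_{w\lambda}=\phi_\lambda$ for $w\in W$ then yields $\gamma(D)(w\lambda)=\gamma(D)(\lambda)$ on a Zariski-dense set, hence identically. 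If invoking $\phi_{w\lambda}=\phi_\lambda$ is felt to be circular, I would fall back on Helgason's direct argument via rank-one reductions $\gg_\alpha\oplus\gg_{-\alpha}\oplus\dots$ for simple reflections.

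\emph{Surjectivity.} I would compare leading terms using Chevalley's restriction theorem $S(\gs^\C)^K\cong S(\ga^\C)^W$, cited as \cite[Cor.~II.5.12]{H84} earlier in the paper. For $p\in S(\gs^\C)^K$ of degree $d$, the highest-degree term of $\gamma(\lambda(p))$ is $p|_{\ga}$, since the $\rho$-shift and the projection only add lower-order terms. An induction on degree then shows that every element of $S(\ga^\C)^W$ lies in the image of $\gamma$.
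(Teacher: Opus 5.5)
The paper does not prove this statement. It records it as the classical Harish-Chandra isomorphism from \cite{H84}, so the relevant comparison is with that standard argument. Your homomorphism step is exactly the standard one and is correct. Your surjectivity step (top-degree term $p|_{\ga}$, Chevalley's restriction theorem, induction on degree) is also standard. Note, however, that the induction needs $q-\gamma(\lambda(p))$ to lie in $S(\ga^\C)^W$, so it depends on the $W$-invariance step.

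That step is where the genuine gap lies. Your main argument uses $\phi_{w\lambda}=\phi_\lambda$, but in the standard development this identity is \emph{deduced} from the $W$-invariance of $\gamma(D)$. One observes that $\phi_\lambda$ and $\phi_{w\lambda}$ are normalized spherical functions with the same eigencharacter $D\mapsto\gamma(D)(i\lambda)$, and then invokes uniqueness. The paper follows the same order: it states $\phi_\lambda=\phi_\beta\iff\beta\in W\lambda$ only after this theorem. So, as written, the step is circular, and your fallback is only a pointer to the part that actually carries the weight.

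To close the gap you need one of two things. The first is an independent proof of $\phi_{w\lambda}=\phi_\lambda$, for instance via Knapp--Stein intertwining operators $\pi_{i\lambda}\to\pi_{iw\lambda}$ of the spherical principal series. These are $K$-equivariant, so they send the $K$-fixed vector to a multiple $c_w(\lambda)$ of itself. The second is to carry out the rank-one reduction explicitly:
\begin{itemize}
\item For a simple root $\alpha$, let $\mathfrak{l}_\alpha$ be the centralizer of $\ker\alpha\subset\ga$ in $\gg$, and let $\gn^\alpha=\bigoplus_{\beta\in\Sigma^+\setminus\{\alpha,2\alpha\}}\gg_\beta$.
\item Projecting $D$ along $\gn^\alpha\U(\gg^\C)+\U(\gg^\C)\gk^\C$ to some $D'$ gives $\gamma(D)(\lambda)=\gamma_\alpha(D')(\lambda+\rho-\rho_\alpha)$. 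Here $\gamma_\alpha$ is the Harish-Chandra map of $\mathfrak{l}_\alpha$, and $s_\alpha(\rho-\rho_\alpha)=\rho-\rho_\alpha$.
\item This reduces $s_\alpha$-invariance to real rank one, where the invariant operators are generated by $S(\ker\alpha)$ and the Laplacian, and evenness is checked directly.
\end{itemize}

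Also fix a convention slip. With the $KAN$ projection $H(\cdot)$, the function $e^{(i\lambda-\rho)H(\cdot)}$ is not right-$K$-invariant. It is therefore not an eigenfunction of a general left-invariant $D\in\U(\gg^\C)^K$. In the paper's $NAK$ setting, use $e^{i\lambda+\rho}(nak)=a^{i\lambda+\rho}$, for which $De^{i\lambda+\rho}=\gamma(D)(i\lambda)\,e^{i\lambda+\rho}$, and average over left $K$-translates.

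Finally, your kernel argument is right in outline, but the claim that $G$-translates of the $e^\mu$ separate $\D(G/K)$ needs proof. It does hold: the jets at $o$ of $x\mapsto e^\mu(kx)$, for $k\in K$ and $\mu\in(\ga^\C)^*$, span all jets, because $\Ad(K)\ga=\gs$. It is quicker to reuse your degree comparison:
\begin{itemize}
\item Write $D=D_0+\lambda(p)$ with $D_0\in\U(\gg^\C)^K\cap\U(\gg^\C)\gk^\C$ and $p\in S(\gs^\C)^K$.
\item Then $\gamma(D)=\gamma(\lambda(p))$, and its top-degree part is $p_d|_\ga$.
\item A $K$-invariant polynomial on $\gs$ that vanishes on $\ga$ is zero, since $\Ad(K)\ga=\gs$.
\end{itemize}
So $\gamma(D)=0$ forces $p=0$, using only the easy half of Chevalley's theorem.
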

\begin{corollary}
   The map
   \[
       \Gamma: \D(G/K) \rightarrow \U(\ga^\C)^W
   \]
   defined by $\Gamma(\mu(D)) = \gamma(D)$ for $D\in \U(\gg^\C)^K$ is an isomorphism of associative algebras.
\end{corollary}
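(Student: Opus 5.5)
The map $\Gamma$ is well defined, and it is an algebra isomorphism. We get both facts by combining the surjective homomorphism $\mu\colon \U(\gg^\C)^K\to\D(G/K)$ from \eqref{eq:themumap} with the surjective homomorphism $\gamma\colon\U(\gg^\C)^K\to\U(\ga^\C)^W$ of the preceding theorem. The key is to compare their kernels. By the discussion after \eqref{eq:themumap}, $\ker\mu=\U(\gg^\C)^K\cap\U(\gg^\C)\gk$. By the theorem, $\ker\gamma=\U(\gg^\C)^K\cap\U(\gg^\C)\gk^\C$. Since $\U(\gg^\C)\gk$ and $\U(\gg^\C)\gk^\C$ coincide as complex subspaces (the left ideal generated by $\gk$ is automatically the one generated by $\gk^\C=\gk+i\gk$), the two kernels are equal.

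The proof then has three steps.

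\textbf{Well-definedness.} Suppose $\mu(D_1)=\mu(D_2)$. Then $D_1-D_2\in\ker\mu=\ker\gamma$, so $\gamma(D_1)=\gamma(D_2)$. Since $\mu$ is surjective, every element of $\D(G/K)$ has the form $\mu(D)$, so $\Gamma$ is defined on all of $\D(G/K)$.

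\textbf{Homomorphism property.} We first check that $\mu$ is multiplicative. For $D_1,D_2\in\U(\gg^\C)^K$ and $f\in C^\infty(G/K)$, the function $D_2(f\circ\kappa)$ is again right $K$-invariant, because $D_2$ commutes with right translation by $K$. Hence $D_2(f\circ\kappa)=(\mu(D_2)f)\circ\kappa$, and therefore $\mu(D_1D_2)f=\mu(D_1)\mu(D_2)f$. Since $\gamma$ is also multiplicative, we get
\[
\Gamma(\mu(D_1)\mu(D_2))=\Gamma(\mu(D_1D_2))=\gamma(D_1D_2)=\gamma(D_1)\gamma(D_2).
\]
Linearity of $\Gamma$ is immediate.

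\textbf{Bijectivity.} Surjectivity of $\Gamma$ follows from surjectivity of $\gamma$. For injectivity, suppose $\Gamma(\mu(D))=0$. Then $D\in\ker\gamma=\ker\mu$, so $\mu(D)=0$.

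The only delicate point is the identification of the kernels, and specifically making sure that the kernel statement for $\mu$ is read with $\gk$ understood as its complexification. Everything else is routine.
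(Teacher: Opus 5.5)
Your argument is correct and is the one the paper leaves implicit; the corollary is stated without proof as an immediate consequence of the preceding theorem. The point is that $\mu$ and $\gamma$ are surjective algebra homomorphisms on $\U(\gg^\C)^K$ with the same kernel $\U(\gg^\C)^K\cap\U(\gg^\C)\gk^\C$, so $\gamma$ factors through $\mu$ as an isomorphism, and your explicit check that $\mu$ is multiplicative (via $\mathrm{Ad}(K)$-invariant elements commuting with right $K$-translations) fills in a step the paper takes for granted.
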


For each $\avar\in(\ga^\C)^*$, one defines the function $e^\avar\in C(N\backslash G/K)^\infty$ by
\[
   e^\avar(x) := a(x)^\avar := e^{\avar(\log a(x))} 
\]
for all $x\in G/K$, where $a(x)\in A$ is chosen such that $x = na(x)K$ for some $n\in N$. Recalling the definition of the projection onto $\ga$ of operators in $\D(G/K)$, and using the fact that $\Delta$ is $N$-invariant on the left, it is not difficult to see that 
\[
    De^\avar = D_\ga(\avar) e^\avar
\]
for all $D\in\D(G/K)$, where we again consider $D_\ga\in S(\ga)$ as a polynomial on $\ga^*$.

Finally, for each $\avar\in(\ga^\C)^*$, we set:
\[
   \phi_\avar(x) = \int_K e^{i\avar + \rho}(k\cdot x)dx
\]
for each $x\in G/K$.  Then $\phi_\avar$ is a spherical function for all $\avar\in(\ga^\C)^*$ and $\phi_\avar = \phi_{\beta}$ if and only if there is $w\in W(\gg,\ga)$ s.t. $\beta=w\avar$. The ``$\rho$-shift'' is included in the integrand so that we have:
\[
    D\phi_\avar = D_\ga(i\avar + \rho)\phi_\avar = \Gamma(D)(i\avar)\phi_\avar,
\]
for any $D\in\D(G/K)$.

We can also begin with an arbitrary $W$-invariant element of $S(\ga^\C)$, say $Q\in S(\ga^\C)^W$.  Then $\Gamma^{-1}(Q) \in \D(G/K)$ is an invariant differential operator on $G/K$, and we have that
\[
    \Gamma^{-1}(Q)\phi_\avar = Q(i\avar) \phi_\avar,
\]
where we again interpret elements of $S(\ga^\C)$ as polynomial functions on $(\ga^\C)^*$.
Furthermore, we have that if $H_1,\ldots,H_r$ is a basis for $\ga$ and $q\in\C[z_1,\ldots, z_r]$ and $Q=q(H_1,\ldots,H_r)$, then
\[
   \Gamma^{-1}(Q)f(a) = q\left.\left(\del{t_1}-\rho_1 ,\ldots,\del{t_r}-\rho_r \right)\right|_{t_1=\cdots=t_r=0}f(a e^{t_1 H_1 + \cdots + t_r H_r}),
\]
for all $f\in C^\infty(N\backslash G /K)$ and $a\in A$.

Furthermore, it is well known that $\phi_\avar$ is positive-definite if $\avar\in\ga^* \subseteq (\ga^\C)^*$.  This permits one to construct a scalar Fourier transform for $K$-invariant functions on $G/K$, known as the spherical Fourier transform.  

First, we write 
\[
   \ga^+ := \{H\in\ga \mid \avar(H) > 0 \text{ for all } \avar\in\Sigma^+\}
\]
for the open Weyl chamber corresponding to $\Sigma^+$, and we note that the set $\ga'$ of regular elements of $\ga$ satisfies 
\[
   \ga' := \bigcup^{\cdot}_{w\in W} w\cdot \ga^+.
\]
Furthermore, we set
\[
   \ga^*_+ := \{\lambda\in \ga^* \mid \langle \lambda, \avar \rangle > 0 \text{ for all } \avar \in \Sigma^+ \},
\]
where $\ip{\cdot}{\cdot}$ denotes the bilinear form on $\gg^*$ which is dual to the Killing form on $\gg$ (this bilinear form on $\gg^*$ restricts to the unique (up to constant multiple) $K$-invariant real inner product on $\gs^*$).

On the other hand, every bi-invariant differential operator is clearly invariant on the left under $G$ and on the right under $K$, so we see that $Z(U(\gg^\C)) \subseteq \U(\gg^\C)^K$.  Hence the homomorphism $\mu$ in \eqref{eq:themumap} originally defined on $\U(\gg^\C)^K$, restricts to:
\begin{equation}\label{eq:restrictedmumap}
    \mu |_{Z(\U(\gg^\C))} : Z(\U(\gg^\C)) \rightarrow \D(G/K).
\end{equation}
It was shown by Helgason (see, for instance, \cite[Rem. 532]{H84}, ) that if $\gg$ is a direct sum of classical simple Lie algebras (that is, if it has no exceptional factors), then $\D(G/K) = \mu(Z(\U(\gg^\C)))$.
In particular, we have the following:
\begin{theorem}[\cite{H84}]\label{theo:center and difops}
    Let $\bsd=G/K$ be a classical bounded symmetric domain. Then $\D(G/K) = \mu(Z(\U(\gg^\C)))$.
\end{theorem}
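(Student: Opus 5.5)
The plan is to compare the two algebras through the Harish-Chandra isomorphisms on each side. First, $\mu|_{Z(\U(\gg^\C))}$ does land in $\D(G/K)$. A bi-invariant operator $D$ is left $G$-invariant and $\Ad(K)$-invariant, so $D\in\U(\gg^\C)^K$ and $\mu(D)$ is defined. Moreover, $\mu$ restricted to $\U(\gg^\C)^K$ is an algebra homomorphism, because $\U(\gg^\C)^K\cap\U(\gg^\C)\gk^\C$ is a two-sided ideal in $\U(\gg^\C)^K$. Surjectivity is the real content. By the Corollary above, $\Gamma:\D(G/K)\to S(\ga^\C)^W$ is an algebra isomorphism, so it suffices to show that $\gamma(Z(\U(\gg^\C)))=S(\ga^\C)^W$, where $\gamma$ is the restricted Harish-Chandra homomorphism.

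Next, fix a Cartan subalgebra $\gh=\gt\oplus\ga$ of $\gg$ containing $\ga$, where $\gt\subseteq\gk$ centralizes $\ga$. Choose a positive system of $\Delta(\gg^\C,\gh^\C)$ compatible with $\Sigma^+$, so that nonzero roots restricting to $\Sigma^+$ are positive. With this choice, the full Harish-Chandra homomorphism $\gamma_{\gh}:Z(\U(\gg^\C))\to S(\gh^\C)^{W_\gg}$, built with the $\rho_\gh$-shift, is compatible with $\gamma$. Concretely, for $z\in Z(\U(\gg^\C))$, the element $\gamma(z)$ is the restriction of $\gamma_\gh(z)$ to $(\ga^\C)^*$, viewed inside $(\gh^\C)^*$ by extending by zero on $\gt$. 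The two shifts match because $\rho_\gh|_\ga=\rho$ for compatible orderings; this is standard (Helgason, Ch.~II). By Harish-Chandra's theorem $\gamma_\gh$ is onto $S(\gh^\C)^{W_\gg}$. Hence $\gamma(Z(\U(\gg^\C)))$ is the image of the restriction map
\[
S(\gh^\C)^{W_\gg}\longrightarrow S(\ga^\C)^{W},\qquad p\mapsto p|_{(\ga^\C)^*}.
\]

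The statement therefore reduces to surjectivity of this restriction map. This is the main obstacle: it fails for some exceptional symmetric spaces, so it must be checked case by case. Here I would use the appendix's list of classical bounded symmetric domains: type $\mathrm{I}_{p,q}$ ($\SU(p,q)$), $\mathrm{II}$ ($\SO^*(2n)$), $\mathrm{III}$ ($\Sp(n,\R)$) and $\mathrm{IV}$ ($\SO(2,n)$). Since $\gg$ is simple in each case, it is enough to treat these. In every case the restricted root system is of type $BC_r$ or $C_r$, so $S(\ga^\C)^W$ is generated by the power sums $\sum_{j=1}^r x_j^{2k}$, $1\le k\le r$, in coordinates $x_j$ dual to the strongly orthogonal Jordan frame.

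For each case I will write $\ga\subseteq\gh$ explicitly and exhibit $W_\gg$-invariant polynomials on $\gh^\C$ whose restrictions are these power sums.
\begin{itemize}
\item For $\su(p,q)$ with $p\le q$, $\ga$ embeds into the diagonal $\gh^\C$ of $\sl(p+q,\C)$ as $(x_1,\dots,x_p,0,\dots,0,-x_p,\dots,-x_1)$ up to ordering. The trace power sums $\sum_i y_i^{2k}$ restrict to $2\sum_j x_j^{2k}$.
\item For $\Sp(n,\R)$ and $\SO^*(2n)$, use $\sum y_i^{2k}$ on the standard Cartan of $\sp(n,\C)$, respectively $\mathfrak{so}(2n,\C)$. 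These are $W_\gg$-invariant since the Weyl group acts by permutations and even sign changes. For $\SO^*(2n)$ the $x_j$ appear in pairs $(x_j,x_j)$.
\item For $\SO(2,n)$ we have rank $2$ and $\ga$ sits in two coordinates, so $y_1^{2k}+\dots$ again restricts correctly. The Pfaffian-type invariant is not needed.
\end{itemize}
Since the restrictions generate $S(\ga^\C)^W$ as an algebra and restriction is an algebra map, surjectivity follows. This gives $\Gamma\circ\mu(Z(\U(\gg^\C)))=S(\ga^\C)^W$, hence $\D(G/K)=\mu(Z(\U(\gg^\C)))$.

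Alternatively, the paper may simply cite Helgason for this case verification, which is what \cite{H84} does.
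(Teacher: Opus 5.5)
Your route is the one the paper relies on. The paper gives no argument beyond the citation to Helgason, and its sketch before Lemma~\ref{lem:centralZ} uses exactly the Harish-Chandra isomorphism $\Gamma_\gh$ followed by the restriction $\Upsilon$. Your reduction to $\gamma(Z(\U(\gg^\C)))=S(\ga^\C)^W$ is fine, and so is your case-by-case check that plain restriction $S(\gh^\C)^{W_\gg}\to S(\ga^\C)^W$ is onto.

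\textbf{The gap.} The step that fails is the claim that $\gamma(z)$ is the restriction of $\gamma_\gh(z)$ to $(\ga^\C)^*$. Matching $\rho_\gh|_\ga=\rho$ is not enough, because $\gamma_\gh(z)$ also depends on the $\gt$-variables and $\rho_\gh$ has a nonzero $\gt$-component. For a compatible ordering, $\rho_\gh=\rho+\rho_{\mathfrak m}$. Here $\rho_{\mathfrak m}$ is half the sum of the positive roots of $(\mathfrak m^\C,\gt^\C)$, with $\mathfrak m=\mathfrak z_\gk(\ga)$; the $\gt$-parts of the other positive roots cancel under $\alpha\mapsto-\theta\alpha$. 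The correct identity is
\[
\gamma(z)(\nu)=\gamma_\gh(z)(\nu+\rho_{\mathfrak m}),\qquad \nu\in(\ga^\C)^*.
\]
To see this, let $p(z)\in\U(\gh^\C)$ be the component of $z$ along $\gn_\gh\U(\gg^\C)+\U(\gg^\C)\bar\gn_\gh$, where $\gn_\gh$ and $\bar\gn_\gh$ are the sums of the positive and negative root spaces. For central $z$, a PBW and weight argument gives $z\equiv p(z)$ modulo $\gn^\C\U(\gg^\C)+\U(\gg^\C)\gk^\C$, and $p(z)(\Lambda)=\gamma_\gh(z)(\Lambda-\rho_\gh)$.

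A quick check with the Casimir confirms the shift. $\mu(\Omega_G)$ kills constants, and $\phi_\lambda\equiv 1$ for $i\lambda=-\rho$, so $\gamma(\Omega_G)(\pm\rho)=0$. On the other hand, $\gamma_\gh(\Omega_G)(\Lambda)=\langle\Lambda,\Lambda\rangle-\langle\rho_\gh,\rho_\gh\rangle$, which at $\Lambda=\pm\rho$ equals $-\langle\rho_{\mathfrak m},\rho_{\mathfrak m}\rangle\neq 0$.

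This is not a corner case. $\mathfrak m$ is nonabelian for the ball $\B^n$ with $n\ge 3$ (more generally type I with $|p-q|\ge 2$), for type II, and for type IV with $n\ge 5$. The paper's unshifted statement $\Gamma=\Upsilon\circ\Gamma_\gh$ has the same defect.

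\textbf{The repair.} Your conclusion survives with one extra step. Put $R(p)=p|_{(\ga^\C)^*}$ and $R'(p)(\nu)=p(\nu+\rho_{\mathfrak m})$. Then $\gamma(Z(\U(\gg^\C)))=R'(S(\gh^\C)^{W_\gg})$, and $R'$ takes values in $S(\ga^\C)^W$ because $R'\circ\gamma_\gh=\gamma$ on the center. For homogeneous $p$ of degree $d$, $R'(p)-R(p)$ has degree $<d$. By your case analysis $R$ is graded and onto. So, given $q\in S(\ga^\C)^W$ of degree $d$, choose a homogeneous $p_d\in S(\gh^\C)^{W_\gg}$ with $R(p_d)$ equal to the top-degree component of $q$; that component is $W$-invariant because $W$ acts linearly. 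Then $q-R'(p_d)\in S(\ga^\C)^W$ has degree $<d$, and induction on $d$ shows that $R'$ is onto, i.e.\ $\Gamma(\mu(Z(\U(\gg^\C))))=S(\ga^\C)^W$.

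So replace the unshifted compatibility claim by the shifted one and add this leading-term argument. The rest of your proof, including the explicit power-sum computations for types I--IV, then goes through.
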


\section{An invariant differential shift operator and the Berezin transform}\label{sec:Berezin_shifts}
In this section, we proceed to construct the differential operator $D_{\alpha}$ that shifts the parametrized Berezin transform.

For each $1\leq p\leq \infty$, we denote by $L^p(\bsd,d\mu)^K$ the space of all radial (i.e., $K$-invariant) functions in $L^p(\bsd,d\mu)$. Denote by $c(\lambda)$ the Harish-Charandra $c$-function. Recall that the spherical Fourier transform on $\bsd = G/K$ is the unitary operator
\begin{align*}
   \cF : L^2(\bsd,d\mu)^K \rightarrow L^2(\ga^*_+, |\hc(\lambda)|^{-2} d\lambda),
\end{align*}
defined by
\[
   \cF f(\lambda) := \int_\bsd f(z)\overline{\phi_{\lambda}(z)} d\mu_0(z) = 
   \int_\bsd f(z)\phi_{-\lambda}(z) d\mu_0(z)  
\]
for all $f\in L^1(\bsd,d\mu)^K \bigcap L^2(\bsd,d\mu)^K$ and $\lambda\in \ga^*_+$.

Unterberger and Upmeier calculated the spherical Fourier transform of $\varphi_\alpha$ for all $\alpha> \gen -1$ in \cite{U94} and arrived at the following result:  
\begin{theorem}[\cite{U94}]
   \[
      \cF(\varphi_\alpha)(\lambda) = \prod_{j=1}^r \frac{\Gamma(i\lambda_j + \alpha - \frac{\gen-1}{2}) \Gamma(-i\lambda_j + \alpha - \frac{\gen-1}{2})} {\Gamma(-\rho_j + \alpha - \frac{\gen-1}{2}) \Gamma(\rho_j + \alpha - \frac{\gen-1}{2})}
   \]
   for all $\lambda = (\lambda_1, \cdots, \lambda_r) \in \ga^*_+$, where
   \[
      \rho_j = \frac{a}{2}(j-1)+\frac{b+1}{2}
   \]
   for all $1\leq j \leq r$.
\end{theorem}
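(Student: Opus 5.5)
The strategy is to reduce the computation to a single integral over the Cartan subgroup via the Harish-Chandra (horospherical) form of the spherical transform, and then to evaluate that integral with the Gindikin-type Beta integral formula. Since $\varphi_\alpha$ is radial and integrable for $\alpha>\gen-1$, we have
\[
   \cF(\varphi_\alpha)(\lambda)=\int_\bsd \varphi_\alpha(z)\phi_{-\lambda}(z)\,d\mu_0(z).
\]
Inserting $\phi_{-\lambda}(z)=\int_K e^{-i\lambda+\rho}(k\cdot z)\,d\mu_K(k)$ and using the $K$-invariance of $\varphi_\alpha$ and $\mu_0$, the $K$-integral disappears. This leaves
\[
   \cF(\varphi_\alpha)(\lambda)=\int_\bsd \varphi_\alpha(z)\, e^{-i\lambda+\rho}(z)\,d\mu_0(z).
\]
We then write $z=na\cdot 0$ using $G=NAK$, and the $G$-invariant measure becomes $a^{-2\rho}\,dn\,da$ for a suitable normalization. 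This yields
\[
   \int_A a^{-i\lambda-\rho}\Big(\int_N \varphi_\alpha(na\cdot 0)\,dn\Big)\,da,
\]
so that $\cF(\varphi_\alpha)(\lambda)$ is the Euclidean Fourier transform of the Abel transform of $\varphi_\alpha$.

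The key computational step is to compute the Abel transform $\int_N h(na\cdot0,na\cdot0)^\alpha\,dn$ explicitly. For this I would pass to the unbounded (Siegel domain) realization through the Cayley transform. There $N$ acts by affine translations $(u,v)\mapsto(u+x+\ldots, v+\ldots)$, $A$ acts by dilations along the Jordan frame, and $h(z,z)$ becomes, up to Jacobian factors, $\Delta(\mathrm{Im}\,u - F(v,v))$ on the cone of positive elements in the Jordan algebra. The $N$-integral then splits into two parts. The first is an integral over the real part $x$ of $\Delta(y+ix)^{-\alpha}$-type kernels, which by the Gindikin/Koranyi formula produces $\Gamma_\Omega$-ratios times a power $\Delta(y)^{-\alpha+\ldots}$. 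The second is a Gaussian-like integral over $v$ (the $b$-directions), which shifts the exponent. What remains is a power function $a^{\mu(\alpha)}$ times an explicit constant, but this is an integral over $A$ alone only after one has also tracked the dependence on the $r$ coordinates. In practice I would perform the computation coordinatewise on the diagonal $y=\sum e^{t_j}e_j$, where the integrand factorizes into $r$ one-dimensional integrals of the form
\[
   \int_\R e^{(\alpha-\frac{\gen-1}{2}-i\lambda_j)t}\,\big(\cosh\text{-type}\big)^{-(2\alpha-\gen+1)}\,dt,
\]
that is, rank-one Beta integrals. Each of these equals $\Gamma(\alpha-\frac{\gen-1}{2}+i\lambda_j)\Gamma(\alpha-\frac{\gen-1}{2}-i\lambda_j)/\Gamma(2\alpha-\gen+1)$ up to constants.

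Finally, the normalization constants, including $c_\alpha$ and the Haar measure normalizations, are fixed without tracking them explicitly. Since $\|\varphi_\alpha\|_1=1$ and $\phi_{-i\rho}\equiv 1$, we must have $\cF(\varphi_\alpha)(-i\rho)=1$. Evaluating the product formula at $\lambda_j=-i\rho_j$ gives exactly the denominator $\prod_j\Gamma(-\rho_j+\alpha-\frac{\gen-1}{2})\Gamma(\rho_j+\alpha-\frac{\gen-1}{2})$. Hence the constant is forced, provided the $\lambda$-dependence has been shown to be the product of $\Gamma(\alpha-\frac{\gen-1}{2}\pm i\lambda_j)$. Here $\rho_j=\frac a2(j-1)+\frac{b+1}{2}$ is read off from $\rho$ in the coordinates given by the Jordan frame.

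The main obstacle is the middle step. The Abel transform of $h^\alpha$ does not factorize naively in higher rank, because of the $a$-multiplicity root spaces, and showing the clean product of $\Gamma(\alpha-\frac{\gen-1}{2}\pm i\lambda_j)$ requires the full Gindikin gamma integral over the symmetric cone. If that route becomes too messy, the fallback is to use the expansion $h(z,w)^\alpha=\sum_{\bm}\frac{\Gamma_\Omega(-\alpha+\bm)}{\Gamma_\Omega(-\alpha)}K_\bm$ together with the known Berezin-symbol eigenvalues of the Toeplitz/Berezin transform on spherical functions. By Lemma~\ref{lem:function Berezin transform convol}, $B_\alpha\phi_\lambda=\cF(\varphi_\alpha)(\lambda)\phi_\lambda$, so it suffices to compute the $\Berg$-Berezin eigenvalue of $\phi_\lambda$, which is the route taken in \cite{U94}.
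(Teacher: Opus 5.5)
\textbf{There is a genuine gap in the middle step.} The paper gives no proof of this formula; it quotes it from \cite{U94}. Your outer framework is sound: the reduction to $\int_{\bsd}\varphi_\alpha\,e^{-i\lambda+\rho}\,d\mu_0$ is fine (Fubini applies because $\int_\bsd\varphi_\alpha\phi_0\,d\mu_0\le 1$), as are the rewriting over $NA$ with $a^{-2\rho}\,dn\,da$ and fixing the constant via $\cF(\varphi_\alpha)(-i\rho)=\|\varphi_\alpha\|_1=1$. The gap is that the middle step, which carries all the content, is partly misdescribed and never actually carried out.

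First, the $x$- and $v$-integrations do not leave a power of $a$. Let $\Delta$ be the Jordan determinant of the cone. In Siegel coordinates one has $\varphi_\alpha(u,v)=C\,\Delta(y')^{\alpha}\,|\Delta(\tfrac{u+ie}{2i})|^{-2\alpha}$ with $y'=\mathrm{Im}\,u-F(v,v)$. The base point $ie$ makes the $x$-integral produce $\Delta(y+e)^{-2\alpha+n_1/r}$ (with $y=\mathrm{Im}\,u$), and the $v$-integral then gives $\Delta(y'+e)^{-2\alpha+n_1/r+b}$, not powers of $\Delta(y)$. A pure power function on $A$ has no ordinary Fourier transform, so it could never yield the Gamma product.

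Second, $N$ also contains the subgroup $N_0$ (root spaces $\frac{\gamma_k-\gamma_j}{2}$, of multiplicity $a$), which acts linearly on the cone, and you never integrate over it. On the diagonal $y=\sum_j e^{t_j}e_j$ the remaining integrand $\Delta(n_0y+e)^{-\tau}$ does not split into one-variable factors. Already for the cone of positive definite real symmetric $2\times 2$ matrices, $\det(n_0yn_0^{T}+I)=(1+y_1)(1+y_2)+\xi^2y_1$, and the product structure appears only after integrating in $\xi$. So the factorization into $r$ one-dimensional Beta integrals is asserted exactly where the work lies. Your fallback is circular, since the Berezin eigenvalue of $\phi_\lambda$ is precisely the statement to be proved.

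The missing idea is not to integrate out all of $N$ first. Instead, keep $N_0A$ together as the symmetric cone $\mathcal{C}$, which it parametrizes simply transitively. In Siegel coordinates the horospherical function is a generalized power, $e^{-i\lambda+\rho}(u,v)=\Delta_{\mathbf s}(\mathrm{Im}\,u-F(v,v))$ with $s_j=\rho_j-i\lambda_j$, for the ordering of the Jordan frame compatible with $\gn$. The measure $d\mu_0$ is a multiple of $\Delta(y')^{-\gen}\,dx\,dy'\,dv$. After the two Gindikin integrals in $x$ and $v$ you are left with
\[
  C\int_{\mathcal{C}}\Delta_{\mathbf s'}(y)\,\Delta(e+y)^{-t}\,\Delta(y)^{-n_1/r}\,dy
  = C\,\frac{\Gamma_\Omega(\mathbf s')\,\Gamma_\Omega(t-\mathbf s'^{*})}{\Gamma_\Omega(t)},
\]
where $s'_j=s_j+\alpha-\gen+\frac{n_1}{r}$, $t=2\alpha-\frac{n_1}{r}-b$ and $\mathbf s'^{*}=(s'_r,\dots,s'_1)$. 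This multivariable Beta integral follows from the Laplace transform $\int_{\mathcal{C}}e^{-(y|\xi)}\Delta_{\mathbf s}(y)\Delta(y)^{-n_1/r}\,dy=\Gamma_\Omega(\mathbf s)\Delta_{\mathbf s}(\xi^{-1})$ (Faraut--Kor\'anyi, \emph{Analysis on Symmetric Cones}, Ch.~VII). Set $c=\alpha-\frac{\gen-1}{2}$. Then $s'_j-(j-1)\frac a2=c-i\lambda_j$ and $t-s'_j-(r-j)\frac a2=c+i\lambda_j$, so the $\lambda$-dependence is exactly $\prod_j\Gamma(c+i\lambda_j)\Gamma(c-i\lambda_j)$. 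Your normalization at $\lambda=-i\rho$ then finishes the argument.
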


We observe that this theorem has a simple corollary:
\begin{corollary}
For all $\alpha>\gen-1$, one has that:
   \[
      \widehat{\varphi_{\alpha+1}}(\lambda) = \prod_{j=1}^r \frac{(i\lambda_j + \alpha - \frac{\gen-1}{2}) (-i\lambda_j + \alpha - \frac{\gen-1}{2})}  {(-\rho_j + \alpha - \frac{\gen-1}{2}) (\rho_j + \alpha -\frac{\gen-1}{2})} \widehat{\varphi_\alpha}(\lambda).
   \]
\end{corollary}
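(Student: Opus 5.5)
The plan is to divide the formula of the preceding theorem (Unterberger--Upmeier) at parameter $\alpha+1$ by the same formula at parameter $\alpha$, and simplify with the functional equation $\Gamma(s+1)=s\Gamma(s)$ applied factor by factor. Writing $\beta:=\alpha-\frac{\gen-1}{2}$, the theorem gives
\[
   \widehat{\varphi_{\alpha+1}}(\lambda)=\prod_{j=1}^r \frac{\Gamma(i\lambda_j+\beta+1)\Gamma(-i\lambda_j+\beta+1)}{\Gamma(-\rho_j+\beta+1)\Gamma(\rho_j+\beta+1)}.
\]
Applying $\Gamma(s+1)=s\Gamma(s)$ to each of the four Gamma factors in the $j$-th term pulls out the factor
\[
\frac{(i\lambda_j+\beta)(-i\lambda_j+\beta)}{(-\rho_j+\beta)(\rho_j+\beta)}
\]
times the $j$-th factor of $\widehat{\varphi_\alpha}(\lambda)$. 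Taking the product over $j$ gives the claimed identity.

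The only step requiring care is that every Gamma argument lies away from the poles, so that the recursion may be applied and the denominators are nonzero. Since $\alpha>\gen-1$, we have $\beta>\frac{\gen-1}{2}$, so for real $\lambda_j$ the arguments $\pm i\lambda_j+\beta$ have positive real part.

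For the denominators we need $\beta>\rho_j$ for every $j$. The largest is $\rho_r=\frac{a}{2}(r-1)+\frac{b+1}{2}=\frac{\gen-1}{2}$, using $\gen=(r-1)a+b+2$. Hence $\beta-\rho_j\geq \alpha-(\gen-1)>0$, so $\beta\pm\rho_j>0$.

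I expect no real obstacle beyond this bookkeeping check; in particular the theorem applies to both $\alpha$ and $\alpha+1$, since both exceed $\gen-1$.
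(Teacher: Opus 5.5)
Your proposal is correct and is essentially the argument the paper intends: the paper gives no written proof and calls this a simple corollary of the Unterberger--Upmeier formula, and your comparison of that formula at $\alpha+1$ and at $\alpha$ using $\Gamma(s+1)=s\Gamma(s)$ factor by factor is that step. Your extra check that $\rho_r=\frac{\gen-1}{2}$, so that $\alpha-\frac{\gen-1}{2}\pm\rho_j>0$ and no Gamma argument is a pole and no denominator vanishes, is correct bookkeeping that the paper leaves implicit.
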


   This corollary can be rewritten as follows:
   \[
      \prod_{j=1}^r \left( (\alpha-\frac{\gen-1}{2} )^2 - \rho_j^2 \right) \widehat{\varphi_{\alpha+1}}(\lambda) = \prod_{j=1}^r \left( (\alpha - \frac{\gen-1}{2})^2 + \lambda_j^2 \right) \widehat{\varphi_\alpha}(\lambda)
   \]
We immediately notice that $\widehat{\varphi_{\alpha+1}}$ is an even symmetric polynomial times $\widehat{\varphi_\alpha}$.  In fact, we can explicitly expand the products on both sides of this equation as linear combinations of elementary symmetric polynomials, obtaining:
\begin{align*}
     \sum_{k=0}^r (-1)^k(\alpha - \frac{\gen-1}{2})^{2(r-k)}& \sum_{1\leq i_1 < \cdots < i_k \leq r} p_{j_1}^2 \cdots p_{j_k}^2 \widehat{\varphi_{\alpha+1}}(\lambda) \\
   &= 
   \sum_{k=0}^r (\alpha - \frac{\gen-1}{2})^{2(r-k)} \sum_{1\leq i_1 < \cdots < i_k \leq r} \lambda_{i_1}^2 \cdots \lambda_{i_k}^2 \widehat{\varphi_\alpha}(\lambda). 
\end{align*}
This can be written succinctly as:
\begin{align*}
     \widehat{\varphi_{\alpha+1}}(\lambda) =
   \frac{1}{d(\alpha)} \sum_{k=0}^r (\alpha - \frac{\gen-1}{2})^{2(r-k)} \delta_k(\lambda) \widehat{\varphi_\alpha}(\lambda),
\end{align*}
where 
\begin{equation}\label{eq:dnu}
    d(\alpha) = \prod_{j=1}^r \left( (\alpha-\frac{\gen-1}{2} )^2 - \rho_j^2 \right),
\end{equation}
and where, for $1\leq k \leq r$, $\delta_k$ is the fundamental even symmetric polynomial in $r$ variables of degree $k$, which is given by:
\[
   \delta_k(\lambda) = \sum_{1\leq i_1 < \cdots < i_k \leq r} \lambda_{i_1}^2 \cdots \lambda_{i_k}^2.
\]

In light of the above, we have the following result:

\begin{proposition}
   For each $\alpha>\gen-1$ there exists a $G$-invariant differential operator $D_\alpha$ on $\bsd$ such that $D_\alpha \varphi_\alpha = \varphi_{\alpha+1}$.  In particular, in light of the above, we can let $D_\alpha$ be the differential operator such that its $N$-radial part $\Gamma D_\alpha$ is given by the following differential operator on $\ga$:
   \[
      \Gamma D_\alpha = \frac{1}{d(\alpha)} \prod_{j=1}^r \left((\alpha - \frac{\gen-1}{2})^2 + (\frac{1}{i}\frac{\partial}{\partial t_j} - \rho_j)^2 \right).
   \]
\end{proposition}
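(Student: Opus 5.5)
Our strategy is to define $D_\alpha$ through the Harish-Chandra isomorphism $\Gamma:\D(G/K)\to\U(\ga^\C)^W$ and check the identity $D_\alpha\varphi_\alpha=\varphi_{\alpha+1}$ on the spherical Fourier side. Set
\[
   Q_\alpha(\lambda) := \frac{1}{d(\alpha)}\prod_{j=1}^r\Big(\big(\alpha-\tfrac{\gen-1}{2}\big)^2+\lambda_j^2\Big),
\]
which is a polynomial in the even symmetric functions $\delta_1,\dots,\delta_r$. Note that $d(\alpha)\neq 0$ for $\alpha>\gen-1$: we have $\alpha-\frac{\gen-1}{2}>\frac{\gen-1}{2}\geq\rho_j$, since $\rho_r=\frac{a}{2}(r-1)+\frac{b+1}{2}\leq\frac{\gen-1}{2}$. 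The restricted root system of a bounded symmetric domain is of type $C_r$ or $BC_r$, so $W$ acts on $\ga^*\cong\R^r$ by signed permutations. Consequently $Q_\alpha$, viewed via $\lambda\mapsto Q_\alpha(-i\lambda)$, i.e. as a polynomial in $\lambda_j^2$, is $W$-invariant and defines an element of $S(\ga^\C)^W$. We then put $D_\alpha:=\Gamma^{-1}(\widetilde Q_\alpha)$, where $\widetilde Q_\alpha$ is chosen so that $\widetilde Q_\alpha(i\lambda)=Q_\alpha(\lambda)$ (the sign of $\lambda_j^2$ must be tracked carefully). By the formula for $\Gamma^{-1}$ acting on $C^\infty(N\backslash G/K)$, the $N$-radial part of $D_\alpha$ is then exactly the displayed product with the $\rho$-shifted derivatives $\frac1i\partial_{t_j}-\rho_j$. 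We expect that matching the coordinates $\lambda_j$ of Unterberger--Upmeier to our basis $H_1,\dots,H_r$ of $\ga$ (with $\rho(H_j)=\rho_j$) is purely bookkeeping.

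The key identity is $D\phi_\lambda=\Gamma(D)(i\lambda)\phi_\lambda$ for every $D\in\D(G/K)$, which gives $D_\alpha\phi_\lambda=Q_\alpha(\lambda)\phi_\lambda$. Since $D_\alpha$ is $G$-invariant, it commutes with $K$ and preserves radial functions. It is also formally self-adjoint up to conjugation, because $Q_\alpha$ is real on $\ga^*$. Hence for radial $f$ in a suitable class,
\[
   \cF(D_\alpha f)(\lambda)=\int_\bsd D_\alpha f\,\phi_{-\lambda}\,d\mu_0=\int_\bsd f\,D_\alpha^{t}\phi_{-\lambda}\,d\mu_0=Q_\alpha(\lambda)\cF f(\lambda),
\]
where the transpose of an invariant operator acts on $\phi_{-\lambda}$ by the same eigenvalue because $Q_\alpha$ is even. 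Applying this to $f=\varphi_\alpha$ and invoking the corollary of the Unterberger--Upmeier theorem gives $\cF(D_\alpha\varphi_\alpha)=\cF(\varphi_{\alpha+1})$. Injectivity of $\cF$ on $L^2(\bsd,d\mu_0)^K$ (Plancherel) then yields $D_\alpha\varphi_\alpha=\varphi_{\alpha+1}$, using that both sides are continuous radial functions.

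The main obstacle is justifying the integration by parts. Since $\phi_{-\lambda}$ is not compactly supported, we must show that $\varphi_\alpha$ and $D_\alpha\varphi_\alpha$ lie in $L^1\cap L^2(\bsd,d\mu_0)^K$ and that the boundary terms vanish. To handle this, we would first note that $h(z,z)^\alpha$ is real-analytic and that invariant differential operators applied to it produce finite sums of terms of the form $h(z,z)^\alpha$ times polynomials in $z,\bar z$ divided by powers of $h(z,z)$ that do not exceed the order of the operator. We would verify this by computing in the frame $z=k\sum t_je_j$, where $h=\prod(1-t_j^2)$, using the $N$-radial-part formula above; equivalently, $D_\alpha\varphi_\alpha\in\mathrm{span}\{\varphi_{\alpha},\varphi_{\alpha+1},\ldots\}$ up to lower-order corrections. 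These expressions decay like $h^{\alpha-2r}$, which is integrable against $d\mu_0=h^{-\gen}dz$ for $\alpha$ large. For such $\alpha$ we would pair $D_\alpha\varphi_\alpha$ with compactly supported radial test functions $\psi$ and use the distributional identity $\langle D_\alpha\varphi_\alpha,\psi\rangle=\langle\varphi_\alpha,D_\alpha^t\psi\rangle$, together with $\cF(D^t_\alpha\psi)=Q_\alpha\cF\psi$, which is valid without boundary issues since $\psi\in C_c^\infty$. Finally, both $D_\alpha\varphi_\alpha$ and $\varphi_{\alpha+1}$ depend real-analytically on $\alpha$, so the identity extends by analytic continuation to all $\alpha>\gen-1$.
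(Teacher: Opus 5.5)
Your proposal is correct and follows the same route as the paper. The paper gives no written-out proof: it takes $D_\alpha$ to be $\Gamma^{-1}$ of the even symmetric polynomial that relates $\cF(\varphi_{\alpha+1})$ to $\cF(\varphi_\alpha)$ in the Unterberger--Upmeier corollary, and relies on $D\phi_\lambda=\Gamma(D)(i\lambda)\phi_\lambda$. Your pairing with radial $C_c^\infty$ test functions supplies the justification the paper leaves implicit.

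Two minor points. First, the decay estimates and the analytic continuation in $\alpha$ are unnecessary. That pairing only needs $\varphi_\alpha,\varphi_{\alpha+1}\in L^2(\bsd,d\mu_0)$, and this holds for every $\alpha>\gen-1$. Second, with the paper's formula for $\Gamma^{-1}$, your construction gives the $N$-radial part $\frac{1}{d(\alpha)}\prod_{j=1}^r\big((\alpha-\frac{\gen-1}{2})^2+(\frac{1}{i}(\frac{\partial}{\partial t_j}-\rho_j))^2\big)$. It does not match the printed factor $(\frac{1}{i}\frac{\partial}{\partial t_j}-\rho_j)^2$ exactly; the printed formula should be read with the $\rho_j$-shift inside the factor $\frac{1}{i}$, as above.
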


Recall the Berezin transform $B_\alpha f$ of a function $f$ \eqref{eq:function Berezin transform}.

\begin{theorem}\label{theo:Berezin shift}
   Assume that $\bsd$ be a bounded symmetric domain and consider $\Berg=\cA^2_\nu$ with $\nu>\gen-1$. Let $\alpha>\gen-1$ and $1\leq p\leq \infty$.  For all $f\in \Lp$, 
   \[
      B_{\alpha+1}f = D_\alpha (B_\alpha f)
   \]
   Furthermore, if $\bsd$ is a classical domain, and if $f\in C^\infty \cap \Lp$ and $D_\alpha f\in \Lp$, then
   \[
      B_{\alpha+1}f = B_\alpha( D_\alpha f ).
   \]
\end{theorem}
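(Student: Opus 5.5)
Both identities should follow from the convolution representation $B_\alpha f = f\ast\varphi_\alpha$ of Lemma~\ref{lem:function Berezin transform convol}, together with the Proposition above, which gives $D_\alpha\varphi_\alpha=\varphi_{\alpha+1}$. The idea is that a left-invariant differential operator can be moved onto either factor of the convolution.

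For the first identity, I write
\[
B_\alpha f(z)=\int_G f(y\cdot 0)\,(\actfL{y}{\varphi_\alpha})(z)\haar{y},
\]
as in the proof of Lemma~\ref{lem:function Berezin transform convol}. Since $D_\alpha$ is $G$-invariant, $D_\alpha(\actfL{y}{\varphi_\alpha})=\actfL{y}{(D_\alpha\varphi_\alpha)}=\actfL{y}{\varphi_{\alpha+1}}$. The claim then follows once $D_\alpha$ can be passed under the integral sign.

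That interchange is the analytic step, and I would justify it by differentiation under the integral. In local coordinates $z$ near a point $z_0$, the integrand is
\[
f(w)\,c_\alpha h(z,z)^\alpha h(w,w)^\alpha|h(z,w)|^{-2\alpha}.
\]
All $z$-derivatives up to the order of $D_\alpha$ are bounded by $C\,h(w,w)^\alpha|h(z,w)|^{-2\alpha}$ uniformly for $z$ in a compact neighbourhood of $z_0$. This holds because $h$ is a polynomial, holomorphic in $z$, and $|h(z,w)|$ is bounded below on compact$\times\bsd$.

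I expect this majorant to lie in $L^{p'}(\bsd,d\mu_0)$ uniformly in $z$. By $G$-invariance it is comparable to $\actfL{y}{\varphi_\alpha}$, which is in $L^{p'}$ for all $p'\in[1,\infty]$ since $\varphi_\alpha\in L^1\cap L^\infty$. Hölder's inequality then gives domination, and dominated convergence justifies the interchange. The same argument shows $B_\alpha f\in C^\infty$, so $D_\alpha(B_\alpha f)$ makes sense. This gives $D_\alpha(B_\alpha f)=f\ast\varphi_{\alpha+1}=B_{\alpha+1}f$.

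For the second identity, on a classical domain Theorem~\ref{theo:center and difops} lets me write $D_\alpha=\mu(Z)$ with $Z\in Z(\U(\gg^\C))$ bi-invariant. Hence $D_\alpha$ also commutes with right translations on $G$. In particular, $D_\alpha$ is formally self-adjoint up to the antipode, and it acts on the $f$ slot of the convolution:
\[
(D_\alpha f)\ast\varphi_\alpha = D_\alpha(f\ast\varphi_\alpha).
\]
Concretely, I write $f\ast\varphi_\alpha(x)=\int_G f(xy^{-1})\,\tilde\varphi_\alpha(y)\haar{y}$ using radiality and Lemma~\ref{lem:radial_functions}(2). A left-invariant operator in $x$ then falls on $f(xy^{-1})$. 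Centrality lets me identify $Z$ applied to $x\mapsto f(xy^{-1})$ with $(Zf)(xy^{-1})$, because $Z$ commutes with right translation by $y^{-1}$.

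Differentiating under the integral here again requires domination. I would avoid needing bounds on derivatives of $f$ by instead using the first identity together with the hypothesis $D_\alpha f\in\Lp$. Both $(D_\alpha f)\ast\varphi_\alpha$ and $D_\alpha(f\ast\varphi_\alpha)$ are well-defined continuous functions. I would pair each against test functions $\psi\in C_c^\infty$ and integrate by parts: the formal transpose of the bi-invariant operator $Z$ is $Z$ composed with the antipode, which again maps $\varphi_\alpha$-type kernels appropriately.

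The main obstacle is this integration-by-parts/distributional step, namely showing that the boundary-free identity
\[
\int (D_\alpha f)\,\actfL{x}{\varphi_\alpha}\,d\mu_0=\int f\,D_\alpha^{t}(\actfL{x}{\varphi_\alpha})\,d\mu_0
\]
holds on the noncompact $\bsd$. I would handle it by cutting off with radial $C_c^\infty$ functions $\chi_R\uparrow1$ whose derivatives are uniformly bounded, which is possible in the Riemannian symmetric space. The error terms involve derivatives of $\chi_R$ times derivatives of $\varphi_\alpha$ and are supported on annuli. These vanish as $R\to\infty$ by the $L^{p'}$ integrability of the derivatives of $\actfL{x}{\varphi_\alpha}$. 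Since $D_\alpha$ is invariant under the inversion $x\mapsto x^{-1}$ on $K$-bi-invariant functions for symmetric spaces, $D^t_\alpha\varphi_\alpha=D_\alpha\varphi_\alpha=\varphi_{\alpha+1}$, which finishes the proof.
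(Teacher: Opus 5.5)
Your first identity is the paper's argument. The paper simply writes $B_{\alpha+1}f=f\ast(D_\alpha\varphi_\alpha)=D_\alpha(f\ast\varphi_\alpha)$, implicitly using Lemma~\ref{lem:diffops_convol_f}(2); you supply the differentiation under the integral that this tacitly requires. (Dominated convergence needs a single majorant rather than uniform $L^{p'}$ bounds. The function $C_L\,h(w,w)^\alpha$ works, because $|h(z,w)|$ is bounded below for $z$ in a compact set $L$, and it lies in $L^{p'}(\bsd,d\mu_0)$ for all $p'\ge 1$ since $\alpha>\gen-1$.)

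For the second identity your route is genuinely different. The paper writes $D_\alpha=\mathrm{d}\ell(Z)$ with $Z$ central (Theorem~\ref{theo:center and difops}, Lemma~\ref{lem:centralZ}) and moves the operator onto $f$ via Lemma~\ref{lem:diffops_convol_f}(1). That step needs $f$ to be a smooth vector for $\ell$, which is why the paper's proof actually assumes $Df\in L^p$ for all $D\in\D(G/K)$. You instead keep all derivatives off $f$. You write $(D_\alpha f)\ast\varphi_\alpha(x)=\int_\bsd (D_\alpha f)\,\ell_x\varphi_\alpha\,d\mu_0$, integrate by parts against cutoffs $\chi_R$, and use $D_\alpha^t\varphi_\alpha=\varphi_{\alpha+1}$. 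This lands directly on $f\ast\varphi_{\alpha+1}=B_{\alpha+1}f$ under exactly the stated hypotheses $f\in C^\infty\cap L^p$ and $D_\alpha f\in L^p$. The pairing with test functions and the appeal to the first identity are superfluous, since the cutoff identity already holds pointwise in $x$.

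Your justification of $D_\alpha^t\varphi_\alpha=\mathrm{d}r(Z^\dagger)\varphi_\alpha=(\mathrm{d}r(Z)\check{\varphi}_\alpha)^\vee$, with $\check{\varphi}(x)=\varphi(x^{-1})$, uses $\check{\varphi}_\alpha=\varphi_\alpha$, i.e.\ Lemma~\ref{lem:radial_functions}(2). That fact comes from $-1\in W$ for the root systems $C_r$ and $BC_r$; it is not a general feature of symmetric spaces, so your phrase ``for symmetric spaces'' overstates it. In fact, since $\Gamma(D^t)(\lambda)=\Gamma(D)(-\lambda)$ and $-1\in W$, every $D\in\D(G/K)$ is formally symmetric with respect to $\mu_0$ here. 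So your argument does not need centrality or the classical hypothesis at all. That, together with the weaker regularity assumptions, is what your route buys; the paper's route is shorter given Lemma~\ref{lem:diffops_convol_f}.

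Two small points need tightening.
\begin{enumerate}
\item The cutoff error terms $\int_\bsd f\,[D_\alpha^t,\chi_R](\ell_x\varphi_\alpha)\,d\mu_0$ vanish by the $L^{p'}$-tails of the kernel derivatives only when $p'<\infty$. For $p=1$, use the $L^1$-tail of $f$ instead, or choose $\chi_R$ with derivatives of size $O(1/R)$.
\item The integrability of those kernel derivatives should be checked, not just asserted. Every left-invariant derivative of $\log\varphi_\alpha$ is bounded on $G$: this follows from the cocycle identity for $j$ and the fact that $j(\tau_\xi,\cdot)$ is a constant times a power of $h(\cdot,\xi)$, with $\xi$ ranging over the bounded set $\bsd$. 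Hence $|\mathrm{d}r(Y)\varphi_\alpha|\le C_Y\varphi_\alpha$ for all $Y\in\U(\gg^\C)$.
\end{enumerate}
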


\begin{proof}
   Let $f\in \Lp$. By Lemma \ref{lem:function Berezin transform convol}, we have
   \[
      B_\alpha f = f * \varphi_\alpha,
   \]
   with $\varphi_\alpha \in L^1(\bsd)$.  It follows that
   \begin{align*}
      B_{\alpha+1}f & = f * \varphi_{\alpha+1} \\
                 & = f * (D_\alpha \varphi_\alpha) \\
                 & = D_\alpha(f * \varphi_\alpha).
   \end{align*}
   If, in addition, we suppose that $\bsd$ is classical, by Theorem \ref{theo:center and difops}, we can obtain $Z\in Z(U(\gg^\C))$ s.t. $D_{\nu+\alpha}=\mu(Z)$. Hence, if $f\in C^\infty(\bsd)$ and $Df\in \Lp$ for all $G$-invariant differential operators $D\in \D(G/K)$, then we have
   \begin{align*}
      B_{\alpha +1}f & = (D_\alpha f) * \varphi_\alpha \\
                  & = B_\alpha (D_\alpha f). \qedhere
   \end{align*}
\end{proof}

By repeatedly applying this theorem, we immediately obtain the following corollary, which allows us to write the Berezin transform with parameter $\alpha+m$ as the composition of the Berezin transform with an invariant differential operator:
\begin{corollary}\label{coro:shifting function BT}
   Let $\alpha>\gen-1$, $m\in\N$, and $1\leq p\leq \infty$.  We define the invariant differential operator
   \[
      D_\alpha^m := D_{\alpha+k-1} D_{\alpha+k-2} \cdots D_{\alpha+1} D_\alpha \in \D(G/K).  
   \]
   Then 
   \[
      B_{\alpha+m}f = D_\alpha^m(B_\alpha f)
   \]
   for all $f\in \Lp$.  Furthermore, if $\bsd$ is a classical domain, and if $f\in C^\infty(G) \cap L^q(\bsd)$ and $D_\alpha^k f\in \Lp$ for all $1\leq k \leq m$, then
   \[
      B_{\alpha+m} f = B_\alpha (D_\alpha^m f).
   \]
\end{corollary}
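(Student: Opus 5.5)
The plan is induction on $m$, applying Theorem~\ref{theo:Berezin shift} once per step with the parameter increased by one each time. I read the definition of $D_\alpha^m$ with $k=m$, so that $D_\alpha^m = D_{\alpha+m-1}\cdots D_{\alpha+1}D_\alpha$ and $D_\alpha^{m+1} = D_{\alpha+m}D_\alpha^m$. For $m=1$ both statements are exactly Theorem~\ref{theo:Berezin shift}.

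\emph{First identity.} Assume $B_{\alpha+m}f = D_\alpha^m(B_\alpha f)$ for all $f\in\Lp$. Since $\alpha+m>\gen-1$, Theorem~\ref{theo:Berezin shift} with parameter $\alpha+m$ gives
\[
B_{\alpha+m+1}f = D_{\alpha+m}(B_{\alpha+m}f) = D_{\alpha+m}D_\alpha^m(B_\alpha f) = D_\alpha^{m+1}(B_\alpha f).
\]
Here $B_{\alpha+m}f = f\ast\varphi_{\alpha+m}$ is smooth, because $f\in\Lp$ and differentiation can be passed under the convolution integral. This is the same justification already used in Theorem~\ref{theo:Berezin shift}, so no new regularity issue appears.

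\emph{Second identity (classical $\bsd$).} Assume $f\in C^\infty\cap\Lp$ with $D_\alpha^k f\in\Lp$ for $1\le k\le m$. I would iterate from the inside, setting $f_k := D_\alpha^k f$ with $f_0=f$. Each $f_k$ is smooth and lies in $\Lp$, and $D_{\alpha+k}f_k = f_{k+1}\in\Lp$ for $k<m$. The second part of Theorem~\ref{theo:Berezin shift} with parameter $\alpha+k$, applied to $f_k$, then gives $B_{\alpha+k+1}f_k = B_{\alpha+k}(f_{k+1})$.

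One cannot simply chain these identities, because the indices shift in both slots. A cleaner route is to use commutativity directly. By Theorem~\ref{theo:center and difops}, each $D_{\beta}$ equals $\mu(Z_\beta)$ with $Z_\beta$ central, and so $D_\beta$ commutes with right convolution by radial $L^1$ functions when it is applied to smooth $L^p$ functions whose derivatives stay in $L^p$. This is precisely the identity $(D f)\ast\varphi = D(f\ast\varphi)$ used in the proof of Theorem~\ref{theo:Berezin shift}.

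With this, the argument runs in two steps:
\[
B_\alpha(D_\alpha^m f) = (D_\alpha^m f)\ast\varphi_\alpha = D_\alpha^m(f\ast\varphi_\alpha),
\]
where each operator $D_{\alpha+k}$ is peeled off one at a time using the hypothesis $f_k\in\Lp$. The first identity then gives
\[
D_\alpha^m(f\ast\varphi_\alpha) = D_\alpha^m(B_\alpha f) = B_{\alpha+m}f.
\]

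The main obstacle is justifying $(D f_k)\ast\varphi = D(f_k\ast\varphi)$ at each step, which needs $f_k\in\Lp$ and $Df_k\in\Lp$. This is exactly why the hypothesis asks that all intermediate $D_\alpha^k f$ lie in $\Lp$. I would verify it by writing the central element as a right-invariant operator as well, so that it can be moved across the convolution integral by the standard distributional argument, as in Theorem~\ref{theo:Berezin shift}.
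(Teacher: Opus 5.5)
Your proposal is correct. For the first identity you do exactly what the paper does: its entire proof is ``repeatedly apply Theorem~\ref{theo:Berezin shift}'', and your induction $B_{\alpha+m+1}f=D_{\alpha+m}(B_{\alpha+m}f)=D_{\alpha+m}D_\alpha^m(B_\alpha f)$ is that iteration.

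For the second identity your route differs from a literal iteration of the theorem, and it is the more careful one. As you noticed, the identity $B_{\beta+1}g=B_\beta(D_\beta g)$ from the theorem does not telescope under the stated hypotheses. Chaining it from the top, $B_{\alpha+m}f=B_{\alpha+m-1}(D_{\alpha+m-1}f)=\cdots$, and then using commutativity of $\D(G/K)$, needs $D_{\alpha+m-1}f\in L^p$, $D_{\alpha+m-2}D_{\alpha+m-1}f\in L^p$, and so on. Alternatively it needs the stronger assumption actually used in the proof of the theorem, namely that $Df\in L^p$ for every $D\in\D(G/K)$. None of these are among the hypotheses $D_\alpha^k f\in L^p$, $1\le k\le m$.

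You instead iterate the commutation step hidden inside that proof, $(D_{\alpha+k}g)\ast\varphi_\alpha=D_{\alpha+k}(g\ast\varphi_\alpha)$ with $g=D_\alpha^k f$. This uses exactly the given intermediate integrability assumptions, and you then finish with the first identity.

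Both arguments rest on the same ingredient, which is asserted rather than proved in the paper. For central $Z$ one needs $(\mathrm{d}r(Z)g)\ast\varphi=g\ast(\mathrm{d}\ell(Z^\dagger)\varphi)=\mathrm{d}r(Z)(g\ast\varphi)$. This follows from the relation $\mathrm{d}\ell(Z^\dagger)=\mathrm{d}r(Z)$ for central $Z$, stated just before Lemma~\ref{lem:centralZ}, together with an integration by parts using unimodularity. You are right to single this out as the step needing justification. What your ordering buys is a proof that actually matches the corollary's hypotheses as stated.
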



\section{Invariant differential operators acting on bounded operators on \texorpdfstring{$\Berg$}{Bergman Space}}\label{sec:Diffops on ops}

The notion of differentiating operators has appeared in several settings. For instance, in \cite{S04} and \cite{S05},
the Laplacian of an operator on Bergman space over the unit disc was developed using the Berezin transform, and 
an analogous operator Laplacian on Fock spaces was investigated in \cite{DM24}. Moreover, smooth operators for the translations defined via the Weyl representation
were studied in \cite{L24} and \cite{FL25}, adopting a more representation-theoretic perspective. In this section, we 
generalize both of these approaches to operators on the Bergman space $\Berg$
 of a bounded symmetric domain $\bsd$, and we observe that Su\'arez's approach yields a broader framework.

\subsection{Smooth vectors and the derived representation}
We now recall that if $(\pi,V)$ strongly-continuous representation of a Lie group $G$ in a Banach space $V$, then there is a dense subspace $V^\infty \subseteq V$ of so-called smooth vectors; that is,
\[
   V^\infty := \{ v\in V\mid G\rightarrow V, g\mapsto \pi(g)v \text{ is smooth} \}.
\]
In fact, $V^\infty$ is a $G$-invariant subspace of $V$ which carries a natural Banach  space topology for which the action of $G$ is strongly continuous.  G{\aa}rding showed that $V^\infty$ is dense by using the following construction. 

Recall the definition of
\[
    \pi(f) := \int_G f(x)\pi(x)dx \in \cB(V)
\]
for all $f\in C_c(G)$, we obtain the so-called integrated representation (also denoted by $\pi$) of the convolution algebra $C_c(G)$ on $V$.  If $V$ is a reflexive Banach space, then this integrated representation extends to a representation of the Banach algebra $L^1(G)$, and if $V$ is a Hilbert space, then it gives a $*$-representation of $L^1(G)$ as a Banach $*$-algebra.

In \cite{Garding47}, G\aa{}rding  showed that 
\[
    \pi(C^\infty_c(G))V \subseteq V^\infty,
\]
and later in \cite[Theorem 3.3]{DixmierMalliavin78}, Dixmier and Malliavin  showed that this is an equality:
\begin{equation}\label{eq:DixMalli}
    \pi(C^\infty_c(G))V = V^\infty
\end{equation}

As usual, we defines the derived representation
\begin{equation}\label{eq:derivedrep}
    \mathrm{d}\pi(X)v = \left.\ddt\right|_{t=0} \pi(\exp(tX))v,
\end{equation}
for all $X\in\gg$ and $v\in V^\infty$.  It is notdifficult to show that $\mathrm{d}\pi$ defines a Lie-algebra representation of $\gg$ on $V^\infty$, which then canonically extends to an associative-algebra representation of $\U(\gg^\C)$.

For $X\in\gg$, we define $\domdpi{\pi}{X}$ to be the domain of the infinitesimal generator of the one-parameter group $t\mapsto \pi(\exp tX)$. Then for $\Uvar=X_1\cdots X_m\in \U(\gg^\C)$, the domain $\domdpi{\pi}{\Uvar}$ is defined inductively, and then to all $Z\in \U(\gg^\C)$ by linearity. Then by Theorem 1.1 in \cite{Goodman69} we have a characterization of $V^\infty$ by powers of
generators of one-parameter subgroups:
\begin{equation}\label{eq:smoothvectors}
    V^\infty=\bigcap_{j=1}^d \bigcap_{m=1}^\infty \domdpi{\pi}{X_j^m}.
\end{equation}

More can be said via the elliptic elements in $U(\gg^\C)$. In particular, the following theorem, which is due to Goodman, appears in \cite{Poulsen72}. It
is a Banach-space version of results of Nelson \cite{Nelson59} stated for the Hilbert space case.

\begin{theorem}\cite[Theorem 1.2]{Poulsen72}\label{theo:Nelson}
    Let $L\in U(\gg^\C)$ be elliptic and let $A=\overline{\mathrm{d}\pi(L)}$ (the closure of $\mathrm{d}\pi(L)$). Then
    $$V^\infty=\bigcap_{m=1}^\infty \mathrm{Dom}(A^m).$$
\end{theorem}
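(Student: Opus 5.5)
The inclusion $V^\infty\subseteq\bigcap_{m}\mathrm{Dom}(A^m)$ is immediate. $V^\infty$ is invariant under $\mathrm{d}\pi(\U(\gg^\C))$, and $A$ extends $\mathrm{d}\pi(L)$. Hence for $v\in V^\infty$ we have $A^m v=\mathrm{d}\pi(L^m)v$ for every $m$, and all these lie in the domain. The substance is the reverse inclusion. I would prove it by elliptic regularity applied to the orbit map $f_v:G\to V$, $f_v(g)=\pi(g)v$, in the spirit of Nelson's argument.

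\textbf{Step 1 (distributional equation).} Fix $v\in\mathrm{Dom}(A)$. I claim that for all $\varphi\in C^\infty_c(G)$
\begin{equation*}
   \int_G \big((L^\#)^{t}\varphi\big)(g)\,\pi(g)v\,dg=\int_G \varphi(g)\,\pi(g)Av\,dg .
\end{equation*}
Here $L^\#$ is the left-invariant operator attached to $L$, and $(L^\#)^t$ is its formal transpose with respect to Haar measure.

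First take $u\in V^\infty$. Then $\frac{d}{dt}\pi(g\exp tX)u=\pi(g)\,\mathrm{d}\pi(X)u$, so $L^\# f_u=f_{\mathrm{d}\pi(L)u}$ classically. The identity then follows by integration by parts.

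For general $v\in\mathrm{Dom}(A)$, use that $A$ is the closure of $\mathrm{d}\pi(L)|_{V^\infty}$. Choose $u_n\in V^\infty$ with $u_n\to v$ and $\mathrm{d}\pi(L)u_n\to Av$. Both sides of the identity are continuous in these data, since $\pi$ is locally bounded and $\varphi$ has compact support. Passing to the limit gives the claim.

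Iterating, for $v\in\bigcap_m\mathrm{Dom}(A^m)$ we get $(L^\#)^m f_v=f_{A^m v}$ in the distributional sense. Each right-hand side is a continuous $V$-valued function.

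\textbf{Step 2 (scalar elliptic regularity).} Pair with $\lambda\in V'$ and set $F_\lambda=\lambda\circ f_v$, a continuous scalar function with $(L^\#)^mF_\lambda=\lambda\circ f_{A^mv}$ as distributions. Since $L$ is elliptic, so is $(L^\#)^m$, of order $m\deg L$, with smooth coefficients. The interior elliptic estimates on a relatively compact chart $U\Subset U'$ give
\begin{equation*}
   \|F_\lambda\|_{H^{s}(U)}\le C_{s}\Big(\|(L^\#)^mF_\lambda\|_{L^2(U')}+\|F_\lambda\|_{L^2(U')}\Big)\le C'_{s}\,\|\lambda\|\big(\sup_{\overline{U'}}\|\pi(g)\|\big)\big(\|A^mv\|+\|v\|\big)
\end{equation*}
for $s\le m\deg L$. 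By Sobolev embedding, every derivative of order $k$ of $F_\lambda$ is continuous on $U$. Moreover it is bounded uniformly over $\|\lambda\|\le 1$.

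\textbf{Step 3 (weak to strong smoothness).} From Step 2, $f_v$ is weakly $C^\infty$, with derivatives of $\lambda\circ f_v$ controlled uniformly in $\|\lambda\|\le1$ on compacta. I would then use the standard fact that a Banach-valued map whose scalar compositions are $C^{k+1}$ with such uniform local bounds is strongly $C^k$. One way to see it: the difference quotients are uniformly Cauchy by the mean value theorem applied to $\lambda\circ f_v$ and taking the supremum over $\lambda$. Hence $g\mapsto\pi(g)v$ is smooth, i.e.\ $v\in V^\infty$. Alternatively, once enough derivatives are known to exist, one can invoke \eqref{eq:smoothvectors}.

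I expect the main obstacle to be making the elliptic estimate in Step 2 rigorous with constants independent of $\lambda$. This rests on the local (Gårding-type) estimate for elliptic operators with smooth coefficients on a compact coordinate patch, applied to distributions that are only known to be continuous. I would handle it by mollifying in the chart, applying the a priori estimate, and passing to the limit.
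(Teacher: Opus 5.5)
The paper does not prove this statement. It quotes it from Poulsen, who attributes it to Goodman as a Banach-space version of Nelson's Hilbert-space theorem, so there is no internal argument to compare against. Your proposal is a correct, self-contained proof along the standard elliptic-regularity route.

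\begin{itemize}
\item The easy inclusion is right.
\item Step~1 uses only the definition of the closure and the local boundedness of $\pi$ (uniform boundedness principle). The iteration to $(L^\#)^m f_v=f_{A^m v}$ works because $A^j v\in\mathrm{Dom}(A)$ for $j<m$.
\item Step~2 is legitimate. We have $(L^\#)^m=(L^m)^\#$, and the principal symbol of $L^m$ is the $m$-th power of that of $L$, because the associated graded algebra $S(\gg^\C)$ is an integral domain. Hence $(L^\#)^m$ is elliptic, and interior regularity gives the Sobolev bound with constants depending only on the operator and the cutoffs.
\item The weak-to-strong upgrade in Step~3 is sound as you sketch it. Taylor's formula for $\lambda\circ f_v$ with $\lambda$-uniform bounds makes the difference quotients norm-Cauchy, and you then induct.
\end{itemize}

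Three small remarks. First, the $\lambda$-uniformity you flag as the main obstacle can be obtained abstractly, so you only need the qualitative regularity theorem. The linear map $\lambda\mapsto F_\lambda$ from $V'$ into $C^k(\overline{U})$ (with $U\Subset U'$) has closed graph, because $|F_{\lambda_n}(g)-F_\lambda(g)|\le\|\lambda_n-\lambda\|\,\sup_{\overline{U}}\|\pi(g)\|\,\|v\|$. The closed graph theorem then makes it bounded.

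Second, your Step~1 computation also proves that $\mathrm{d}\pi(L)|_{V^\infty}$ is closable, which the statement tacitly presupposes. If $u_n\to 0$ and $\mathrm{d}\pi(L)u_n\to w$, then $\int_G\varphi(g)\pi(g)w\,dg=0$ for every test function $\varphi$, and a Dirac sequence gives $w=0$.

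Third, by equivariance, $f_v(hg)=\pi(h)f_v(g)$, so it suffices to establish smoothness on a single chart around the identity.

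What your version buys over the bare citation is transparency about the inputs. It uses only strong continuity and local boundedness of $\pi$, interior elliptic regularity on $G$, and the fact that weakly smooth Banach-valued maps are smooth. This also makes clear that strong continuity of the representation is genuinely needed for the result.
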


The above theorem is particularly useful. 
Now, as in section \ref{sec:invariant_differential_operators}, assume that $G$ is a connected semisimple Lie group with finite center and that $K$ is a maximal compact subgroup. Recall that a vector $v\in V$ is called $K$-invariant if $\pi(k)v=v$ for all $k\in K$.

Let $B$ denote the Killing form of $\gg$. With respect to the Cartan
decomposition $\gg=\mathfrak k\oplus\mathfrak s$, $B$ is negative
definite on $\mathfrak k$ and positive definite on $\mathfrak s$. Choose bases
$Y_1,\dots,Y_s$ of $\mathfrak s$ and $X_1,\dots,X_k$ of $\mathfrak k$ such that
$B(Y_i,Y_j)=\delta_{ij}$ and $B(X_i,X_j)=-\delta_{ij}$. Then the \emph{Casimir
element} is given by
\[
  \Omega_G=\sum_{i}Y_i^2-\sum_{j}X_j^2\in U(\gg^\C).
\]
We point to \cite[Theorem 3.11.1]{Varadarajan74}
It is independent of the choice of bases and belongs to
$Z(U(\gg^\C))$. Put $\Omega_K=\sum_j X_j^2\in
U(\mathfrak k^\C)$ and
\[
  \Delta=\sum_i Y_i^2+\sum_j X_j^2=\Omega_G+2\Omega_K.
\]
In particular, $\Delta$ is elliptic. 

\begin{proposition}\label{prop:Nelsons_radial}
    Let $v\in V$ be a $K$-invariant vector. Then if $v\in \domdpi{\pi}{\Omega_G^m}$ for all $m\in\N$, we have  that $v\in V^\infty$.
\end{proposition}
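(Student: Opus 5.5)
Since $\Delta=\Omega_G+2\Omega_K$ is elliptic, Theorem~\ref{theo:Nelson} reduces the claim to showing that $v\in\mathrm{Dom}(A^m)$ for every $m\in\N$, where $A=\overline{\mathrm{d}\pi(\Delta)}$. The plan is to show that on $K$-invariant vectors the $\Omega_K$-part of $\Delta$ acts trivially, so that $\Delta$ and $\Omega_G$ agree there. Then the hypothesis $v\in\domdpi{\pi}{\Omega_G^m}$ transfers to $A^m$.

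First, take $X\in\gk$ and a $K$-invariant $w$. Then $\pi(\exp tX)w=w$ for all $t$, so $w\in\domdpi{\pi}{X}$ and $\mathrm{d}\pi(X)w=0$. Hence $w\in\domdpi{\pi}{\Omega_K}$ and $\mathrm{d}\pi(\Omega_K)w=0$.

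Second, we need $\Omega_G$ to preserve $K$-invariance. Because $\Omega_G\in Z(U(\gg^\C))$, it is $\Ad(K)$-invariant, so $\pi(k)\,\mathrm{d}\pi(\Omega_G)\,\pi(k)^{-1}=\mathrm{d}\pi(\Ad(k)\Omega_G)=\mathrm{d}\pi(\Omega_G)$ on the domain $\domdpi{\pi}{\Omega_G}$. This domain is itself $K$-stable, because $\pi(k)\pi(\exp tY)\pi(k)^{-1}=\pi(\exp t\Ad(k)Y)$. Consequently, if $w$ is $K$-invariant and lies in the domain, then $\mathrm{d}\pi(\Omega_G)w$ is again $K$-invariant. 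Inducting on $m$, every $v_j:=\mathrm{d}\pi(\Omega_G)^jv$ with $j\le m$ is $K$-invariant.

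Third, combine the two steps. By the first step, each $K$-invariant $v_j$ in the domain of $\Omega_G$ lies in the domain of $\mathrm{d}\pi(\Delta)$, and
\[
\mathrm{d}\pi(\Delta)v_j=\mathrm{d}\pi(\Omega_G)v_j+2\,\mathrm{d}\pi(\Omega_K)v_j=v_{j+1}.
\]
Since $A$ is the closure of $\mathrm{d}\pi(\Delta)$, it extends this, so iterating gives $v\in\mathrm{Dom}(A^m)$ with $A^mv=v_m$ for every $m$. Theorem~\ref{theo:Nelson} then yields $v\in V^\infty$.

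The main obstacle is bookkeeping with domains, because $\mathrm{d}\pi(\Delta)$ is a priori defined only on $V^\infty$. The decomposition of $\mathrm{d}\pi(\Delta)w$ as $\mathrm{d}\pi(\Omega_G)w+2\,\mathrm{d}\pi(\Omega_K)w$ is therefore meaningful only if we use the inductively defined domains $\domdpi{\pi}{\cdot}$ and check that $A$ agrees with this extended operator on them. I would handle this by approximation with a $K$-invariant Dirac sequence $f_n\in C_c^\infty(G)$, that is, functions invariant under conjugation by $K$. Put $w_n=\pi(f_n)w\in V^\infty$. These vectors are $K$-invariant, satisfy $w_n\to w$, and commute with the one-parameter groups up to the right convolution structure, which gives $\mathrm{d}\pi(\Omega_G)w_n=\pi(f_n)\mathrm{d}\pi(\Omega_G)w$. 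Hence $\mathrm{d}\pi(\Delta)w_n=\pi(f_n)v_{j+1}\to v_{j+1}$, and closedness of $A$ gives $w\in\mathrm{Dom}(A)$ with $Aw=v_{j+1}$. The identity $\mathrm{d}\pi(\Omega_G)\pi(f)=\pi(f)\mathrm{d}\pi(\Omega_G)$ for conjugation-invariant $f$ on the domain follows from centrality, by differentiating $\pi(f)\pi(\exp tY)$ against the Ad-invariant kernel; I expect this to be the most delicate point.
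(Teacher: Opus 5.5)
Your proposal is correct and follows the paper's proof. In both, $\mathrm{d}\pi(\Omega_K)$ kills $K$-invariant vectors and centrality of $\Omega_G$ preserves $K$-invariance, so $A^m v=\mathrm{d}\pi(\Omega_G)^m v$, and Theorem~\ref{theo:Nelson} applied to the elliptic $\Delta$ gives $v\in V^\infty$.

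Your approximation by $\pi(f_n)v_j$, with $K$-conjugation-invariant Dirac sequences $f_n$, together with the closedness of $A$, supplies a justification the paper only asserts: that the $\Omega_G$-iterates actually lie in $\mathrm{Dom}(A^m)$ for $A$ the closure of $\mathrm{d}\pi(\Delta)|_{V^\infty}$. Two small corrections to how you set this up. First, the identity $\mathrm{d}\pi(\Omega_G)\pi(f)w=\pi(f)\,\mathrm{d}\pi(\Omega_G)w$ holds for every $f\in C_c^\infty(G)$ by centrality and $\Omega_G^\dagger=\Omega_G$; conjugation invariance is needed only to keep $w_n$ $K$-invariant. Second, the same limit argument already shows that $v_{j+1}$ is $K$-invariant, so you do not need the $K$-stability of the domain claimed in your second step.
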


\begin{proof}
    The forward direction is trivial. To prove the other direction, we apply Theorem \ref{theo:Nelson} via the elliptic operator $\Delta$. First note that  $d\pi(X)v=0$ for all $X\in\mathfrak k$ as $\pi(k)v=v$ for $k\in K$. Therefore,  $d\pi(\Omega_K)v=0$ and 
    $$d\pi(\Delta)v=d\pi(\Omega_G)v$$
    for $v\in \domdpi{\pi}{\Omega_G}\subseteq \domdpi{\pi}{\Delta}$.
    Also, since $\Omega_G$ is central, $d\pi(\Omega_G)v$ is $K$-invariant. Then it follows that $$d\pi(\Delta)^m v=d\pi(\Omega)^m v, \quad m\in\N.$$  
    Therefore, $\domdpi{\pi}{\Omega_G^m}\subseteq \mathrm{Dom}(A^n)$ where $A$ be the closure of $d\pi(\Delta)$. Now the result follows by Theorem \ref{theo:Nelson}.
\end{proof}

\subsection{Derivatives of functions via regular representations}
Now we turn to the smooth vectors of the left and right regular representations $(\ell,L^p(G))$ and $(r,L^p(G))$ of $G$. Given a subspace $V$ of $L^p(G)$, we denote the smooth vectors in $V$ w.r.t. $\ell$ and $r$ by $V^{\infty,\ell}$ and $V^{\infty,r}$ respectively.

\begin{proposition}
    The smooth vectors  satisfy the inclusions:
    $$C_c^\infty(G) \subseteq L^p(G)^{\infty,\ell} \subseteq C^\infty(G),$$
    and
    $$C_c^\infty(G) \subseteq L^p(G)^{\infty, r} \subseteq C^\infty(G),$$
    where $1\leq p < \infty$.
\end{proposition}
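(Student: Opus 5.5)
The two first inclusions in each line will be handled by a direct differentiation argument, and the second inclusions by a Sobolev-type argument via Gårding vectors and the Dixmier--Malliavin theorem \eqref{eq:DixMalli}.

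For $C_c^\infty(G)\subseteq L^p(G)^{\infty,\ell}$, take $f\in C_c^\infty(G)$ and consider $x\mapsto \ell_x f$. I will show this map is smooth into $L^p(G)$ by checking that the difference quotients converge in $L^p$. For $X\in\gg$ one has
\[
\tfrac{1}{t}\bigl(\ell_{\exp tX}f - f\bigr)(y) \to -(X^{R}f)(y),
\]
where $X^R f(y)=\ddt|_{t=0} f(\exp(tX)y)$ is the right-invariant vector field. Uniformity follows because, for $|t|\le1$, all these functions are supported in the fixed compact set $\exp([-1,1]X)\,\supp f$, and the mean value theorem gives uniform convergence there. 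Hence convergence holds in $L^p$ for every $1\le p<\infty$. The derivative $-X^Rf$ is again in $C_c^\infty(G)$, so iterating gives all higher derivatives. Continuity of $x\mapsto \ell_x(X_1^R\cdots X_m^Rf)$ follows from strong continuity of $\ell$ on $L^p$, which gives $C^\infty$ of the orbit map. The right regular representation is handled identically, using the left-invariant fields $X^\#$.

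For $L^p(G)^{\infty,\ell}\subseteq C^\infty(G)$, I will use \eqref{eq:DixMalli}: every smooth vector $v$ can be written as a finite sum $v=\sum_i \ell(\phi_i)f_i$ with $\phi_i\in C_c^\infty(G)$ and $f_i\in L^p(G)$. Since $\ell(\phi)f=\int\phi(x)f(x^{-1}\cdot)\,dx=\phi*f$ is an ordinary convolution, it suffices to show $\phi*f\in C^\infty(G)$. This holds because derivatives can be moved onto $\phi$. Writing $(\phi*f)(y)=\int \phi(yx)f(x^{-1})\,dx$ (using unimodularity), differentiation under the integral in $y$ is justified locally: for $y$ in a compact set, $\phi(y\,\cdot)$ and its derivatives are supported in a fixed compact set, on which $f$ is integrable since $L^p_{loc}\subseteq L^1_{loc}$. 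Dominated convergence then applies.

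For $r$, one has $r(\phi)f(y)=\int\phi(x)f(yx)\,dx=\int \phi(y^{-1}z)f(z)\,dz$, which is smooth in $y$ by the same reasoning. The main point of care is this local domination. Alternatively, one could avoid Dixmier--Malliavin by using local Sobolev embedding: smooth vectors have all distributional derivatives $X^\# f$ in $L^p$, and elliptic regularity with respect to the elliptic operator $\Delta$ yields smoothness. I will take the convolution route, as it is simpler.
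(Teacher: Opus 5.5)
Your proof is correct. For $L^p(G)^{\infty,\ell}\subseteq C^\infty(G)$ you argue exactly as the paper does. Both write a smooth vector as a finite sum $\sum_i \phi_i * f_i$ with $\phi_i\in C_c^\infty(G)$ via \eqref{eq:DixMalli} and conclude smoothness. You also supply the local domination needed to differentiate under the integral sign, which the paper leaves implicit.

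For $C_c^\infty(G)\subseteq L^p(G)^{\infty,\ell}$ the routes differ. The paper invokes the Dixmier--Malliavin factorization theorem to write $f\in C_c^\infty(G)$ as a finite sum $\sum_i g_i * h_i$ with $g_i,h_i\in C_c^\infty(G)$. Then $f=\sum_i \ell(g_i)h_i$ lies in the G{\aa}rding space and is therefore a smooth vector. You instead differentiate the orbit map directly: the difference quotients at the identity are supported in a fixed compact set and converge uniformly there. You then transport to an arbitrary point via $\ell_{x\exp tX}=\ell_x\ell_{\exp tX}$ and iterate, using that $X^R f\in C_c^\infty(G)$ again.

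The paper's route is a two-line reduction, but it rests on a deep factorization theorem. Yours is elementary and self-contained, and is essentially the argument behind G{\aa}rding's inclusion itself. It uses only compact support and uniform continuity of derivatives, and it treats the right regular representation explicitly rather than by analogy.

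The one step worth stating explicitly is the upgrade from directional derivatives to $C^1$, and hence $C^\infty$. It comes from continuity of the derivatives along the global frame $X_1^\#,\dots,X_d^\#$, equivalently continuous partial derivatives in canonical coordinates of the second kind.
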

\begin{proof}
   Note that by \eqref{eq:DixMalli}, we have that $C^\infty_c(G)\ast L^p(G) = L^p(G)^{\infty,\ell}$ which implies $L^p(G)^\infty \subseteq C^\infty(G)$. Also, the Theorem 3.1 in \cite{DixmierMalliavin78} states that (check) any $f\in C^\infty_c(G)$ can be written as a finite sum of convolutions:
   $$f=\sum_{j=1}^N g_i\ast h_i,\quad g_i,h_i\in C^\infty_c(G).$$
   Then by \eqref{eq:DixMalli}, we have that $f\in L^p(G)^{\infty,\ell}$. Therefore, $C_c^\infty(G) \subseteq L^p(G)^{\infty,\ell}$.
\end{proof}

 Similarly, for the left and right regular representations $(\ell,\cbuGL)$ and $(r,\cbuGR)$ on the spaces of left (respectively, right) uniformly continuous bounded functions, one has:
\begin{align*}
   C_c^\infty(G) & \subseteq \cbuGL^{\infty,\ell} \subseteq C^\infty(G), \\
   C_c^\infty(G) & \subseteq \cbuGR^{\infty,r} \subseteq C^\infty(G),
\end{align*}
Furthermore, it is clear that for all $X\in\gg$, we have that
\begin{align}
   \label{eq_derivative_of_left_translation}
      \mathrm{d}\ell(X)f(x) & = \left.\ddt\right|_{t=0} f(\exp^{-tX}x)\\
   \label{eq_derivative_of_right_translation}
      \mathrm{d}r(X)f(x) & = \left.\ddt\right|_{t=0} f(x\exp^{tX})\\
      \label{eq:dl_dr}
      \mathrm{d}r(X)f(x) & = \mathrm{d}\ell(\Ad(x^{-1})X)f(x)
\end{align}
for all $x\in G$ and $f\in L^p(G)^\infty$, $1\leq p <\infty$, as well as for $f$ in $\cbufL^\infty$ (in the case of \ref{eq_derivative_of_left_translation}) or $\cbufR^\infty$ (in the case of \ref{eq_derivative_of_right_translation}). The above identities extend naturally to $\U(\gg^\C)$. In fact, it is not difficult to show that $\mathrm{d}r$ induces the identification of $\U(\gg^\C)$ with $\D(G)$ that we made previously. However, $\mathrm{d}\ell(X)$ is not usually left-invariant.

\begin{lemma}\label{lem:diffops_convol_f}
   Let  $\Uvar\in U(\gg^\C)$ and let $1\leq p,q,r\leq \infty$ be such that $\frac{1}{p}+\frac{1}{q}-1=\frac{1}{r}$. Then
   \begin{enumerate}
       \item  If $f\in L^p(G)^{\infty,\ell}$ and $g\in L^q(G)^{\infty,\ell}$, then we have that $f\ast g\in L^r(G)^{\infty,\ell}$ and
       \begin{align*}
           \mathrm{d}\ell(\Uvar) (f\ast g) = (\mathrm{d}\ell(\Uvar)f)\ast g.
       \end{align*}
       \item  If $f\in L^p(G)^{\infty,r}$ and $g\in L^q(G)^{\infty,r}$, then we have that $f\ast g\in L^r(G)^{\infty,r}$ and
       \begin{align*}
           \mathrm{d}r(\Uvar) (f\ast g) = f\ast (\mathrm{d}r(\Uvar)g).
       \end{align*}
       In particular for $D\in \D(G)$, we have
       \begin{align*}
           D (f\ast g) = f\ast (Dg).
       \end{align*}
   \end{enumerate}
\end{lemma}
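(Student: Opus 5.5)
The plan is to reduce everything to the case $\Uvar = X\in\gg$ and then proceed by induction on the degree in $\U(\gg^\C)$, using that $\mathrm{d}\ell$ and $\mathrm{d}r$ are algebra representations of $\U(\gg^\C)$ on the spaces of smooth vectors. The basic identities are the equivariance of convolution under translations. Writing $(f\ast g)(y)=\int_G f(x)g(x^{-1}y)\haar{x}$, a change of variables gives
\[
\actfL{z}{(f\ast g)} = (\actfL{z}{f})\ast g, \qquad \actfR{z}{(f\ast g)} = f\ast (\actfR{z}{g})
\]
for all $z\in G$.

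First, smoothness of $f\ast g$. By Young's inequality $\|f\ast g\|_r\le \|f\|_p\|g\|_q$, the map $F\mapsto F\ast g$ is a bounded linear map $L^p(G)\to L^r(G)$. Since $z\mapsto \actfL{z}{f}$ is smooth into $L^p(G)$, composing with this bounded map shows $z\mapsto \actfL{z}{(f\ast g)} = (\actfL{z}{f})\ast g$ is smooth into $L^r(G)$. Hence $f\ast g\in L^r(G)^{\infty,\ell}$, and the hypothesis on $g$ is not even needed for this part. Differentiating at $z=\exp(tX)$, $t=0$, and passing the derivative through the bounded map gives
\[
\mathrm{d}\ell(X)(f\ast g)=(\mathrm{d}\ell(X)f)\ast g .
\]
Since $\mathrm{d}\ell(X)f$ is again smooth, iterating along monomials $X_1\cdots X_m$ and extending linearly yields statement (1) for all $\Uvar$. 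Statement (2) is symmetric: use the map $G\ni z\mapsto f\ast(\actfR{z}{g})$, boundedness of $G\mapsto f\ast G$ from $L^q(G)$ to $L^r(G)$, and the right-translation identity.

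The point needing care is the endpoint exponents. When $p$ or $q$ is $\infty$, or $r=\infty$, the spaces $L^\infty$ are not strongly continuous, so "smooth vector" should be read in the norm sense used for $\cbuGL$ and $\cbuGR$. The argument above uses only norm-smoothness of the orbit of one factor together with boundedness of convolution by the other, so it goes through verbatim. I should also check that the left-translation identity is compatible with the sign convention in \eqref{eq_derivative_of_left_translation}, but both sides use the same $\actfL{\exp tX}{}$, so the signs match.

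For the final claim, the paper identifies $\D(G)$ with $\U(\gg^\C)$ via $\mathrm{d}r$: a left-invariant $D\in\D(G)$ equals $\mathrm{d}r(\Uvar)$ for some $\Uvar$ acting on smooth vectors. Applying (2) then gives $D(f\ast g)=f\ast(Dg)$. The main obstacle I expect is justifying that the derivative of the vector-valued map $z\mapsto \actfR{z}{(f\ast g)}$ agrees with the pointwise differential operator $D$ applied to the function $f\ast g$. I would handle this by noting that $L^r$-norm derivatives coincide a.e. with distributional derivatives, and that smooth vectors lie in $C^\infty(G)$ (preceding proposition), so the two notions agree pointwise.
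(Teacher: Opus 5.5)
Your proposal is correct and follows essentially the route the paper indicates. The paper omits this proof and refers to the argument for Proposition~\ref{prop:diffops_convol}, which uses the same steps: translation-equivariance of the convolution, boundedness of convolution by a fixed factor (Young's inequality) to pass the derivative through, and iteration over monomials in $\U(\gg^\C)$. The one minor difference is how you get smoothness of $f\ast g$. You compose the smooth orbit map with a bounded linear map, whereas the paper uses Goodman's characterization \eqref{eq:smoothvectors}; your version has the small advantage of also covering the $L^\infty$ endpoints, where the regular representation is not strongly continuous.
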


We omit the proof of the above lemma as it is well-known and because the idea of the proof is similar to the proof of Proposition \ref{prop:diffops_convol} that we address in detail.

Now we let
\begin{align*}
   ^\dagger : U(\gg^\C) & \rightarrow U(\gg^\C) \\ 
                   \Uvar & \mapsto \Uvar^\dagger 
\end{align*}
denote the unique involutive associative-algebra antihomomorphism on $U(\gg^\C)$ induced by the involutive Lie-algebra antihomomorphism $X\mapsto -X$ on $\gg^\C$.  

Similarly, it is not difficult to show that $\mathrm{d}\ell(X)$ extends to a unique differential operator in $\diffops(G)$.  However, this differential operator is \textit{right}-$G$-invariant but, in general, is not \text{left}-$G$-invariant.  Nevertheless, if $\Uvar\in Z(\U(\gg^\C))$, then $\mathrm{d}\ell(\Uvar)$ commutes with $\ell(x)$ for all $x\in G$ and is thus a bi-invariant (and in particular, left-invariant) operator on $G$. In fact, from (\ref{eq_derivative_of_left_translation}) and (\ref{eq_derivative_of_right_translation}) it is clear that $\mathrm{d}\ell(\Uvar^\dagger)f = \mathrm{d}r(\Uvar)f$ for all $\Uvar\in Z(\U(\gg^\C))$ and all $f\in C^\infty(G)$.

We now suppose that $\gg$ is classical (i.e., non-exceptional).  If $\gt$ is a Cartan subalgebra of $\gk$, then $\gh:= \gt \oplus \ga$ is a Cartan subalgebra of $\gg$.  One identifies $Z(U(\gg^\C))$ with $\D(G\times G/ \diag G)$, where we note that $G\times G/\diag G$ is a pseudo-Riemannian symmetric space.  Then $\D(G\times G)/ \diag G) = U(\gg^\C \oplus \gg^\C)^{\diag G}/(U(\gg^\C \oplus \gg^\C)^{\diag G} \cap U(\gg^\C \oplus \gg^\C) (\diag \gg^\C))$.  

One obtains the Harish-Chandra isomorphism
\[
    \Gamma_\gh :  Z(\U(\gg^\C)) \rightarrow S(\gh^\C)^{\widetilde{W}},
\]
where $\widetilde{W}$ is the Weyl group of the root system $\Delta(\gg^\C,\gh^\C)$.  Using the Killing form, we can embed $(\ga^\C)^*$ as a subspace of $(\gh^\C)*$.  Since $\ga$ and $\gt$ are orthogonal with respect to the Killing form, this embedding corresponds to extending weights on $\ga$ to $\gh$ by setting them to zero on $\gt$.  One shows that the restriction to $\ga^*$ of a $\widetilde{W}$-invariant polynomial on $(\gh^\C)^*$ gives a $W$-invariant polynomial on $\ga^*$.

Using the identification of $S(\gh^\C)$ with the algebra of polynomials on $(\gh^\C)^*$ and of $S(\ga^\C)$ with the algebra of polynomials on $(\ga^\C)^*$, one has a homomorphism
\[
   \Upsilon: S(\gh^\C)^{\widetilde{W}} \rightarrow S(\ga^C)^W.
\]
Helgason showed that, if $\gg$ is classical (that is, non-exceptional), then this homomorphism is surjective, and, furthermore, that $\Gamma = \Upsilon \circ \Gamma_\gh$.  

Since we have assumed that $\gg$ is a Hermitian Lie algebra, we know that $\Sigma(\gg,\ga)$ is of type $C_r$ or $BC_r$, depending on whether $G/K$ is of tube type or not.  Therefore, the Weyl group $W$ is the group generated by permutations of the elements $e_1,\ldots,e_r$ of the Jordan frame, together with the reflections $r_i$ (where $1\leq i\leq n$) that satisfy:
\[
   r_i(e_j) = \left\{ \begin{matrix}
                                -e_j, \text{ if } i = j \\
                                e_j \text{ if } i\neq j
                              \end{matrix} \right.
\]
It follows that all elements of $S(\ga^\C)^W$ are sums of terms of even order.  

Therefore, we have the following:
\begin{lemma}\label{lem:centralZ}
    Let $\bsd=G/K$ be a bounded symmetric domain and let  $\Zvar \in Z(U(\gg^\C))$. Then
    \[
   \mathrm{d}r(Z)f = \mathrm{d}r(Z^\dagger) f =\mathrm{d}\ell(Z)f
\]
for all $f\in \Lp^{\infty,\ell}$, where $1\leq p<\infty$ (and similarly for $\cbufL^{\infty,\ell}$ and $\cbufR^{\infty,r}$).
\end{lemma}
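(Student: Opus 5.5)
The plan is to prove the two equalities separately: first $\mathrm{d}\ell(Z)f = \mathrm{d}r(Z^\dagger)f$, and then $\mathrm{d}r(Z^\dagger)=\mathrm{d}r(Z)$, which is the content specific to Hermitian (root system $C_r$ or $BC_r$) domains.

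For the first equality, start from \eqref{eq:dl_dr}, which gives $\mathrm{d}r(X)f(x)=\mathrm{d}\ell(\Ad(x^{-1})X)f(x)$ for $X\in\gg$. Because the sign convention in \eqref{eq_derivative_of_left_translation} puts $\exp(-tX)$ on the left, $\mathrm{d}\ell(X)$ at the point $x$ is differentiation along $-\Ad(x^{-1})X$ on the right. Iterating this and using that the derived representation extends multiplicatively to $\U(\gg^\C)$, one gets
\[
   \mathrm{d}\ell(\Uvar)f(x)=\mathrm{d}r\bigl(\Ad(x^{-1})\Uvar^\dagger\bigr)f(x)
\]
for any monomial $\Uvar=X_1\cdots X_m$ and hence, by linearity, for all $\Uvar$. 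The order reversal comes from the antihomomorphism property of $\dagger$ together with the fact that left actions compose in reversed order. For $\Zvar$ central, $\Ad(x^{-1})\Zvar^\dagger=\Zvar^\dagger$, since $\dagger$ preserves the center. This gives $\mathrm{d}\ell(\Zvar)f=\mathrm{d}r(\Zvar^\dagger)f$, which the paper already notes after Lemma~\ref{lem:diffops_convol_f}. To make this legitimate for $f\in\Lp^{\infty,\ell}$, I would first prove the identity on $C^\infty(G)$ and then use $\Lp^{\infty,\ell}\subseteq C^\infty(G)$, noting that for smooth vectors the derived representation agrees with the pointwise derivative. The same argument applies to $\cbufL^{\infty,\ell}$ and $\cbufR^{\infty,r}$.

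For the second equality, it suffices to show $\mu(\Zvar^\dagger)=\mu(\Zvar)$ in $\D(G/K)$. This is enough because $f$ is a function on $\bsd$, i.e.\ right $K$-invariant, so $\mathrm{d}r(\Zvar)f=\mu(\Zvar)f$. Apply the injective map $\Gamma$ and use Helgason's factorization $\Gamma=\Upsilon\circ\Gamma_\gh$. Under the Harish-Chandra isomorphism, $\dagger$ corresponds to the substitution $\lambda\mapsto-\lambda$: the principal antiautomorphism acts by $-1$ on $\gg$, and the $\rho$-shift is compatible with this sign change because $-\id$ composed with the Weyl group element $w_0$ handles the shift. Concretely, $\Gamma(\mu(\Zvar^\dagger))(\lambda)=\Gamma(\mu(\Zvar))(-\lambda)$. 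The paper observed that $-\id\in W$ for type $C_r$ or $BC_r$, since $W$ contains all the sign changes $r_i$. Every element of $S(\ga^\C)^W$ is therefore even, and the two images coincide, so $\mu(\Zvar^\dagger)=\mu(\Zvar)$.

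The main obstacle is this last step: verifying rigorously that $\Gamma\circ\mu\circ\dagger$ equals $\Gamma\circ\mu$ composed with $\lambda\mapsto-\lambda$, keeping track of the $\rho$-shift. If the direct computation gets messy, I would instead argue through spherical functions. Since $\mathrm{d}\ell(\Zvar)=\mathrm{d}r(\Zvar^\dagger)$ and the Casimir-type elements act on $\phi_\lambda$ by scalars, use $\phi_\lambda(x^{-1})=\phi_{-\lambda}(x)$ together with the Weyl invariance $\phi_{-\lambda}=\phi_\lambda$, which holds because $-\id\in W$. Both operators then have the same eigenvalue on every $\phi_\lambda$, and an invariant differential operator on $G/K$ is determined by its eigenvalues on the spherical functions.
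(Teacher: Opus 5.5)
Your proposal is correct and takes the same route as the paper's implicit argument. You rewrite $\mathrm{d}\ell(Z)$ as $\mathrm{d}r(Z^\dagger)$ for central $Z$, then use the Harish-Chandra isomorphism and $-\mathrm{id}\in W$ (so all of $S(\ga^\C)^W$ is even) to get $\mu(Z^\dagger)=\mu(Z)$; your identity $\Gamma(\mu(Z^\dagger))(\lambda)=\Gamma(\mu(Z))(-\lambda)$, or the spherical-function check, supplies the step the paper leaves tacit. One citation fix: derive the first step directly from $\exp(-tX)x=x\exp(-t\Ad(x^{-1})X)$ rather than from \eqref{eq:dl_dr}, which as printed has the wrong sign and $\Ad(x^{-1})$ where $\Ad(x)$ belongs; your formula $\mathrm{d}\ell(U)f(x)=\mathrm{d}r(\Ad(x^{-1})U^\dagger)f(x)$ is the correct one.
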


\subsection{Derivatives of operators via representation theory}
Now we consider operators on $\Berg$ for $\nu>\gen-1$.
The representation $x\mapsto \ActopL{x}$ \eqref{eq:translationops} is a representation of $G$ acting on the Banach space $\schop$ for $1\leq p\leq \infty$. This representation is strongly continuous for $1\leq p<\infty$, and for the case $p=\infty$, one has to consider $\cbuopL$ instead of $\bdd$ to obtain strong continuity.
 
 Now we turn to smooth vectors of the above representation.
 Even though the representation $\lambda^\nu$ is not strongly continuous on $\bdd$, we still consider its smooth vectors, denoted $\bdd^\infty$. More generally, given $V\subseteq \bdd$, we write $V^\infty$ for the smooth vectors in $V$ (in the topology of $V$) under the action $\lambda^\nu$.  
 Finally, $\Berg^\infty$ denotes the smooth vectors in $\Berg$ for the representation $\pinu$.

 \begin{lemma}\label{lem:smooth rank one}
     Let $f,g\in \Berg^\infty$. Then the rank one operator $f\otimes \overline{g}$ is in $ \traceop^\infty$.
 \end{lemma}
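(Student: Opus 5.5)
Since $\lambda^\nu_x(f\otimes\overline g)=\pinu(x)f\otimes\overline{\pinu(x)g}$, the plan is to show that this map is smooth from $G$ into $\traceop$ by viewing it as the composition of the two smooth orbit maps $x\mapsto \pinu(x)f$, $x\mapsto\pinu(x)g$ with a continuous bilinear map. One technical point is that $\pinu$ is only a projective representation. I would therefore first pass to the universal cover: by Lemma~\ref{lem:pi_tilde_is_a_projective_rep} we have $\pinu(q(x))=\widetilde\pi_\nu(x)=J_\nu(x,0)\widetilde\sigma_\nu(x)$ with $|J_\nu(x,0)|=1$. Consequently
\[
   \lambda^\nu_{q(x)}(f\otimes\overline g)=\widetilde\sigma_\nu(x)f\otimes\overline{\widetilde\sigma_\nu(x)g},
\]
because the phases $J_\nu(x,0)\overline{J_\nu(x,0)}=1$ cancel. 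The smooth vectors of $\pinu$ are then understood as the smooth vectors of the genuine unitary representation $\widetilde\sigma_\nu$. Since $q$ is a local diffeomorphism, smoothness of $x\mapsto \lambda^\nu_x(f\otimes\overline g)$ on $G$ reduces to smoothness of its composition with $q$ on $\widetilde G$.

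Next I would invoke the standard fact that the map $\Berg\times\Berg\to\traceop$, $(u,v)\mapsto u\otimes\overline v$, is bounded and real-bilinear (conjugate-linear in $v$), with $\|u\otimes\overline v\|_1=\|u\|_\nu\|v\|_\nu$. A continuous bilinear map between Banach spaces is $C^\infty$ in the Fréchet sense, and its derivative is the obvious bilinear expression. By hypothesis the maps $x\mapsto\widetilde\sigma_\nu(x)f$ and $x\mapsto\widetilde\sigma_\nu(x)g$ are smooth into $\Berg$. The chain rule (Leibniz rule) then gives smoothness of $x\mapsto\widetilde\sigma_\nu(x)f\otimes\overline{\widetilde\sigma_\nu(x)g}$ into $\traceop$. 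Hence $f\otimes\overline g\in\traceop^\infty$.

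As a byproduct, and as a check, I would record the formula
\[
   \mathrm d\lambda^\nu(X)(f\otimes\overline g)=\mathrm d\widetilde\sigma_\nu(X)f\otimes\overline g+f\otimes\overline{\mathrm d\widetilde\sigma_\nu(X)g}
\]
for $X\in\gg$. This formula is likely to be useful later, for example in computing derivatives of $\Phi^\nu=1\otimes1$, where $1\in\Berg^\infty$ since it is $K$-finite.

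The main obstacle is only the bookkeeping around the projective phase. One must make sure that the notion of ``smooth vector of $\pinu$'' is consistent: the phase $J_\nu(x,0)$ is smooth in $x$, so smoothness of the orbit does not depend on whether we use $\widetilde\pi_\nu$ or $\widetilde\sigma_\nu$. Beyond that, the argument is a routine application of the product rule.
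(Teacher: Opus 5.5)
Your proposal is correct and is essentially the paper's argument. Both rest on the identity $\lambda^\nu_x(f\otimes\overline g)=\pinu(x)f\otimes\overline{\pinu(x)g}$, the smoothness of the two orbit maps, and the boundedness of the bilinear map $(u,v)\mapsto u\otimes\overline v$ into $\traceop$, for which $\|u\otimes\overline v\|_1=\|u\|_\nu\|v\|_\nu$. Your detour through $\widetilde G$ and $\widetilde\sigma_\nu$ is harmless but not needed. That identity already holds on $G$ with $\pinu$ itself, and $\Berg^\infty$ is defined through the orbit maps $x\mapsto\pinu(x)f$, so the projective phase never has to be removed.
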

 \begin{proof}
    Note that for $x\in G$, 
     $$\actopL{x}{f\otimes \overline{g}}= (\pinu(x)f)\otimes \overline{(\pinu(x)g)}.$$
     Then $x\mapsto \actopL{x}{f\otimes \overline{g}}$ is smooth by bilinearity and because of the norm bound: $\|f\otimes \overline{g}\|_1=\|f\|_{\Berg}\|g\|_{\Berg}$. 
 \end{proof}

 \begin{example}
     Consider the rank one operator $\Phi^\nu=1\otimes\overline{1}$. Then $1\in \Berg^\infty$. This is because $z\mapsto k^\nu_z$ is real analytic. Hence, by Lemma \ref{lem:smooth rank one}, we get that $1\otimes \overline{1} \in \traceop^\infty$.
 \end{example}
 
 Furthermore, we have the following inclusions:
 \begin{align*}
    \text{span}\{\Berg^\infty\otimes \overline{\Berg^\infty}\}\subset \schop^\infty\subset \cbuopL^\infty \subseteq \bdd^\infty.
\end{align*}

\begin{remark}
    {\rm Let $1\leq p<\infty$. Then $\schop^\infty$ and $\cbuopL^\infty$ are dense subsets of $\schop$ and $\cbuopL$ respectively. However, we do not have that $\bdd^\infty$ is a dense subset of $\bdd$. This is in fact untrue by the results in the next section.}
\end{remark}

Next, we investigate how the derived representation $d\lambda^\nu$ of the representation $\lambda^\nu$ interacts with QHA convolutions.

\begin{proposition}\label{prop:diffops_convol}
   Let  $\Uvar\in U(\gg^\C)$ and let $1\leq p,q,r\leq \infty$ be such that $\frac{1}{p}+\frac{1}{q}=1+\frac{1}{r}$. Then 
   \begin{enumerate}
       \item  If $f\in L^p(G/K)^{\infty,\ell}$ and $S\in \schopq{q}$, then we have that $f\ast S\in \schopq{r}^\infty$ and
       \begin{align*}
           \mathrm{d}\lambda^\nu(\Uvar) (f\ast S) = (\mathrm{d}\ell(\Uvar)f)\ast S 
       \end{align*}
   \item  If $S\in \schop^\infty$ and $A\in \schopq{q}^K$, then we have that $S\ast A\in L^r(G/K)^{\infty,\ell}$ and
       \begin{align*}
            \mathrm{d}\ell(\Uvar)(S\ast A)= (\mathrm{d}\lambda^\nu(\Uvar)S)\ast A.
       \end{align*}
   \item  Assume that $\Uvar$ is central. Then if $f\in L^p(G/K)$  and $S\in \schopq{q}^\infty$, we have that $f\ast S\in \domdpi{\lambda^\nu}{\Uvar}\subset \schopq{r}$ and
       \begin{align*}
           \mathrm{d}\lambda^\nu(\Uvar) (f\ast S)= f\ast (\mathrm{d}\lambda^\nu(\Uvar)  S).
       \end{align*}
       \item  If $S\in \schop$ and $A\in (\schopq{q}^\infty)^K$, we have that $S\ast A\in L^r(G/K)^{\infty,r}$ and
       \begin{align*}
           \mathrm{d}r(\Uvar)(S\ast A)= S\ast (\mathrm{d}\lambda^\nu(\Uvar)A).
       \end{align*}
       Additionally, if $\Uvar$ is central, then $S\ast A\in \domdpi{\ell}{\Uvar}\subset L^r(G/K)$ and
       \begin{align*}
           \mathrm{d}\ell(\Uvar)(S\ast A)= S\ast (\mathrm{d}\lambda^\nu(\Uvar)A).
       \end{align*}
   \end{enumerate}
\end{proposition}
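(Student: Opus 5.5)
The plan is to prove everything first for a single $X\in\gg$ and then pass to general $\Uvar\in U(\gg^\C)$ by induction on the length of monomials $X_1\cdots X_m$, using linearity. The engine in every part is one of the translation identities: Lemma~\ref{lem:translation_convol}, namely $\actopL{x}{(\psi\ast S)}=(\actfL{x}{\psi})\ast S$ and $\actfL{x}{(S\ast T)}=(\actopL{x}{S})\ast T$, and Lemma~\ref{lem:rtranslation_convol}, namely $\actfR{x}{(S\ast T)}=S\ast\actopL{x}{T}$. These are combined with the QHA Young inequalities \eqref{eq:QHAYoung} and \eqref{eq:QHAYoung ops}, which make the convolution maps bounded bilinear between the relevant $L^p$ and Schatten spaces.

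For (1), fix $X$ and form the difference quotient
\[\tfrac1t\bigl(\lambda^\nu_{\exp tX}(f\ast S)-f\ast S\bigr)=\bigl(\tfrac1t(\ell_{\exp tX}f-f)\bigr)\ast S.\]
Since $f$ is $\ell$-smooth in $L^p$, the inner quotient converges in $L^p$ to $\mathrm d\ell(X)f$. Young's inequality $\|\psi\ast S\|_r\lesssim\|\psi\|_p\|S\|_q$ then gives convergence in $\schopq{r}$. Smoothness of $x\mapsto\lambda^\nu_x(f\ast S)=(\ell_xf)\ast S$ follows because it is the composition of a smooth map into $L^p$ with a bounded linear map. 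Iterating gives the formula for monomials. Part (2) is identical: use $\ell_x(S\ast A)=(\lambda^\nu_xS)\ast A$ and the bound \eqref{eq:QHAYoung ops}. Here $A$ is radial, so by Theorem~\ref{theo:radial_convol}(1) the function $S\ast A$ lives on $G/K$.

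For (4), use $r_x(S\ast A)=S\ast\lambda^\nu_x(A)$. Smoothness of $A$ under $\lambda^\nu$ together with Young's inequality gives $r$-smoothness and $\mathrm dr(\Uvar)(S\ast A)=S\ast \mathrm d\lambda^\nu(\Uvar)A$. When $\Uvar$ is central, Lemma~\ref{lem:centralZ}-type reasoning applies: a central element gives a bi-invariant operator, so $\mathrm dr(\Uvar)$ and $\mathrm d\ell(\Uvar^\dagger)$ agree on smooth functions, and evenness of $W$-invariant polynomials gives $\mathrm d\ell(\Uvar^\dagger)=\mathrm d\ell(\Uvar)$ on $K$-invariant data. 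To get the $\ell$-domain statement without assuming $\ell$-smoothness a priori, I would argue through the duality \eqref{eq:duality}. One pairs against $\psi\in C_c^\infty$, and the derivatives then transfer onto $\psi$, which is legitimate because such $\psi$ are smooth in both regular representations. The closedness of the generator $\overline{\mathrm d\ell(\Uvar)}$ then places $S\ast A$ in its domain.

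Part (3) is the delicate one, because $f$ is not assumed smooth, and I expect this to be the main obstacle. I would first reduce to the case where $f$ is smooth. Take a Dirac sequence $\psi_\alpha\in C_c^\infty$, or use the Dixmier--Malliavin factorization, to approximate $f$ by $f_\alpha=f\ast\psi_\alpha$; these lie in $L^p$ and are smooth, with $f_\alpha\to f$ in $L^p$ when $p<\infty$. For smooth $f_\alpha$, I would show $\mathrm d\lambda^\nu(\Uvar)(f_\alpha\ast S)=f_\alpha\ast\mathrm d\lambda^\nu(\Uvar)S$ directly, using that $\lambda^\nu(\exp tX)$ applied to $f\ast S=\int f(x)\lambda^\nu_x S\,dx$ can be moved inside the integral as $\lambda^\nu_{x}\lambda^\nu_{x^{-1}\exp(tX)x}S$. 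This produces $\mathrm d\lambda^\nu(\Ad(x^{-1})X)$, and centrality of $\Uvar$ makes it $\Ad$-invariant, so $\mathrm d\lambda^\nu(\Uvar)$ commutes with each $\lambda^\nu_x$ on smooth vectors. That commutation is in fact valid without smoothing $f$. The formula $\mathrm d\lambda^\nu(\Uvar)(f\ast S)=f\ast \mathrm d\lambda^\nu(\Uvar)S$ is then immediate at the level of weak integrals. Membership in the domain follows by closedness of the generator, obtained by passing to the limit in the approximations, with Young's inequality controlling both sides. The case $p=\infty$ would be handled by weak-$*$ or strong-operator limits, using Theorem~\ref{theo:convergence}.
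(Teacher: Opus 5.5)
Your proposal follows the paper's proof. Parts (1), (2) and the first half of (4) come from the translation identities of Lemma~\ref{lem:translation_convol} and Lemma~\ref{lem:rtranslation_convol} combined with Young's inequality; your ``smooth orbit map composed with a bounded convolution map'' argument is, if anything, cleaner than the paper's appeal to \eqref{eq:smoothvectors}, since it also covers $r=\infty$. Part (3) rests on $\mathrm{d}\lambda^\nu(\Uvar)$ commuting with every $\lambda^\nu_x$ when $\Uvar$ is central, and the central case of (4) is Lemma~\ref{lem:centralZ}.

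Your approximation-and-closedness step for the domain claims is extra rigor that the paper skips. However, the mollification must be on the left: take $f_\alpha=\psi_\alpha\ast f=\int_G\psi_\alpha(y)\,\ell_y f\,dy$, or simply approximate $f$ by $C_c$ functions. As written, $f\ast\psi_\alpha$ is only right-smooth, so it would not make $f_\alpha\ast S$ a smooth vector via part (1).
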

\begin{proof}
    To prove (1), note that for $X\in \gg$
    \begin{align*}
         \mathrm{d}\lambda^\nu(X)(f\ast S)&= \left.\ddt\right|_{t=0} \actopL{\exp(tX)}{f\ast S}\\
         &= \left.\ddt\right|_{t=0} ( \actfL{\exp(tX)}{f})\ast S\quad \text{(by Lemma \ref{lem:translation_convol})}\\
         &= \bigg(\left.\ddt\right|_{t=0} \actfL{\exp(tX)}{f}\bigg)\ast S\\
         &= ( \mathrm{d}\ell(X){f})\ast S,
    \end{align*}
    where the third equality follows from the continuity of the convolution imposed by the Young's inequality and the fact that $\mathrm{d}\ell(X)f\in \Lp$.
    
    The above identity naturally extends to products of elements in $\gg$ and we obtain
    $$\mathrm{d}\lambda^\nu(X_1\cdots X_m)(f\ast S)=( \mathrm{d}\ell(X_1\cdots X_m){f})\ast S.$$
    Then by linearity the identity holds for all $\Uvar\in U(\gg^\C)$ and $f\ast S\in \schopq{r}^\infty$ by \eqref{eq:smoothvectors}.

    The proof of (2) is similar and follows from the fact $\actfL{x}{(S\ast A)}=(\actopL{x}{S}\ast A)$. Also, we note that $S\ast A$ is a function on $\bsd$ as $A$ is $K$-invariant.

    To prove (3), note that $ \mathrm{d}\lambda^\nu(\Uvar)$ is an intertwining operator for the representation $\lambda^\nu$, as $\Uvar\in Z(U(\gg^\C))$. Therefore,
    \begin{align*}
       \mathrm{d}\lambda^\nu(\Uvar)(f\ast S)&= \int_G f(y)\mathrm{d}\lambda^\nu(\Uvar)\actopL{y}{S} \haar{y}\\
        &= \int_G f(y)\actopL{y}{\mathrm{d}\lambda^\nu(\Uvar)S} \haar{y}\\
        &=f\ast (\mathrm{d}\lambda^\nu(\Uvar)S).
    \end{align*}
   
    To prove (4), note that for $X\in \gg$
    \begin{align*}
        \mathrm{d} r(X)(S\ast A) &=\left.\ddt\right|_{t=0} \actfR{\exp(tX)}{(S\ast A)}\\
        &= \left.\ddt\right|_{t=0} (S\ast \actopL{\exp(tX)}{A})\quad \text{(by Lemma \ref{lem:rtranslation_convol})}\\
        &= S\ast \mathrm{d}\lambda^\nu(X)A.
    \end{align*}
    Then $S\ast A\in L^r(G/K)^{\infty,r}$ and $\mathrm{d} r(\Uvar)(S\ast A)=S\ast \mathrm{d}\lambda^\nu(\Uvar)A$ by the same argument as in (1).
    When $\Uvar$ is central the second statement follows by Lemma \ref{lem:centralZ}.    
\end{proof} 

As a consequence, we note that Berezin transform of an operator is also smooth:
\begin{corollary}
    Let $1\leq p\leq \infty$ and let $S\in \schop$. Then $B^\nu(S)\in L^p(G/K)^{\infty,r}$.
\end{corollary}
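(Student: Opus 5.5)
The plan is to write the Berezin transform as a QHA convolution, $B^\nu(S)=S\ast\Phi^\nu$, and then apply part (4) of Proposition~\ref{prop:diffops_convol} with $A=\Phi^\nu$. That part needs $A$ to be a radial smooth vector in some Schatten class.

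First, $\Phi^\nu=1\otimes\overline{1}$ is radial. Indeed, for $k\in K$ we have $\pinu(k)1=k^\nu_{0}\,\actfL{k}{1}=1$, so $\actopL{k}{\Phi^\nu}=\Phi^\nu$.

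Second, by the Example following Lemma~\ref{lem:smooth rank one}, $1\in\Berg^\infty$, and hence $\Phi^\nu\in\traceop^\infty$. Since the inclusion $\traceop\hookrightarrow\schopq{q}$ is continuous for every $q\ge1$ (it is norm-decreasing), smoothness of $x\mapsto\actopL{x}{\Phi^\nu}$ into $\traceop$ gives smoothness into every $\schopq{q}$. So $\Phi^\nu\in(\schopq{q}^\infty)^K$ for all $1\le q\le\infty$.

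Now choose exponents in Proposition~\ref{prop:diffops_convol}(4). Take $S\in\schop$ and $A=\Phi^\nu\in\schopq{1}$, so $q=1$. The relation $\frac1p+\frac1q=1+\frac1r$ then gives $r=p$. Part (4) yields $S\ast\Phi^\nu\in L^p(G/K)^{\infty,r}$, and for every $\Uvar\in U(\gg^\C)$
\[
\mathrm{d}r(\Uvar)B^\nu(S)=S\ast\bigl(\mathrm{d}\lambda^\nu(\Uvar)\Phi^\nu\bigr).
\]
Each right-hand side lies in $L^p$ by the QHA Young inequality \eqref{eq:QHAYoung ops}, since $\mathrm{d}\lambda^\nu(\Uvar)\Phi^\nu\in\traceop$. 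By \eqref{eq:smoothvectors}, this is exactly what it means for $B^\nu(S)$ to be a smooth vector of the right regular representation.

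The main obstacle is the endpoint $p=\infty$. There the right regular representation is not strongly continuous on $L^\infty$, so one must make sure that smoothness is understood in the $L^\infty$-norm topology. I would handle this directly. By Lemma~\ref{lem:rtranslation_convol}, $\actfR{x}{(S\ast\Phi^\nu)}=S\ast\actopL{x}{\Phi^\nu}$. The map $x\mapsto\actopL{x}{\Phi^\nu}$ is smooth into $\traceop$, and $T\mapsto S\ast T$ is a bounded linear map $\traceop\to L^\infty$ with norm at most $\|S\|$. Composing, $x\mapsto\actfR{x}{B^\nu(S)}$ is smooth into $L^\infty$, and in fact into $\cbufR$ by Lemma~\ref{lem:convolution_in_cbuR}. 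The same composition argument also covers $1\le p<\infty$ uniformly, using the bound $\|S\ast T\|_p\le c\,\|S\|_p\|T\|_1$ from \eqref{eq:QHAYoung ops}, so I would present it as the main argument and treat Proposition~\ref{prop:diffops_convol} as the source of the derivative formula.
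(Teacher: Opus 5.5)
Your proposal is correct and follows the paper's route. The paper states the corollary as an immediate consequence of Proposition~\ref{prop:diffops_convol}(4), applied to $B^\nu(S)=S\ast\Phi^\nu$ with $A=\Phi^\nu$ radial and smooth in $\traceop$ (so $q=1$, $r=p$).

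Your direct composition argument is a good way to make the endpoint $p=\infty$ rigorous. It composes the smooth orbit $x\mapsto\actopL{x}{\Phi^\nu}$ in $\traceop$ with the bounded linear map $T\mapsto S\ast T$, and so avoids \eqref{eq:smoothvectors}, which presupposes strong continuity. One small correction: the translates $\actfR{x}{B^\nu(S)}$ are not right-$K$-invariant, so the orbit lies in $\cbuGR$ rather than $\cbufR$.
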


Since the representation $\lambda^\nu$ behaves much like a left translation of operators, and because we do not have a notion of right translation of operators at our disposal, the same cannot be said about Toeplitz operators. However, we have the following:

\begin{corollary}
    Let $1\leq p\leq \infty$ and let $a\in \Lp^K$. Then $T_a\in \schop^\infty$.
\end{corollary}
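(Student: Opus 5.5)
The plan is to write the Toeplitz operator as a QHA convolution, $T^\nu_a = c_\nu\,(a\ast\Phi^\nu)$ (Lemma~\ref{lem:Toeplitz}), and then exploit the radiality of $a$ to move the group action from the symbol onto the operator $\Phi^\nu$, which we already know is smooth. For $x\in G$, Lemma~\ref{lem:translation_convol} gives
\[
\actopL{x}{T^\nu_a}=c_\nu(\actfL{x}{a})\ast\Phi^\nu,
\]
so smoothness of $x\mapsto\actopL{x}{T^\nu_a}$ in $\schop$ is equivalent to smoothness of $x\mapsto (\actfL{x}{a})\ast\Phi^\nu$. For a general $a\in\Lp$ this fails, which is why the radiality hypothesis is needed.

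The key step uses the fact that $a$ is $K$-invariant on the left and right, viewed as a function on $G$. By Lemma~\ref{lem:radial_functions}, $\tilde a(x)=\tilde a(x^{-1})$. Since $G$ is unimodular, the substitution $y\mapsto y^{-1}$ turns the convolution integral into
\[
a\ast\Phi^\nu=\int_G a(y)\,\actopL{y}{\Phi^\nu}\haar{y}=\int_G a(y)\,\actopL{y^{-1}}{\Phi^\nu}\haar{y}.
\]
Then
\[
\actopL{x}{a\ast\Phi^\nu}=\int_G a(y)\,\actopL{xy^{-1}}{\Phi^\nu}\haar{y}.
\]
Substituting $y=zx$ gives
\[
\actopL{x}{a\ast\Phi^\nu}=\int_G a(zx)\,\actopL{z^{-1}}{\Phi^\nu}\haar{z}.
\]
Rather than handling this expression directly, it is cleaner to regard $T^\nu_a$ as $\Phi^\nu$ acted on by a radial function from the other side. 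Theorem~\ref{theo:radial_convol}(2) implies that for radial trace-class operators the convolution commutes, and the same symmetry idea gives $a\ast\Phi^\nu=\int_G a(y)\,\actopL{y}{\Phi^\nu}\haar{y}$, where the integrand's dependence on $x$ can be placed on $\Phi^\nu$. Concretely, $(\actfL{x}{a})\ast\Phi^\nu=a\ast\actopL{x}{\Phi^\nu}$ holds after averaging over $K$: both $a$ and $\Phi^\nu$ are $K$-invariant, and we use $\tilde a(x^{-1}y)=\tilde a(y^{-1}x)$ together with the invariance of Haar measure. I will verify this identity by pairing against $B\in\schopq{p'}$ and using duality \eqref{eq:duality}, reducing it to an identity between radial functions, where commutativity of $\Lone^K$ applies.

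Once $\actopL{x}{T^\nu_a}=c_\nu\, a\ast\actopL{x}{\Phi^\nu}$ is established (modulo the precise form of the identity), smoothness follows from QHA Young's inequality \eqref{eq:QHAYoung} with the trace-class endpoint: $\|a\ast S\|_p\le\|a\|_p\|S\|_1$ up to a constant. This makes $S\mapsto a\ast S$ a bounded linear map $\traceop\to\schop$. The map $x\mapsto\actopL{x}{\Phi^\nu}$ is smooth into $\traceop$ by Lemma~\ref{lem:smooth rank one} and the Example following it. Composing a smooth map with a bounded linear map gives a smooth map, so $T^\nu_a\in\schop^\infty$. This argument covers $p=\infty$ as well, since only norm bounds are used.

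The main obstacle is the identity that transfers the translation from $a$ to $\Phi^\nu$. Left translation of the symbol naturally corresponds to right translation, and the paper has no right translation of operators. The argument must therefore rely on $K$-bi-invariance, using the inversion symmetry $\tilde a(x^{-1})=\tilde a(x)$ and the commutativity of the Gelfand pair. I would first try to prove the identity at the level of Berezin transforms, using injectivity of $B^\nu$ and Theorem~\ref{theo:radial_convol}, and only then upgrade it to operators.
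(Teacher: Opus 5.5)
The step that fails is the identity $\actopL{x}{T^\nu_a}=c_\nu\,a\ast\actopL{x}{\Phi^\nu}$, equivalently $(\actfL{x}{a})\ast\Phi^\nu=a\ast\actopL{x}{\Phi^\nu}$. It is not merely unproved: it is false for every nonconstant radial $a$. From the definition and the right invariance of Haar measure,
\[
a\ast\actopL{x}{\Phi^\nu}=\int_G a(y)\,\lambda^\nu_{yx}(\Phi^\nu)\haar{y}=\int_G a(zx^{-1})\,\actopL{z}{\Phi^\nu}\haar{z}.
\]
So pushing the translation onto $\Phi^\nu$ right-translates the symbol ($z\mapsto a(zx^{-1})$) instead of left-translating it. The inversion symmetry $\tilde a(x)=\tilde a(x^{-1})$ does not turn one into the other.

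A cleaner way to see that no rearrangement can help is this. Since $a$ is left $K$-invariant, Lemma~\ref{lem:convolution_rad}(2) makes $a\ast S$ radial for every $S$, so your right-hand side $a\ast\actopL{x}{\Phi^\nu}$ is radial for all $x\in G$. The left-hand side is $T^\nu_{\actfL{x}{a}}$. By Lemma~\ref{lem:translation_convol} and $B^\nu(T^\nu_a)=B_\nu(a)$, its Berezin transform is $\actfL{x}{B_\nu(a)}$. If that operator were radial for all $x$, then $B_\nu(a)$ would be invariant under the stabilizer of every point of $\bsd$. In particular it would be invariant under each $\tau_z$, giving $B_\nu(a)(z)=B_\nu(a)(0)$, and $a$ would be constant by injectivity of the Berezin transform.

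Your proposed verification breaks at the same place. Commutativity of $\Lone^K$ and Theorem~\ref{theo:radial_convol} need all factors to be radial, but $\actopL{x}{\Phi^\nu}$ is not, and neither is $\actfL{x}{\varphi_\nu}$ after taking Berezin transforms. Pairing against a general $B\in\schopq{p'}$ does not produce radial functions either. Radiality of $a$ only makes $S\mapsto a\ast S$ commute with $\ActopL{k}$ for $k\in K$, not with every $\ActopL{x}$. So the obstruction you flagged yourself, the lack of a right translation of operators, is still there.

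The missing idea is to use something that commutes with all of $G$, not just with $K$. The paper takes the Casimir element $\Omega_G\in Z(U(\gg^\C))$. Since $\mathrm{d}\lambda^\nu(\Omega_G)$ intertwines every $\ActopL{x}$, Proposition~\ref{prop:diffops_convol}(3) gives
\[
\mathrm{d}\lambda^\nu(\Omega_G)^m T^\nu_a=c_\nu\,a\ast\bigl(\mathrm{d}\lambda^\nu(\Omega_G)^m\Phi^\nu\bigr)\in\schop
\]
for all $m\in\N$. This uses the smoothness of $\Phi^\nu$ and the QHA Young inequality, which is where your bounded-linear-map observation genuinely applies. That only controls the central direction; radiality of $T^\nu_a$ is what controls the rest. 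The operator $\mathrm{d}\lambda^\nu(\Omega_K)$ annihilates the $K$-invariant $T^\nu_a$, so the elliptic element $\Delta=\Omega_G+2\Omega_K$ acts on it as $\Omega_G$. The Goodman--Nelson theorem (Theorem~\ref{theo:Nelson}, as packaged in Proposition~\ref{prop:Nelsons_radial}) then gives $T^\nu_a\in\schop^\infty$. For $p=\infty$ this should be run in $\cbuopL$, where $\ActopL{x}$ is strongly continuous; $T^\nu_a$ lies there by Theorem~\ref{theo: main theorem}.
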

\begin{proof}
    Recall the Casimir element $\Omega_G\in Z(U(\gg^\C))$. Then by the above Proposition, we obtain $\Omega_GT_a=\Omega_G c_\nu (a\ast \Phi^\nu)=c_\nu (a\ast \Omega_G\Phi^\nu)$. In particular, as $\Phi^\nu\in \traceop^\infty$, we obtain that $T_a\in \domdpi{\lambda^\nu}{\Omega_G^m}$ for all $m\in\N$. Also, $T_a$ is a radial operator. Therefore, by Proposition \ref{prop:Nelsons_radial}, the result follows.
\end{proof}

\subsection{Invariant differential operators acting on operators}
Recall that given $D\in \D(G/K)$, there exists $\Uvar\in U(\gg^\C)^K$ s.t. $D=\mathrm{d}r(Z)$. However, this correspondence does not generally hold for the left-regular representation $\ell$, unless the domain $\bsd$ is a classical domain (see Theorem \ref{theo:center and difops}).
Given $Z\in U(\gg^\C)$ and a differential operator $D\in \D(G/K)$ such that $D f = \mathrm{d}\ell(Z)f$, we can define
$$DS=\mathrm{d}\lambda^\nu(Z)S, \quad S\in \bdd^\infty.$$
As $\bdd^\infty$ is the common domain for all $D\in \D(G/K)$, we could consider a larger domain $\mathrm{Dom}_{\mathrm{op}}(D)$, for a fixed differential operator $D$, as we have done below. This is indeed the approach of Su\'arez in \cite{S04}.

\begin{definition}
    For $1\leq p\leq \infty$ and $D\in \D(G/K)$, define $\mathrm{Dom}_{\mathrm{op}}^p(D)\subseteq \schop$ to be
    $$\mathrm{Dom}_{\mathrm{op}}^p(D)=\{S\in \schop\mid D(S\ast \Phi^\nu)=T\ast \Phi^\nu \text{ for some } T\in \schop.\}$$
    Then for $S\in \mathrm{Dom}_{\mathrm{op}}^p(D)$, we define
    $$DS:=T.$$
\end{definition}

Due to the injectivity of the Berezin transform (i.e. the regularity of the operator $\Phi^\nu$), the above definition is well-defined.

Note that for $p<q$, we have $\mathrm{Dom}_{\mathrm{op}}^p(D)\subseteq \mathrm{Dom}_{\mathrm{op}}^q(D)\subseteq  \mathrm{Dom}_{\mathrm{op}}^\infty(D)=: \mathrm{Dom}_{\mathrm{op}}(D)$. 

\begin{proposition}\label{prop:diffops_convol_dom}
    Let $D\in \D(G/K)$ and let  $1\leq p,q,r\leq \infty$ be such that $\frac{1}{p}+\frac{1}{q}=1+\frac{1}{r}$. Then
      \begin{enumerate}
                 \item  If $f\in L^p(G/K)$  and $S\in \mathrm{Dom}_{\mathrm{op}}^{q}(D)$, then we have that $f\ast S\in \mathrm{Dom}_{\mathrm{op}}^{r}(D)$ and
       \begin{align*}
           D (f\ast S)= f\ast (D S).
       \end{align*}
       \item  Assume that there exists $\Uvar\in U(\gg^\C)$ s.t. $D=\mathrm{d}\ell(Z)$. Then for $f\in L^p(G/K)^{\infty,\ell}$ and $S\in \schopq{q}$, we have that $f\ast S\in \mathrm{Dom}_{\mathrm{op}}^r(D)$ and
       \begin{align*}
          D(f\ast S) = (D f) \ast S.
       \end{align*}
       \item  If $S\in \schop$ and $A\in \mathrm{Dom}_{\mathrm{op}}^q(D)^K$, then we have that 
       \begin{align*}
           D(S\ast A)= S\ast (DA).
       \end{align*}
       \item Assume that there exists $\Uvar\in U(\gg^\C)$ s.t. $D=\mathrm{d}\ell(Z)$. Then for $S\in \mathrm{Dom}_{\mathrm{op}}^p(D)$ and $A\in \schopq{q}^K$, we have that 
       \begin{align*}
           D(S\ast A)= (DS)\ast A.
       \end{align*}
      \end{enumerate}
\end{proposition}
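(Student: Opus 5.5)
We prove each of the four statements by reducing it to an identity between Berezin transforms, i.e. convolutions with $\Phi^\nu$, and then invoking the associativity and radial commutativity of QHA convolutions (Theorem \ref{theo:radial_convol}). The idea is that $D$ acting on operators is defined through $S\ast\Phi^\nu$. We therefore always compute $D((\cdot)\ast\Phi^\nu)$, exhibit it in the form $T\ast\Phi^\nu$, and then read off $T$. Throughout we use that an invariant $D\in\D(G/K)$ commutes with left translations, so $D(g\ast\psi)=g\ast(D\psi)$ whenever $\psi$ is a sufficiently regular function on $\bsd$. This is Lemma \ref{lem:diffops_convol_f}(2), via $D=\mathrm{d}r(Z)$ for some $Z\in U(\gg^\C)^K$, applied with $g$ an $L^p$ function or a convolution-type object.

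For (1), let $S\in\mathrm{Dom}^q_{\mathrm{op}}(D)$ with $D(S\ast\Phi^\nu)=(DS)\ast\Phi^\nu$. By associativity, $(f\ast S)\ast\Phi^\nu=f\ast(S\ast\Phi^\nu)$, a function convolution. Since $D$ is left invariant, $D(f\ast(S\ast\Phi^\nu))=f\ast D(S\ast\Phi^\nu)$. The latter equals $f\ast((DS)\ast\Phi^\nu)=(f\ast DS)\ast\Phi^\nu$. Young's inequality \eqref{eq:QHAYoung} places $f\ast DS$ in $\schopq{r}$. Hence $f\ast S\in\mathrm{Dom}^r_{\mathrm{op}}(D)$ with $D(f\ast S)=f\ast DS$.

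For (2), we write $D=\mathrm{d}\ell(Z)$ and use $(f\ast S)\ast\Phi^\nu=f\ast(S\ast\Phi^\nu)$ together with Lemma \ref{lem:diffops_convol_f}(1). This gives $\mathrm{d}\ell(Z)(f\ast g)=(\mathrm{d}\ell(Z)f)\ast g$ for $g=S\ast\Phi^\nu$. Associativity then yields $((Df)\ast S)\ast\Phi^\nu$. Here we must check that $g$ has enough regularity, or else apply the derivative only to the first factor through difference quotients. Proposition \ref{prop:diffops_convol}(1) and (2) give exactly this kind of argument, with translation continuity in $L^p$.

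For (3), we use Theorem \ref{theo:radial_convol}(3): since $A$ and $\Phi^\nu$ are radial, $(S\ast A)\ast\Phi^\nu=(S\ast\Phi^\nu)\ast A$. This is a function of the form $S\ast A$ again, to which $D$ must be applied. We instead rewrite through duality. Pair $D(S\ast A)$ against test functions $\psi\in C_c^\infty$ and use the formal adjoint $D^\ast$, which is again invariant. This reduces the claim to $D^\ast(\psi\ast\Phi^\nu)$-type identities. Alternatively, and more simply, $S\ast A=(A\ast S)(\cdot^{-1})$ is a radial-type convolution; applying Theorem \ref{theo:radial_convol}(3) with $\Phi^\nu$ as the second radial operator gives $(S\ast A)\ast\Phi^\nu\ast$-type commutations. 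We then conclude by the injectivity of the Berezin transform: $((S\ast A)\ast\Phi^\nu)$ is the Berezin transform of $T_{S\ast A}/c_\nu$. Differentiating via Lemma \ref{lem:diffops_convol_f} puts $D$ on $A\ast\Phi^\nu=(DA)\ast\Phi^\nu$, and reversing the commutation gives $S\ast DA$. Part (4) is the mirror image: $D=\mathrm{d}\ell(Z)$ moves onto $S\ast\Phi^\nu$, and then onto $S$.

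The main obstacle is (3) and (4). There $D$ acts on a function $S\ast A$ that is not a priori of the form $(\cdot)\ast\Phi^\nu$, so the domain definition cannot be used directly. The justification of differentiating under the convolution and trace requires approximating by $\varphi_\alpha\ast(\cdot)$ and passing to limits using Theorem \ref{theo:convergence}. The first thing to try is to test both sides against $\psi\ast\Phi^\nu$ by \eqref{eq:duality}, so that every derivative lands on the smooth radial pieces.
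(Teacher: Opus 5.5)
Your parts (1) and (2) follow the paper's argument: push $\Phi^\nu$ inside by associativity, move $D$ across the function convolution via Lemma \ref{lem:diffops_convol_f}, apply the definition of $\mathrm{Dom}_{\mathrm{op}}$, and conclude by injectivity. Your Young's-inequality check that $f\ast DS\in\schopq{r}$ is a useful addition.

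Parts (3) and (4), however, are not proved. You list several strategies (duality against a formal adjoint, approximation by $\varphi_\alpha\ast(\cdot)$, a ``$(S\ast A)\ast\Phi^\nu\ast$-type commutation'') but carry none of them through. The one closest to working misidentifies the objects involved. $(S\ast\Phi^\nu)\ast A$ is an operator, not ``a function of the form $S\ast A$''. And $(S\ast A)\ast\Phi^\nu=\frac{1}{c_\nu}T^\nu_{S\ast A}$ is the operator itself, not its Berezin transform. So the step ``differentiating via Lemma \ref{lem:diffops_convol_f} puts $D$ on $A\ast\Phi^\nu$'' has nothing to act on: that lemma needs a convolution of two \emph{functions}, and you never produce one.

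The missing idea is to apply the function Berezin transform $g\mapsto g\ast\varphi_\nu$ to $g=S\ast A$ and use $\varphi_\nu=c_\nu\,\Phi^\nu\ast\Phi^\nu$ (Lemma \ref{lem:convolution of Phinu}). By associativity and Theorem \ref{theo:radial_convol}(3),
\[
(S\ast A)\ast\varphi_\nu=c_\nu\big((S\ast A)\ast\Phi^\nu\big)\ast\Phi^\nu=c_\nu\big((S\ast\Phi^\nu)\ast A\big)\ast\Phi^\nu=c_\nu\,(S\ast\Phi^\nu)\ast(A\ast\Phi^\nu).
\]
This is a genuine function--function convolution, with radial second factor $A\ast\Phi^\nu$.
\begin{itemize}
\item For (3), $D$ passes onto the second factor by left invariance, and the domain definition gives $D(A\ast\Phi^\nu)=(DA)\ast\Phi^\nu$.
\item For (4), $D=\mathrm{d}\ell(Z)$ passes onto the first factor by Lemma \ref{lem:diffops_convol_f}(1), and the domain definition gives $D(S\ast\Phi^\nu)=(DS)\ast\Phi^\nu$.
\end{itemize}
Reading the same chain backwards, with $DA$ (resp.\ $DS$) in place of $A$ (resp.\ $S$), yields $(S\ast DA)\ast\varphi_\nu$, resp.\ $((DS)\ast A)\ast\varphi_\nu$.

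You then conclude using two facts. First, the map $g\mapsto g\ast\varphi_\nu=B^\nu(T^\nu_g)$ is injective, as a composition of two injective maps. Second, $D$ commutes with $g\mapsto g\ast\varphi_\nu$, since $\varphi_\nu$ is radial. This is exactly how the paper gets around the obstacle you identified in your last paragraph; no duality argument or limit over $\varphi_\alpha$ is needed.
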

\begin{proof}

    To prove (1), 
    \begin{align*}
        D((f\ast S)\ast \Phi^\nu)&=D(f\ast (S\ast \Phi^\nu)) \quad \text{(by associativity)}\\
        &= f\ast D(S\ast \Phi^\nu) \quad \text{(by Lemma \ref{lem:diffops_convol_f})}\\
        &= f\ast (DS\ast \Phi^\nu) \quad \text{(by definition)}\\
        &= (f\ast DS)\ast \Phi^\nu
    \end{align*}
    Hence by the injectivity of the Toeplitz quantization, $D(f\ast S)=f\ast DS$, proving (1).

    To prove (2), note that 
    \begin{align*}
        D((f\ast S)\ast \Phi^\nu)&= \mathrm{d}\ell(Z)(f\ast (S\ast \Phi^\nu))\quad \text{(by associativity)}\\
        &=(\mathrm{d}\ell(Z)f)\ast (S\ast \Phi^\nu)\quad \text{(by Lemma \ref{lem:diffops_convol_f})}\\
        &=(Df)\ast (S\ast \Phi^\nu)\\
        &= ((Df)\ast S)\ast \Phi^\nu \quad \text{(by associativity)}.
    \end{align*}
    Hence $D(f\ast S)=(Df)\ast S$, as required.

    To prove (3), note that:
    \begin{align*}
        D(S\ast A) \ast \varphi_\nu 
        &= c_\nu ((S\ast A)\ast \Phi^\nu\ast \Phi^\nu) \quad \text{(by Lemma \ref{lem:convolution of Phinu})}\\
        &= c_\nu D((S\ast \Phi^\nu)\ast (A\ast \Phi^\nu))\quad \text{(as $A$ is radial by Theorem \ref{theo:radial_convol})}\\
        &= c_\nu (S\ast \Phi^\nu)\ast D(A\ast \Phi^\nu)\quad \text{(by Lemma \ref{lem:diffops_convol_f})}\\
        &= c_\nu (S\ast \Phi^\nu)\ast (DA\ast \Phi^\nu)\quad \text{(by definition)}\\
        &= (S\ast DA)\ast \varphi_\nu.
    \end{align*}  
    Now since $B_\nu(f)=f \ast \varphi_\nu$ (Lemma \ref{lem:function Berezin transform convol}), and by the injectivity of the Berezin transform, we obtain that $D(S\ast A) =S\ast DA$ as required.
    
    To prove (4), note that
    \begin{align*}
        D(S\ast A) \ast \varphi_\nu \qquad \qquad 
        &=c_\nu D ((S\ast A)\ast (\Phi^\nu\ast \Phi^\nu)) \quad \text{(by Lemma \ref{lem:convolution of Phinu})}\\
        &=c_\nu \mathrm{d}\ell(Z) ((S\ast \Phi^\nu)\ast (A\ast \Phi^\nu)) \quad \text{(as $A$ is radial)}\\
        &= c_\nu (\mathrm{d}\ell(Z)(S\ast \Phi^\nu))\ast (A\ast \Phi^\nu)\quad \text{(by Lemma \ref{lem:diffops_convol_f})}\\
        &=c_\nu (D(S\ast \Phi^\nu))\ast (A\ast \Phi^\nu)\\
        &= c_\nu (DS\ast \Phi^\nu)\ast (A\ast \Phi^\nu) \quad \text{(by definition)}.
    \end{align*}
    Then by the injectivity of the function Berezin transform, we obtain,
    $ D(S\ast A)=DS\ast A.$
\end{proof}

As we note below, $D$ acting on $\mathrm{Dom}_{\mathrm{op}}(D)$ coincides with $\mathrm{d}\lambda^\nu(Z)$ on $\bdd^\infty$.

\begin{proposition}\label{prop:closed_extension}
    Let $D\in \D(G/K)$ be such that $D = \mathrm{d}\ell(Z)$ for some $Z\in Z(U(\gg^\C))$. Then $D$ acting on $\mathrm{Dom}_{\mathrm{op}}(D)$ is a closed extension of $\mathrm{d}\lambda^\nu(Z)$.
\end{proposition}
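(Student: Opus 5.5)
The proof has two parts: $D$ extends $\mathrm{d}\lambda^\nu(Z)$, and $D$ is closed. Throughout, $D$ acts on $\mathrm{Dom}_{\mathrm{op}}(D)=\mathrm{Dom}^\infty_{\mathrm{op}}(D)$, and the topology is the operator norm on $\bdd$.

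\emph{Extension.} Let $S\in\bdd^\infty$. Since $\Phi^\nu$ is radial and trace class, Proposition~\ref{prop:diffops_convol}(2) with $A=\Phi^\nu$ (the case $p=\infty$, $q=1$, $r=\infty$) gives
\[
   \mathrm{d}\ell(Z)(S\ast\Phi^\nu)=(\mathrm{d}\lambda^\nu(Z)S)\ast\Phi^\nu .
\]
The left-hand side equals $D(S\ast\Phi^\nu)=D(B^\nu(S))$ because $D=\mathrm{d}\ell(Z)$. The operator $T:=\mathrm{d}\lambda^\nu(Z)S$ is bounded, since $S$ is a smooth vector in $\bdd$. Hence $S\in\mathrm{Dom}_{\mathrm{op}}(D)$ and $DS=T=\mathrm{d}\lambda^\nu(Z)S$. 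This also shows that $D$ is well defined on this subspace: the definition is unambiguous by injectivity of the Berezin transform.

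\emph{Closedness.} Take $S_k\in\mathrm{Dom}_{\mathrm{op}}(D)$ with $S_k\to S$ and $DS_k\to T$ in operator norm. We must show $S\in\mathrm{Dom}_{\mathrm{op}}(D)$ with $DS=T$, that is, $D(S\ast\Phi^\nu)=T\ast\Phi^\nu$. By QHA Young's inequality \eqref{eq:QHAYoung ops}, $B^\nu(S_k)\to B^\nu(S)$ and $B^\nu(DS_k)\to B^\nu(T)$ uniformly on $\bsd$. The identity $D(B^\nu(S_k))=B^\nu(DS_k)$ holds only classically, so we must show that $D$, as a differential operator, is closed with respect to uniform convergence. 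We do this weakly. Let $\psi\in C^\infty_c(\bsd)$, and let $D^t$ be the formal transpose of $D$ with respect to $\mu_0$; it is again an invariant differential operator. Integration by parts, valid because $B^\nu(S_k)$ is smooth, gives
\[
   \int B^\nu(DS_k)\,\psi\,d\mu_0=\int D(B^\nu(S_k))\,\psi\,d\mu_0=\int B^\nu(S_k)\,D^t\psi\,d\mu_0 .
\]
Letting $k\to\infty$ shows that $D(B^\nu(S))=B^\nu(T)$ in the sense of distributions.

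The remaining step is to upgrade this distributional identity to the pointwise one. By the Corollary following Proposition~\ref{prop:diffops_convol}, $B^\nu(S)$ is smooth: $B^\nu(S)=S\ast\Phi^\nu$ with $\Phi^\nu\in\traceop^\infty$ radial, so Proposition~\ref{prop:diffops_convol}(4) applies. Therefore $D(B^\nu(S))$ is a classical derivative, and it agrees with the continuous function $B^\nu(T)$ everywhere. This gives $S\in\mathrm{Dom}_{\mathrm{op}}(D)$ and $DS=T$.

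I expect the main obstacle to be justifying the integration by parts. Concretely, one needs $D^t\in\D(G/K)$, which follows because $\mu_0$ is $G$-invariant, so transposes of invariant operators are invariant. One also needs the smoothness of Berezin transforms of arbitrary bounded operators, which I take from the Corollary. An alternative avoids distributions entirely: convolve with a radial $\psi\in C_c^\infty$ and use Lemma~\ref{lem:diffops_convol_f} to move $D$ onto $\psi$, so that
\[
   D(B^\nu(S_k)\ast\psi)=B^\nu(S_k)\ast D\psi\to B^\nu(S)\ast D\psi ,
\]
and then let $\psi$ run through a Dirac sequence. If the transpose argument runs into difficulties, I would use this route.
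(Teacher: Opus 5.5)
Your proof is correct. The extension part is essentially the paper's: apply Proposition~\ref{prop:diffops_convol}(2) with $A=\Phi^\nu$, then use injectivity of $B^\nu$. The closedness argument, however, takes a genuinely different route.

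\emph{The paper's route.} It uses the centrality of $Z$, through Proposition~\ref{prop:diffops_convol}(4) and Lemma~\ref{lem:centralZ}, to rewrite $D(S\ast\Phi^\nu)=S\ast(\mathrm{d}\lambda^\nu(Z)\Phi^\nu)$ for every $S\in\bdd$. Since $\mathrm{d}\lambda^\nu(Z)\Phi^\nu$ is a fixed trace-class operator, $S\mapsto D(B^\nu(S))$ is a bounded map $\bdd\to\bddf$. Hence $D(B^\nu(S_k))\to D(B^\nu(S))$ uniformly, and closedness follows from uniqueness of limits.

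\emph{Your route.} You pass the limit through $D$ in the sense of distributions. You then use smoothness of $B^\nu(S)$ to recover the classical identity.

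\emph{Comparison.} Your argument is more elementary and more general. It uses neither centrality nor invariance of $D$. Any differential operator with smooth coefficients has a formal transpose with respect to the smooth positive density $\mu_0$, so the invariance of $D^t$ you flag as an obstacle is not actually needed. Consequently, the closedness part holds without $Z$ being central, contrary to the remark following the proposition. The paper's route, in exchange, is shorter within the QHA framework and gives the quantitative bound $\|D(B^\nu(S))\|_\infty\le\|\mathrm{d}\lambda^\nu(Z)\Phi^\nu\|_1\,\|S\|$, which your argument does not provide.

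Your fallback via mollification is closer in spirit to the paper. In both, $D$ is moved onto a fixed smooth object (a test function for you, $\Phi^\nu$ for the paper), so that the dependence on $S_k$ becomes continuous.
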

\begin{proof}
    By Lemma \ref{lem:centralZ} and Proposition \ref{prop:diffops_convol}, for $S\in \bdd^\infty$, we have that
    $$(DS)\ast \Phi^\nu=D(S\ast \Phi^\nu)=(\mathrm{d}\lambda^\nu(Z)S)\ast \Phi^\nu.$$
    Then by the injectivity of the Berezin transform $DS=\mathrm{d}\lambda^\nu(Z)S$. 

    To show that $D$ is closed on $\mathrm{Dom}_{\mathrm{op}}(D)$, let $\{(S_k,DS_k)\}$ be a sequence in the graph of $D$ converging to $(S,T)$ in $\bdd\times\bdd$, where $\bdd$ is considered with the operator norm topology. Then 
    \begin{align*}
        \lim_{k\to \infty} D(S_k\ast \Phi^\nu)&= \lim_{k\to \infty} (DS_k)\ast \Phi^\nu\\
        &= T\ast \Phi^\nu
    \end{align*}
    in $\bddf$.  Also, since $\Phi^\nu$ is a smooth vector, since $Z$ is central, by Proposition \ref{prop:diffops_convol}, 
    \begin{align*}
        \lim_{k\to \infty} D(S_k\ast \Phi^\nu)&= \lim_{k\to \infty} S_k\ast (\mathrm{d}\lambda^\nu(Z)\Phi^\nu)\\
        &= S\ast (\mathrm{d}\lambda^\nu(Z)\Phi^\nu)\\
        &= D(S\ast\Phi^\nu)
    \end{align*}
    in $\bddf$. Then by the uniqueness of the limit, $D(S\ast\Phi^\nu)=T\ast \Phi^\nu$. Therefore, $D S=T$ by the injectivity of the Berezin transform, proving that $D$ is a closed operator on $\mathrm{Dom}_{\mathrm{op}}(D)$.
\end{proof}

\begin{remark}
    {\rm In the above proposition, show that $D$ is an extension of $\mathrm{d}\lambda^\nu(Z)$, we did not use the fact that $Z$ is central (equivalently, $D$ is bi-invariant). However, the centrality of $Z$ was essential in proving that the resulting operator is closed. We also note that all these considerations behave well on the classical bounded symmetric domains.}
\end{remark}

\begin{theorem}
     Assume that $1\leq p<\infty$. Let $D\in \D(G/K)$ be such that $D = \mathrm{d}\ell(Z)$ for some $Z\in Z(U(\gg^\C))$. Then the operator $D$ acting on $\mathrm{Dom}_{\mathrm{op}}^p(D)$ is the closure of $\mathrm{d}\lambda^\nu(Z)$.
\end{theorem}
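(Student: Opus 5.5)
We must show that the graph of $\mathrm{d}\lambda^\nu(Z)$ on $\schop^\infty$ is dense, in $\schop\times\schop$, in the graph of $D$ on $\mathrm{Dom}_{\mathrm{op}}^p(D)$. We argue in three steps.

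\emph{Closedness.} First we check that $D$ on $\mathrm{Dom}_{\mathrm{op}}^p(D)$ is closed in the Schatten $p$-norm. This is the same argument as in Proposition~\ref{prop:closed_extension}. Convergence in $\schop$ implies convergence in operator norm. Hence $S_k\ast\Phi^\nu\to S\ast\Phi^\nu$ and $(DS_k)\ast\Phi^\nu\to T\ast\Phi^\nu$ uniformly. On the other hand, $D(S_k\ast\Phi^\nu)=S_k\ast(\mathrm{d}\lambda^\nu(Z)\Phi^\nu)$ by Proposition~\ref{prop:diffops_convol}(4), since $\Phi^\nu$ is a smooth radial vector and $Z$ is central. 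These two facts give $D(S\ast\Phi^\nu)=T\ast\Phi^\nu$ with $T\in\schop$, since $\schop$-limits stay in $\schop$.

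\emph{Extension.} Next, Proposition~\ref{prop:closed_extension} (restricted to $\schop^\infty\subseteq\bdd^\infty$) shows that $D$ extends $\mathrm{d}\lambda^\nu(Z)$ on $\schop^\infty$. Therefore $\overline{\mathrm{d}\lambda^\nu(Z)}\subseteq D$, and it remains to prove the reverse inclusion.

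\emph{Reverse inclusion by mollification.} Fix $S\in\mathrm{Dom}_{\mathrm{op}}^p(D)$. Choose a Dirac sequence $\psi_\alpha\in C_c^\infty(G)$; we may take these $K$-bi-invariant, or just in $L^1(G)$ as in Lemma~\ref{lem:schop_convergence}. Set $S_\alpha:=\psi_\alpha\ast S$.
\begin{enumerate}
\item By the Dixmier--Malliavin/G\aa{}rding result \eqref{eq:DixMalli}, or by Proposition~\ref{prop:diffops_convol}(1), since $C_c^\infty(G)\subseteq L^1(G)^{\infty,\ell}$, we get $S_\alpha\in\schop^\infty$.
\item By Lemma~\ref{lem:schop_convergence}, $S_\alpha\to S$ in $\schop$, because $p<\infty$.
\item By Proposition~\ref{prop:diffops_convol_dom}(1) with $f=\psi_\alpha$, $q=p$ and exponent $1$, we have $D S_\alpha=\psi_\alpha\ast DS$. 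Since $DS\in\schop$, Lemma~\ref{lem:schop_convergence} again gives $DS_\alpha\to DS$ in $\schop$.
\item Because $DS_\alpha=\mathrm{d}\lambda^\nu(Z)S_\alpha$ by the extension step, the pairs $(S_\alpha,\mathrm{d}\lambda^\nu(Z)S_\alpha)$ lie in the graph of $\mathrm{d}\lambda^\nu(Z)$ and converge to $(S,DS)$.
\end{enumerate}
Hence $D\subseteq\overline{\mathrm{d}\lambda^\nu(Z)}$, which completes the proof.

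The main obstacle is Proposition~\ref{prop:diffops_convol_dom}(1) when $\psi_\alpha$ is a function on $G$ rather than on $G/K$. The proposition is stated for $f\in L^p(G/K)$, and the associativity $(\psi\ast S)\ast\Phi^\nu=\psi\ast(S\ast\Phi^\nu)$ together with $D(\psi\ast F)=\psi\ast DF$ requires $D$ to commute with left translations. The latter holds because $D$ is $G$-invariant, so the argument extends verbatim to $\psi\in L^1(G)$. Alternatively, one can use $K$-bi-invariant smooth compactly supported $\psi_\alpha$, which are genuine functions on $G/K$ and still form a Dirac sequence. For these we still need $\psi_\alpha\ast S$ to be smooth for general (non-radial) $S$, and this follows from \eqref{eq:DixMalli} because $\psi_\alpha\in C_c^\infty(G)$. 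The restriction $p<\infty$ is used exactly in steps 2 and 3, where strong continuity of $\lambda^\nu$ on $\schop$ gives the norm convergence of the mollifications.
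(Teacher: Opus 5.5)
Your main line is the paper's own proof. You mollify $S$ by a Dirac sequence $\psi_j\in C_c^\infty(G)$ and get $\psi_j\ast S\in\schop^\infty$ from G\aa{}rding/Dixmier--Malliavin. You then identify $D$ with $\mathrm{d}\lambda^\nu(Z)$ on smooth vectors via Proposition~\ref{prop:closed_extension}, commute $D$ past $\psi_j$ with Proposition~\ref{prop:diffops_convol_dom}(1), and conclude with Lemma~\ref{lem:schop_convergence}. You also make explicit two points the paper passes over silently. The first is closedness of $D$ on $\mathrm{Dom}_{\mathrm{op}}^p(D)$ in the $\cS^p$ graph norm, which is needed for ``is the closure''. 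The second is that Proposition~\ref{prop:diffops_convol_dom}(1), stated for functions on $G/K$, extends to $\psi\in L^1(G)$ because $D$ is $G$-invariant.

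Drop the claim that the $\psi_\alpha$ may instead be taken $K$-bi-invariant, because that alternative fails. By Lemma~\ref{lem:convolution_rad}, a right-$K$-invariant $\psi$ gives $\psi\ast S=\psi\ast\Rad{S}$, and a left-$K$-invariant $\psi$ makes $\psi\ast S$ radial. So $K$-bi-invariant mollifiers converge to $\Rad{S}$, not to $S$, whenever $S$ is not radial. Such functions form a $G/K$-Dirac sequence, not a Dirac sequence in $L^1(G)$, since they cannot concentrate at $e\in G$. Hence Lemma~\ref{lem:schop_convergence} does not apply to them, and Proposition~\ref{prop:convergence approx identity} covers only radial $S$. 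The non-invariant mollifiers are therefore essential, which is why your $L^1(G)$ extension of Proposition~\ref{prop:diffops_convol_dom}(1) is the step that actually carries the argument.
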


\begin{proof}
    First, note that since $\bsd$ is classical, there is $Z\in Z(U(\gg^\C))$ s.t. $D= \mathrm{d}\ell(Z)$ on smooth vectors.
    Let $S\in \mathrm{Dom}_{\mathrm{op}}^p(D)$, and let $\{\psi_j\}_{j}\subset C_c^\infty(G)$ be a Dirac sequence in $L^1(G)$. Then by Lemma \ref{lem:schop_convergence}, we obtain that $\psi_j\ast S\to S$ in $\schop$.
    Also, by \eqref{eq:DixMalli}, we have that 
    $\psi_j\ast S\in \schop^\infty$, and 
    \begin{align*}
        \mathrm{d}\lambda^\nu(Z)(\psi_j\ast S)&= D(\psi_j\ast S) \quad \text{(by Proposition \ref{prop:closed_extension})}\\
        &= \psi_j\ast DS \quad \text{(by Proposition \ref{prop:diffops_convol_dom})}.
    \end{align*}
    Now since $DS\in \schop$,  we have that $\psi_j\ast DS\to DS$ in $\schop$. Therefore, $(S, \mathrm{d}\lambda^\nu(Z)(\psi_j\ast S))\to (S,DS)$ in the graph norm, completing the proof.
\end{proof}

The key advantage of Su\'arez's definition is that it allows one to differentiate operators without relying explicitly on the 
machinery of representation theory. In particular, his approach defines the Laplacian of an operator directly 
through the Berezin transform, producing a closed operator on a larger natural domain than what arises 
from the derived representation.

\subsection{Shifting the $\alpha$-Berezin transform of an operator}
Recall the differential operator $D_\alpha$ in Section \ref{sec:Berezin_shifts}. Here, we note that $D_{\nu+\alpha}$ shifts the $\alpha$-Berezin transform of operators, analogous to Theorem \ref{theo:Berezin shift}, stated for functions.

\begin{theorem}\label{theo:Berezin shift op}
  Assume that $\bsd$ be a bounded symmetric domain and consider $\Berg=\cA^2_\nu$ with $\nu>\gen-1$. Let $\alpha\geq 0$, and let $S\in \bdd$. Then $\Phi^\nu_\alpha\in \mathrm{Dom}_{\mathrm{op}}^1(D_{\nu+\alpha})$ and
   $$D_{\nu+\alpha}\Phi^\nu_\alpha=\Phi^\nu_{\alpha+1},$$ 
   and the $\alpha$-Berezin transform satisfies:
   \[
      B^\nu_{\alpha+1}(S) = D_{\nu+\alpha} (B^\nu_\alpha (S))
   \]
   Furthermore, if $\bsd$ is classical domain and if $S\in \mathrm{Dom}_{\mathrm{op}}(D_{\nu+\alpha})$, then
   \[
      B^\nu_{\alpha+1}(S) = B^\nu_\alpha( D_{\nu+\alpha} S ).
   \]
\end{theorem}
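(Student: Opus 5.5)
The plan is to reduce everything to the function identity $D_{\nu+\alpha}\varphi_{\nu+\alpha}=\varphi_{\nu+\alpha+1}$ from the Proposition in Section~\ref{sec:Berezin_shifts}, combined with the Berezin-transform characterization of $\Phi^\nu_\alpha$ in Theorem~\ref{theo:Phi alpha}.

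\textbf{Step 1: the membership $\Phi^\nu_\alpha\in\mathrm{Dom}_{\mathrm{op}}^1(D_{\nu+\alpha})$.} By Theorem~\ref{theo:Phi alpha}, $\Phi^\nu_\alpha\ast\Phi^\nu=B^\nu(\Phi^\nu_\alpha)=\frac{1}{c_\nu}\varphi_{\nu+\alpha}$, and likewise $\Phi^\nu_{\alpha+1}\ast\Phi^\nu=\frac{1}{c_\nu}\varphi_{\nu+\alpha+1}$. Hence
\[
D_{\nu+\alpha}(\Phi^\nu_\alpha\ast\Phi^\nu)=\tfrac{1}{c_\nu}D_{\nu+\alpha}\varphi_{\nu+\alpha}=\tfrac{1}{c_\nu}\varphi_{\nu+\alpha+1}=\Phi^\nu_{\alpha+1}\ast\Phi^\nu .
\]
The parameter $\nu+\alpha>\gen-1$, so the Proposition applies. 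Since $\Phi^\nu_{\alpha+1}\in\traceop$ (Theorem~\ref{theo:Phi alpha} with $\alpha+1>0$), this is exactly the defining condition for $\mathrm{Dom}_{\mathrm{op}}^1(D_{\nu+\alpha})$. Injectivity of the Berezin transform then gives $D_{\nu+\alpha}\Phi^\nu_\alpha=\Phi^\nu_{\alpha+1}$.

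One boundary point needs care. For $\alpha=0$ we interpret $\Phi^\nu_0=\Phi^\nu$, whose Berezin transform is $\frac1{c_\nu}\varphi_\nu$ by Lemma~\ref{lem:convolution of Phinu}, so the same computation applies.

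\textbf{Step 2: the shift identity $B^\nu_{\alpha+1}(S)=D_{\nu+\alpha}(B^\nu_\alpha(S))$.} Write $B^\nu_\alpha(S)=S\ast\Phi^\nu_\alpha$, with $\Phi^\nu_\alpha$ radial and in $\mathrm{Dom}_{\mathrm{op}}^1(D_{\nu+\alpha})^K$. Proposition~\ref{prop:diffops_convol_dom}(3), with $p=\infty$, $q=1$, $r=\infty$, gives
\[
D_{\nu+\alpha}(S\ast\Phi^\nu_\alpha)=S\ast(D_{\nu+\alpha}\Phi^\nu_\alpha)=S\ast\Phi^\nu_{\alpha+1}=B^\nu_{\alpha+1}(S).
\]
Should one prefer not to lean on that proposition's proof, the route is as follows. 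Apply $B_\nu=(\cdot)\ast\varphi_\nu$ to both sides. Write $\varphi_\nu=c_\nu\Phi^\nu\ast\Phi^\nu$ and use Theorem~\ref{theo:radial_convol} to get $(S\ast\Phi^\nu_\alpha)\ast\varphi_\nu=c_\nu(S\ast\Phi^\nu)\ast(\Phi^\nu_\alpha\ast\Phi^\nu)=(S\ast\Phi^\nu)\ast\varphi_{\nu+\alpha}$. Then $D_{\nu+\alpha}$ passes onto the right factor by Lemma~\ref{lem:diffops_convol_f}(2), since $\varphi_{\nu+\alpha}$ is smooth and integrable with integrable derivatives. This yields $(S\ast\Phi^\nu)\ast\varphi_{\nu+\alpha+1}$, and injectivity of the function Berezin transform finishes the argument.

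\textbf{Step 3: the classical case.} Here $D_{\nu+\alpha}=\mathrm{d}\ell(Z)$ for a central $Z$ by Theorem~\ref{theo:center and difops}, and Lemma~\ref{lem:centralZ} shows this agrees with the right action. For $S\in\mathrm{Dom}_{\mathrm{op}}(D_{\nu+\alpha})$, Proposition~\ref{prop:diffops_convol_dom}(4) with $A=\Phi^\nu_\alpha\in\traceop^K$ gives $D_{\nu+\alpha}(S\ast\Phi^\nu_\alpha)=(D_{\nu+\alpha}S)\ast\Phi^\nu_\alpha=B^\nu_\alpha(D_{\nu+\alpha}S)$. Combining with Step 2 gives the last identity.

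\textbf{Expected main obstacle.} The analytic justification of interchanging $D_{\nu+\alpha}$ with the convolution against a merely bounded function such as $S\ast\Phi^\nu$ is the delicate part. It requires that $\varphi_{\nu+\alpha}$, together with all $G$-invariant derivatives of it, lies in $\Lone$. I would verify this by noting that $D\varphi_\beta$ is a finite combination of $\varphi_{\beta}$-type terms $h(z,z)^{\beta}$ times bounded factors, obtained by differentiating $h(z,z)^\beta$ along the Iwasawa coordinates. Alternatively, one can use that $\Phi^\nu$ is a smooth vector and invoke Proposition~\ref{prop:diffops_convol}(4).
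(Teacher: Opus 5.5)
Your proposal is correct and follows the paper's proof essentially step for step. You get domain membership and $D_{\nu+\alpha}\Phi^\nu_\alpha=\Phi^\nu_{\alpha+1}$ from $B^\nu(\Phi^\nu_\alpha)=\frac{1}{c_\nu}\varphi_{\nu+\alpha}$ together with $D_{\nu+\alpha}\varphi_{\nu+\alpha}=\varphi_{\nu+\alpha+1}$. Then Proposition~\ref{prop:diffops_convol_dom}(3) gives the shift identity, and Proposition~\ref{prop:diffops_convol_dom}(4) with $A=\Phi^\nu_\alpha$ handles the classical case.

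One caveat concerns your optional direct route via $B_\nu$. That route computes $D_{\nu+\alpha}\bigl(B_\nu(S\ast\Phi^\nu_\alpha)\bigr)$, so concluding by injectivity also requires $B_\nu\bigl(D_{\nu+\alpha}(S\ast\Phi^\nu_\alpha)\bigr)=D_{\nu+\alpha}\bigl(B_\nu(S\ast\Phi^\nu_\alpha)\bigr)$, i.e.\ moving $D_{\nu+\alpha}$ onto the left factor of $(\cdot)\ast\varphi_\nu$. Lemma~\ref{lem:diffops_convol_f}(2) does not give this, so it needs a separate argument, namely that invariant differential operators commute with convolution by radial functions. The paper's proof of Proposition~\ref{prop:diffops_convol_dom}(3) uses the same step tacitly.
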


\begin{proof}
    Note that from the construction of the $\alpha$-Berezin transform, we obtain that $B^\nu_\alpha(S)=S\ast\Phi^\nu_\alpha$ and $B^\nu(\Phi^\nu_\alpha)=\frac{1}{c_\nu}\varphi_{\nu+\alpha}$. Then it follows that
    $$D_{\nu+\alpha}
    B^\nu(\Phi^\nu_\alpha)=\frac{1}{c_\nu}D_{\nu+\alpha}
    \varphi_{\nu+\alpha}=\frac{1}{c_\nu} \varphi_{\nu+\alpha+1}=B^\nu(\Phi^\nu_{\alpha+1}).$$
    Hence, we obtain that $\Phi^\nu_\alpha\in \mathrm{Dom}_{\mathrm{op}}(D_{\nu+\alpha}
    )$ and that $D_{\nu+\alpha}
    \Phi^\nu_\alpha=\Phi^\nu_{\alpha+1}$. Then, by Proposition \ref{prop:diffops_convol_dom}, we obtain
    $$D_{\nu+\alpha} (B^\nu_\alpha (S))=D_{\nu+\alpha} (S\ast \Phi^\nu_\alpha)=S\ast (D_{\nu+\alpha}\Phi^\nu_\alpha)=S\ast \Phi^\nu_{\alpha+1}=B^\nu_{\alpha+1}(S).$$
    To prove the last statement, note that by Theorem \ref{theo:center and difops} and by the fact that $\bsd$ is classical, there is $Z\in Z(U(\gg^\C))$ s.t. $D_{\nu+\alpha}=\mathrm{d}\ell(Z)$ on smooth vectors. Therefore by Proposition \ref{prop:diffops_convol_dom}, 
    $$D_{\nu+\alpha} (B^\nu_\alpha (S))=D_{\nu+\alpha} (S\ast \Phi^\nu_\alpha)= (D_{\nu+\alpha} S)\ast \Phi^\nu_\alpha.$$
\end{proof}


\section{A QHA heat equation and a characterization of the radial Toeplitz algebra}\label{sec:heateq}
When the domain $\bsd=\B^n$, the unit ball in $\C^n$, function and operator convolutions with the approximate identity $\{\varphi_\alpha\}$ satisfy an equation reminiscent of the heat equation, with derivative replaced by a difference:
\begin{equation}\label{eq:heateq_rankone}
    \varphi_{\alpha}\ast S-\varphi_{\alpha+1}\ast S = \frac{1}{\alpha(\alpha-n)}\varphi_\alpha\ast \Tilde{\Delta} S,
\end{equation}
where $\widetilde{\Delta}$ is the invariant Laplacian (see \cite[Proposition 1.17]{Z05} for the formula).
This equation plays an interesting role in the study of Toeplitz operators. In \cite{S08}, Suarez used it to show that radial 
Toeplitz operators are dense in the Toeplitz algebra, a result later applied to the Gelfand theory of the radial Toeplitz algebra. 
Similar applications of the classical heat equation to the Toeplitz algebra on the Fock space were observed in \cite{DM24} as well. 
Our aim here is to investigate this phenomenon for Bergman spaces over classical bounded symmetric domains $\bsd$. While 
the invariant Laplacian alone suffices in the rank-one case, general domains require $r$ differential operators for this purpose, 
namely the generators of the algebra of bi-invariant differential operators $\D(G/K)$.

Let $\Delta_k$ denote the invariant differential operator corresponding to the elementary symmetric polynomial $\delta_k$. Let $\mathrm{Dom}_{\mathrm{op}}(\Delta_k)$ be the space of operators in the domain of $\Delta_k$. 
Let $\domop=\cap_{k=1}^r\mathrm{Dom}_{\mathrm{op}}(\Delta_k)$.
Recall that 
$$\varphi_{\alpha+1}=D_\alpha \varphi_\alpha=\frac{1}{d(\alpha)} \sum_{k=0}^r (\alpha - \frac{\gen-1}{2})^{2(r-k)} \Delta_k\varphi_\alpha.$$
Also, for $S\in \domop$,
\begin{align*}
    \varphi_{\alpha+1}\ast S-\varphi_{\alpha}\ast S &= (D_\alpha\varphi_{\alpha})\ast S- \varphi_{\alpha}\ast S\\
    &= D_\alpha(\varphi_\alpha\ast S)- \varphi_\alpha\ast S \quad \text{( by Proposition \ref{prop:diffops_convol})}\\
    &=\varphi_{\alpha}\ast D_{\alpha}S-\varphi_{\alpha}\ast S \quad \text{( by Proposition \ref{prop:diffops_convol_dom})}\\
    &=\varphi_\alpha \ast (D_\alpha-\id)S.
\end{align*}
Then it follows that
\begin{equation}\label{eq:heat eq}
     \varphi_{\alpha+1}\ast S-\varphi_{\alpha}\ast S = \varphi_\alpha\ast \bigg(\Big(\frac{(\alpha - \frac{\gen-1}{2})^{2r}}{d(\alpha)}-1\Big)\id+\frac{1}{d(\alpha)} \sum_{k=1}^r (\alpha - \frac{\gen-1}{2})^{2(r-k)} \Delta_k \bigg)S
\end{equation}

\begin{example}
    In the case of the unit ball $\B^n$, we get $p=n+1$, $r=1$, $a=2$, $b=n-1$. Then $\Delta_1=-\Tilde{\Delta}-\rho_1^2$ (see \cite{U94}), where $\Tilde{\Delta}$ denotes the invariant Laplacian, and moreover, 
    $$d(\alpha)=(\alpha-n/2)^2-\rho_1^2=(\alpha-n/2)^2-n^2/4=\alpha(\alpha-n).$$
    Therefore, we obtain the well-known identity \eqref{eq:heateq_rankone}:
\begin{align*}
    \varphi_{\alpha+1}\ast S-\varphi_{\alpha}\ast S&= \varphi_\alpha \ast \bigg(\Big(\frac{(\alpha - \frac{n}{2})^{2}}{d(\alpha)}-1\Big)\id+\frac{1}{d(\alpha)}   \Delta_1 \bigg)S\\
    &= \frac{-1}{d(\alpha)} \varphi_\alpha\ast \Tilde{\Delta}S\\
    &=\frac{-1}{\alpha(\alpha-n)}\varphi_\alpha\ast \Tilde{\Delta} S.
\end{align*}
\end{example}

\begin{lemma}\label{lem:asymptotics}
    Let $S\in \domop$. Then there is a constant $A$ such that 
    $$\|\varphi_{\alpha+1}\ast S-\varphi_{\alpha}\ast S\|\leq \frac{A}{\alpha^2}.$$
\end{lemma}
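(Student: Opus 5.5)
The starting point will be the identity \eqref{eq:heat eq}, which was derived just above for $S\in\domop$:
\begin{equation*}
\varphi_{\alpha+1}\ast S-\varphi_{\alpha}\ast S = \varphi_\alpha\ast \bigg(\Big(\frac{(\alpha - \frac{\gen-1}{2})^{2r}}{d(\alpha)}-1\Big)S+\frac{1}{d(\alpha)} \sum_{k=1}^r \Big(\alpha - \frac{\gen-1}{2}\Big)^{2(r-k)} \Delta_k S \bigg).
\end{equation*}
Since $S\in\domop$, each $\Delta_k S$ is a bounded operator. The QHA Young inequality \eqref{eq:QHAYoung} with $p=1$, $q=r=\infty$ gives $\|\varphi_\alpha\ast T\|\le\|\varphi_\alpha\|_1\|T\|=\|T\|$ for every $T\in\bdd$, because $\varphi_\alpha\ge0$ and $\|\varphi_\alpha\|_1=1$. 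By the triangle inequality it therefore suffices to bound the scalar coefficients:
\begin{equation*}
\Big|\frac{(\alpha - \frac{\gen-1}{2})^{2r}}{d(\alpha)}-1\Big|\,\|S\| + \sum_{k=1}^r \frac{(\alpha - \frac{\gen-1}{2})^{2(r-k)}}{|d(\alpha)|}\,\|\Delta_k S\|.
\end{equation*}

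Now use the definition \eqref{eq:dnu}. Set $\beta=\alpha-\frac{\gen-1}{2}$. Then $d(\alpha)=\prod_{j=1}^r(\beta^2-\rho_j^2)$ is a monic polynomial of degree $2r$ in $\beta$, and it is nonzero for large $\alpha$.
\begin{itemize}
\item For the first coefficient, $\beta^{2r}-d(\alpha)$ is a polynomial of degree at most $2r-2$ in $\beta$. Hence $\big|\beta^{2r}/d(\alpha)-1\big|=|\beta^{2r}-d(\alpha)|/|d(\alpha)|=O(\beta^{-2})$.
\item For $k\ge1$, the coefficient $\beta^{2(r-k)}/d(\alpha)$ is $O(\beta^{-2k})\le O(\beta^{-2})$.
\end{itemize}
Since $\beta\sim\alpha$, every coefficient is $O(\alpha^{-2})$. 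Choose $\alpha_0$ so that for $\alpha\ge\alpha_0$ one has $\beta\ge\alpha/2$, $\beta^2\ge 2\max_j\rho_j^2$, and hence $|d(\alpha)|\ge 2^{-r}\beta^{2r}$. With these explicit bounds the constant is
\begin{equation*}
A=C\Big(\|S\|+\sum_k\|\Delta_kS\|\Big)
\end{equation*}
for some $C$ depending only on $r,\gen,a,b$.

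The only real subtlety is the range of $\alpha$. The statement is asymptotic, so we intend $\alpha$ large, or at least $\alpha$ in the range where \eqref{eq:heat eq} is valid and $d(\alpha)\neq0$. On a compact set of admissible $\alpha$ bounded away from the zeros of $d$, the left-hand side is trivially bounded by $2\|S\|$, since $\|\varphi_\alpha\ast S\|\le\|S\|$. Enlarging $A$ then covers those $\alpha$ as well. The remaining check is that \eqref{eq:heat eq} indeed holds, i.e.\ that Propositions \ref{prop:diffops_convol} and \ref{prop:diffops_convol_dom} apply to $S\in\domop$ with $D_\alpha$ a polynomial in the $\Delta_k$. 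This was established in the derivation preceding the lemma, so I take it as given.
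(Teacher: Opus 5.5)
Your proof is correct and follows the paper's argument: start from \eqref{eq:heat eq}, use the Young bound $\|\varphi_\alpha\ast T\|\le\|T\|$, and note that $d(\alpha)$ is monic of degree $2r$ in $\beta$, so every coefficient is $O(\alpha^{-2})$; your degree count is equivalent to the paper's expansion of $1/\prod_j(1-\rho_j^2/\beta^2)$. Your extra care with small $\alpha$ is fine, but the hedge about zeros of $d$ is unnecessary: since $\frac{\gen-1}{2}=\rho_r$, one has $d(\alpha)>0$ for all $\alpha>\gen-1$, and the trivial bound $2\|S\|$ holds for every $\alpha$ in any case.
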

\begin{proof}
    First note that by \eqref{eq:heat eq} and the QHA Young's inequality, we get that
    \begin{align*}
        \|\varphi_{\alpha+1}\ast S-\varphi_{\alpha}\ast S\|\leq \Big|\frac{(\alpha - \frac{\gen-1}{2})^{2r}}{d(\alpha)}-1\Big|\|S\|+ \sum_{k=1}^r \frac{|\alpha - \frac{\gen-1}{2}|^{2(r-k)}}{d(\alpha)} \|\Delta_k S\|.
    \end{align*}
    Using the formula for $d(\alpha)$ \eqref{eq:dnu}, we obtain that
    $$\frac{(\alpha - \frac{\gen-1}{2})^{2r}}{d(\alpha)}=\frac{1}{\prod_{j=1}^r 1 - \frac{\rho_j^2}{(\alpha-\frac{\gen-1}{2} )^2} } \sim 1\pm \frac{\sum_{j=1}^r\rho_j^2}{(\alpha-\frac{\gen-1}{2} )^2}\pm O(1/\alpha^4).$$
    Therefore,
    $$\Big|\frac{(\alpha - \frac{\gen-1}{2})^{2r}}{d(\alpha)}-1\Big|\sim \frac{\sum_{j=1}^r\rho_j^2}{\alpha^2}.$$
    It is also clear from \eqref{eq:dnu} that
    $$\frac{|\alpha - \frac{\gen-1}{2}|^{2(r-k)}}{d(\alpha)} = O(1/\alpha^2)$$
    for $k=1,\dots r$. Now by choosing $A$ to be a sufficiently large constant based on the above asymptotics and the norms $\|\Delta_k S\|$, the result follows.
\end{proof}

\begin{theorem}\label{theo:main theorem two}
    Consider $\Berg=\cA^2_\nu$, where $\bsd$ is a classical bounded symmetric domain, and $\nu>\gen-1$. Then the following holds:
    $$(\overline{\bdd^\infty}^{\|\cdot\|})^K=(\overline{\domop}^{\|\cdot\|})^K=\cbuopL^K=\toepalg^K.$$
\end{theorem}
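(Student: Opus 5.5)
We prove the theorem through a cycle of inclusions. By Theorem \ref{theo: main theorem} we already know $\cbuopL^K=\toepalg^K$, so it suffices to show
\[
\toepalg^K\subseteq(\overline{\bdd^\infty}^{\|\cdot\|})^K\subseteq(\overline{\domop}^{\|\cdot\|})^K\subseteq \cbuopL^K .
\]

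For the first inclusion, take $S\in\toepalg^K$. Theorem \ref{theo: main theorem} gives $\varphi_\alpha\ast S\to S$ in norm, so it is enough to know that $\varphi_\alpha\ast S$ is a radial smooth vector. Radiality follows from Lemma \ref{lem:convolution_rad}(2). For smoothness, the cleanest route is to use $\Phi^\nu_\alpha$. Lemma \ref{lem:Berezin_convol}(2) gives $\varphi_{\nu+\alpha}\ast S=T^\nu_{B^\nu_\alpha(S)}$, and $B^\nu_\alpha(S)=S\ast\Phi^\nu_\alpha$ is a radial bounded function because $S$ and $\Phi^\nu_\alpha$ are radial (Theorem \ref{theo:radial_convol}). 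The corollary on Toeplitz operators with radial symbols then shows $T^\nu_{B^\nu_\alpha(S)}\in\bdd^\infty$. As $\alpha\to\infty$ we still have $\varphi_{\nu+\alpha}\ast S\to S$, which proves the first inclusion.

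The second inclusion needs $\bdd^\infty\subseteq\domop$. For $S\in\bdd^\infty$, Proposition \ref{prop:closed_extension} applies, because on a classical domain each $\Delta_k$ equals $\mathrm{d}\ell(Z_k)$ for some central $Z_k$ (Theorem \ref{theo:center and difops} and Lemma \ref{lem:centralZ}). Hence $S\in\mathrm{Dom}_{\mathrm{op}}(\Delta_k)$ for every $k$, so $S\in\domop$. Taking norm closures and radial parts preserves the inclusion, since radialization is a norm-continuous contraction that fixes radial elements.

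The third inclusion is the heart of the argument and uses Lemma \ref{lem:asymptotics}. Let $S\in\domop$ be radial. I would first check that $D_\alpha S$ is defined for such $S$ and remains radial, so that \eqref{eq:heat eq} applies. Lemma \ref{lem:asymptotics} then gives $\|\varphi_{\alpha+1}\ast S-\varphi_\alpha\ast S\|\le A/\alpha^2$, and this bound is summable over integer $\alpha$. Hence $\{\varphi_\alpha\ast S\}_{\alpha\in\N}$ is norm-Cauchy and converges in norm to some $T$. Proposition \ref{prop:convergence approx identity}(1) gives $\varphi_\alpha\ast S\to S$ in SOT, so $T=S$. Each $\varphi_\alpha\ast S$ lies in $\cbuopL$, and $\cbuopL$ is norm closed, so $S\in\cbuopL^K$.

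It remains to pass to the closure. If $S\in(\overline{\domop})^K$, pick $S_j\in\domop$ with $S_j\to S$. I must check that $\Rad{S_j}\in\domop$, which should follow from Proposition \ref{prop:diffops_convol_dom} together with the $K$-invariance of $\Delta_k$. Then $\Rad{S_j}\to\Rad{S}=S$, so $S$ is a norm limit of elements of $\cbuopL^K$ and therefore belongs to the closed algebra $\cbuopL^K$. I expect this radialization step, which needs $\mathrm{Dom}_{\mathrm{op}}(\Delta_k)$ to be stable under $\lambda^\nu_k$, to be the main technical point. It holds because $\Delta_k$ commutes with left translations, so $\Delta_k(\lambda^\nu_k(S)\ast\Phi^\nu)=\ell_k(\Delta_k(S\ast\Phi^\nu))$, and averaging this over $K$ weakly shows $\Rad{S}\in\domop$.
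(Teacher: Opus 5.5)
Your argument is correct and follows the paper's route: Lemma \ref{lem:asymptotics} makes $\{\varphi_\alpha\ast S\}$ norm-Cauchy for radial $S\in\domop$, and this is combined with $(\bdd^\infty)^K\subseteq(\domop)^K$, the density of radial smooth operators in $\cbuopL^K$, and Theorem \ref{theo: main theorem}. The differences are minor. You identify the norm limit with $S$ through the SOT convergence of Proposition \ref{prop:convergence approx identity}(1), whereas the paper uses the Berezin transform and the right uniform continuity of $S\ast\Phi^\nu$. You also explicitly justify two steps the paper only asserts: the density step, via $T^\nu_{B^\nu_\alpha(S)}$ and the corollary on Toeplitz operators with radial symbols, and the stability of $\domop$ under radialization, which the paper's remark $\overline{V^K}=(\overline{V})^K$ tacitly requires.
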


\begin{proof}
    First we show that $(\domop)^K\subseteq \cbuopL^K$. Let $S\in \domop$. It is enough to show that $\varphi_\alpha\ast S\to S$ in operator norm. Note that for $\alpha,\beta\in \N$ such that $\alpha>\beta$, we have
    \begin{align*}
        \|\varphi_{\alpha}\ast S -\varphi_\beta\ast S\|&=   \Big\|\sum_{k=\beta}^{\alpha-1} \varphi_{k+1}\ast S-\varphi_{k}\ast S \Big\|\\
        &\leq  \sum_{k=\beta}^{\alpha-1} \|\varphi_{k+1}\ast S-\varphi_{k}\ast S\|.
    \end{align*}
    Hence, by Lemma \ref{lem:asymptotics}, the sequence
    $\{\varphi_\alpha\ast S\}$ is Cauchy, and converges in norm to some operator $T\in \bdd$. Now we prove that $T=S$. First, note that $\varphi_\alpha\ast (S\ast \Phi^\nu)\to T\ast \Phi^\nu$ in $\|\cdot\|_\infty$ by the Young's inequality and convergence $\varphi_\alpha\ast S\to T$. But also, since $S\ast \Phi^\nu \in \cbufR$ , we obtain that $(S\ast \Phi^\nu)\ast \varphi_\alpha \to S\ast \Phi^\nu$. But, since $S$ is radial, $(S\ast \Phi^\nu)\ast \varphi_\alpha=\varphi_\alpha\ast (S\ast \Phi^\nu)$.  Then by the uniqueness of the limit, $T\ast \Phi^\nu=S\ast \Phi^\nu$ a.e. and hence, $S=T$ by the injectivity of the Berezin transform. Thus $S\in \cbuopL$.
    
    Now the density follows from the fact that
    $$(\bdd^\infty)^K\subseteq (\domop)^K\subseteq \cbuopL^K$$
    and $(\bdd^\infty)^K$ is dense in $\cbuopL^K$. We also recall that for a subset $V\subseteq\bdd$, $\overline{V^K}^{\|\cdot\|}=(\overline{V}^{\|\cdot\|})^K$, as an application of the $K$-average map (radialization). Furthermore, $\cbuopL^K=\toepalg^K$ by Theorem \ref{theo: main theorem}.
\end{proof}

\section*{Appendix: Review of the Classical Domains}\label{sec:classical_dom}

In this section, we remind the reader of the definitions of the classical bounded symmetric domains and their symmetry groups. We also provide a table which includes the ranks, root multiplicities, and genus for each of these domains.  All of the material in this section is very well known.  We start with some matrix calculations for the group $\SU (p,q)$ related to the
{\it Harish-Chandra} realization of bounded symmetric domains.

\subsection{The group $\SU (p,q)$}
For $p,q\in \N$, $n=p+q$ let $I_{p,q}=\mathrm{diag} (I_p,-I_q)$ where $\mathrm{diag}$ stands for {\it diagonal matrix} and
$I_m$ is the $m\times m$-identity matrix $I_m = \mathrm{diag}(1,1,\ldots , 1)$. Define
a map $\theta : \GL(n,\C)\to \GL(n,\C)$ by $ \theta (x) = (x^*)^{-1}$. On the Lie algebra level, we let $\dot \theta (X) = - X^*$.
Finally we let $\tau (x) = I_{p,q} \theta (x) I_{p,q}$
and similarly $\dot\tau (X) = -I_{p,q} X^*I_{p,q}$. In this notation we have
\[\SU (p,q) = \{x\in \SL(n,\C)\mid \tau (x) =x\}\]
with Lie algebra
\[\su (p,q ) = \{X\in \sl (n,\C)\mid I_{p,q}X +X^*I_{p,q} =0\} = \{X\in
\sl (n,\C) \mid \dot\tau (X)=X\}. \]
We let
\begin{align*} K &=\SU(p,q)^\theta=\left\{\left. k(A,D)=\begin{pmatrix} A & 0 \\ 0 & D\end{pmatrix}
\, \right|\, A \in \rU (p), D\in \rU (q)\text{ and } \det A \det D = 1\right\}\\
&= \rS (\rU (p)\times \rU(q)).
\end{align*}
Then $G/K$ is a bounded symmetric domain. 

The group $G$ has a complexification $G_\C = \SL (n,\C)$. The group $K$
has also a complexification 
\begin{align*}
K_\C &= \{k(A,D)\mid A\in\GL(p,\C), D\in \GL(q,\C), \text{ and } \det A \det D = 1\}\\
&=\rS(\GL(p,\C)\times \GL (q,\C)) .
\end{align*}

We need the following subgroup of $G_\C$:
\begin{align*}
P^+ &= \left\{\left. p^+(Z) = \begin{pmatrix} I_p & Z\\ 0 & I_q\end{pmatrix}\,
\right| \, Z\in M_{p,q}(\C)\right\}\\
P^+ &= \left\{\left. p^-(W)=\begin{pmatrix} I_p & 0\\ W & I_q\end{pmatrix}\,
\right| \, W\in M_{q,p}(\C)\right\} .
\end{align*}

A simple matrix multiplication shows that
\[\begin{pmatrix}  A & B\\ C & D\end{pmatrix}
= p^+(Z) k(\alpha ,\beta ) p^- (W) =\begin{pmatrix} x + ZyW & Zy \\
yW & y\end{pmatrix}  \]
and this can be solved uniquely if and only if $\det D \not=0$, which
is always the case if the left-hand side is in $\SU (p,q)$, so 
$G \subset P^+K_\C P^-$. This also shows that  $P^+ K_\C P^-$ is open and dense in $G_\C$ and if $\begin{pmatrix}  A & B\\ C & D\end{pmatrix}
=P^+(Z) k(x,y) p^- (W)\in P^-K_\C P^+$ then
\begin{equation}\label{eq:decomp}
 y = D,\quad Z= BD^{-1},\quad W= D^{-1}C\quad\text{and}\quad
x= A - BD^{-1}C .
\end{equation}

Let $x= \begin{pmatrix}  A & B\\ C & D\end{pmatrix}$ and $Z\in M_{p,q}(\C)$ such that
\[\begin{pmatrix}  A & B\\ C & D\end{pmatrix}p^+(Z)  
=\begin{pmatrix} A & AZ+B\\ C & CZ + D\end{pmatrix} =:
p^{+} (x\cdot Z) k(x(x,Z),y(x,Z)) p^-
\in P^- K_\C P^+\]
then
\smallskip

\noindent
{\rm 1)} $x\cdot Z = (AZ + B)(CZ+D)^{-1}$.
\smallskip

\noindent
{\rm 2)} $y(x,Z) = CZ +D$
which shows how some of the formulas in the following subsections
are derived. Furthermore we have
\[p^-{(- W)}^*p^+(Z) = \begin{pmatrix} I_p &Z \\ -W^+ & I_q- W^*Z\end{pmatrix}. \]
Hence the lower corner in the $K_\C$ part is $I_q -W^*Z$. Finally we note that
all the homomorphism $\chi : K_\C\to \C$ are given by
\[\chi_\lambda \left(\begin{pmatrix} A & 0 \\ 0 & D\end{pmatrix}\right)
= (\det D)^\lambda = (\det A)^{-\lambda} \quad\text{for some } \lambda \in \C .\]

\subsection{Cartan Domains of Type I}
Suppose that $p,q\in\N$. Then the domain $D^\mathrm{I}_{p,q}$ consists of all complex $p\times q$ matrices with operator norm less than one.  That is: 
\begin{align*}
   D^\text{I}_{p,q} = \{A\in M(p,q,\C) \mid \|A\|_\mathrm{op} < 1\} 
                     = \{A\in M(p,q,\C) \mid AA^* < I \},
\end{align*}
where $M(p,q,\C)$ denotes the space of all $p\times q$ matrices over $\C$, and where $AA^* < I$ means that $I-AA^*$ is a strictly positive matrix.  The group $G=\SU(p,q)$ acts by biholomorphisms on this domain as follows:
\[
    \left( \begin{matrix}
       A & B \\
       C & D
 \end{matrix} \right) \cdot Z := (AZ + B)(CZ+D)^{-1}
\]
for all $\left( \begin{matrix}
       A & B \\
       C & D
 \end{matrix} \right) \in \SU(p,q)$ and all $Z\in D^\mathrm{I}_{p,q}$, where we have written matrices in $\SU(p,q)$ block form.

The Jordan triple determinant is given by:
\[
   h(Z,W) = \det(I_n - W^*Z)
\]
for all $Z,W\in D^\mathrm{I}_{m}$,  
and the complex Jacobian of the action of $\SU(m,n)$ is given by:
\[
    j( \left(\begin{matrix}
                A & B \\
                C & D 
             \end{matrix}\right), Z) = \det(CZ+D)^{-\gen},
\]
and so one can obviously choose the holomorphic $\gen$th root 
\[
    j( \left(\begin{matrix}
                A & B \\
                C & D 
             \end{matrix}\right), Z)^{1/\gen} = \det(CZ+D)^{-1}.
\]

\subsection{Cartan Domains of Type II}
Suppose that $m\in\N$.  Then the domain $D^\mathrm{II}_{m}$ consists of all complex, symmetric $m\times m$ complex matrices of operator norm less than one:
\begin{align*}
   D^\text{II}_{m} = \{A\in M(m,\C) \mid A = A^T, \|A\|_\mathrm{op} < 1\} 
                    = \{A\in M(m,\C) \mid A = A^T, AA^* < I \}.
\end{align*}
where $M(m,\C)$ denotes the space of all $m\times m$ matrices over $\C$.

 The group 
\[
   \Sp(m,\R) := \left\{ g \in \SU(m,m) \left|\, g
   \left( \begin{matrix}
      0 & I \\
      -I & 0 
   \end{matrix} \right) g^T = \left( \begin{matrix}
      0 & I \\
      -I & 0
   \end{matrix} \right) \right. \right\}
\]
acts by biholomorphisms on this domain as follows:
\[
   \left( \begin{matrix}
       A & B \\
       C & D
   \end{matrix} \right) \cdot Z := (AZ + B)(CZ+D)^{-1}
\]
for all $\left( \begin{matrix}
       A & B \\
       C & D
\end{matrix} \right) \in \Sp(m,\R)$ and all $Z\in D^\mathrm{III}_{p,q}$, where we have written matrices in $\Sp(m,\R)$ with blocks of size $m\times m$.  

Note that the definition here is not the usual realization of the group $\Sp(m,\R)$, but it is isomorphic to it.

The Jordan triple determinant is given by:
\[
   h(Z,W) = \det(I_n - W^*Z)
\]
for all $Z,W\in D^\mathrm{II}_{m}$,  
and the complex Jacobian of the action of $\SO^*(2m)$ is given by:
\[
    j( \left(\begin{matrix}
                A & B \\
                C & D 
             \end{matrix}\right), Z) = \det(CZ+D)^{-\gen},
\]
and so one can obviously choose the holomorphic $\gen$th root 
\[
    j( \left(\begin{matrix}
                A & B \\
                C & D 
             \end{matrix}\right), Z)^{1/\gen} = \det(CZ+D)^{-1}.
\]

\subsection{Cartan Domains of Type III}
Suppose that $m\in\N$. Then the domain $D^\mathrm{III}_{m}$ consists of all complex, antisymmetric $m\times m$ complex matrices of operator norm less than one: 
\begin{align*}
   D^\text{III}_{m} = \{A\in M(m,\C) \mid A = -A^T, \|A\|_\mathrm{op} < 1\} 
                    = \{A\in M(m,\C) \mid A = -A^T, AA^* < I \}.
\end{align*}
where $M(m,\C)$ denotes the space of all $m\times m$ matrices over $\C$. 
 The group 
\[
   \SO^*(2m) := \left\{ g \in \SU(m,m) \left|\, g
   \left( \begin{matrix}
      0 & I \\
      I & 0
   \end{matrix} \right) g^T = \left( \begin{matrix}
      0 & I \\
      I & 0
   \end{matrix} \right) \right. \right\}
\]
acts by biholomorphisms on this domain as follows:
\[
   \left( \begin{matrix}
      A & B \\
      C & D
   \end{matrix} \right) \cdot Z := (AZ + B)(CZ+D)^{-1}
\]
for all $\left( \begin{matrix}
       A & B \\
       C & D
 \end{matrix} \right) \in \SO^*(2m)$ and all $Z\in D^\mathrm{II}_{m}$, where we have written matrices in $\SO*(2m)$ with blocks of size $m\times m$.

The Jordan triple determinant is given by:
\[
   h(Z,W) = \det(I_n - W^*Z)
\]
for all $Z,W\in D^\mathrm{III}_{m}$,  

and the complex Jacobian of the action of $\Sp(m,\R)$ is given by:
\[
    j( \left(\begin{matrix}
                A & B \\
                C & D 
             \end{matrix}\right), Z) = \det(CZ+D)^{-\gen},
\]
and so one can obviously choose the holomorphic $\gen$th root 
\[
    j( \left(\begin{matrix}
                A & B \\
                C & D 
             \end{matrix}\right), Z)^{1/\gen} = \det(CZ+D)^{-1}.
\]

\subsection{Cartan Domains of Type IV}
We will only discuss the last
classical domain, the   {\it Lie ball} $D^{IV}_m$, $m\geq 3$, very briefly.  

The Lie ball is the symmetric domain
\[
D^{IV}_m
=
\left\{
z\in\mathbb C^m :
\|z\|^2<1,\quad
1-2\|z\|^2+|z^Tz|^2>0
\right\}.
\]

This seems not to fit so well into the above discussion, but denoting by $\sigma$
the complex conjugation on $\SU (2,m)$ and the domain $D^I_{2,m}$ we have
\[\SU (2,m)^\sigma_e = \SO (2,m)_e \quad\text{and}\quad
(D^I_{2,m})^\sigma = \{Z\in M_{2.m} \mid I_m - Z^t Z >0\}.\]
This is a real bounded domain, but by writing 
\[Z = \begin{pmatrix} x \\ y\end{pmatrix}, \quad x,y\in \R^m\]
we can map the domain into $\C^m$ by $Z \mapsto x+ iy$. The image
is then the Lie ball.

The group $G=SO_0(2,m)$, preserving $\operatorname{diag}(I_2,-I_m)$,
acts transitively by biholomorphisms, with stabilizer
$K=SO(2)\times SO(m)$.

\begin{figure}
   \caption{Data for the classical bounded symmetric domains.}
   \begin{tabular}{|c||c|c|c|c|c|c|c|}
      \hline
      Domain & $G$ & $K$ & $r$ & $a$ & $b$ & $g$ \\
      \hline
      $D^\text{I}_{p,q}$ & $\SU(p,q)$ & $S(\U(p)\times\U(q))$ & $\min\{p,q\}$ & 2 & $|p-q|$ & $p+q$\\
      \hline
      $D^\text{II}_{2m}$ & $\SO^*(4m,\R)$ & $\U(2m)$ & $2m$ & $4$ & $0$ & $8m-2$\\
      \hline
      $D^\text{II}_{2m+1}$ & $\SO^*(2(2m+1),\R)$ & $\U(2m+1)$ & $2m+1$ & $4$ & $2$ & $8m+4$\\
      \hline
      $D^\text{III}_m$ & $\Sp(m,\R)$ & $\U(m)$ & $m$ & $1$ & $0$ & $m+1$ \\
      \hline
      $D^\text{IV}_m$ & $\SO_0(2,m)$ & $\SO(2)\times \SO(m)$ & $2$ & $m-2$ & $0$ & $m$\\
      \hline
   \end{tabular}
\end{figure}

\bibliographystyle{alpha}

\begin{thebibliography}{BHYnV14b}

\bibitem[BF20]{BF20}
W.~Bauer and R.~Fulsche. Berger-{C}oburn theorem, localized operators, and the {T}oeplitz
  algebra. 
  {\em Oper. Theory Adv. Appl.},
 {\bf 279}   53--77.
  Birkh\"auser/Springer  2020.

\bibitem[BFT25]{BFT25}
W. Bauer, R. Fulsche, and J. Toft.
 Convolutions of Orlicz spaces and Orlicz Schatten classes, with
  applications to Toeplitz operators,  arXiv:2505.01707, 2025.

\bibitem[BHYV14a]{BHV14b}
W.~Bauer, C.~Herrera Ya\~nez, and N.~Vasilevski.
Eigenvalue characterization of radial operators on weighted {B}ergman
  spaces over the unit ball.
  {\em Integral Equations Operator Theory}, {\bf 78} (2) (2014), 271--300.

\bibitem[BHYV14b]{BHV14}
W.~Bauer, C.~Herrera Ya\~nez, and N.~Vasilevski.
\newblock {$(m,\lambda)$}-{B}erezin transform and approximation of operators on
  weighted {B}ergman spaces over the unit ball.
{\em Oper. Theory Adv. Appl.}, {\bf 240} (2014), 45--68.
  Birkh\"auser/Springer, Cham, 2014.

\bibitem[BL76]{BerghLofstrom76}
J.  Bergh and J. L{\"o}fstr{\"o}m.
 {\em Interpolation Spaces: An Introduction}, {\em
  Grundlehren der mathematischen Wissenschaften}, {\bf 223}
Springer-Verlag, Berlin-Heidelberg-New York, 1976.

\bibitem[DDMO25]{DDMO24}
M. Dawson, V. Dewage, M. Mitkovski, and G.  \'Olafsson.
Quantum harmonic analysis on the unweighted {B}ergman space of the
  unit ball.
{\em Integral Equations Operator Theory}, {\bf 97}(3), (2025), Paper No. 23,  51 pp.

\bibitem[Dew26]{D25}
V. Dewage.
Toeplitz algebra of bounded symmetric domains: A quantum harmonic
  analysis approach via localization.
 {\em J. Funct. Anal.}, {\bf 290} (2) (2026), paper no. 111211, 32 pp.

\bibitem[DFO25]{DFO25}
V. Dewage, R. Fulsche, and G.  \'Olafsson.
The function-operator convolution algebra over the Bergman space of
  the ball and its Gelfand theory, 2025. arXiv:2512.24746. To appear in
  {\em Contemporary Math.}.

\bibitem[DM25a]{DM24}
V. Dewage and M. Mitkovski.
 The {L}aplacian of an operator and the radial {T}oeplitz algebra.
 {\em J. Math. Anal. Appl.}, {\bf 547}(1) (2025), Paper No. 129245, 19 pp. 

\bibitem[DM25b]{DM24a}
V. Dewage and M.  Mitkovski.
  A quantum harmonic analysis approach to the {B}erger-{C}oburn
  theorem.
 {\em New York J. Math.}, {\bf 31} (2025), 887--901.

\bibitem[D17]{D17}
A.~Dieudonne.
{$m$}-{B}erezin transform and approximation of operators on the
  {B}ergman space over bounded symmetric domains.
\newblock {\em Complex Anal. Oper. Theory} {\bf 11}(3) (2017), 651--674.

\bibitem[DLMS24]{DLMS24}
M. D{\"o}rfler, F. Luef, H. McNulty, and E. Skrettingland.
Time-frequency analysis and coorbit spaces of operators.
 {\em J. Math. Anal. Appl.}, {\bf 534}(2) (2024), 33 pages. 
 

\bibitem[DM78]{DixmierMalliavin78}
J. Dixmier and P. Malliavin.
 Factorisations de fonctions et de vecteurs ind\'efiniment
  diff\'erentiables.
  {\em Bull. Sci. Math. (2)}, {\bf 102}(4) (1978), 307--330.


\bibitem[DOQB15]{DOQ15}
M.~Dawson, G.~\'{O}lafsson, and R.~Quiroga-Barranco. Commuting {T}oeplitz operators on bounded symmetric domains and
 {\em J. Funct. Anal.}, {\bf 268}(7) (2015), 1711--1732.

\bibitem[DOQB18]{DOQ18}
M.~Dawson, G.~\'{O}lafsson, and R.~Quiroga-Barranco.
 The restriction principle and commuting families of {T}oeplitz
  operators on the unit ball.
 {\em S\~{a}o Paulo J. Math. Sci.}, {\bf 12}(2) (2018), 196--226.

\bibitem[Eng99]{E99}
M. Engli\v{s}.
 Compact {T}oeplitz operators via the {B}erezin transform on bounded
  symmetric domains.
  {\em Integral Equations Operator Theory}, (1999), {\bf 33}(4), 426--455.

\bibitem[FK90]{FK90}
J.~Faraut and A.~Kor\'anyi.
\newblock Function spaces and reproducing kernels on bounded symmetric domains.
\newblock {\em J. Funct. Anal.} {\bf 88}(1) (1990), 64--89.


\bibitem[Ful20]{F19}
R.~Fulsche. Correspondence theory on {$p$}-{F}ock spaces with applications to
  {T}oeplitz algebras.
 {\em J. Funct. Anal.}, {\bf 279}(7) (2020), Paper No. 108661, 41 pp.


\bibitem[FG23]{FG23}
R.~Fulsche and N.~Galke.
 Quantum harmonic analysis on locally compact abelian groups. {\em J. Fourier Anal. Appl.} {\bf 31} (2025), Paper No. 13, 
 58 pp. 
 
\bibitem[FH23]{FH23}
R.~Fulsche and R.~Hagger.
\newblock Quantum harmonic analysis for polyanalytic {F}ock space.
{\em J. Fourier Anal. Appl.} {\bf 30} (2024), Paper No. 63, 42 pp. 


\bibitem[FLW26]{FLW26}
R.  Fulsche, F. Luef, and R. F. Werner.  Wiener's {T}auberian theorem in classical and quantum harmonic
  analysis.
  {\em J. Funct. Anal.}, {\bf 290}(4) (2026), Paper No. 111265, 48 pp.


\bibitem[FvL25]{FL25}
R. Fulsche and L. van Luijk.
 Heisenberg-smooth operators from the phase-space perspective.
 {\em Mathematische Nachrichten}, {\bf 298}(8) (2025), 2845--2866.

\bibitem[FRR23]{FR23}
R.~Fulsche and M.~A. Rodriguez~Rodriguez.
  Commutative {G}-invariant Toeplitz ${C}^*$-algebras on the {F}ock
  space and their {G}elfand theory through quantum harmonic analysis, {\em J. Operator Theory} {\bf 93} (2025), 593--620.


\bibitem[G64]{G64}
S. G. Gindikin.
 Analysis in homogeneous domains.
 {\em Russian Math Surveys}, {\bf 19}(4) (1964), 1--90.

\bibitem[G69]{Goodman69}
R. W. Goodman.
 Analytic and entire vectors for representations of {L}ie groups.
  {\em Trans. Amer. Math. Soc.}, {\bf 143} (1969), 55--76.

\bibitem[GMV13]{GMV13}
S.~M. Grudsky, E.~A. Maximenko, and N.~L. Vasilevski.
 Radial {T}oeplitz operators on the unit ball and slowly oscillating
  sequences.
 k {\em Commun. Math. Anal.}, {\bf 14}(2) (2013), 77--94.


\bibitem[G47]{Garding47}
Lars G{\aa}rding.  Note on continuous representations of {L}ie groups.  {\em Proc. Nat. Acad. Sci. U.S.A.} {\bf 33} (1947),
331--332.

\bibitem[Ha17]{H17}
R.~Hagger.  The essential spectrum of {T}oeplitz operators on the unit ball.  {\em Integral Equations Operator Theory}, {\bf 89}(4) (2017), 519--556.

\bibitem[Ha19]{H19}
R.~Hagger. Limit operators, compactness and essential spectra on bounded
  symmetric domains.
 {\em J. Math. Anal. Appl.}, {\bf 470}(1) (2019), 470--499.

\bibitem[Hal23]{H23}
S.~Halvdansson.  Quantum harmonic analysis on locally compact groups. 
 {\em Journal of Functional Analysis}, {\bf 285}(8) (2023), Paper No. 110096.

\bibitem[HC54]{H54}
Harish-Chandra.  Representations of semisimple lie groups: VI  {\em Proc.\ Nat.\ Acad.\ Sci.}, {\bf 40}(11) (1954), 
1078--1080.

\bibitem[Hel78]{H78}
S.~Helgason.
  {\em Differential Geometry, {L}ie groups, and Symmetric Spaces}, {\em Pure Appl. Math.}
{\bf 80}.  Academic Press, Inc.  New
  York-London, 1978.

\bibitem[Hel84]{H84}
S.~Helgason. {\em Groups and Geometric Analysis: Integral Geometry, Invariant
  Differential Operators, and Spherical Functions}, {\em Pure Appl. Math.} {\bf 113}
\newblock Academic Press, Inc. [Harcourt Brace Jovanovich, Publishers], New
  York-London, 1984.

\bibitem[Hua63]{H63}
L.~K. Hua.  {\em Harmonic Analysis of Functions of Several Complex Variables in
  the Classical Domains}.  American Mathematical Society, Providence, RI, 1963.
  

\bibitem[K69]{K69}
M.~Koecher.  {\em An Elementary Approach to Bounded Symmetric Domains}. Rice University, Houston, TX, 1969.

\bibitem[Laf24]{L24}
L. Lafleche. On quantum Sobolev inequalities.  {\em Journal of Functional Analysis} {\bf 286}(10), Paper No. 110400,
40 pp.

\bibitem[Leb72]{Lebedev72}
N.~N. Lebedev.  {\em Special Functions and their Applications}. Dover Publications, Inc., New York, revised edition, 1972. 

\bibitem[Loo69]{L69}
O.~Loos.  {\em Symmetric Spaces. {I}: {G}eneral theory}.  W. A. Benjamin, Inc., New York-Amsterdam, 1969.

\bibitem[Loo77]{L77}
O.~Loos.  {\em Symmetric Spaces and {Jordan Pairs}}.
  Mathematical Lectures, 1977.


\bibitem[LM24]{LM24}
F.  Luef and H, McNulty.
 Quantum {Time}-{Frequency} {Analysis} and {Pseudodifferential}
  {Operators}. {\em Indiana University. Math. J.} {\bf 75}(2) (2026), 587--623. 

\bibitem[LS25]{LS25}
F. Luef and H. J. Samuelsen.  Fourier restriction for {S}chatten class operators and functions on
  phase space. {\em Int. Math. Res. Not. IMRN}, (2):Paper No. rnae291, 22, 2025.
  
\bibitem[LS18]{LS18}
F.~Luef and E.~Skrettingland. Convolutions for localization operators.  {\em J. Math. Pures Appl. (9)}, {\bf 118} (2018), 288--316.


\bibitem[LS21]{LS20}
F.~Luef and E.~Skrettingland. A {W}iener {T}auberian theorem for operators and functions.  {\em J. Funct. Anal.}, 
{\bf 280}(6) (2021), Paper No. 108883, 44 pp. 

\bibitem[MO26]{MO26}
M. Mitkovski, and Y. Ouyang. Local dyadic fractional Sobolev spaces: paraproducts, commutators, and the algebra property.
arXiv:2603.26565.
  
\bibitem[Nel59]{Nelson59}
E. Nelson. Analytic vectors.  {\em Ann. of Math. (2)}, {\bf 70} (1959), 572--615.

\bibitem[P72]{Poulsen72}
N. S. Poulsen.  On {$C\sp{\infty }$}-vectors and intertwining bilinear forms for
  representations of {L}ie groups. {\em J. Functional Analysis}, {\bf 9} (1972), 87--120.

\bibitem[S80]{S80}
I.~Satake.  {\em Algebraic Structures of Symmetric Domains},  {\em
  Kan\^o{} Memorial Lectures} {\bf 4}. Iwanami Shoten, Tokyo; Princeton University Press, Princeton, NJ,
  1980.

\bibitem[Su{\'a}04]{S04}
D.~Su{\'a}rez.  Approximation and symbolic calculus for {T}oeplitz algebras on the
  {B}ergman space.  {\em Rev. Mat. Iberoamericana}, {\bf 20}(2) (2004), 563--610.

\bibitem[Su{\'a}05]{S05}
D.~Su{\'a}rez. Approximation and the {$n$}-{B}erezin transform of operators on the
  {B}ergman space. {\em J. Reine Angew. Math.}, {\bf 581} (2005), 175--192.

\bibitem[Su{\'a}08]{S08}
D.~Su{\'a}rez.  The eigenvalues of limits of radial {T}oeplitz operators.
 {\em Bull. Lond. Math. Soc.}, {\bf 40}(4) (2008), 631--641.

\bibitem[Upm83]{U83}
H.~Upmeier.  Toeplitz operators on bounded symmetric domains.  {\em Trans. Amer. Math. Soc.},
{\bf 280}(1) (1983), 221--237.

\bibitem[Upm21]{U21}
H. Upmeier.  Eigenvalues of {$K$}-invariant {T}oeplitz operators on bounded
  symmetric domains.  {\em Integral Equations Operator Theory}, {\bf 93}(3) (2021), Paper No. 27, 21 pp. 

\bibitem[UU94]{U94}
A.~Unterberger and H.~Upmeier. The Berezin transform and invariant differential operators.  
{\em Comm. in Math. Phys.}, {\bf 164}(3) (1994), 563--597.

\bibitem[Var74]{Varadarajan74}
V.~S. Varadarajan.  {\em Lie groups, {L}ie algebras, and their Representations}.
 Prentice-Hall Series in Modern Analysis. Prentice-Hall, Inc.,
  Englewood Cliffs, NJ, 1974.

\bibitem[W84]{W84}
R.~Werner.  Quantum harmonic analysis on phase space. 
{\em J. Math. Phys.}, {\bf 25}(5) (1984), 1404--1411.

\bibitem[Xia15]{X15}
J.~Xia.  Localization and the {T}oeplitz algebra on the {B}ergman space.  {\em J. Funct. Anal.}, 
{\bf 269}(3) (2015), 781--814.

\bibitem[Zhu05]{Z05}
K.~Zhu.  {\em Spaces of Holomorphic Functions in the Unit Ball},   {\em Graduate Texts in Mathematics} {\bf 226} 
 Springer-Verlag, New York, 2005.

\end{thebibliography}

\end{document}